\newtheorem{Definition}{Definition}[section]
\newtheorem{Theorem}{Theorem}[section]
\newtheorem{Remark}[Theorem]{Remark}
\newtheorem{Lemma}[Theorem]{Lemma}
\newtheorem{Proposition}[Theorem]{Proposition}
\numberwithin{equation}{section}
\newcommand{\norm}[1]{\left\Vert#1\right\Vert}
\newcommand{\bb}[1]{\mathbb{#1}}
\def\h{\mathcal{H}}
\def\E{\hat{\mathbb{E}}}
\def\Et{\tilde{\mathbb{E}}}
\def\siup{\bar{\sigma}}
\def\sidown{\underline{\sigma}}
\def\Bq{\left\langle B\right\rangle}
\def\Exp{\mathscr{E}}
\def\C_lip{\mathcal{C}_{l,lip}}
\title{Reflected Quadratic BSDEs driven by $G$-Brownian Motions\footnotemark[1]}
\author{Dong Cao\footnotemark[2]
\quad and\quad Shanjian Tang\footnotemark[3]  }
\begin{document}
\maketitle
\begin{abstract}
In this paper, we consider a  reflected backward  stochastic differential equation driven by a $G$-Brownian motion ($G$-BSDE), with  the generator  growing quadratically  in the second unknown. We obtain the existence by the penalty method, and a priori estimates which implies the uniqueness, for solutions of the $G$-BSDE. Moreover, focusing our discussion at the Markovian setting, we give a nonlinear Feynman-Kac formula for solutions of a fully nonlinear partial differential equation.
\end{abstract}

\footnotetext[1]{Partially supported by National Science Foundation of China (Grant No. 11631004)
and Science and Technology Commission of Shanghai Municipality (Grant No. 14XD1400400). }
\footnotetext[2]{School of Mathematical Sciences, Fudan University, Shanghai 200433, China (\textit{e-mail: dcao14@fudan.edu.cn}).}
 \footnotetext[3]{Department of Finance and Control Sciences, School of
Mathematical Sciences, Fudan University, Shanghai 200433, China (\textit{e-mail: sjtang@fudan.edu.cn}).}

\section{Introduction}
A general  backward stochastic differential equation (BSDE) takes  the following form:
$$Y_t=\xi+\int_t^Tf(s,Y_s,Z_s)ds-\int_t^TZ_sdW_s,~~t\in[0,T]. $$
The function $f$ is conventionally called the generator and the random variable $\xi$ is called the terminal value.  Bismut~\cite{Bismut1976,Bismut1978} initially gave a complete linear theory,   where the generator is linear in both unknown variables, and derived the stochastic Riccati equation as a particular nonlinear BSDE where the generator is quadratic in the second unkown variable.  Pardoux and Peng~\cite{Pardoux1990} established the existence and uniqueness result when the generator $f$ is uniformly Lipschitz continuous in both unknown variables and the terminal value $\xi$ is square integrable.
Subsequently, an intensive  attention has been  given to relax the assumption of the uniformly Lipschitz continuity on the generator.  In particular,  the one-dimensional BSDE with a quadratic generator (i.e., the so-called quadratic BSDE) was studied by Kobylanski~\cite{Kobylanski2000} for a bounded terminal value  $\xi$, and by  Briand and Hu \cite{Briand2006,Briand2008} for an unbounded terminal value $\xi$ of some suitable exponential moments.  The multi-dimensional quadratic BSDE was discussed by Tang~\cite{Tang2003} and Hu and Tang~\cite{HuTang2016}.

 As a constrained BSDE, a reflected backward stochastic differential equation (RBSDE) was formulated and studied by El Karoui et al.~\cite{El Karoui1997}, where the first unknown $Y$ is required to stay up a given continuous process $S$ and an additional increasing process which satisfies the Skorohod condition, is thus introduced into the equation.   Subsequently, much efforts have been made to relax the  Lipschitz assumption on the generator. For the quadratic case, see Kobylanski et al.~\cite{Kobylanski2002} with bounded terminal values, and  Lepeltier and Xu \cite{Lepeltier2007}  with unbounded terminal values.

To incorporate the  Knightian uncertainty,  Peng~\cite{Peng2007,Peng2008,Peng2010,Peng2010_1}  introduced the notion of  $G$-expectation as a  time-consistent sub-linear expectation, and constructed (via a fully nonlinear PDE)  the so-called $G$-Brownian motion $\{B_t,~t\in [0,+\infty)\}$, whose quadratic variation process $\Bq$---in contrast to the classical Brownian motion---is not deterministic.   The stochastic integral with respect to the $G$-Brownian motion and its quadratic variation were also discussed by Peng \cite{Peng2007}.  Denis et al.~\cite{Denis2011} proves that  the $G$-expectation is in fact  the upper expectation over a collection of mutually singular martingale measures $\mathcal{P}$.
Hu et al.~\cite{HuM2014_1}  showed that there is a unique triple of processes $(Y,Z,K)$ in a proper Banach space satisfying the following scalar-valued BSDE driven by the $G$-Brownian motion $B$:
\begin{equation}\label{GBSDE}
Y_t=\xi+\int_t^Tg(s,Y_s,Z_s)ds+\int_t^Tf(s,Y_s,Z_s)d\Bq_s-\int_t^TZ_sdB_s-\int_t^TdK_s,~~t\in[0,T],
\end{equation}
where $f$ and $g$ are uniformly Lipshchitz in both unknown variables. Hu et al. \cite{Hu2018} proved the existence and uniqueness for adapted solutions to the scalar-valued $z$-quadratic  BSDE~\eqref{GBSDE}  driven by the $G$-Brownian motion  $B$  for a bounded terminal value $\xi$. Very recently, Li, Peng, and Soumana Hima~\cite{Li2017_1} discuss a reflected BSDE driven by the $G$-Brownian motion subject to  a lower obstacle under the uniformly Lipschitz condition, where  a $G$-martingale condition rather than  the conventional Skorohod condition,  is used  to characterize  the unknown bounded variational  process which is introduced into the equation  to keep the first unknown process  stay up the lower obstacle under  the $G$-expectation.  More precisely, they showed that there is a unique triple $(Y,Z,A)$ of processes satisfying the following equation:
\begin{equation}
\begin{cases}
 \displaystyle Y_t=\xi+\int_t^Tg(s,Y_s,Z_s)ds+\int_t^Tf(s,Y_s,Z_s)d\Bq_s\\
 \displaystyle\quad\quad\quad\quad -\int_t^TZ_sdB_s+\int_t^TdA_s,~~t\in[0,T];\\[3mm]
Y_t\geq S_t, 0\leq t\leq T;\quad
\int_0^\cdot (S_s-Y_s)dA_s \text{ is a non-increasing }G\text{-martingale.}
 \label{RGBSDE0}
 \end{cases}
 \end{equation}
  A subsequent study of Li and Peng~\cite{Li2017_2} reported  the following unexpected observation on the upper obstacle  problem  for the reflected BSDE driven by a $G$-Brownian motion: the proof of the uniqueness of solutions in the lower obstacle problem turns out to be difficult to be adapted to the upper obstacle problem. Since the preceding two equations hold $\bb{P}$-$a.s.$ for each $\bb P\in \mathcal{P}$, they  are also associated to second order BSDEs,  which have been discussed by Cheridito et al.~\cite{Cheridito2007}, Soner et al.~\cite{Soner2012},  and Possama\"i and Zhou~\cite{PossZhou2013}. Moreover, Matoussi,  Piozin and Possama\"i~\cite{Matoussi1} and Matoussi, Possama\"i and Zhou~\cite{Matoussi2,Matoussi3} discuss the reflected second order BSDEs. In the context of a $G$-BSDE, the solution is universally discussed in a ``better" space of processes, and  its existence naturally requires more regularity of the coefficients.

As a generalized counterpart of  the classical reflected quadratic BSDEs, the existence and uniqueness result for reflected quadratic BSDEs driven by $G$-Brownian motions still remains to be studied.
 The main objective of this paper is to provide the well-posedness of the reflected $G$-BSDE (\ref{RGBSDE0}) when the generator has a quadratic growth and the terminal value $\xi$ is bounded. As noted in Li, Peng and Soumana Hima~\cite{Li2017_1} and Possama\"{i} and Zhou~\cite{PossZhou2013}, the dominated convergence theorem does not hold under the $G$-framework, and a bounded sequence in $M_G^p(0,T)$ does not necessarily have the weak compactness. These striking differences prevent us from adapting the method of Kobylanski et al.~\cite{Kobylanski2002} to approximate the quadratic generator with Lipshcitz ones and then to prove the solutions of the approximating reflected BSDEs to converge to that of the original reflected quadratic  BSDE. Instead in this paper, we  use a penalty method in the spirit of El Karoui et al.~\cite{El Karoui1997} (for a BSDE in a Wiener space) and Li, Peng and Soumana Hima~\cite{Li2017_1} (for a $G$-BSDE). Since our generator is allowed to grow quadratically in the second unknown variable,  the terminal value $\xi$ is assumed to be bounded for simplicity of exposition, and then the symmetric martingale part of the underlying BSDE is discussed in the BMO space.

As in Hu et al.~\cite{HuM2014_2} and Li, Peng and Soumana Hima~\cite{Li2017_1}, the solution of a forward backward differential equations driven by $G$-Brownian motion ($G$-FBSDEs in short) can be interpreted as a viscosity solution of a PDE. We first prove the existence
of the quadratic $G$-BSDEs in a Markovian setting. We then give the nonlinear Feynman-Kac formula for a fully nonlinear parabolic variational  in equality via the quadratic $G$-BSDEs and the reflected quadratic $G$-BSDEs.

 The paper is organized as follows. Section 2 is dedicated to preliminaries on the $G$-framework, the formulation of reflected $G$-BSDEs, $G$-BMO martingales and $G$-Girsanov Theorem. In Section 3, we introduce some priori estimates for quadratic reflected $G$-BSDEs through the $G$-Girsanov transformation,  which yields the uniqueness in a straightforward way.
 In Section 4, we establish the approximation method via penalization. We state some convergence properties of the solutions to the penalized $G$-BSDEs. In Section 5, we prove our main result and  a comparison theorem.
 Finally, in Section 6, we give a nonlinear Feynmann-Kac formula and address the relation between quadratic $G$-BSDEs and nonlinear parabolic PDEs.

\section{Preliminaries}

\subsection{ Notations and results on $G$-expectation and quadratic $G$-BSDEs}

In this section, we first recall  notations and basic results concerning $G$-expectation, $G$-Brownian motion  and related $G$-stochastic calculus, and quadratic $G$-BSDEs. More details can be found in \cite{HuM2014_1}, \cite{HuM2014_2}, \cite{LiX2011}, \cite{Peng2007}, \cite{Peng2008}, and \cite{Peng2010}.

Let $\Omega$ be a complete separable metric space, and let $\h$ be a linear space of real-valued functions defined on $\Omega$ satisfying $c\in\h$ for each constant $c$ and $|X|\in\h$ if $X\in\h.$
$\h$ is considered as the space of random variables.
\begin{Definition}
(Sublinear expectation space).
A sublinear expectation $\E[\cdot]$ is a functional $\E:\h\to\bb{R}$ satisfying the following properties: for all $X,Y\in\h,$ we have
\begin{enumerate}
\item
Monotonicity: if $X\geq Y$, then $\E[X]\geq\E[Y]$;
\item
Constant preservation: $\E[c]=c,~c\in\bb{R}$;
\item
Sub-additivity: $\E[X+Y]\leq \E[X]+\E[Y]$;
\item
Positive homogeneity: $\E[\lambda X]=\lambda \E[X]$, for all $\lambda\geq 0$.
\end{enumerate}
We call the triple $(\Omega,\h,\E)$ a sublinear expectation space.
\end{Definition}

\begin{Definition}
(Independence).  In a sublinear expectation space $(\Omega,\h,\E)$, a random vector $Y=(Y_1,Y_2,\cdots,Y_n),~Y_i\in\h$ is said to be independent of another random vector $X=(X_1,X_2,\cdots,X_m),~X_i\in\h$, if $\E[\phi(X,Y)]=\E[\E[\phi(x,Y)]|_{x=X}],$
for all $\phi\in\C_lip(\bb{R}^{m+n})$, where $\C_lip(\bb{R}^{n})$ is the space of real continuous functions defined on $\bb R^n$ such that
$$|\phi(x)-\phi(y)|\leq C(1+|x|^k+|y|^k)|x-y|,\quad \forall x,y\in\bb R^n,$$
where $k$ and $C$ depend only on $\phi.$
\end{Definition}

\begin{Definition}
($G$-normal distribution).
We say the random vector $X=(X_1,X_2,\cdots,X_d)$ is $G$-normally distributed, if for any function $\phi\in\C_lip(\bb{R}^d)$, the function $u$ defined by $u(t,x):=\E\big[\phi\big(x+\sqrt tX\big)\big],~(t,x)\in[0,+\infty)\times\bb{R}^d$, is a viscosity of $G$-heat equation:
$$\partial_tu-G\big(D_x^2u\big)=0;~u(0,x)=\phi(x).$$
Here $G$ denotes the function
$$G(A):={1\over 2}\E[\left\langle AX,X\right\rangle]: \bb{S}_d\to \bb R,$$
where $\bb{S}_d$ denotes the collection of $d\times d$ symmetric matrices.
\end{Definition}
The function $G(\cdot)$ is a monotonic, sublinear mapping on $\bb S_d$ ~and
$$G(A)={1\over 2}\E[\left\langle AX,X\right\rangle]\leq {1\over2}|A|\E[|X|^2]:={1\over2}|A|\siup^2$$
implies that there exists a bounded, convex and closed subset $~\Gamma\subseteq \bb S_d^+~$ such that
$$G(A)={1\over2}\sup_{\gamma\in\Gamma}\text{tr}[\gamma A],$$
where $~\bb S_d^+~$ denotes the collection of nonnegative elements in $~\bb S_d$.

In this paper, we only consider a non-degenerate $G$-normal distribution, i.e, there exists some $\sidown>0$ such that $G(A)-G(B)\geq \sidown^2\text{tr}[A-B]$ for any $A\geq B.$

We now fix $\Omega:=C_0([0,\infty);\bb R^d)$, the space of all $\bb R^d$-valued continuous functions $\{\omega_t,~t\in [0,+\infty)\}$ with $\omega_0=0.$ Let $\mathcal{F}=\{\mathcal F_t,~t\in [0,+\infty)\}$ be the nature filtration generated by the canonical process $\{B_t,~t\in [0,+\infty)\}$, i.e., $B_t(\omega)=\omega_t$ for $(t,\omega)\in [0,\infty)\times\Omega.$ Set $\Omega_T:=C_0([0,T];\bb R^d)$. Let us consider the function spaces defined by
$$Lip(\Omega_T):=\big\{\phi(B_{t_1},B_{t_2}-B_{t_1},\cdots,
B_{t_n}-B_{t_{n-1}}):0\leq t_1\leq t_2\leq\cdots\leq t_n\leq T, ~\phi\in\C_lip(\bb R^{d\times n})\big\}$$
for $T>0$, and
$Lip(\Omega)=\bigcup\limits_{n=1}^\infty Lip(\Omega_n)$.
\begin{Definition}
($G$-Brownian motion and $G$-expectation).
On the sublinear expectation space $(\Omega,Lip(\Omega),\E)$, the canonical process $\{B_t,~t\in [0,+\infty)\}$ is called $G$-Brownian motion if the following properties are satisfied:
\begin{enumerate}
\item
$B_0=0$;
\item
For each $t,s>0$, the increment $B_{t+s}-B_t$ is independent of $(B_{t_1},B_{t_2},\cdots,B_{t_n})$, for each $n\in\bb N$ and $0\leq t_1\leq t_2\leq\cdots\leq t_n\leq t;$
\item
$B_{t+s}-B_t$ is $G$-normally distributed.
\end{enumerate}
Moreover, the sublinear expectation $\E[\cdot]$ is called $G$-expectation.
\end{Definition}

\begin{Definition}
(Conditional $G$-expectation).
For the random variable $\xi\in Lip(\Omega_T)$ of the following form:
$$\phi(B_{t_1},B_{t_2}-B_{t_1},\cdots,B_{t_n}-B_{t_{n-1}}),\quad\phi\in\C_lip(\bb R^{d\times n}),$$
the conditional $G$-expectation $\E_{t_i}[\cdot]$, $i=1,2,\cdots,n$, is defined as follows:
$$\E_{t_i}=[\phi(B_{t_1},B_{t_2}-B_{t_1},\cdots,B_{t_n}-B_{t_{n-1}})]
=\tilde\phi(B_{t_1},B_{t_2}-B_{t_1},\cdots,B_{t_i}-B_{t_{i-1}}),$$
where
$$\tilde\phi(x_1,x_2,\cdots,x_i)=\E[\phi(x_1,x_2,\cdots,x_i,B_{t_{i+1}}-B_{t_i},\cdots,B_{t_n}-B_{t_{n-1}})].$$
If $t\in(t_i,t_{i+1})$, the conditional $G$-expectation $\E_t[\xi]$ could be defined by reformulating $\xi$ as
$$\xi=\hat\phi(B_{t_1},B_{t_2}-B_{t_1},\cdots,B_{t}-B_{t_i},B_{t_{i+1}}-B_{t},\cdots,B_{t_n}-B_{t_{n-1}})
,\quad\hat\phi\in\C_lip(\bb R^{d\times (n+1)}).$$
\end{Definition}

For $\xi\in Lip(\Omega_T)$ and $p\geq 1$, we consider the norm $\norm{\xi}_{L_G^p}=\big(\E[|\xi|^p]\big)^{1\over p}$. Denote by $L_G^p(\Omega_T)$ the Banach completion of $Lip(\Omega_T)$ under $\norm{\cdot}_{L_G^p}$. It is easy to check that the conditional $G$-expectation $\E_t[\cdot]: ~Lip(\Omega_T)\to Lip(\Omega_t)$ is a continuous mapping and thus can be extended to $\E_t:~L_G^p(\Omega_T)\to L_G^p(\Omega_t)$.

\begin{Definition}
($G$-martingale).
A process $\{M_t,~t\in [0,T]\}$ is called a $G$-martingale if
\begin{enumerate}[(i)]
\item
   $M_t\in L_G^1(\Omega_t)$, for any $t\in[0,T]$;
\item
$\E_s[M_t]=M_s$, for all $0\leq s\leq t\leq T$.
\end{enumerate}
The process $\{M_t,~t\in [0,T]\}$ is called a symmetric $G$-martingale if $-M$ is also a $G$-martingale.
\end{Definition}

The following representation result of $G$-expectation on $L_G^1(\Omega_T)$, can be found in Denis et  al.~\cite[Propositions 49 and 50, page 157-158]{Denis2011}  and  Hu and Peng~\cite[Theorem 3.5, page 544]{HuM2009}.

\begin{Theorem} There exists a weakly compact set $\mathcal P\subseteq \mathcal{M}_1(\Omega_T)$ (i.e., the set of all probability measures on $(\Omega_T,\mathcal{B}(\Omega_T))$), such that
$$\E[\xi]=\sup_{\bb P\in \mathcal{P}}E^{\bb P}[\xi],~~\forall \xi\in L_G^1(\Omega_T),$$
where $E^{\bb P}[\cdot]$ is the expectation operator with respect to  probability $\bb P$.
Such $\mathcal P$ is called a representative set of  $\E$.
\end{Theorem}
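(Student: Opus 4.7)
The plan is to construct $\mathcal{P}$ via a Hahn--Banach / Daniell--Stone argument and then establish weak compactness by tightness. First I would extend the sublinear expectation $\E$ from $Lip(\Omega_T)$ to $C_b(\Omega_T)$, the space of bounded continuous functions on the Polish space $\Omega_T$. This extension is possible because $Lip(\Omega_T)$ is dense in $C_b(\Omega_T)$ in a suitable sense (via Stone--Weierstrass applied to cylindrical functions of the increments of the canonical process), and because $\E$ is 1-Lipschitz in the sense $|\E[\xi]-\E[\eta]|\leq \E[|\xi-\eta|]$, so the extension preserves sublinearity, monotonicity and constant preservation.

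Next, for each fixed $\xi_0\in C_b(\Omega_T)$, the Hahn--Banach theorem applied to the sublinear majorant $\E$ furnishes a linear functional $L: C_b(\Omega_T)\to \bb{R}$ with $L\leq \E$ pointwise and $L(\xi_0)=\E[\xi_0]$. The key step, and the main obstacle, is to show that every such $L$ is represented by a countably additive probability measure $\bb{P}_L$ on $(\Omega_T,\mathcal{B}(\Omega_T))$. By the Daniell--Stone representation this amounts to verifying sequential continuity from above: if $\xi_n\in C_b(\Omega_T)$ with $\xi_n\downarrow 0$ pointwise and $|\xi_n|\leq M$, then $\E[\xi_n]\to 0$. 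Since the $G$-framework lacks the classical dominated convergence theorem, the standard route (as in Denis, Hu and Peng) is to first establish tightness of the ``natural'' class of measures, indexed by admissible controls through the stochastic control representation $G(A)=\tfrac12\sup_{\gamma\in\Gamma}\mathrm{tr}[\gamma A]$, and then to conclude via a Dini-type argument on compacts combined with the monotone class theorem.

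Collecting such $\bb{P}_L$ as $\xi_0$ ranges over $C_b(\Omega_T)$ produces a family $\mathcal{P}_0$ satisfying $\E[\xi]=\sup_{\bb{P}\in\mathcal{P}_0}E^{\bb{P}}[\xi]$ for every $\xi\in C_b(\Omega_T)$. Using the moment estimates $\E[|B_t-B_s|^p]\leq C_p|t-s|^{p/2}$ inherited from the $G$-normal distribution, Kolmogorov's tightness criterion applies \emph{uniformly} over $\mathcal{P}_0$, so Prohorov's theorem identifies the weak closure $\mathcal{P}:=\overline{\mathcal{P}_0}$ as a weakly compact subset of $\mathcal{M}_1(\Omega_T)$, on which the supremum is attained. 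Finally, the representation extends from $C_b(\Omega_T)$ to all of $L_G^1(\Omega_T)$ by density of $Lip(\Omega_T)$ together with the uniform contraction $|E^{\bb{P}}[\xi]-E^{\bb{P}}[\eta]|\leq \E[|\xi-\eta|]$ valid for every $\bb{P}\in\mathcal{P}$.
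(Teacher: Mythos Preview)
The paper does not prove this theorem; it merely quotes it from Denis--Hu--Peng~\cite{Denis2011} and Hu--Peng~\cite{HuM2009}. So there is no ``paper's proof'' to compare against beyond those references. Your sketch is broadly in the spirit of the Denis--Hu--Peng argument, but two points deserve comment.

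First, your route is somewhat circular as written. You propose Hahn--Banach to produce dominated linear functionals $L\le\E$, then Daniell--Stone to turn each $L$ into a countably additive measure, and you acknowledge that the Daniell--Stone continuity $\E[\xi_n]\to 0$ for $\xi_n\downarrow 0$ is the main obstacle. Your fix is to invoke the ``natural'' family of measures coming from the stochastic-control representation $G(A)=\tfrac12\sup_{\gamma\in\Gamma}\mathrm{tr}[\gamma A]$ and its tightness. But once you have that family explicitly---laws of $\int_0^\cdot\theta_s\,dW_s$ with $\theta$ valued in $\Gamma^{1/2}$---the Hahn--Banach/Daniell--Stone machinery becomes superfluous: the cited references take precisely this family as $\mathcal P_0$, verify the identity $\E[\xi]=\sup_{\bb P\in\mathcal P_0}E^{\bb P}[\xi]$ for cylindrical $\xi$ directly from the link between the $G$-heat equation and the HJB equation, prove tightness via Kolmogorov, and pass to the weak closure. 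Your sketch blends two logically independent approaches; it is not wrong, but one half does all the work.

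Second, the claimed extension of $\E$ from $Lip(\Omega_T)$ to $C_b(\Omega_T)$ ``by Stone--Weierstrass'' is not quite right: $Lip(\Omega_T)$ is not uniformly dense in $C_b(\Omega_T)$ because $\Omega_T$ is non-compact, and Stone--Weierstrass only gives uniform approximation on compacts. The references handle this by first establishing tightness (hence a capacity concentrated on $\sigma$-compacts) and then extending through the completion $L_G^1(\Omega_T)$ rather than through $C_b(\Omega_T)$.
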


Let $\mathcal P$ be a weakly compact set that represents $\E$. For this $\mathcal P$, we define capacity
$c(A):=\sup_{\bb P\in\mathcal P}\bb P(A),~A\in\mathcal B(\Omega_T)$.

\begin{Definition}
(Quasi-sure).
A set $A\in \mathcal B(\Omega_T)$ is a polar set if $c(A)=0$. A property holds ``quasi-surely" (q.s.) if it holds outside a polar set.
\end{Definition}
In what follows,  two random variables $X$ and $Y$ will not be distinguished  if $X = Y$, q.s.

Soner et al.~\cite[Proposition 3.4, page 272]{Soner2011} give  the following characterization of the conditional $G$-expectation.

\begin{Theorem}
\label{Thm_conditional_E}
For any $\xi\in L_G^1(\Omega_T)$, $t\in[0,T]$ and $\bb P\in \mathcal{P}$,
$$\E_t[\xi]=\operatorname*{ess\sup}_{\bb P^\prime\in \mathcal{P}(t,\bb P)}E^{\bb P^\prime}_t[\xi],\quad \bb P\text{\,-a.s.},$$
where
$$\mathcal{P}(t,\bb P):=\{\bb P^\prime\in \mathcal P: \bb P^\prime=\bb P~\text{on}~\mathcal F_t\}.$$
\end{Theorem}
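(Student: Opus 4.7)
The plan is to establish the identity first on the dense subspace $Lip(\Omega_T)$ and then extend by continuity to $L_G^1(\Omega_T)$. For a cylindrical $\xi=\phi(B_{t_1},B_{t_2}-B_{t_1},\ldots,B_{t_n}-B_{t_{n-1}})$ with $t\in[t_{i-1},t_i)$, the left-hand side $\E_t[\xi]$ is given explicitly by the iterated construction of Definition 2.6, so the task reduces to matching this explicit expression with the essential supremum on the right-hand side.

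The inequality ``$\geq$'' is the easier direction. Given $\bb{P}^\prime\in\mathcal{P}(t,\bb{P})$, one shows $E^{\bb{P}^\prime}_t[\xi]\leq\E_t[\xi]$, $\bb{P}$-a.s. For cylindrical $\xi$ this can be proved inductively on the partition times $t_n>t_{n-1}>\cdots>t_i$: after conditioning one step under $\bb{P}^\prime$, the inner conditional expectation is a function of $(B_{t_1},\ldots,B_{t_{n-1}}-B_{t_{n-2}})$ that is dominated by the corresponding G-expectation owing to the unconditional representation $\E[\eta]=\sup_{\bb{P}''\in\mathcal P}E^{\bb{P}''}[\eta]$. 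Iterating back to time $t$ and using that $E^{\bb{P}^\prime}_t$ agrees with $E^{\bb{P}}_t$ on $\mathcal F_t$-measurable integrands, the bound holds $\bb{P}$-a.s., whence the same bound holds for the essential supremum.

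The inequality ``$\leq$'' is the genuinely hard step. One needs, for each $\varepsilon>0$, to construct $\bb{P}^\varepsilon\in\mathcal{P}(t,\bb{P})$ with $E^{\bb{P}^\varepsilon}_t[\xi]\geq\E_t[\xi]-\varepsilon$, $\bb{P}$-a.s. The essential ingredient is the \emph{stability of $\mathcal{P}$ under $\mathcal F_t$-pasting}: starting from a near-optimal $\widetilde{\bb{P}}\in\mathcal{P}$ (obtained by a measurable selection from a regular conditional probability distribution of some $\bb{P}^\star$ attaining $\E_{t}[\xi]$ pointwise up to $\varepsilon$), one pastes $\bb{P}$ on $\mathcal F_t$ with these regular conditional distributions. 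The pasted measure stays in $\mathcal{P}$ because $\mathcal{P}$ is the family of laws induced by square-integrable controls on $\Bq$ taking values in $\Gamma$, and such controls can be freely concatenated at any deterministic time $t$. A measurable selection yields a single $\bb{P}^\varepsilon$ delivering the uniform bound, after which letting $\varepsilon\downarrow 0$ finishes the cylindrical case. This step is where the fine structural property of $\mathcal{P}$ (pasting) is used, beyond the mere weak compactness invoked in the unconditional representation.

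Finally, the extension from $Lip(\Omega_T)$ to $L_G^1(\Omega_T)$ uses $L^1$-type continuity of both sides: by definition $\E_t[\cdot]$ is an $L_G^1$-contraction, while for the right-hand side sublinearity together with $\E[\,|\xi-\xi_n|\,]=\sup_{\bb{P}''\in\mathcal P}E^{\bb{P}''}[\,|\xi-\xi_n|\,]\to 0$ (Theorem 2.7 applied to the approximating sequence $\xi_n\in Lip(\Omega_T)$ converging to $\xi$ in $L_G^1$) implies $E^{\bb{P}}\bigl[\,\bigl|\operatorname{ess\,sup}_{\bb{P}^\prime} E^{\bb{P}^\prime}_t[\xi]-\operatorname{ess\,sup}_{\bb{P}^\prime} E^{\bb{P}^\prime}_t[\xi_n]\bigr|\,\bigr]\to 0$, so the identity passes to the limit. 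The main obstacle, as noted, is the pasting construction underlying ``$\leq$''; the rest is a routine density and measurable-selection argument.
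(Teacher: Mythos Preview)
The paper does not prove Theorem~\ref{Thm_conditional_E} at all; it merely quotes the result, attributing it to Soner, Touzi and Zhang~\cite[Proposition~3.4]{Soner2011}. So there is no ``paper's own proof'' to compare against.

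Your sketch is broadly in line with how the cited reference proceeds: the two inequalities are separated, the upward direction uses the stability of $\mathcal{P}$ under pasting (concatenation of controls at a deterministic time) together with a measurable-selection/aggregation argument, and the downward direction is the easier monotonicity bound. That said, what you have written is a plan rather than a proof. The genuinely delicate points in \cite{Soner2011} are (i) that one cannot simply pick a single near-optimal $\widetilde{\bb P}$ but must aggregate a measurable family of conditional laws into one element of $\mathcal{P}$, which requires a specific construction of $\mathcal{P}$ via strong formulation (volatility controls) rather than just weak compactness, and (ii) that the essential supremum on the right is a priori only defined $\bb P$-a.s.\ and one must check it coincides with the quasi-sure object $\E_t[\xi]$. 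Your outline acknowledges these issues but does not resolve them; in particular, the phrase ``a measurable selection yields a single $\bb P^\varepsilon$'' hides the main technical work. If you intend to supply a self-contained proof rather than a citation, those two points need to be carried out explicitly.
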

In view of Theorem \ref{Thm_conditional_E}, it is easy to check the following property for $G$-martingales.
\begin{Proposition}
Assume that $\{M_s,~s\in[0,T]\}$ is a $G$-Martingale and $\{\eta_s,~s\in[0,T]\}$ is a process satisfying $\eta_s\in L_G^1(\Omega_s)$, for any $s\in[0,T]$. Then we have for any $t\in[0,T]$,
$$\E_t[\eta_t+M_T-M_t]=\eta_t.$$
\end{Proposition}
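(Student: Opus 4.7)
The plan is to combine Theorem~\ref{Thm_conditional_E}, which represents $\E_t[\cdot]$ as an essential supremum of classical conditional expectations $E^{\bb P'}_t[\cdot]$ over $\bb P' \in \mathcal P(t,\bb P)$, with the observation that elements of $L_G^1(\Omega_t)$ are $\mathcal F_t$-measurable modulo a $\bb P$-null set for each $\bb P \in \mathcal P$. Consequently, $\eta_t - M_t$ acts as an $\mathcal F_t$-measurable ``constant'' that can be factored out of the essential supremum, so the identity reduces to the $G$-martingale property $\E_t[M_T]=M_t$.

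First, I would fix $\bb P \in \mathcal P$ and apply Theorem~\ref{Thm_conditional_E} to write
\begin{equation*}
\E_t[\eta_t + M_T - M_t] = \operatorname*{ess\,sup}_{\bb P' \in \mathcal P(t,\bb P)} E^{\bb P'}_t[\eta_t + M_T - M_t], \qquad \bb P\text{-a.s.}
\end{equation*}
For each $\bb P' \in \mathcal P(t,\bb P)$, the identity $\bb P' = \bb P$ on $\mathcal F_t$ together with the $\mathcal F_t$-measurability of $\eta_t - M_t$ gives $E^{\bb P'}_t[\eta_t - M_t] = \eta_t - M_t$, $\bb P$-a.s., so by linearity of the classical conditional expectation,
\begin{equation*}
E^{\bb P'}_t[\eta_t + M_T - M_t] = (\eta_t - M_t) + E^{\bb P'}_t[M_T].
\end{equation*}
Next, I would factor the $\mathcal F_t$-measurable term $\eta_t - M_t$ out of the essential supremum and apply Theorem~\ref{Thm_conditional_E} a second time to $M_T$, obtaining
\begin{equation*}
\E_t[\eta_t + M_T - M_t] = (\eta_t - M_t) + \E_t[M_T] = \eta_t - M_t + M_t = \eta_t, \qquad \bb P\text{-a.s.},
\end{equation*}
where the last equality is the $G$-martingale property of $M$. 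Since $\bb P \in \mathcal P$ was arbitrary, the identity holds quasi-surely.

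I do not anticipate a serious obstacle. The only delicate technical point is the $\mathcal F_t$-measurability of elements of $L_G^1(\Omega_t)$ under each $\bb P \in \mathcal P$, which follows from the construction of $L_G^1(\Omega_t)$ as the $\E$-closure of $Lip(\Omega_t)$ (whose elements are manifestly $\mathcal F_t$-measurable cylindrical functionals) together with the domination $E^{\bb P}[|\cdot|] \leq \E[|\cdot|]$ valid for every $\bb P \in \mathcal P$; once this is in hand, the proof is essentially an application of linearity inside the essential supremum.
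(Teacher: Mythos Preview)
Your proposal is correct and follows exactly the line the paper indicates: the paper does not write out a proof but simply states that the proposition ``is easy to check'' in view of Theorem~\ref{Thm_conditional_E}, and your argument is precisely the natural unpacking of that hint---represent $\E_t$ as an essential supremum of classical conditional expectations, pull out the $\mathcal F_t$-measurable part $\eta_t-M_t$, and reduce to the $G$-martingale identity $\E_t[M_T]=M_t$. Your remark on why elements of $L_G^1(\Omega_t)$ are $\mathcal F_t$-measurable under each $\bb P\in\mathcal P$ is the only nontrivial detail, and you handle it correctly.
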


For the terminal value of quadratic $G$-BSDE, we define the space $L_G^\infty(\Omega_T)$ as the completion of $Lip(\Omega_T)$ under the norm
$$\norm{\xi}_{L_G^\infty}:=\inf\{M\geq 0:|\xi|\leq M,~\text{q.s.}\}.$$

For $\xi\in Lip(\Omega_T)$ and $p\geq 1$, define $\norm{\xi}_{p,\mathcal E}=\E[\sup_{t\in[0,T]}\E_t[|\xi|^p]]^{1\over p}$ and denote by $L^p_{\mathcal E}(\Omega_T)$ the completion of $Lip(\Omega_T)$ under $\norm{\cdot}_{p,\mathcal E}$.
Song~\cite[Theorem 3.4, page 293]{Song2011}) gives the following estimate.

\begin{Theorem}
\label{Thm_doob_type_ineq}
For any $\alpha\geq1$ and $\delta>0$, $L^{\alpha+\delta}_{G}(\Omega_T)\subseteq L^\alpha_{\mathcal E}(\Omega_T)$.
More precisely, for any $1<\gamma<\beta:={(\alpha+\delta)/\alpha}$, $\gamma\leq 2$, we have
$$\norm{\xi}_{\alpha,\mathcal E}^\alpha\leq \gamma^*\big\{\norm{\xi}^\alpha_{L_G^{\alpha+\delta}}+
14^{1/ \gamma}C_{\beta/ \gamma}\norm{\xi}^{(\alpha+\beta)/ \gamma}_{L_G^{\alpha+\delta}}\big\},\quad \forall\xi\in Lip(\Omega_T),$$
where $C_{\beta/ \gamma}=\sum_{i=1}^\infty i^{-\beta/\gamma}$, $\gamma^*=\gamma/(\gamma-1)$.
\end{Theorem}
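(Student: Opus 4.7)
The inequality is a Doob-type $L^\alpha$ maximal estimate with an $\varepsilon$-gap in the exponent, formulated for the sublinear conditional expectation $\E_t$. My plan is to argue in four moves: discretize the supremum in $t$, raise the moment from $\alpha$ to $\alpha+\delta$ via conditional Jensen, establish a weak-type capacity bound for the running maximum of the resulting $G$-martingale, and finally convert that into the strong-type estimate by layer-cake integration with a two-term split. First, fix $\xi\in Lip(\Omega_T)$; the map $t\mapsto\E_t[|\xi|^\alpha]$ is quasi-continuous, so the supremum over $[0,T]$ coincides with the supremum over any countable dense subset $\{t_i\}_{i\geq 1}$. Conditional Jensen applied to the convex function $x\mapsto x^\beta$ with $\beta=(\alpha+\delta)/\alpha>1$ gives $\E_t[|\xi|^\alpha]^\beta\leq \E_t[|\xi|^{\alpha+\delta}]=:N_t$, so it suffices to bound $\E\big[\sup_i N_{t_i}^{1/\beta}\big]$, where $N$ is a nonnegative $G$-martingale with $N_T=|\xi|^{\alpha+\delta}$ and $N_0=\E[|\xi|^{\alpha+\delta}]=\norm{\xi}_{L_G^{\alpha+\delta}}^{\alpha+\delta}$.

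\textbf{Weak-type capacity bound.} The technical heart is a Markov-type inequality for $c\big(\sup_i N_{t_i}>\lambda\big)$. Classical Doob proceeds via optional stopping of the nonnegative martingale, which is unavailable in a purely sublinear framework. The plan is to exploit the representation $\E[\cdot]=\sup_{\bb P\in\mathcal P}E^{\bb P}[\cdot]$ of the preceding theorem: under each $\bb P\in\mathcal P$, the $G$-martingale $N$ is a classical $\bb P$-supermartingale (since $\E_s[N_t]=N_s$ dominates $E^{\bb P}_s[N_t]$), so the classical weak-type Doob inequality $\bb P(\sup_i N_{t_i}>\lambda)\leq\lambda^{-1}E^{\bb P}[N_0]$ holds $\bb P$-a.s.\ under each $\bb P\in\mathcal P$; taking the supremum in $\bb P$ transports this to the capacity estimate $c(\sup_i N_{t_i}>\lambda)\leq\lambda^{-1}N_0$. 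The quantitative constant $14^{1/\gamma}$ emerges from a dyadic refinement across scales $\lambda_k=\lambda\,2^k$, in which one localizes at each scale to a single representing measure so as to avoid a destructive union bound over $\mathcal P$ and over levels simultaneously.

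\textbf{Layer-cake with two-term split.} With the weak-type estimate in hand, I apply the layer-cake representation
$$\E\Big[\sup_i N_{t_i}^{1/\beta}\Big]=\tfrac{1}{\beta}\int_0^\infty\lambda^{1/\beta-1}\,c\Big(\sup_i N_{t_i}>\lambda\Big)\,d\lambda$$
and split the integral at a threshold $\lambda_0$ to be chosen. On $[0,\lambda_0]$ I simply use $c\leq 1$, producing, after optimization in $\lambda_0$, a term proportional to $\norm{\xi}_{L_G^{\alpha+\delta}}^{\alpha}$. On $[\lambda_0,\infty)$ the weak-type estimate combined with dyadic reindexing $\lambda_k=\lambda_0 2^k$ yields a series of the form $\sum_{i\geq 1}i^{-\beta/\gamma}=C_{\beta/\gamma}$, which converges precisely because $\gamma<\beta$; the surviving power of $\norm{\xi}_{L_G^{\alpha+\delta}}$ is $(\alpha+\beta)/\gamma$. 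The conjugate-exponent factor $\gamma^*=\gamma/(\gamma-1)$ appears when balancing the two pieces through a Young-type inequality (the restriction $\gamma\leq 2$ ensures the balance is admissible). Finally, density of $Lip(\Omega_T)$ in $L_G^{\alpha+\delta}(\Omega_T)$ and the continuity of both sides in the $L_G^{\alpha+\delta}$-norm extend the estimate to all of $L_G^{\alpha+\delta}(\Omega_T)$, proving in particular the inclusion $L_G^{\alpha+\delta}(\Omega_T)\subseteq L_{\mathcal E}^{\alpha}(\Omega_T)$.

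\textbf{Main obstacle.} The decisive difficulty is the weak-type capacity step: the capacity $c$ is only sub-additive, not countably sub-additive in any usable sense, so a naive union bound over the dyadic scales of $\lambda$ would destroy all constants. The chained argument through the representing measures, which produces the universal factor $14^{1/\gamma}$ and forces the technical restriction $1<\gamma\leq 2$, is where essentially all of the loss relative to the classical Doob inequality is concentrated; once this lemma is in place, the remaining steps are (careful but) standard.
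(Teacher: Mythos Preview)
The paper does not prove this statement: it is quoted from Song~\cite{Song2011} without argument, so there is no in-paper proof to compare against.

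Your first three moves are sound and in fact prove more than you claim. Once you observe (via Theorem~\ref{Thm_conditional_E}) that $N_t:=\E_t[|\xi|^{\alpha+\delta}]$ is a nonnegative $\bb P$-supermartingale for every $\bb P\in\mathcal P$, classical Doob gives $\bb P(\sup_i N_{t_i}\geq\lambda)\leq\lambda^{-1}N_0$ uniformly in $\bb P$, and hence the same bound for the capacity $c$. Feeding this single weak-type estimate into the layer-cake inequality (carried out under each $\bb P$ separately before taking the supremum over $\mathcal P$) and splitting the integral at $\lambda=N_0$ yields directly
\[
\norm{\xi}_{\alpha,\mathcal E}^\alpha\ \leq\ \E\big[(\sup_i N_{t_i})^{1/\beta}\big]\ \leq\ \tfrac{\beta}{\beta-1}\,\norm{\xi}_{L_G^{\alpha+\delta}}^{\alpha},
\]
a homogeneous one-term bound strictly sharper than the displayed two-term inequality (note $\beta/(\beta-1)<\gamma^*$ for every admissible $\gamma<\beta$, so the stated estimate follows a fortiori).

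The gap is in your ``main obstacle'' paragraph. The weak-type capacity bound you need is a \emph{single} inequality obtained level by level; at no point does one have to union over dyadic scales $\lambda_k$ and over $\mathcal P$ simultaneously, so the sub-additivity concern is a phantom. The constants $14^{1/\gamma}$, $C_{\beta/\gamma}=\sum_i i^{-\beta/\gamma}$ and the auxiliary parameter $\gamma$ are artifacts of Song's original proof technique---developed essentially contemporaneously with the representation in Theorem~\ref{Thm_conditional_E} and proceeding along different lines---not of your argument. Your attempt to reverse-engineer them from a dyadic layer-cake refinement cannot succeed, because they simply do not arise in that computation. Either commit to the supermartingale route and record that it implies the stated inequality with room to spare, or consult Song's paper for the actual provenance of those specific constants.
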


\begin{Remark}
\label{remark_doob_type_ineq}
In view of  \cite[Remark 2.9]{HuM2014_1}, there exists $C_1$ depending only on $\alpha$ and $\delta$ such that
$$\E\Big[\sup_{t\in[0,T]}\E_t[|\xi|^\alpha]\Big]\leq C_1\big\{ \E[|\xi|^{\alpha+\delta}]^{\alpha\over \alpha+\delta}+\E[|\xi|^{\alpha+\delta}]\big\}.$$
\end{Remark}

Let $B$ be a $d$-dimensional $G$-Brownian motion. For each fixed $a\in \bb R^d$, $B_t^a=\left\langle a,B_t \right\rangle$ is a 1-dimensional $G_a$-Brownian motion, where $G_a(\alpha)=G(aa^T)\alpha^++G(-aa^T)\alpha^-.$
The quadratic variation process of $B^a$ is defined by
$$\left\langle B^a \right\rangle_t=\lim_{\mu(\pi_t^N)\to 0}\sum_{j=0}^{N-1}(B_{t_{j+1}^N}^a-B_{t_{j}^N}^a)^2,$$
where $\pi_t^N$, $N=1,2,\cdots$, are refining partitions of $[0,t]$. By Peng \cite{Peng2010}, for all $t,s>0$,
$\left\langle B^a \right\rangle_{t+s}-\left\langle B^a \right\rangle_t\in[-2G(-aa^T)s,2G(aa^T)s]$, q.s. \\
For each fixed $a,\bar a\in\bb R^d$, the mutual variation process of $B^a$ and $B^{\bar a}$ is defined by
$$\left\langle B^a,B^{\bar a} \right\rangle_t={1\over 4}\big[\left\langle B^{a+\bar a} \right\rangle_t-\left\langle B^{a-\bar a} \right\rangle_t\big].$$

Next we discuss the stochastic integrals with respect to the $G$-Brownian motion and its quadratic variation.
\begin{Definition}
Let $M_G^0(0,T)$ be the collection of processes $\eta$ of the following form: for a given partition $\{t_1,r_2,\cdots,t_n\}=\pi_T$ of $[0,T]$,
$$\eta_t(\omega)=\sum_{j=0}^{N-1}\xi_j(\omega)\textbf{1}_{[t_j,t_{j+1})}(t),$$
where $\xi_i\in Lip(\Omega_{t_i})$ for $i=0,1,2,\cdots,N-1$. For $p\geq 1$ and $\eta\in M_G^0(0,T)$, define
$$\norm{\eta}_{H_G^p}:=\E\left[\left(\int_0^T|\eta_s|^2ds\right)^{p\over 2}\right]^{1\over p} \quad \hbox{\rm and }\quad \norm{\eta}_{M_G^p}:=\E\left[\left(\int_0^T|\eta_s|^pds\right)\right]^{1\over p}.$$
Denote by $H_G^p(0,T)$ and $M_G^p(0,T)$ the completion of $M_G^0(0,T)$ under norms $\norm{\cdot}_{H_G^p}$ and $\norm{\cdot}_{M_G^p}$, respectively.
\end{Definition}

For both processes $\eta\in M_G^2(0,T)$ and $\xi\in M_G^1(0,T)$, the $G$-It\^o integrals $\{\int_0^t\eta_sdB_s^i,~0\leq t\leq T\}$ and $\{\int_0^t\xi_sd\langle B^i, B^j\rangle_s,~0\leq t\leq T\}$ are well defined in \cite{LiX2011} and \cite{Peng2010}. Moreover, the following BDG inequality can be found in \cite[Proposition 4.3, Page 295]{Song2011}.
\begin{Proposition}
For $\eta\in H_G^\alpha(0,T)$ with $\alpha\geq 1$ and $p\in(0,\alpha]$, we have
$$\sidown^pc_p\E_t\Big[\Big(\int_t^T|\eta_s|^2ds\Big)^{p\over 2}\Big]\leq \E_t\Big[\sup_{u\in[t,T]}\Big|\int_t^u\eta_sdB_s\Big|^{p}\Big]\leq \siup^pC_p\E_t\Big[\Big(\int_t^T|\eta_s|^2ds\Big)^{p\over 2}\Big],~~\text{q.s.} $$
\end{Proposition}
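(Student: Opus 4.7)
The plan is to reduce the $G$-BDG estimate to the classical conditional BDG inequality under each $\mathbb{P}\in\mathcal{P}$, and then recombine using the representation $\E_t[\cdot]=\operatorname{ess\,sup}_{\mathbb{P}'\in\mathcal{P}(t,\mathbb{P})}E^{\mathbb{P}'}_t[\cdot]$ supplied by Theorem \ref{Thm_conditional_E}. First I would prove the inequality for simple integrands $\eta\in M_G^0(0,T)$, and then extend by density to $H_G^\alpha(0,T)$, using that $M_G^0(0,T)$ is dense in $H_G^\alpha(0,T)$ by construction.

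For the simple-integrand step, fix $\mathbb{P}\in\mathcal{P}$ and $\mathbb{P}'\in\mathcal{P}(t,\mathbb{P})$. Under $\mathbb{P}'$, the canonical process $B$ is a continuous square-integrable martingale, and the construction of the representative set $\mathcal{P}$ guarantees that $\sidown^2\,ds\le d\Bq_s\le\siup^2\,ds$ holds $\mathbb{P}'$-a.s.\ on $[0,T]$. For $\eta\in M_G^0(0,T)$, the $G$-It\^o integral agrees $\mathbb{P}'$-a.s.\ with the classical It\^o integral against the $\mathbb{P}'$-martingale $B$, so the classical conditional BDG inequality furnishes universal constants $c_p,C_p$, depending only on $p$, such that
$$c_p E^{\mathbb{P}'}_t\Bigl[\Bigl(\int_t^T\eta_s^2\,d\Bq_s\Bigr)^{p/2}\Bigr]\le E^{\mathbb{P}'}_t\Bigl[\sup_{u\in[t,T]}\Bigl|\int_t^u\eta_s\,dB_s\Bigr|^p\Bigr]\le C_p E^{\mathbb{P}'}_t\Bigl[\Bigl(\int_t^T\eta_s^2\,d\Bq_s\Bigr)^{p/2}\Bigr].$$
Sandwiching $d\Bq_s$ between $\sidown^2\,ds$ and $\siup^2\,ds$ gives the analogous bound with $\sidown^pc_p$ and $\siup^pC_p$ in place of $c_p$ and $C_p$, and $ds$ in place of $d\Bq_s$. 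Taking $\operatorname{ess\,sup}_{\mathbb{P}'\in\mathcal{P}(t,\mathbb{P})}$ of each of the three terms and invoking Theorem \ref{Thm_conditional_E} replaces every $E^{\mathbb{P}'}_t$ by $\E_t$, while monotonicity of the essential supremum preserves the two inequalities. Since the resulting estimate holds $\mathbb{P}$-a.s.\ for every $\mathbb{P}\in\mathcal{P}$, it holds quasi-surely.

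To pass from simple to general integrands, choose $\eta^n\in M_G^0(0,T)$ with $\eta^n\to\eta$ in $H_G^\alpha$. Applying the upper $G$-BDG bound just proved to $\eta^n-\eta^m$ shows that $\int_0^{\cdot}\eta^n_s\,dB_s$ is Cauchy and converges to $\int_0^{\cdot}\eta_s\,dB_s$ in the appropriate $G$-norm, while the Lebesgue terms converge in $M_G^{p/2}(0,T)$ by a Cauchy-Schwarz argument. Applying the two-sided estimate to each $\eta^n$ and sending $n\to\infty$ with the help of Theorem \ref{Thm_doob_type_ineq} to control the residuals in conditional norms yields the inequality for general $\eta\in H_G^\alpha(0,T)$. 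The main technical obstacle is precisely this density step, since $G$-expectation lacks a dominated convergence theorem; what saves the argument is that one already has a one-sided BDG bound at the simple-process level and a Doob-type estimate (Theorem \ref{Thm_doob_type_ineq}) that translates $L_G^p$ convergence into conditional control, so no dominated convergence is actually required.
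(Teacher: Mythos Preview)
The paper does not actually prove this proposition; it simply cites it from Song~\cite[Proposition 4.3]{Song2011}. Your argument --- reduce to the classical conditional BDG inequality under each $\mathbb{P}'\in\mathcal{P}(t,\mathbb{P})$, sandwich $d\Bq_s$ between $\sidown^2\,ds$ and $\siup^2\,ds$, then aggregate via $\E_t=\operatorname{ess\,sup}_{\mathbb{P}'}E^{\mathbb{P}'}_t$ --- is a correct and natural way to lift such inequalities to the $G$-framework, and it is essentially how these statements are typically proved. The observation that $\operatorname{ess\,sup}$ preserves pointwise inequalities handles both bounds simultaneously.

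Two small remarks. First, to invoke Theorem~\ref{Thm_conditional_E} you need the random variables $\sup_{u\in[t,T]}\bigl|\int_t^u\eta_s\,dB_s\bigr|^p$ and $\bigl(\int_t^T\eta_s^2\,ds\bigr)^{p/2}$ to lie in $L_G^1(\Omega_T)$; this holds for $\eta\in H_G^\alpha$ with $p\le\alpha$ by construction of the $G$-It\^o integral, but it is worth stating. Second, your density step is more elaborate than necessary: once the inequality is established for simple $\eta$, the upper bound itself furnishes continuity of $\eta\mapsto\int_0^\cdot\eta\,dB$ from $H_G^\alpha$ into the relevant $S_G^p$ space, and both extreme terms are manifestly continuous in the $H_G^\alpha$ norm. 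The two-sided inequality then passes to the limit directly, without needing Theorem~\ref{Thm_doob_type_ineq}; alternatively, one can simply note that under each fixed $\mathbb{P}'$ the $G$-It\^o integral of a general $\eta\in H_G^\alpha$ coincides $\mathbb{P}'$-a.s.\ with the classical It\^o integral, so the per-$\mathbb{P}'$ BDG estimate applies immediately without any approximation.
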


Denote by $\mathcal{C}_{b,lip}(\bb R^{1+d\times n})\big\}$  the collection of all bounded and Lipschitz functions on $\bb R^{1+d\times n}$. Define
\begin{eqnarray*}
S_G^0(0,T)&:=&\Big\{h(t,B_{t_1\land t},B_{t_2\land t}-B_{t_1\land t},\cdots,
B_{t_n\land t}-B_{t_{n-1}\land t}): \\
&&\quad\quad  ~h\in\mathcal{C}_{b,lip}(\bb R^{1+d\times n}) \hbox{ \rm and } t_1, t_2\cdots ,t_n\in [0,T]\Big\}.
\end{eqnarray*}
For $p\geq 1$ and $\eta\in S_G^0(0,T)$, set $$\norm{\eta}_{S_G^p}:=\E\left[\sup_{t\in[0,T]}|\eta_t|^p\right]^{1\over p}.$$
 Denote by $S_G^p(0,T)$ the completion of $S_G^0(0,T)$ under the norm $\norm{\cdot}_{S_G^p}$.
 The following continuity of  $Y\in S_G^p(0,T)$ for $p>1$ can be found in Li, Peng, and Song~\cite[Lemma 3.7, page 12]{Li2017_3}.

\begin{Lemma}
\label{lemma_Y_continue}
For $Y\in S_G^p(0,T)$ with $p>1$, we have, by setting $Y_s=Y_T$ for $s>T$,
$$F(Y):=\limsup_{\varepsilon\to 0} \E\big[\sup_{t\in[0,T]}\sup_{s\in[t,t+\varepsilon]}|Y_t-Y_s|^p\big]^{1\over p}=0.$$
\end{Lemma}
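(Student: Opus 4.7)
The plan is to prove the claim first for the ``cylindrical'' generators of $S_G^0(0,T)$ using the Lipschitz structure together with a modulus of continuity for $B$, and then extend to arbitrary $Y\in S_G^p(0,T)$ by density. Concretely, take $Y_t=h(t,B_{t_1\wedge t},B_{t_2\wedge t}-B_{t_1\wedge t},\ldots,B_{t_n\wedge t}-B_{t_{n-1}\wedge t})\in S_G^0(0,T)$ with $h$ bounded and Lipschitz with constant $L$. A three-case check on whether $t_i\le t$, $t\le t_i\le s$, or $t_i\ge s$ shows that for $0\le t\le s\le T$,
$$|B_{t_i\wedge s}-B_{t_i\wedge t}|\le \omega_B(s-t),\qquad \omega_B(\delta):=\sup_{\substack{u,v\in[0,T]\\ |u-v|\le\delta}}|B_u-B_v|,$$
so $|Y_t-Y_s|\le L(|t-s|+n\,\omega_B(|t-s|))$ and consequently
$$\E\Big[\sup_{t\in[0,T]}\sup_{s\in[t,t+\varepsilon]}|Y_t-Y_s|^p\Big]^{1/p}\le L\varepsilon+Ln\,\E[\omega_B(\varepsilon)^p]^{1/p}.$$
Using the $G$-moment bound $\E[|B_v-B_u|^q]\le c_q|v-u|^{q/2}$ valid for every $q\ge 1$ (Peng~\cite{Peng2010}), a dyadic chaining argument (the sublinear-expectation analogue of Kolmogorov's continuity criterion) gives $\E[\omega_B(\varepsilon)^p]\le C_{p,T}\,\varepsilon^{\alpha p}$ for some $\alpha\in(0,1/2)$, from which the conclusion for $Y\in S_G^0(0,T)$ follows.

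For the density step, let $Y\in S_G^p(0,T)$ and pick $Y^m\in S_G^0(0,T)$ with $\|Y-Y^m\|_{S_G^p}\to 0$. Starting from
$$\sup_t\sup_{s\in[t,t+\varepsilon]}|Y_t-Y_s|\le 2\sup_{u\in[0,T]}|Y_u-Y_u^m|+\sup_t\sup_{s\in[t,t+\varepsilon]}|Y_t^m-Y_s^m|,$$
Minkowski's inequality under the sublinear expectation $\E$ yields
$$\E\Big[\sup_t\sup_{s}|Y_t-Y_s|^p\Big]^{1/p}\le 2\|Y-Y^m\|_{S_G^p}+\E\Big[\sup_t\sup_{s}|Y_t^m-Y_s^m|^p\Big]^{1/p}.$$
Applying $\limsup_{\varepsilon\to 0}$ and using Step~1 for each fixed $Y^m$ gives $F(Y)\le 2\|Y-Y^m\|_{S_G^p}$, and letting $m\to\infty$ concludes $F(Y)=0$. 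The convention $Y_s=Y_T$ for $s>T$ causes no trouble since the contribution of any $s>T$ is $|Y_t-Y_T|$, already captured by $s=T$.

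The genuine difficulty is the input $\E[\omega_B(\varepsilon)^p]\to 0$. In the classical setting this is immediate from the almost-sure continuity of $B$ together with dominated convergence, but both of these tools are known to fail in the $G$-framework (the standard reference counterexamples are recalled in the introduction), so one really needs a quantitative $L^p$-modulus estimate rather than a qualitative one. Once the sublinear Kolmogorov estimate is established, the remaining steps are a routine Lipschitz-plus-density argument that parallels the classical proof.
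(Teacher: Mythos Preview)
Your argument is correct. The two-step scheme---Lipschitz control on $S_G^0(0,T)$ via the modulus $\omega_B$, followed by a density argument using Minkowski's inequality under $\E$---is sound, and the only nontrivial input, the bound $\E[\omega_B(\varepsilon)^p]\le C_{p,T}\varepsilon^{\alpha p}$, does follow from a chaining argument (Minkowski for sublinear expectations lets the dyadic sum go through exactly as in the classical case; alternatively one applies classical Kolmogorov under each $\bb P\in\mathcal P$ with constants depending only on $\siup$, then takes the supremum).

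Note, however, that the paper does not prove this lemma at all: it merely quotes it from Li, Peng and Song~\cite[Lemma~3.7]{Li2017_3}. So there is no ``paper's own proof'' to compare against here. Your proof is essentially the natural one and is almost certainly the argument (or very close to it) that appears in the cited source: the definition of $S_G^p(0,T)$ as the completion of $S_G^0(0,T)$ forces any proof through a density step, and the structure of $S_G^0(0,T)$ forces the first step through a Lipschitz bound in terms of $\omega_B$. One cosmetic point: your increment bound reads $|Y_t-Y_s|\le L(|t-s|+n\,\omega_B(|t-s|))$, but since the arguments of $h$ are successive \emph{differences} $B_{t_i\wedge\cdot}-B_{t_{i-1}\wedge\cdot}$, the honest constant is $2n$ rather than $n$; this of course does not affect the conclusion.
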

Similar to $S_G^p(0,T)$, we can define the space $S_G^\infty(0,T)$ as the completion of $S_G^0(0,T)$ under the norm $\norm{\eta}_{S_G^\infty}:=\norm{\sup_{t\in[0,T]}|\eta_t|}_{L_G^\infty}$.

We now introduce some results on quadratic $G$-BSDEs in \cite{Hu2018}. For simplicity, we assume $d=1$ and consider the following type of equation:
\begin{equation}
 Y_t=\xi+\int_t^Tg(s,\omega_{\cdot \land s},Y_s,Z_s)ds+\int_t^Tf(s,\omega_{\cdot\land s},Y_s,Z_s)d\Bq_s-\int_t^TZ_sdB_s-\int_t^TdK_s,~~\text{q.s.},
 \label{QGBSDE}
 \end{equation}
 where the generator $(f,g):[0,T]\times\Omega_T\times\bb{R}^2\to\bb{R}^2$ and the terminal value $\xi$ are supposed to satisfy the following conditions:
 \begin{itemize}
\item[\textbf{(H1)}]$ \int_0^T|f(t,\omega,0,0)|^2dt+\int_0^T|g(t,\omega,0,0)|^2dt +|\xi(\omega)|\leq M_0$, q.s.;
\item[\textbf{(H2)}] The generator $(f,g)$ is uniformly continuous in $(t,\omega)$, i.e. there is a non-decreasing continuous function $w: [0, +\infty)\to [0, +\infty)$ such that $w(0)=0$ and
    $$\sup_{y,z\in \bb{R}}|f(t,\omega,y,z)-f(t^\prime,\omega^\prime,y,z)|\leq w(|t-t^\prime|+\norm{\omega-\omega^\prime}_\infty),$$
        $$\sup_{y,z\in \bb{R}}|g(t,\omega,y,z)-g(t^\prime,\omega^\prime,y,z)|\leq w(|t-t^\prime|+\norm{\omega-\omega^\prime}_\infty);$$
\item[\textbf{(H3)}] There are two positive constants $L_y$ and $L_z$ such that for each $(t,\omega)\in [0,T]\times\Omega,$
    $$|f(t,\omega,y,z)-f(t,\omega,y^\prime,z^\prime)|+|g(t,\omega,y,z)-g(t,\omega,y^\prime,z^\prime)|\leq L_y|y-y^\prime|+L_z(1+|z|+|z^\prime|)|z-z^\prime|;$$
\end{itemize}
\begin{Remark}
In \cite{Hu2018}, the triple $(f, g, \xi)$ is supposed to satisfy the following condition:
 \begin{itemize}
\item[\textbf{(H1')}] For each $t\in [0,T],\quad |f(t,\omega,0,0)|+|g(t,\omega,0,0)|+|\xi(\omega)|\leq M_0$, q.s.
\end{itemize}
The results there still hold if \textbf{(H1')} is replaced with \textbf{(H1)}, by a similar analysis as in \cite{HuM2014_1} and \cite{Hu2018}.
\end{Remark}
\begin{Remark}
\label{remark_f_bound}
Assumption \textbf{(H3)} implies the following
$$|h(t,\omega,y,z)|\leq |h(t,\omega,0,0)|+L_y|y|+L_z(|z|+|z|^2)\leq |h(t,\omega,0,0)|+{1\over2}L_z+L_y|y|+{3\over 2}L_z|z|^2$$
with $h=f, g.$ So $(f, g)$ are linear in $y$ and quadratic in $z$.
\end{Remark}

For simplicity, we denote by $ \mathfrak{G}_G^p(0,T)$ the collection of process $(Y,Z,K)$ such that $(Y,Z)\in S_G^p(0,T)\times H_G^p(0,T)$ and $K$ is a decreasing $G$-martingale with $K_0=0$ and $K_T\in L_G^p(\Omega_T)$.
Hu et al.~\cite[Theorem 5.3, page 22; Eq (3.2) and (3.3), page 13]{Hu2018} give the following theorem.

\begin{Theorem}
\label{Thm_QBSDE}
Assume that $\xi\in L_G^\infty(\Omega_T)$ and the triple $(f, g, \xi)$ satisfies \textbf{(H1)}-\textbf{(H3)}. Then equation (\ref{QGBSDE}) has a unique solution $(Y,Z,K)\in \mathfrak{G}_G^2(0,T)$ such that
$$\norm{Y}_{S_G^\infty}+\norm{Z}_{BMO_G}\leq C(M_0,L_y,L_z),$$ and
$$\E[|K_T|^p]\leq  C(p,M_0,L_y,L_z),\quad \forall p\geq 1,$$
where the norm $\norm{\cdot}_{BMO_G}$ will be defined in Section \ref{section_BMO}.
\end{Theorem}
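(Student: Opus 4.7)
The plan is to proceed by Lipschitz truncation of the quadratic coefficients, to use an exponential Kobylanski-type transform to produce uniform a priori bounds in $S_G^\infty\times BMO_G$, and then to read off both the convergence of the approximations and the uniqueness of the limit from a $G$-Girsanov linearization. First, the a priori bounds. I would apply the $G$-It\^o formula to $\psi(Y_t)$ with $\psi(y)=(e^{\alpha y}-1-\alpha y)/\alpha^{2}$, choosing $\alpha>0$ so that $\alpha\sidown^{2}$ exceeds the effective quadratic coefficient of $|z|^{2}$ coming from Remark~\ref{remark_f_bound}. Because $G(A)\geq\sidown^{2}\operatorname{tr}[A]$ by non-degeneracy, the quadratic-in-$Z$ contribution from the $d\Bq$-term absorbs the quadratic growth of $f$ and $g$ in $z$; the residual drift is only affine in $y$ and bounded by (H1). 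Taking conditional $G$-expectation and using that $K$ is decreasing (so $\E_t[-\int_t^T dK_s]\geq 0$) produces $\norm{Y}_{S_G^\infty}\leq C(M_0,L_y,L_z)$. The same exponential identity, together with $d\Bq_s\geq \sidown^{2}\,ds$, lower-bounds $\E_t[\int_t^T |Z_s|^{2}\,d\Bq_s]$ by boundary data and thereby yields $\norm{Z}_{BMO_G}\leq C$. Finally, isolating $K_T$ from equation~(\ref{QGBSDE}) and combining the BDG inequality with the exponential integrability of $Z$ in $BMO_G$ gives $\E[|K_T|^{p}]\leq C(p,M_0,L_y,L_z)$ for each $p\geq 1$.

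For existence I would truncate: let $\rho_n(z):=\bigl((-n)\vee z\bigr)\wedge n$ and set $f_n(t,\omega,y,z):=f(t,\omega,y,\rho_n(z))$, $g_n(t,\omega,y,z):=g(t,\omega,y,\rho_n(z))$. Each pair $(f_n,g_n)$ is uniformly Lipschitz in $(y,z)$, so the Lipschitz $G$-BSDE theory of Hu et al.~\cite{HuM2014_1} supplies a unique $(Y^n,Z^n,K^n)\in\mathfrak{G}_G^{2}(0,T)$. The a priori step applies uniformly in $n$ since (H1)--(H3) pass to $(f_n,g_n)$ with the same constants, yielding $\norm{Y^n}_{S_G^\infty}+\norm{Z^n}_{BMO_G}\leq C$. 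Taking differences $(Y^n-Y^m,Z^n-Z^m,K^n-K^m)$, applying the $G$-It\^o formula to $|Y^n-Y^m|^{2}$, linearizing $f_n-f_m$ and $g_n-g_m$ via (H3), and absorbing the linear-in-$\Delta Z$ terms through a $G$-Girsanov change of sublinear expectation (well-defined because the linearization coefficient is a BMO$_G$ process by the uniform estimate) shows that $\{(Y^n,Z^n,K^n)\}$ is Cauchy in $S_G^{p}(0,T)\times H_G^{p}(0,T)\times L_G^{p}(\Omega_T)$ for every $p\geq 2$. The limit solves~(\ref{QGBSDE}), and the limit $K$ remains a decreasing $G$-martingale because that class is closed under $S_G^{p}$-convergence.

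Uniqueness follows from the same linearization: given two solutions, $\Delta Y:=Y-Y'$ satisfies a linear $G$-BSDE with zero terminal value whose coefficient on $\Delta Z$ has linear growth in $1+|Z|+|Z'|$, hence lies in BMO$_G$. The $G$-Girsanov theorem of Section~2 changes measure to eliminate the $\Delta Z$-term, leaving a purely linear equation with a non-positive $G$-martingale correction, from which $\Delta Y\equiv 0$ follows by taking $G$-expectation under the new family, and then $\Delta Z\equiv 0$ and $\Delta K\equiv 0$. The main obstacle in this program is the convergence step for the truncated approximations. As emphasized in the introduction, the dominated convergence theorem fails under $\E$ and bounded sequences in $M_G^{p}$ need not be weakly compact, so the Kobylanski monotone stability machinery of~\cite{Kobylanski2000} does not transplant. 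One must instead quantify the Cauchy property through uniform BMO$_G$ estimates and a delicate use of the $G$-Girsanov transformation, which in turn requires exponential integrability of $\int_0^T|Z^n|^{2}\,d\Bq_s$ uniformly in $n$ under every element of the reference family $\mathcal P$.
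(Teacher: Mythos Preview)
The paper does not prove Theorem~\ref{Thm_QBSDE}. It is stated in Section~2 as a preliminary result and attributed verbatim to Hu, Lin, and Soumana Hima~\cite[Theorem 5.3; inequalities (3.2) and (3.3)]{Hu2018}; the authors simply quote it and then build the reflected theory on top. There is therefore no proof in this paper to compare against.

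For what it is worth, your sketch is broadly the strategy of~\cite{Hu2018}: a Kobylanski exponential transform for the $S_G^\infty\times BMO_G$ a priori bound (the paper later re-uses exactly this computation in Proposition~\ref{Propose_Z_bound} and Lemma~\ref{lemma_Y^n_bound}), a $G$-Girsanov linearization for stability and uniqueness (cf.\ Proposition~\ref{Propose_stable_pde} and Theorem~\ref{Thm_compare} here), and an approximation scheme for existence. One point of divergence: the existence argument in~\cite{Hu2018} does not proceed by naive truncation $\rho_n(z)$ of the $z$-variable alone. As the reconstruction in Section~6 of the present paper indicates, the route in~\cite{Hu2018} goes through mollification in $(t,\omega)$ (or, in the Markovian case, $(t,x)$) combined with a PDE step, precisely because the Cauchy property of truncated solutions under the $G$-expectation is hard to obtain directly---your last paragraph correctly names this obstruction but does not resolve it. The stability estimate under $G$-Girsanov (Proposition~3.5 of~\cite{Hu2018}) controls $|Y^n-Y^m|$ by the difference of generators evaluated along one of the solutions, and for $f_n-f_m$ with your truncation this difference vanishes only on $\{|Z^m|\leq n\wedge m\}$; turning that into a quantitative smallness statement uniformly over $\mathcal P$ is exactly where the lack of dominated convergence bites. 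The cited paper sidesteps this by approximating the coefficients rather than the solution, so that the residual generator difference is uniformly small.
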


\subsection{Formulation of the problem}
For simplicity, we consider the $G$-expectation space $(\Omega,L_G^1(\Omega_T),\E)$ for the case of $d=1$
and $\siup^2=\E[B_1^2]\geq-\E[-B_1^2]=\sidown^2>0$. Consider the following equation:

\begin{equation}
\begin{cases}
\displaystyle
 Y_t=\xi+\int_t^Tg(s,\omega_{\cdot \land s},Y_s,Z_s)ds+\int_t^Tf(s,\omega_{\cdot\land s},Y_s,Z_s)d\Bq_s\\[3mm]
 \displaystyle
 \qquad-\int_t^TZ_sdB_s+\int_t^TdA_s,~~\text{q.s.},~t\in[0,T];\\[3mm]
 \displaystyle
Y_t\geq S_t, ~~\text{q.s.} \ t\in [0, T];\\
\text{\rm the process }-\int_0^\cdot (Y_s-S_s)dA_s \text{  \rm is a non-increasing  $G$-martingale on $[0,T]$,}
 \label{RGBSDE1}
 \end{cases}
 \end{equation}
where the generator $(f, g):[0,T]\times\Omega_T\times\bb{R}^2\to\bb{R}^2$ and the terminal value $\xi$ are assumed to satisfy \textbf{(H1)}-\textbf{(H3)}.
Moreover, the obstacle process $\{S_t,~t\in[0,T]\}$ is supposed to satisfy the following conditions:
\begin{itemize}
\item[\textbf{(H4)}]  $S_\cdot\in \bigcap\limits_{\alpha>1} S_G^\alpha(0,T)$ with $S_T\le \xi$, q.s.  Furthermore,  there is a positive constant $N_0$ such that $S_t\leq N_0$, q.s.,  for any $t\in[0,T]$.
\item[\textbf{(H5)}]     $S$ is uniformly continuous in $(t,\omega)$, i.e. there is a non-decreasing continuous function $w: [0, +\infty)\to [0, +\infty)$ with  $w(0)=0$  such that
    $$|S_t(\omega)-S_{t^\prime}(\omega^\prime)|\leq w(|t-t^\prime|+\norm{\omega-\omega^\prime}_\infty).$$
\end{itemize}

\begin{Remark}
\label{Remark_tech_condition}
Like  in \cite{Hu2018},   Assumptions \textbf{(H2)} and \textbf{(H5)} are used to ensure the existence of solutions to our subsequent penalized quadratic $G$-BSDEs.
\end{Remark}

A solution of reflected $G$-BSDEs is defined as follows.

\begin{Definition}
\label{def_solution}
A triple of processes $(Y,Z,A)$ belongs to $\mathcal{S}_G^\alpha(0,T)$ for $\alpha>1$ if $(Y, Z)\in S_G^\alpha(0,T)\times  H_G^\alpha(0,T)$ and $A$ is a continuous nondecreasing process such that  $A_0=0$ and $A_T\in L_G^\alpha(\Omega_T)$. The triple (Y,Z,A) is said to be a solution to the reflected $G$-BSDE~\eqref{RGBSDE1} if ~$(Y,Z,A)\in \mathcal{S}_G^\alpha(0,T)$, and  satisfies~\eqref{RGBSDE1} for $t\in [0,T]$.
\end{Definition}

Our objective is to establish the existence and uniqueness result for the  quadratic $G$-BSDE~\eqref{RGBSDE1}.
For simplicity of exposition, we assume that  $g\equiv 0$ in what follows. Corresponding results still hold for the case of $g\neq 0.$

\subsection{$G$-BMO martingales and $G$-Girsanov Theorem}
\label{section_BMO}
We now introduce some results of $G$-BMO martingale and $G$-Girsanov Theorem in \cite{Hu2018} and \cite{PossZhou2013}.
\begin{Definition}
For $Z\in H_G^2(0,T)$, a symmetric $G$-martingale $\int_0^\cdot Z_sdB_s$ on $[0,T]$ is called a $G$-BMO martingale if
$$\norm{Z}_{BMO_G}^2:=\sup_{\bb{P}\in\mathcal{P}}\norm{Z}_{BMO(\bb{P})}^2
=\sup_{\bb{P}\in\mathcal{P}} \bigg[\sup_{\tau\in\mathcal{T}_0^T}\norm{E_\tau^{\bb{P}}\Big[\int_\tau^T|Z_t|^2d\Bq_t\Big]}_{L^\infty(\bb P)} \bigg]<+\infty,$$
where $\mathcal{T}_0^T$ denotes the totality of all $\mathcal{F}$-stopping times taking values in $[0,T]$ and $\norm{Z}_{BMO(\bb{P})}$ stands for the BMO norm of $\int_0^\cdot Z_sdB_s$ under probability measure $\bb{P}$.
\end{Definition}

Set
$$BMO_G:=\{Z\in H_G^2(0,T):\norm{Z}_{BMO_G}<+\infty\}.$$

In a straightforward manner,  we have the following important norm estimate for a $G$-BMO martingale $\int_0^\cdot Z_sdB_s$.

\begin{Lemma}
\label{energy_ineq} For $Z\in BMO_G$, we have for each ~$t\in[0,T]$,
$$\E_t\Big[\Big(\int_t^T|Z_s|^2d\Bq_s\Big)^{\alpha\over2}\Big]\leq C_\alpha\norm{Z}_{BMO_G}^\alpha,~~\text{q.s.},\quad\forall\alpha\geq1,$$
where $C_\alpha$ is a positive constant depending on $\alpha$.
\end{Lemma}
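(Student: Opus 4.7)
The plan is to exploit the capacity-based representation of the $G$-expectation (Theorem \ref{Thm_conditional_E}) to reduce the statement to the classical energy inequality for BMO martingales under each individual probability measure in $\mathcal{P}$. Recall that for a \emph{classical} BMO martingale $\int_0^\cdot Z_s\,dB_s$ under some $\bb{P}$, the standard energy inequality asserts
$$
E^{\bb{P}}_\tau\Big[\Big(\int_\tau^T |Z_s|^2\,d\Bq_s\Big)^{\alpha/2}\Big] \leq C_\alpha \,\norm{Z}_{BMO(\bb{P})}^\alpha, \qquad \bb{P}\text{-a.s.},
$$
for every $\alpha\geq 1$, with a universal constant $C_\alpha$. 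This is proved first for integer exponents by iterating the BMO defining inequality (a Garsia-type argument giving $E^{\bb{P}}_\tau[(\int_\tau^T |Z_s|^2 d\Bq_s)^n]\leq n!\,\norm{Z}^{2n}_{BMO(\bb{P})}$), and then extended to all $\alpha\geq 1$ by Jensen's inequality applied to the power $2\lceil \alpha/2\rceil/\alpha\geq 1$.

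Given this, the first step is to observe that by the very definition of $\norm{Z}_{BMO_G}$, one has $\norm{Z}_{BMO(\bb{P})}\leq \norm{Z}_{BMO_G}$ for every $\bb{P}\in\mathcal{P}$, so the classical energy inequality yields
$$
E^{\bb{P}'}_t\Big[\Big(\int_t^T |Z_s|^2\,d\Bq_s\Big)^{\alpha/2}\Big] \leq C_\alpha\,\norm{Z}_{BMO_G}^\alpha, \qquad \bb{P}'\text{-a.s.},
$$
uniformly for all $\bb{P}'\in\mathcal{P}(t,\bb{P})$. Next, by Theorem~\ref{Thm_conditional_E},
$$
\E_t\Big[\Big(\int_t^T |Z_s|^2\,d\Bq_s\Big)^{\alpha/2}\Big] = \operatorname*{ess\,sup}_{\bb{P}'\in\mathcal{P}(t,\bb{P})} E^{\bb{P}'}_t\Big[\Big(\int_t^T |Z_s|^2\,d\Bq_s\Big)^{\alpha/2}\Big],\qquad \bb{P}\text{-a.s.},
$$
and taking the essential supremum over $\bb{P}'\in\mathcal{P}(t,\bb{P})$ on the right-hand side of the energy bound gives the inequality $\bb{P}$-a.s. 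Since this holds for every $\bb{P}\in\mathcal{P}$, it holds quasi-surely, which is the claim.

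The main obstacle is a measurability/regularity issue: one needs $\int_t^T |Z_s|^2\,d\Bq_s$ and the corresponding $\alpha/2$-power to be an element of $L^1_G(\Omega_T)$ so that the conditional $G$-expectation is well defined, and one needs the classical BMO energy estimate to be available under every $\bb{P}\in \mathcal{P}$ with a \emph{universal} constant $C_\alpha$ independent of $\bb{P}$. The first point is handled by standard approximation from $M_G^0(0,T)$; the second point is automatic because the classical energy inequality only uses the BMO norm and the martingale property of $\int_0^\cdot Z_s\,dB_s$ under $\bb{P}$, both of which are already built into the definition of $BMO_G$. One minor care point is that the classical energy inequality is typically stated for continuous BMO martingales; here $\int_0^\cdot Z_s\,dB_s$ is indeed continuous under each $\bb{P}\in\mathcal{P}$, so no additional argument is required.
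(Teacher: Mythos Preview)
Your proposal is correct and follows essentially the same route as the paper's own proof: invoke the classical BMO energy inequality (the paper cites \cite[Corollary 2.1, page 28]{Kazamaki1994}) under each $\bb{P}'\in\mathcal{P}(t,\bb{P})$, bound $\norm{Z}_{BMO(\bb{P}')}$ by $\norm{Z}_{BMO_G}$, then pass to the conditional $G$-expectation via Theorem~\ref{Thm_conditional_E} and conclude quasi-surely by the arbitrariness of $\bb{P}$. The only minor point you glossed over is the transfer of the $\bb{P}'$-a.s.\ inequality to a $\bb{P}$-a.s.\ one before taking the essential supremum, which is immediate since $\bb{P}'=\bb{P}$ on $\mathcal{F}_t$ and the quantities involved are $\mathcal{F}_t$-measurable.
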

\begin{proof}
Fix some $(t,\bb{P})\in[0,T]\times\mathcal{P}$. In view of \cite[Corollary 2.1, page 28]{Kazamaki1994}, for each $\bb{P^\prime}\in\mathcal{P}(t,\bb P)$ we have
$$E^{\bb{P^\prime}}_t\Big[\Big(\int_t^T|Z_s|^2d\Bq_s\Big)^{\alpha\over2}\Big]\leq C_\alpha\norm{Z}_{BMO(\bb{P^\prime})}^\alpha\leq C_\alpha\norm{Z}_{BMO_G}^\alpha,~ \bb P^\prime\text{\,-a.s.},$$
where $C_\alpha$ is a positive constant depending only on $\alpha$. In view of the definition of $\mathcal{P}(t,\bb P)$, we have
$$E^{\bb{P^\prime}}_t\Big[\Big(\int_t^T|Z_s|^2d\Bq_s\Big)^{\alpha\over2}\Big]\leq C_\alpha\norm{Z}_{BMO_G}^\alpha,~ \bb P\text{\,-a.s.}$$
In view of Theorem \ref{Thm_conditional_E} and noting that $C_\alpha$ is independent of $\bb{P}^\prime$, we have
$$\E_t\Big[\Big(\int_t^T|Z_s|^2d\Bq_s\Big)^{\alpha\over2}\Big]=\operatorname*{ess\sup}_{\bb P^\prime\in \mathcal{P}(t,\bb P)}E^{\bb{P}^\prime}_t\Big[\Big(\int_0^T|Z_t|^2d\Bq_t\Big)^
{\alpha\over2}\Big]\leq C_\alpha\norm{Z}_{BMO_G}^\alpha,~ \bb P\text{\,-a.s.}$$
Notice that $C_\alpha$ is independent of $\bb{P}$ and we get the lemma.
\end{proof}

Like in the classical stochastic analysis, a $G$-BMO martingale can be used to define  an exponential $G$-martingale.  Hu et al.~\cite[Lemma 3.2, page 11]{Hu2018} give the following lemma.

\begin{Lemma}
For $Z\in BMO_G$, the process
$$\Exp(Z)_t:=\exp\left(\int_0^t Z_s dB_s-{1\over2}\int_0^t|Z_s|^2d\Bq_s \right), \quad t\ge 0$$
is a symmetric $G$-martingale.
\end{Lemma}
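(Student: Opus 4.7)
The plan is to reduce the assertion to the classical BMO martingale theory under each reference measure $\bb{P}\in\mathcal{P}$, exploiting the representation $\E[\,\cdot\,]=\sup_{\bb{P}\in\mathcal{P}}E^{\bb{P}}[\,\cdot\,]$ together with its conditional form in Theorem~\ref{Thm_conditional_E}. The key quantitative input is that $\norm{Z}_{BMO(\bb{P})}\le \norm{Z}_{BMO_G}$ uniformly in $\bb{P}\in\mathcal{P}$, so any constant produced by the classical theory is ultimately controlled by $\norm{Z}_{BMO_G}$ alone.

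First I would establish integrability. By the reverse H\"older inequality for BMO martingales (Kazamaki~\cite{Kazamaki1994}), there exist $p=p(\norm{Z}_{BMO_G})>1$ and $C_p>0$ such that $E^{\bb{P}}[\Exp(Z)_t^{p}]\le C_p$ for every $t\in[0,T]$ and every $\bb{P}\in\mathcal{P}$; taking the supremum yields $\E[\Exp(Z)_t^{p}]\le C_p$. Combining this uniform bound with the standard approximation of $Z$ by bounded simple processes $Z^n\in M_G^0(0,T)$ in the $H_G^2$-norm, whose exponentials are manifestly in $L_G^p(\Omega_t)$ and converge quasi-surely to $\Exp(Z)_t$, would yield $\Exp(Z)_t\in L_G^{1}(\Omega_t)$.

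Next I would verify the martingale property. For each fixed $\bb{P}\in\mathcal{P}$, Kazamaki's theorem implies that $\Exp(Z)$ is a uniformly integrable $\bb{P}$-martingale, and the same holds for every $\bb{P}'\in\mathcal{P}(s,\bb{P})$ with the same uniform bounds. Hence for $0\le s\le t\le T$ and every such $\bb{P}'$,
$$E^{\bb{P}'}_s[\Exp(Z)_t]=\Exp(Z)_s,\quad \bb{P}'\text{-a.s.}$$
Since the right-hand side does not depend on $\bb{P}'$, Theorem~\ref{Thm_conditional_E} yields
$$\E_s[\Exp(Z)_t]=\operatorname*{ess\sup}_{\bb{P}'\in\mathcal{P}(s,\bb{P})}E^{\bb{P}'}_s[\Exp(Z)_t]=\Exp(Z)_s,\quad \bb{P}\text{-a.s.},$$
and the identical argument applied to $-\Exp(Z)_t$ in place of $\Exp(Z)_t$ gives $\E_s[-\Exp(Z)_t]=-\Exp(Z)_s$. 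This proves $\Exp(Z)$ is a symmetric $G$-martingale.

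I expect the main obstacle to be the first step: the reverse H\"older bound under each $\bb{P}$ is classical, but promoting $\Exp(Z)_t$ to a genuine element of $L_G^{1}(\Omega_t)$ requires approximation by cylinder Lipschitz random variables in the $G$-norm. The subtlety is that $L_G^{1}(\Omega_t)$ is strictly smaller than the set of $\mathcal{F}_t$-measurable variables with finite $\E[|\cdot|]$, so one must use the quasi-sure regularity of the stochastic integrals together with the uniformity in $\bb{P}$ of the reverse H\"older constant to carry the exponential through the limit.
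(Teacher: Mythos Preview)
The paper does not give its own proof of this lemma; it simply cites Hu et al.~\cite[Lemma 3.2, page 11]{Hu2018}. Your approach---reducing to the classical BMO theory under each $\bb{P}\in\mathcal{P}$, invoking Kazamaki's criterion to get that $\Exp(Z)$ is a true $\bb{P}$-martingale with a uniform reverse H\"older bound, and then lifting to the $G$-level via Theorem~\ref{Thm_conditional_E}---is precisely the natural strategy, and it is essentially the argument one finds in the cited reference. Your identification of the $L_G^1(\Omega_t)$-membership of $\Exp(Z)_t$ as the only genuinely nontrivial step is also accurate: this is where the $G$-framework differs from the classical one, and your proposed route (approximating $Z$ in $H_G^2$ by simple processes, whose stochastic exponentials lie in $L_G^p$, and passing to the limit using the uniform-in-$\bb{P}$ reverse H\"older bound) is the standard way through.

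One small remark: in the martingale step you should make explicit that $E^{\bb{P}'}_s[\Exp(Z)_t]=\Exp(Z)_s$ holds $\bb{P}$-a.s.\ (not just $\bb{P}'$-a.s.) because $\bb{P}'=\bb{P}$ on $\mathcal{F}_s$ by the definition of $\mathcal{P}(s,\bb{P})$; you do this implicitly, but since Theorem~\ref{Thm_conditional_E} takes the essential supremum under $\bb{P}$, it is worth stating.
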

Similarly to Possama\"{i} and Zhou \cite{PossZhou2013}, we have the following lemmas.

\begin{Lemma}
\label{reverse holder}
(Reverse H\"{o}lder Inequality) Let $\phi(x)=\big\{1+{1\over x^2}\log{2x-1\over2(x-1)}\big\}^{1\over2}-1$ and $1<q<+\infty.$ If ~$\norm{Z}_{BMO_G}<\phi(q),$ we have
$$\sup_{\bb P\in \mathcal{P}}\sup_{\tau\in\mathcal{T}_0^T}\norm{E^{\bb{P}}_\tau\Big[\Big\{{\Exp(Z)_T\over\Exp(Z)_\tau}\Big\}^q\Big]}_{L^\infty(\bb P)}\leq C_q $$
for a constant $C_q>0$ depending only on $q$.
\end{Lemma}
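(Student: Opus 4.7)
The plan is to reduce the statement to the classical Kazamaki reverse Hölder inequality, applied probability by probability, using that the $G$-BMO norm dominates the $\bb P$-BMO norm for every $\bb P\in\mathcal P$. This is exactly the same strategy used for Lemma~\ref{energy_ineq}; only the ingredient from Kazamaki's book changes.

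More precisely, fix an arbitrary $\bb P\in\mathcal P$ and set $M_t:=\int_0^t Z_s\,dB_s$. Under $\bb P$, the canonical process $B$ has quadratic variation $\Bq$, so $\langle M\rangle_t=\int_0^t|Z_s|^2\,d\Bq_s$ and $M$ is a true $\bb P$-martingale (since $M$ is a symmetric $G$-martingale, hence both $M$ and $-M$ are $\bb P$-supermartingales). From the definition of $\norm{Z}_{BMO_G}$, we immediately have
\[
\sup_{\tau\in\mathcal T_0^T}\norm{E^{\bb P}_\tau\Big[\int_\tau^T|Z_s|^2\,d\Bq_s\Big]}_{L^\infty(\bb P)}
=\norm{M}_{BMO(\bb P)}^2\leq \norm{Z}_{BMO_G}^2<\phi(q)^2.
\]

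By the classical reverse Hölder inequality for BMO martingales (Kazamaki, Theorem 3.1), if the $\bb P$-BMO norm of $M$ is strictly less than $\phi(q)$, then $\mathcal E(M)$ satisfies a reverse Hölder inequality of order $q$ under $\bb P$, with a constant $C_q$ depending only on $q$ and on the gap $\phi(q)-\norm{M}_{BMO(\bb P)}$. Since $\norm{M}_{BMO(\bb P)}\leq\norm{Z}_{BMO_G}<\phi(q)$, this gap is bounded below uniformly in $\bb P$, so the resulting constant $C_q$ is independent of $\bb P$. Hence
\[
\sup_{\tau\in\mathcal T_0^T}\norm{E^{\bb P}_\tau\Big[\Big\{\tfrac{\Exp(Z)_T}{\Exp(Z)_\tau}\Big\}^q\Big]}_{L^\infty(\bb P)}\leq C_q,\qquad \forall\,\bb P\in\mathcal P,
\]
and taking the supremum over $\bb P\in\mathcal P$ finishes the proof.

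The only nontrivial point is to verify that the constant in Kazamaki's inequality can be chosen independently of $\bb P$; this comes down to the monotonicity of the classical constant as a function of the BMO norm and the uniform bound $\norm{M}_{BMO(\bb P)}\leq\norm{Z}_{BMO_G}$. Everything else is a direct transcription of the $\bb P$-by-$\bb P$ argument, exactly analogous to the proof of Lemma~\ref{energy_ineq} given in the paper.
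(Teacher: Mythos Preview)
Your proof is correct and follows essentially the same approach as the paper: both reduce to Kazamaki's classical reverse H\"older inequality (Theorem~3.1) applied $\bb P$-by-$\bb P$, using $\norm{Z}_{BMO(\bb P)}\leq\norm{Z}_{BMO_G}<\phi(q)$ to get a constant independent of $\bb P$. Your version adds a little more justification for the uniformity of $C_q$, but the argument is the same.
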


\begin{proof}
For each $\bb{P}\in\mathcal{P}$, $$\norm{Z}_{BMO(\bb{P})}\leq\norm{Z}_{BMO_G}<\phi(q).$$
Then,  from \cite[Theorem 3.1, page 54]{Kazamaki1994}, we have
$$\sup_{\tau\in\mathcal{T}_0^T}\norm{E^{\bb{P}}_\tau\Big[\Big\{{\Exp(Z)_T\over\Exp(Z)_\tau}\Big\}^q\Big]}_{L^\infty(\bb P)}\leq C_q ,\quad\forall \bb{P}\in\mathcal{P}$$
for a positive constant  $C_q$ which does not  dependent on $\bb{P}$.
\end{proof}
\begin{Lemma}
\label{A_p}
 Let $1<r<+\infty.$ If ~$\norm{Z}_{BMO_G}<{\sqrt{2}\over2}(\sqrt{r}-1),$
$$\sup_{\bb P\in \mathcal{P}}\sup_{\tau\in\mathcal{T}_0^T}\norm{E^{\bb{P}}_\tau\Big[\Big\{{\Exp(Z)_\tau\over\Exp(Z)_T}\Big\}^{1\over r-1}\Big]}_{L^\infty(\bb P)}\leq C_r$$
holds with a constant $C_r>0$ depending only on $r$.
\end{Lemma}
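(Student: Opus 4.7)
The plan is to reduce the $G$-framework statement to the classical BMO result, mirroring exactly the structure of the proof of Lemma~\ref{reverse holder}. The key observation is that for every $\bb P\in\mathcal P$ one has $\norm{Z}_{BMO(\bb P)}\leq \norm{Z}_{BMO_G}$, so the hypothesis $\norm{Z}_{BMO_G}<\frac{\sqrt 2}{2}(\sqrt r-1)$ transfers uniformly to each probability measure in the representative set.

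First I would fix an arbitrary pair $(\bb P,\tau)\in\mathcal P\times\mathcal T_0^T$. Under $\bb P$, the continuous martingale $\int_0^\cdot Z_s\,dB_s$ is a classical BMO martingale with BMO norm strictly less than $\frac{\sqrt 2}{2}(\sqrt r-1)$. I would then invoke the classical characterization of the Muckenhoupt $A_r$ condition via BMO (the companion to the reverse H\"older theorem already cited from \cite{Kazamaki1994}, stating that $\norm{M}_{BMO}<\frac{\sqrt 2}{2}(\sqrt r-1)$ implies $\Exp(M)\in A_r$), which yields
\[
E^{\bb P}_\tau\Big[\Big(\frac{\Exp(Z)_\tau}{\Exp(Z)_T}\Big)^{1/(r-1)}\Big]\leq C_r,\qquad \bb P\text{-a.s.},
\]
for a positive constant $C_r$ depending only on $r$ and on the gap $\frac{\sqrt 2}{2}(\sqrt r-1)-\norm{Z}_{BMO_G}$. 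Taking the $L^\infty(\bb P)$-norm and then successively the suprema over $\tau\in\mathcal T_0^T$ and over $\bb P\in\mathcal P$ produces the claimed uniform bound.

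The only delicate point is the uniformity of $C_r$ in $\bb P$. This is however immediate from the explicit form of Kazamaki's proof: the constant arises from the energy inequality together with Doob's $L^p$-inequality, and it depends on the underlying probability only through the BMO norm of the integrand, which is itself dominated by $\norm{Z}_{BMO_G}$ uniformly in $\bb P$. Consequently no analytic work is required beyond the observation already exploited in Lemma~\ref{reverse holder}; the $G$-framework contribution is purely the ``distribute the classical estimate over $\mathcal P$'' step, and the main (mild) obstacle is simply to verify the uniformity of the constant in the chosen reference.
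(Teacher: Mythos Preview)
Your proposal is correct and follows essentially the same route as the paper: the paper's proof simply notes that, as in Lemma~\ref{reverse holder}, the desired bound is an immediate consequence of \cite[Theorem~2.4, page~33]{Kazamaki1994} applied under each $\bb P\in\mathcal P$, using $\norm{Z}_{BMO(\bb P)}\leq\norm{Z}_{BMO_G}$. Your remark about the uniformity of the constant in $\bb P$ is exactly the point, and your identification of the relevant classical ingredient (the $A_r$ criterion via BMO in Kazamaki) matches the paper's citation.
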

\begin{proof}
Similarly to the proof of Lemma \ref{reverse holder}, the
desired result is an immediate consequence of \cite[Theorem 2.4, page 33]{Kazamaki1994} for all $\bb{P}\in\mathcal{P}$.
\end{proof}
\begin{Remark}
\label{remark_Ap_reverse_holder}
Assume $\norm{Z}_{BMO_G}<\phi(q)$ for some $q\in (1,+\infty)$. Fix some $(t,\bb{P})\in[0,T]\times\mathcal{P}$. In view Lemma \ref{reverse holder}, we have for each $\bb P^\prime\in \mathcal{P}(t,\bb P)$,
$$E_t^{\bb P^\prime}\Big[\Big\{{\Exp(Z)_T\over \Exp(Z)_t}\Big\}^q\Big]\leq C_q ,\quad \bb P^\prime\text{\,-a.s.}.$$
In view of the the definition of $\mathcal{P}(t,\bb P)$, we have
$$E_t^{\bb P^\prime}\Big[\Big\{{\Exp(Z)_T\over \Exp(Z)_t}\Big\}^q\Big]\leq C_q ,\quad \bb P\text{\,-a.s.}$$
Thus in view of Theorem \ref{Thm_conditional_E}, we get
$$\E_t\Big[\Big\{{\Exp(Z)_T\over \Exp(Z)_t}\Big\}^q\Big]=\operatorname*{ess\sup}_{\bb P^\prime\in \mathcal{P}(t,\bb P)}E_t^{\bb P^\prime}\Big[\Big\{{\Exp(Z)_T\over \Exp(Z)_t}\Big\}^q\Big]\leq C_q ,\quad \bb P\text{\,-a.s.}$$
Noting that $C_q$ is independent of ~$\bb P$, we have the following reverse H\"{o}lder inequality,
$$\E_t\Big[\Big\{{\Exp(Z)_T\over \Exp(Z)_t}\Big\}^q\Big]\leq C_q,~~\text{q.s.}$$
Similarly, in view of Theorem \ref{Thm_conditional_E} and Lemma \ref{A_p}, we have
$$\E_t\Big[\Big\{{\Exp(Z)_t\over\Exp(Z)_T}\Big\}^{1\over r-1}\Big]\leq C_r,~~\text{q.s.},$$
if $\norm{Z}_{BMO_G}<{\sqrt{2}\over2}(\sqrt{r}-1)$ for some $r\in (1,+\infty)$.
\end{Remark}
\begin{Remark}
The reverse H\"{o}lder inequality in Remark \ref{remark_Ap_reverse_holder} is used in the proof of Hu et al. \cite[Lemma 3.4]{Hu2018}. We give a proof here for convenience of the reader.
\end{Remark}
\begin{Remark}
\label{remark_Ap_Rp}
Suppose there exist $\{Z^n\}_{n\in\bb{N}}\subseteq H_G^2(0,T)$ such that $\norm{Z^n}_{BMO_G}\leq M$ for all $n\in\bb{N}$. Taking $t=0$ in Reamrk \ref{remark_Ap_reverse_holder}, we can know that there exist $q>1$ and $r>1$ which are depending only on $M$ such that:
$$\E[\Exp(Z^n)_T^q]\leq C_q\quad\text{and}\quad\E\Big[\Big\{\Exp(Z^n)_T\Big\}^{1\over 1-r}\Big]\leq C_r.$$
\end{Remark}
With the exponential martingale, we can generalize the Girsanov theorem. In \cite{Hu2018}, we know that we can define a new $G$-expectation $\Et[\cdot]$ with $\Exp(Z)$ satisfying
\begin{equation}
\Et[X]=\sup_{\bb{P}\in\mathcal{P}}E^\bb{P}[\Exp(Z)_TX]=\E[\Exp(Z)_TX],\quad \forall X\in L^p_G(\Omega_T),
\end{equation}
where $p>{q\over q-1}$ and $q$ is the order in the reverse H\"{o}lder inequality for $\Exp(Z)$.
Moreover, the conditional expectation $\Et_t[\cdot]$ is well-defined following the procedure introduced in \cite{Hu2018} and \cite{Xu2011}. And we have
\begin{equation}
\Et_t[X]=\E_t\Big[{\Exp(Z)_T\over\Exp(Z)_t}X\Big],~~\text{q.s.},~~\forall X\in L^p_G(\Omega_T).
\end{equation}

The following two lemmas give the Girsanov theorem in the $G$-framework, and can be found in Hu et al.~\cite{Hu2018}.

\begin{Lemma}
Suppose that $Z\in BMO_G$. We define a new $G$-expectation $\Et[\cdot]$ by $\Exp(Z)$. Then the process $B-\int Zd\Bq$ is a $G$-Brownian motion under $\Et[\cdot]$.
\end{Lemma}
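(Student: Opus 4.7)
The plan is to verify the three defining properties of a $G$-Brownian motion (Definition 2.4) for the process $\tilde B_t := B_t - \int_0^t Z_s\, d\Bq_s$ under the new sublinear expectation $\Et[\cdot]$. Property (1), $\tilde B_0 = 0$, is immediate. Properties (2) independence of increments and (3) the $G$-normal law of each increment will both follow from the single identity
$$\Et_s\bigl[\phi(\tilde B_t - \tilde B_s)\bigr] = u(t-s, 0), \quad \text{q.s.}, \qquad 0\le s\le t\le T,\ \phi\in\mathcal{C}_{l,lip}(\bb R),$$
where $u(\tau,x) := \E[\phi(x + B_\tau)]$ is the viscosity solution of the $G$-heat equation $\partial_\tau u - G(\partial_{xx}^2 u) = 0$ with $u(0,\cdot)=\phi$. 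Since the right-hand side is deterministic, the identity simultaneously gives the $G$-normal law of the increment and, after applying it with $\phi$ carrying an auxiliary parameter, its independence from $\mathcal F_s$ under $\Et$ in the sense of Definition 2.2.

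To establish the identity I would apply the $G$-It\^o formula to $r\mapsto u(t-r,\tilde B_r - \tilde B_s)$ on $[s,t]$. Using $d\tilde B_r = dB_r - Z_r\, d\Bq_r$, $d\langle\tilde B\rangle_r = d\Bq_r$, and the PDE to replace $\partial_\tau u$ by $G(\partial_{xx}^2 u)$, one obtains
$$\phi(\tilde B_t - \tilde B_s) - u(t-s, 0) = \int_s^t \partial_x u\, d\tilde B_r + \int_s^t \Bigl(\tfrac{1}{2}\partial_{xx}^2 u\, d\Bq_r - G(\partial_{xx}^2 u)\, dr\Bigr).$$
Taking $\Et_s$ on both sides, the first integral has vanishing conditional expectation because $\tilde B$ is a symmetric $G$-martingale under $\Et$: for each $\bb P\in\mathcal P$, since $Z\in BMO_G$, Kazamaki's criterion ensures $\Exp(Z)_T$ is a genuine $\bb P$-martingale, and the classical Girsanov theorem makes $\tilde B$ a continuous local martingale under $\bb P^Z := \Exp(Z)_T\cdot \bb P$; passing to the sup-representation $\Et[X] = \sup_{\bb P\in\mathcal P} E^{\bb P^Z}[X]$ transports this martingale property to $\Et$.

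The main obstacle is the second integral, call it $M_t - M_s$: under $\E$ it is the increment of a non-increasing $G$-martingale, so $\E_s[M_t - M_s] = 0$; but a non-symmetric $G$-martingale multiplied by the positive density $\Exp(Z)_T/\Exp(Z)_s$ is not automatically a $G$-martingale, so only the one-sided bound $\Et_s[M_t - M_s]\le 0$ comes for free. To upgrade this to equality I would identify $\mathcal P^Z := \{\bb P^Z : \bb P \in \mathcal P\}$ as a representing family for $\Et$, observe that under each $\bb P^Z$ the process $\tilde B$ is a continuous local martingale with $d\langle\tilde B\rangle_r \in [\sidown^2,\siup^2]\,dr$, and rewrite the integrand in terms of $d\langle\tilde B\rangle_r$ in place of $d\Bq_r$. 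Since the law of $\tilde B$ under $\bb P^Z$ then falls into the canonical representing family for $\E$, the $\Et_s$-expectation of $M_t - M_s$ reduces, via the pushforward under $\omega \mapsto \tilde B(\omega)$, to the $\E_s$-expectation of the analogous non-increasing $G$-martingale attached to the canonical process, which vanishes. Combining the two pieces yields the identity and completes the proof.
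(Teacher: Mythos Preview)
The paper does not prove this lemma in-text; both it and Lemma~\ref{lemma_Gisr_K} are quoted from Hu et al.~\cite{Hu2018}. Your It\^o-formula route to the identity $\Et_s[\phi(\tilde B_t-\tilde B_s)]=u(t-s,0)$ is the natural one, and the reduction to showing $\Et_s[M_t-M_s]=0$ for the non-increasing piece $M_t-M_s=\int_s^t\tfrac12\partial_{xx}^2u\,d\Bq_r-\int_s^tG(\partial_{xx}^2u)\,dr$ is correct (with the minor caveat that $u$ need not be $C^{1,2}$ for general $\phi\in\C_lip$; one approximates $\phi$ by smooth functions).

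The genuine gap is in your pushforward step for the direction $\Et_s[M_t-M_s]\ge 0$. Observing that each $\bb P^Z\circ\tilde B^{-1}$ lies in $\mathcal P$ gives only the inclusion $\{\bb P^Z\circ\tilde B^{-1}:\bb P\in\mathcal P\}\subset\mathcal P$, and hence only
\[
\Et_s[M_t-M_s]\;\le\;\E_s\bigl[\text{canonical version}\bigr]\;=\;0.
\]
But $\Et_s[M_t-M_s]\le 0$ was already free since $M_t-M_s\le 0$ q.s. To obtain $\Et_s[M_t-M_s]\ge 0$ you need the \emph{reverse} inclusion: that every $Q\in\mathcal P$ arises as $\bb P^Z\circ\tilde B^{-1}$ for some $\bb P\in\mathcal P$ (or at least that the pushforward family has the same upper envelope). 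Concretely, given $Q$, one must construct $\bb P\in\mathcal P$ solving $dB_r=d\tilde B_r+Z_r(B_\cdot)\,d\Bq_r$ with $\tilde B$ having law $Q$ under $\bb P^Z$, where $Z$ is a functional of the unknown path $B$. This SDE-type ``surjectivity'' of the Girsanov shift on the representing family is precisely the substantive content of the $G$-Girsanov theorem, and your argument assumes it rather than proves it.
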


\begin{Lemma}
\label{lemma_Gisr_K}
Suppose that $Z\in BMO_G$. We define a new $G$-expectation $\Et[\cdot]$ by $\Exp(Z)$. Suppose that $K$ is a decreasing $G$-martingale such that $K_0=0$ and for some $p>{q\over q-1}, K_t\in L_G^p(\Omega_t), 0\leq t\leq T,$ where $q$ is the order in the reverse H\"{o}lder inequality for $\Exp(Z)$. Then $K$ is a decreasing $G$-martingale under $\Et[\cdot]$.
\end{Lemma}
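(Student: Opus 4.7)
The non-increasing pathwise property of $K$ is independent of which sublinear expectation one uses, so it remains to establish the $\Et$-martingale identity
\begin{equation*}
\Et_s[K_t]=K_s,\quad 0\le s\le t\le T,\ \text{q.s.}
\end{equation*}
First I would verify $K_t\in L_{\tilde G}^1(\Omega_t)$: from the representation $\Et[|K_t|]=\E[\Exp(Z)_T|K_t|]$ and H\"older's inequality with conjugate exponents $(q,q/(q-1))$, this follows from the reverse H\"older estimate for $\Exp(Z)_T$ (Remark \ref{remark_Ap_reverse_holder}) together with the hypothesis $K_t\in L_G^p(\Omega_t)$, $p>q/(q-1)$.

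Next, to prove the martingale identity, I would combine the conditional Girsanov formula with the symmetric $G$-martingale property of $\Exp(Z)$. A direct computation gives
\begin{equation*}
\Et_s[K_t]=\E_s\!\left[\frac{\Exp(Z)_T}{\Exp(Z)_s}K_t\right]=\E_s\!\left[\E_t\!\left[\frac{K_t}{\Exp(Z)_s}\Exp(Z)_T\right]\right]=\E_s\!\left[K_t\,\frac{\Exp(Z)_t}{\Exp(Z)_s}\right],
\end{equation*}
where the third equality uses the elementary fact that, for a symmetric $G$-martingale $M$ and $\mathcal{F}_t$-measurable $\eta$ with appropriate integrability, $\E_t[\eta(M_T-M_t)]=0$; this in turn follows by splitting $\eta=\eta^+-\eta^-$ and invoking sublinearity together with $\E_t[M_T-M_t]=\E_t[-(M_T-M_t)]=0$.

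Passing to the representative set $\mathcal{P}$ of $\E$, the symmetry of $\Exp(Z)$ implies that $\Exp(Z)$ is a genuine $\bb{P}'$-martingale for every $\bb{P}'\in\mathcal{P}(s,\bb{P})$ (both $\operatorname{ess\,sup}$ and $\operatorname{ess\,inf}$ over $\mathcal{P}(t,\bb{P}')$ equal $\Exp(Z)_t$). Applying $\bb{P}'$-It\^o's formula to $\Exp(Z)K$ (noting that $\left\langle\Exp(Z),K\right\rangle\equiv 0$ since $K$ is of bounded variation) and taking $E^{\bb{P}'}_s$, I obtain
\begin{equation*}
E^{\bb{P}'}_s\!\left[\frac{\Exp(Z)_t}{\Exp(Z)_s}K_t\right]=K_s+\frac{1}{\Exp(Z)_s}E^{\bb{P}'}_s\!\left[\int_s^t\Exp(Z)_u\,dK_u\right]\le K_s,
\end{equation*}
because $\Exp(Z)>0$ and $dK\le 0$. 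Taking the essential supremum over $\bb{P}'$ yields the upper bound $\Et_s[K_t]\le K_s$.

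For the matching lower bound I would exploit that $K$ is itself a $G$-martingale under $\E$, which supplies a sequence $\{\bb{P}'_n\}\subset\mathcal{P}(s,\bb{P})$ along which $E^{\bb{P}'_n}_s[K_s-K_t]\to 0$. The pathwise bound $\left|\int_s^t\Exp(Z)_u\,dK_u\right|\le \sup_{u\in[s,t]}\Exp(Z)_u\cdot(K_s-K_t)$ combined with H\"older's inequality reduces the task to controlling $E^{\bb{P}'_n}_s[\sup_u\Exp(Z)_u^a]$ (handled by Doob's maximal inequality together with the reverse H\"older property) and $E^{\bb{P}'_n}_s[(K_s-K_t)^b]$ with $1/a+1/b=1$; the latter is driven to zero by an $L^1$--$L^r$ interpolation, exploiting the assumed $L_G^p$-integrability of $K$ ($p>q/(q-1)$) to bound the higher-moment factor uniformly in $n$. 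This interpolation step---bridging the merely $L^1$-type convergence delivered by the $G$-martingale property of $K$ and the stronger $L^b$-control needed to pair it with the BMO exponential---is the main technical obstacle of the proof; once this is done, combining the two directions gives $\Et_s[K_t]=K_s$ and concludes the argument.
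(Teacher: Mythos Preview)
The paper does not supply its own proof of this lemma: it is quoted from Hu, Lin and Soumana Hima~\cite{Hu2018} (see the sentence immediately preceding the two Girsanov lemmas in Section~\ref{section_BMO}), so there is no in-paper argument to compare against. Your proposal is therefore an independent proof, and its architecture is sound. The upper bound $\Et_s[K_t]\le K_s$ is immediate from $K_t\le K_s$ and monotonicity; the substance lies in the matching lower bound, which you obtain by passing to the probabilistic representation of $\E_s$, applying $\bb P'$-integration by parts to $\Exp(Z)K$ (the cross-variation vanishes since $K$ has finite variation), and then selecting $\bb P'_n\in\mathcal P(s,\bb P)$ along which $E^{\bb P'_n}_s[K_s-K_t]\to 0$, which forces $E^{\bb P'_n}_s[\int_s^t\Exp(Z)_u\,dK_u]\to 0$ via your H\"older-plus-interpolation step.

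Two technical points deserve to be made explicit if you write this out. First, extracting a \emph{sequence} $\{\bb P'_n\}$ realising the essential supremum $\E_s[K_t]=K_s$ requires the family $\{E^{\bb P'}_s[K_t]:\bb P'\in\mathcal P(s,\bb P)\}$ to be upward directed; this is a structural (pasting) property of $\mathcal P$ proved in Soner--Touzi--Zhang~\cite{Soner2011} and should be cited rather than taken for granted. Second, in the interpolation you need a H\"older pair $(a,b)$ with $1<a\le q$ (so that Doob's maximal inequality combined with the reverse H\"older bound of Lemma~\ref{reverse holder} controls $E^{\bb P'_n}_s[\sup_u(\Exp(Z)_u/\Exp(Z)_s)^a]$ uniformly in $n$) and $b=a/(a-1)<p$ (so that the $L^1$--$L^p$ interpolation applies to $(K_s-K_t)$); the hypothesis $p>q/(q-1)$ is exactly what makes the choice $a=q$ admissible, and this is where the integrability assumption on $K$ enters. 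With these two details spelled out, together with the routine check that $\int_s^\cdot K_u\Exp(Z)_uZ_u\,dB_u$ is a true (not merely local) $\bb P'$-martingale, your argument is complete.
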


\section{A priori estimates for solutions of reflected quadratic $G$-BSDEs}

With $G$-BMO martingale and $G$-Girsanov Theorem, we have the following comparison theorem for quadratic $G$-BSDEs.

\begin{Theorem}
	\label{Thm_compare}
	Let the triplet $(\xi^i,f^i,g^i)$ satisfy \textbf{(H1)}-\textbf{(H3)} for $i=1,2$. Let $(Y^i,Z^i,K^i)\in \mathfrak{G}_G^2(0,T)$  be the solution to the following $G$-BSDE:
\begin{eqnarray*}
 Y_t^i&=&\xi^i+\int_t^Tg^i(s,Y_s^i,Z_s^i)ds+\int_t^Tf^i(s,Y_s^i,Z_s^i)d\Bq_s+\int_t^T dV_s^i\\
	&&-\int_t^TZ_s^idB_s-\int_t^T dK_s^i,~~\text{q.s.},~t\in[0,T],
\end{eqnarray*}
	where $V^i$ is a continuous finite variation process, for $i=1,2$. Assume that $$(Y^i,Z^i,K^i_T,V^i)\in S_G^\infty(0,T)\times BMO_G\times \bigcap_{p\geq 1} L_G^p(\Omega_T)\times \bigcap_{p\geq 1} S_G^p(0,T),$$ and $K^i$ is a decreasing $G$-martingale.
	If $\xi^1\geq \xi^2$, $g^1\geq g^2$, $f^1\geq f^2$,~~\text{q.s.} and $V^1-V^2$ is an increasing process, then we have $Y_t^1\geq Y_t^2$,~~\text{q.s.},~ for any $t\in[0,T]$.
\end{Theorem}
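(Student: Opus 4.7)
My plan is to prove the comparison by a linearization-plus-$G$-Girsanov argument, mirroring Hu et al.~\cite{Hu2018} for the non-reflected quadratic $G$-BSDE and tracking carefully how the extra finite-variation process $V^i$ and the decreasing $G$-martingale $K^i$ enter. Set $\hat Y:=Y^1-Y^2$, $\hat Z:=Z^1-Z^2$, $\hat\xi:=\xi^1-\xi^2$, $\hat V:=V^1-V^2$, $\hat K:=K^1-K^2$. By \textbf{(H3)} and the usual three-term splitting in the $(y,z)$-slots, I would produce bounded processes $a,\tilde a$ with $|a|,|\tilde a|\le L_y$, processes $b,\tilde b$ pointwise bounded by $L_z(1+|Z^1|+|Z^2|)$, and source processes $\Delta g_s:=g^1(s,Y^2,Z^2)-g^2(s,Y^2,Z^2)\ge 0$ and $\Delta f_s:=f^1(s,Y^2,Z^2)-f^2(s,Y^2,Z^2)\ge 0$, so that $\hat Y$ satisfies
\[
\hat Y_t=\hat\xi+\int_t^T(a_s\hat Y_s+b_s\hat Z_s+\Delta g_s)\,ds+\int_t^T(\tilde a_s\hat Y_s+\tilde b_s\hat Z_s+\Delta f_s)\,d\Bq_s+\hat V_T-\hat V_t-\int_t^T\hat Z_s\,dB_s-\hat K_T+\hat K_t.
\]
Since $Z^1,Z^2\in BMO_G$ by hypothesis, so are $b$ and $\tilde b$.

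Next I would perform a $G$-Girsanov transform. Using the non-degeneracy $\sidown^2\,ds\le d\Bq_s\le \siup^2\,ds$, the $b\hat Z\,ds$ term can be recast as a $d\Bq_s$-integral with a coefficient pointwise controlled by $b/\sidown^2$, hence merged with $\tilde b$ into a single $BMO_G$-process $\beta$. Lemma \ref{lemma_Gisr_K} then yields a new $G$-expectation $\Et[\cdot]=\E[\Exp(\beta)_T\,\cdot]$ under which $\tilde B_t:=B_t-\int_0^t\beta_s\,d\Bq_s$ is a $G$-Brownian motion and each $K^i$ is still a decreasing $G$-martingale. Multiplying by the strictly positive integrating factor $\Gamma_t:=\exp\bigl(\int_0^tA_s\,ds+\int_0^t\tilde A_s\,d\Bq_s\bigr)$, where $A,\tilde A$ are the bounded $\hat Y$-coefficients after the change of measure, and applying the $G$-It\^o product rule, the linear $\hat Y$-terms cancel and I obtain
\[
\Gamma_t\hat Y_t+\int_t^T\Gamma_s\hat Z_s\,d\tilde B_s+\int_t^T\Gamma_s\,d\hat K_s=\Gamma_T\hat\xi+\int_t^T\Gamma_s\Delta g_s\,ds+\int_t^T\Gamma_s\Delta f_s\,d\Bq_s+\int_t^T\Gamma_s\,d\hat V_s.
\]
The right-hand side is manifestly non-negative since $\hat\xi\ge 0$, $\Delta g,\Delta f\ge 0$, and $\hat V$ is non-decreasing.

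To conclude, I would apply $\Et_t[\cdot]$ to both sides. The term $\int_t^T\Gamma_s\hat Z_s\,d\tilde B_s$ is a symmetric $G$-martingale under $\Et$ (as $\Gamma\hat Z$ sits in the BMO space under $\Et$), so its conditional $\Et_t$ vanishes in both directions. For the $\hat K$-contribution I would split $\int\Gamma\,d\hat K=\int\Gamma\,dK^1-\int\Gamma\,dK^2$; invoking Lemma \ref{lemma_Gisr_K} (so that each $K^i$ is a $G$-martingale under $\Et$) together with the identity $\Et_t[\eta_t+M_T-M_t]=\eta_t$ stated just after Theorem \ref{Thm_conditional_E}, and using the sign $\Gamma_s>0$ with $dK^i\le 0$, one shows that these pieces combine with $\Gamma_t\hat K_t$ on the left to yield a non-negative output. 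Therefore $\Gamma_t\hat Y_t\ge 0$ q.s., and since $\Gamma_t>0$ this gives $Y^1_t\ge Y^2_t$ q.s.\ for every $t\in[0,T]$.

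The main obstacle is the $\hat K$-term: a difference of two decreasing $G$-martingales is \emph{not} itself a $G$-martingale, and the sublinearity of $\Et_t$ rules out any naive cancellation of the form $\Et_t[\hat K_T-\hat K_t]=0$. The resolution is to treat $K^1$ and $K^2$ asymmetrically, combining the preservation of their $G$-martingale property under Girsanov with the one-sided sign information coming from monotonicity of $K^i$ and positivity of $\Gamma$. A secondary, technical difficulty lies in Step~2: the $ds$- and $d\Bq_s$-coefficients of $\hat Z$ must be amalgamated into a single BMO process $\beta$ whose norm is small enough for Lemmas \ref{reverse holder} and \ref{A_p} to apply, so that the exponential $G$-martingale $\Exp(\beta)$ genuinely satisfies the reverse H\"older inequality needed for Lemma \ref{lemma_Gisr_K}.
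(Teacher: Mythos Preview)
Your overall architecture coincides with the paper's: linearize in $(y,z)$, perform a $G$-Girsanov transform to absorb the $\hat Z$-drift, multiply by an exponential integrating factor to kill the $\hat Y$-drift, then treat $K^1$ and $K^2$ asymmetrically---drop the sign-favorable $\int\Gamma\,dK^1\le 0$ and keep $\int\Gamma\,dK^2$ on the same side as $-\Gamma_t\hat Y_t$ so that Proposition~2.3 applies with $M=\int_0^\cdot\Gamma\,dK^2$ (a $G$-martingale under $\Et$ by Lemma~\ref{lemma_Gisr_K}). Your diagnosis that ``a difference of two decreasing $G$-martingales is not a $G$-martingale'' is precisely the crux, and your one-sided resolution is the right one.

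There is, however, a genuine gap in the linearization step. You assert that since $Z^1,Z^2\in BMO_G$, so are your coefficients $b,\tilde b$. In the $G$-framework this inference is illegitimate: $BMO_G\subset H_G^2(0,T)$, and $H_G^2$ is defined as the \emph{completion} of simple processes under the $H_G^2$-norm; a process merely pointwise dominated by an element of $H_G^2$ need not itself lie there. Your $b_s$ is of the form $[f^1(s,Y^2,Z^1)-f^1(s,Y^2,Z^2)]\hat Z_s/|\hat Z_s|^2\cdot\mathbf 1_{\{|\hat Z_s|>0\}}$, a discontinuous functional of $\hat Z$, and there is no reason it should belong to $H_G^2$. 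The paper circumvents this by an $\varepsilon$-mollification: the indicator is replaced by $1-l$ for a Lipschitz cutoff $l$ with $\mathbf 1_{[-\varepsilon,\varepsilon]}\le l\le\mathbf 1_{[-2\varepsilon,2\varepsilon]}$, yielding coefficients $\hat a_s^\varepsilon,\hat b_s^\varepsilon$ that are Lipschitz functions of $(Y^1,Y^2,Z^1,Z^2)$ and hence lie in the correct spaces (this is \cite[Lemma~3.6]{Hu2018}). The price is a residual term $\hat m_s^\varepsilon$ with $|\hat m_s^\varepsilon|\le 2\varepsilon(L_y+L_z(1+2\varepsilon+2|Z^1_s|))$; the argument then yields $\hat Y_t\ge -e^{2L_y\langle B\rangle_T}\Et_t\bigl[\int_t^T|\hat m_s^\varepsilon|\,d\langle B\rangle_s\bigr]$, and one closes by sending $\varepsilon\to 0$ using the reverse H\"older bound $\Et_t[X]\le C_p\E_t[|X|^p]^{1/p}$ together with Lemma~\ref{energy_ineq}.

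Your ``merging'' of the $ds$- and $d\langle B\rangle_s$-drifts on $\hat Z$ is also problematic: writing $b_s\hat Z_s\,ds$ as $(b_s\,ds/d\langle B\rangle_s)\hat Z_s\,d\langle B\rangle_s$ requires the density $d\langle B\rangle_s/ds$, which, while bounded in $[\sidown^2,\siup^2]$ under each $\mathbb P\in\mathcal P$, is not known to define an element of $M_G^2$. The paper avoids this issue by assuming $g^1=g^2=0$ without loss of generality. Finally, your worry that $\|\beta\|_{BMO_G}$ must be ``small enough'' for Lemmas~\ref{reverse holder}--\ref{A_p} is misplaced: since $\phi(q)\to\infty$ as $q\downarrow 1$, any finite $BMO_G$-norm is below $\phi(q)$ for some $q>1$, and the hypothesis $K^i_T\in\bigcap_{p\ge 1}L_G^p$ then supplies the integrability needed in Lemma~\ref{lemma_Gisr_K}.
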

\begin{proof}
	Without loss of generality, we assume that $g^1=g^2=0$.
	
Define $\hat{\xi}:=\xi^1-\xi^2$  and for $t\in [0,T]$,
$$\hat{Y_t}:=Y_t^1-Y_t^2,\quad \hat{Z_t}:=Z_t^1-Z_t^2,\quad \hat K_t:=K^1_t-K^2_t, \quad \hat V_t:=V^1_t-V^2_t,$$
and $\hat{f_t}:=f^1(t,Y_t^2,Z_t^2)-f^2(t,Y_t^2,Z_t^2).$
Like  in the proof of \cite[Proposition 3.5]{Hu2018}, 	we use the method of  linearization  to write
$$\hat Y_t=\hat\xi+\int_t^T(\hat f_s+\hat m_s^\varepsilon+\hat a_s^\varepsilon\hat Y_s+\hat b_s^\varepsilon\hat Z_s)d\Bq_s-\int_t^T \hat Z_s dB_s-\int_t^T d\hat K_s+\int_t^Td\hat V_s,~~\text{q.s.},$$
where  for
	$0\leq s\leq T,$
	\begin{eqnarray*}
		\hat a_s^\varepsilon&:=&[1-l(\hat Y_s)]
		{f^1(s,Y_s^1,Z_s^1)-f^1(s,Y_s^2,Z_s^1)\over
			\hat Y_s}\textbf{1}_{\{|\hat Y_s|>0\}}, \\
		\hat b_s^\varepsilon&:=&[1-l(\hat Z_s)]
		{f^1(s,Y_s^2,Z_s^1)-f^1(s,Y_s^2,Z_s^2)\over
			|\hat Z_s|^2}\hat Z_s\textbf{1}_{\{|\hat Z_s|>0\}}, \\
		\hat m_s^\varepsilon&:=& l(\hat Y_s)
		[f^1(s,Y_s^1,Z_s^1)-f^1(s,Y_s^2,Z_s^1)]+l(\hat Z_s)[f^1(s,Y_s^2,Z_s^1)-f^1(s,Y_s^2,Z_s^2)]
	\end{eqnarray*}
	for a  scalar Lipschitz continuous function $l$ such that $\textbf{1}_{[-\varepsilon,\varepsilon]}(x)\leq l(x)\leq \textbf{1}_{[-2\varepsilon,2\varepsilon]}(x)$ with $x\in (-\infty,+\infty)$.
	We also have
	\begin{eqnarray*}
		|\hat a_s^\varepsilon|&\leq& L_y,\quad |\hat b_s^\varepsilon|\leq L_z(1+|Z_s^1|+|Z_s^2|),\\
		|\hat m_s^\varepsilon|&\leq& 2\varepsilon(L_y+L_z(1+ 2\varepsilon+2|Z_s^1|)).
	\end{eqnarray*}
	Define $\tilde{B}_t:=B_t-\int_0^t\hat b_s^\varepsilon d\Bq_s$ for $t\in [0,T]$.
	In view of \cite[Lemma 3.6]{Hu2018}, we know that $\hat b^\varepsilon\in BMO_G$. Therefore,  we can define a new $G$-expectation $\Et[\cdot]$ by $\Exp(\hat b^\varepsilon)$, such that $\tilde{B}$ is a $G$-Brownian motion under $\Et[\cdot]$. Then the last $G$-BSDE reads
	$$\hat Y_t=\hat\xi+\int_t^T(\hat f_s+\hat m_s^\varepsilon+\hat a_s^\varepsilon\hat Y_s)d\Bq_s-\int_t^T \hat Z_s d\tilde{B}_s-\int_t^T d\hat K_s+\int_t^Td\hat V_s,~~\text{q.s.}$$
	
	Applying It\^o's formula to $e^{\int_0^t\hat a_s^\varepsilon d\Bq_s}\hat Y_t$,~ we have
	\begin{eqnarray*}
		&&e^{\int_0^t\hat a_s^\varepsilon d\Bq_s}\hat Y_t\\
		&=& e^{\int_0^T\hat a_s^\varepsilon d\Bq_s}\hat\xi+\int_t^T e^{\int_0^s\hat a_u^\varepsilon d\Bq_u} \hat f_s d\Bq_s+\int_t^T e^{\int_0^s\hat a_u^\varepsilon d\Bq_u} \hat m_s^\varepsilon d\Bq_s\\
		&&-\int_t^T e^{\int_0^s\hat a_u^\varepsilon d\Bq_u} \hat Z_s d\tilde{B}_s- \int_t^T e^{\int_0^s\hat a_u^\varepsilon d\Bq_u} d\hat K_s+\int_t^T e^{\int_0^s\hat a_u^\varepsilon d\Bq_u} d\hat V_s\\
		&\geq& \int_t^T e^{\int_0^s\hat a_u^\varepsilon d\Bq_u} \hat m_s^\varepsilon d\Bq_s-\int_t^T e^{\int_0^s\hat a_u^\varepsilon d\Bq_u} \hat Z_s d\tilde{B}_s+\int_t^T e^{\int_0^s\hat a_u^\varepsilon d\Bq_u} dK_s^2,~~\text{q.s.}
	\end{eqnarray*}
	So we have
	$$-e^{\int_0^t\hat a_s^\varepsilon d\Bq_s}\hat Y_t +\int_t^T e^{\int_0^s\hat a_u^\varepsilon d\Bq_u} dK_s^2
	\leq -\int_t^T e^{\int_0^s\hat a_u^\varepsilon d\Bq_u} \hat m_s^\varepsilon d\Bq_s+\int_t^T e^{\int_0^s\hat a_u^\varepsilon d\Bq_u} \hat Z_s d\tilde{B}_s,~~\text{q.s.}$$
	In view of  Hu et al. \cite[Lemma 3.4]{HuM2014_1} and  Lemma \ref{lemma_Gisr_K}, we know $\int_0^\cdot e^{\int_0^s\hat a_u^\varepsilon d\Bq_u} dK_s^2$ is a decreasing $G$-martingale under both $\E[\cdot]$ and $\Et[\cdot]$.
	Taking conditional $G$-expectation on both sides, we have
	$$-e^{\int_0^t\hat a_s^\varepsilon d\Bq_s}\hat Y_t\leq \Et_t\Big[-\int_t^T e^{\int_0^s\hat a_u^\varepsilon d\Bq_u} \hat m_s^\varepsilon d\Bq_s\Big],~~\text{q.s.}$$
	Since $|\hat a_s^\varepsilon|\leq L_y$, we have
	$$\hat Y_t\geq -e^{2L_y\Bq_T}\Et_t\Big[\int_t^T  |\hat m_s^\varepsilon| d\Bq_s\Big],~~\text{q.s.}$$
	Finally, it remains to prove the limit
	$$\lim_{\varepsilon\to0}\Et_t\Big[\int_t^T |\hat m_s^\varepsilon|d\Bq_s\Big]=0,~~\text{q.s.}$$
	Let $\phi(x)=\big\{1+{1\over x^2}\log{2x\over2(x-1)}\big\}^{1\over2}-1$. We know there exist $p>1$ independent of $\varepsilon$ such that
	$$\norm{\hat b_s^\varepsilon}_{BMO_G}\leq\norm{L_z(1+|Z_1|+|Z_2|)}_{BMO_G}< \phi(p^\prime),$$
	where $p^\prime={p\over p-1}$.
	Then according to Lemma \ref{reverse holder}, for $X\in L_G^p(\Omega_T)$, we have
	$$\Et_t[X]=\E_t\Big[{\Exp(\hat b^\varepsilon)_T\over \Exp(\hat b^\varepsilon)_t}X\Big]\leq \E_t\Big[\Big({\Exp(\hat b^\varepsilon)_T\over \Exp(\hat b^\varepsilon)_t}\Big)^{p^\prime}\Big]^{1\over p^\prime}\E_t[|X|^p]^{1\over p}\leq C_p\E_t[|X|^p]^{1\over p},~~\text{q.s.}$$
	In view of Lemma \ref{energy_ineq}, we have
	\begin{eqnarray*}
		\Et_t\Big[\int_t^T |\hat m_s^\varepsilon|d\Bq_s\Big]&\leq&
		2\varepsilon\siup^2T(L_y+L_z+2L_z\varepsilon)
		+4\varepsilon\Et_t\Big[\int_t^T|Z_s^1|d\Bq_s\Big]\\
		&\leq&
		2\varepsilon\siup^2T(L_y+L_z+2L_z\varepsilon)
		+4\varepsilon\siup^2T\Et_t\Big[\int_t^T|Z_s^1|^2d\Bq_s\Big]^{1\over 2}\\
		&\leq&2\varepsilon\siup^2T(L_y+L_z+2L_z\varepsilon)
		+4\varepsilon C_p\siup^2T\E_t\Big[\Big(\int_t^T|Z_s^1|^2d\Bq_s\Big)^p\Big]^{1\over 2p}\\
		&\leq&2\varepsilon\siup^2T(L_y+L_z+2L_z\varepsilon)
		+4\varepsilon C_pC_p^{\prime\prime}\norm{Z^1}_{BMO_G},~~\text{q.s.}
	\end{eqnarray*}
	So we get $\lim_{\varepsilon\to0}\Et_t\Big[\int_t^T |\hat m_s^\varepsilon|d\Bq_s\Big]=0,~~\text{q.s.}$
\end{proof}

Consider the following type of BSDE:
 \begin{equation}
\begin{cases}
\displaystyle
 Y_t=\xi+\int_t^Tf(s,\omega_{\cdot\land s},Y_s,Z_s)d\Bq_s-\int_t^TZ_sdB_s+\int_t^TdA_s,~~\text{q.s.},~t\in[0,T];\\
 \displaystyle
Y_t\geq S_t,~~\text{q.s.},\quad t\in[0,T];\quad
\int_0^\cdot(S_s-Y_s)dA_s \text{ is a non-increasing }G\text{-martingale,}
 \label{RGBSDE2}
 \end{cases}
 \end{equation}
 with $A$ being a continuous nondecreasing process and  $A_0=0$.
\begin{Proposition}
\label{Propose_Z_bound}
Let $f$ satisfy \textbf{(H1)} and \textbf{(H3)}. Assume that $(Y,Z,A)$ solves
$$ Y_t=\xi+\int_t^Tf(s,Y_s,Z_s)d\Bq_s-\int_t^TZ_sdB_s+\int_t^TdA_s,~~\text{q.s.},\quad t\in[0,T],$$
where $$(Y,Z)\in S_G^\infty(0,T)\times H_G^2(0,T),$$
and $A$ is a continuous nondecreasing process with $A_0=0$.

Then there exist constant $C_1:=C_1(\norm{Y}_{S_G^\infty},T,L_z,L_y,M_0,\siup)$
such that $$\norm{Z}_{BMO_G}\leq C_1,$$
and constant $C_2:=C_2(\norm{Y}_{S_G^\infty},T,L_z,L_y,M_0,\siup,\alpha)$
for any $\alpha\geq 1$, such that $$\E[|A_T|^\alpha]\leq C_2.$$
\end{Proposition}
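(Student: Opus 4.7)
The plan is to apply It\^o's formula to the exponential transformation $e^{-\beta Y_t}$ for $\beta:=3L_z+1$, chosen so that the It\^o correction term $\tfrac{\beta^2}{2}e^{-\beta Y_s}Z_s^2$ dominates the quadratic-in-$z$ part of the generator. Write $M:=\norm{Y}_{S_G^\infty}$; by Remark \ref{remark_f_bound} one has $|f(s,\omega,Y_s,Z_s)|\le M_0+L_yM+\tfrac12L_z+\tfrac{3}{2}L_z|Z_s|^2$ q.s. Fix $\mathbb{P}\in\mathcal{P}$ and a stopping time $\tau\in\mathcal{T}_0^T$. Applying the classical It\^o formula $\mathbb{P}$-a.s.\ to $e^{-\beta Y_t}$, using $d\langle Y\rangle_t=Z_t^2d\Bq_t$ and the above bound on $-f$, and rearranging the terms produces the pathwise inequality
$$\tfrac{\beta}{2}e^{-\beta M}\int_\tau^T|Z_s|^2d\Bq_s+\beta e^{-\beta M}(A_T-A_\tau)\le C+\int_\tau^T\beta e^{-\beta Y_s}Z_sdB_s,\quad \mathbb{P}\text{-a.s.},$$
where $C=C(M,T,L_z,L_y,M_0,\siup)$ is a deterministic constant.

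To eliminate the stochastic integral, I localize it by $\sigma_n:=\inf\{t\ge \tau:\int_\tau^t|Z_s|^2d\Bq_s\ge n\}\wedge T$, making $\{\int_\tau^{t\wedge\sigma_n}\beta e^{-\beta Y_s}Z_sdB_s\}_t$ a true $\mathbb{P}$-martingale since its $\mathbb{P}$-quadratic variation is bounded by $\beta^2e^{2\beta M}n$. Taking $E^{\mathbb{P}}_\tau[\cdot]$ in the inequality stopped at $\sigma_n$ kills the martingale term, and sending $n\to\infty$ via monotone convergence on the left (both summands are nonnegative) yields
$$E^{\mathbb{P}}_\tau\Big[\int_\tau^T|Z_s|^2d\Bq_s\Big]\le \tfrac{2C}{\beta}e^{\beta M},\quad \mathbb{P}\text{-a.s.}$$
As this bound is uniform in $\mathbb{P}\in\mathcal{P}$ and $\tau\in\mathcal{T}_0^T$, we conclude $\norm{Z}_{BMO_G}\le C_1$.

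For the $\alpha$-th moment of $A_T$, evaluate the $G$-BSDE at $t=0$ to obtain
$$A_T=Y_0-\xi-\int_0^Tf(s,Y_s,Z_s)d\Bq_s+\int_0^TZ_sdB_s.$$
Taking absolute values, applying the quadratic bound on $|f|$, and then the sublinear expectation gives
$$\E[|A_T|^\alpha]\le C(\alpha)\Big\{1+\E\Big[\Big(\int_0^T|Z_s|^2d\Bq_s\Big)^\alpha\Big]+\E\Big[\Big|\int_0^TZ_sdB_s\Big|^\alpha\Big]\Big\}$$
with the additive constant depending on $M,M_0,L_y,L_z,T,\siup,\alpha$. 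Lemma \ref{energy_ineq} with exponent $2\alpha$ controls the first expectation by $\norm{Z}_{BMO_G}^{2\alpha}$; the $G$-BDG inequality, together with $d\Bq_s\le\siup^2ds$, controls the second by $\norm{Z}_{BMO_G}^\alpha$. Combined with the already-proved BMO estimate, this yields the desired bound $\E[|A_T|^\alpha]\le C_2$.

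The main obstacle is justifying that the conditional $\mathbb{P}$-expectation of $\int_\tau^T\beta e^{-\beta Y_s}Z_sdB_s$ vanishes: since $Z$ is a priori only in $H_G^2$, this integral is merely a $\mathbb{P}$-local martingale, which is why the localization $\sigma_n$ together with the nonnegativity of both terms on the left of the pathwise inequality (no a priori control on $A_T$) is essential to pass to the limit by monotone convergence. Working pathwise under each $\mathbb{P}\in\mathcal{P}$ also bypasses the sublinearity of $\E[\cdot]$ in the derivation, as every constant in the key inequality is deterministic.
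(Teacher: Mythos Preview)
Your approach is essentially identical to the paper's: both apply It\^o's formula to $e^{-\beta Y_t}$ (the paper takes $\beta=4L_z$, you take $\beta=3L_z+1$; any $\beta>3L_z$ works), absorb the quadratic-in-$z$ part of $f$ into the It\^o correction, and then read off $A_T$ from the equation at $t=0$ using BDG and Lemma~\ref{energy_ineq}. One small slip: assumption \textbf{(H1)} only gives $\int_0^T|f(s,0,0)|^2\,ds\le M_0$, not a pointwise bound $|f(s,0,0)|\le M_0$, so you should keep $|f(s,0,0)|$ inside the $d\Bq_s$-integral and bound it at the end via Cauchy--Schwarz, $\int_0^T|f(s,0,0)|\,d\Bq_s\le\siup^2\sqrt{T}\sqrt{M_0}$, exactly as the paper does.
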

\begin{proof}
For each $\bb P\in\mathcal{P}$, we know
$$ Y_t=\xi+\int_t^Tf(s,Y_s,Z_s)d\Bq_s-\int_t^TZ_sdB_s+\int_t^TdA_s,~~\bb P\text{\,-a.s.},\quad t\in[0,T].$$
Then, for some $a>0~$, applying It\^o's formula under $\bb P$ to $e^{-aY_t}$,~ we have for each $\tau\in\mathcal{T}_0^T$,
\begin{eqnarray*}
&&{a^2\over 2}\int_\tau^Te^{-aY_s}Z_s^2d\Bq_s\\
&=&e^{-a\xi}-e^{-aY_\tau}-\int_\tau^Tae^{-aY_s}f(s,Y_s,Z_s)d\Bq_s+\int_\tau^Tae^{-aY_s}Z_sdB_s\\
&&-\int_\tau^Tae^{-aY_s}dA_s,~~\bb P\text{\,-a.s.}
\end{eqnarray*}
Since $A~$ is a continuous nondecreasing process, noting $a>0$ and Remark \ref{remark_f_bound}, we have
\begin{eqnarray*}
&&{a^2\over 2}\int_\tau^T e^{-aY_s}|Z_s|^2d\Bq_s\\
&&\leq e^{-a\xi}-e^{-aY_\tau}-\int_\tau^Tae^{-aY_s}f(s,Y_s,Z_s)d\Bq_s+\int_\tau^Tae^{-aY_s}Z_sdB_s\\
&&\leq e^{-a\xi}-e^{-aY_\tau}+\int_\tau^Tae^{-aY_s}(|f(s,0,0)|+{1\over 2}L_z+L_y|Y_s|)d\Bq_s\\
&&\quad+{3aL_z\over2}\int_\tau^Te^{-aY_s}|Z_s|^2d\Bq_s+\int_\tau^Tae^{-aY_s}Z_sdB_s,~~\bb P\text{\,-a.s.}
\end{eqnarray*}
Taking $a=4L_z$, noting $Y\in S_G^\infty(0,T)$ and taking conditional expectations under $\bb{P}$ on both sides, we have
\begin{eqnarray*}
&&2L_z^2E^{\bb{P}}_{\tau}\Big[\int_\tau^T e^{-aY_s}|Z_s|^2d\Bq_s\Big]\\
&&\leq E^{\bb{P}}_{\tau}\Big[e^{-a\xi}-e^{-aY_\tau}+\int_\tau^Tae^{-aY_s}(|f(s,0,0)|+{1\over 2}L_z+L_y|Y_s|)d\Bq_s\Big]\\
&&\leq 2e^{4L_z\norm{Y}_{S_G^\infty}}+4L_z\siup^2\Big(\sqrt{T} E^{\bb{P}}_{\tau}\Big[\Big(\int_0^T|f(s,0,0)|^2ds\Big)^{1\over 2}\Big]
    +{1\over 2}L_zT+L_yT\norm{Y}_{S_G^\infty}\Big)e^{4L_z\norm{Y}_{S_G^\infty}}\\
&&\leq 2e^{4L_z\norm{Y}_{S_G^\infty}}+4L_z\siup^2\Big(\sqrt{TM_0}+{1\over 2}L_zT+L_yT\norm{Y}_{S_G^\infty}\Big)e^{4L_z\norm{Y}_{S_G^\infty}},~~\bb P\text{\,-a.s.}
\end{eqnarray*}
Then with the arbitrariness of $\tau$, we obtain for all $\bb{P}\in\mathcal{P}$,
$$\norm{Z}_{BMO(\bb{P})}^2\leq {1\over L_z^2}e^{8L_z\norm{Y}_{S_G^\infty}}+{2\over L_z}\siup^2\Big(\sqrt{TM_0}
+{1\over 2}L_zT+L_yT\norm{Y}_{S_G^\infty}\Big)e^{8L_z\norm{Y}_{S_G^\infty}}.$$
Finally, with the arbitrariness of $\bb{P}$, we get
$$\norm{Z}_{BMO_G}^2\leq {1\over L_z^2}e^{8L_z\norm{Y}_{S_G^\infty}}+{2\over L_z}\siup^2\Big(\sqrt{TM_0}
+{1\over 2}L_zT+L_yT\norm{Y}_{S_G^\infty}\Big)e^{8L_z\norm{Y}_{S_G^\infty}}.$$
Now we get the estimate for $Z$. We have
$$A_T=Y_0-\xi-\int_0^Tf(s,Y_s,Z_s)d\Bq_s+\int_0^TZ_sdB_s,~~\text{q.s.}$$
In view of BDG inequality and Remark \ref{remark_f_bound}, we have for each $\alpha\geq1$
\begin{eqnarray*}
\E[A_T^\alpha]&\leq& C_\alpha\E[|Y_0|^\alpha+|\xi|^\alpha]+C_\alpha\E\Big[\Big(\int_0^T|f(s,Y_s,Z_s)|d\Bq_s\Big)^\alpha\Big]\\
&&+C_\alpha\E\Big[\Big(\int_0^T|Z_t|^2d\Bq_t\Big)^{\alpha\over2}\Big]\\
&\leq& 2C_\alpha\norm{Y}_{S_G^\infty}^\alpha+\tilde C_\alpha\E\Big[\Big(\int_0^T|f(s,0,0)|d\Bq_s\Big)^\alpha
+\Big(\int_0^T{1\over 2}L_z+L_y|Y_s|d\Bq_s\Big)^\alpha\Big]\\
&&+{3\tilde C_\alpha L_z\over2}\E\Big[\Big(\int_0^T|Z_t|^2d\Bq_t\Big)^{\alpha}\Big]+C_\alpha\E\Big[\Big(\int_0^T|Z_t|^2d\Bq_t\Big)^{\alpha\over2}\Big]\\
&\leq& 2C_\alpha\norm{Y}_{S_G^\infty}^\alpha+\tilde C_\alpha\siup^{2\alpha}\Big\{\E\Big[\Big(T\int_0^T|f(s,0,0)|^2ds\Big)^{\alpha\over 2}\Big]
+\Big({1\over 2}L_zT+L_yT\norm{Y}_{S_G^\infty}\Big)^\alpha\Big\}\\
&&+{3\tilde C_\alpha L_z\over2}\E\Big[\Big(\int_0^T|Z_t|^2d\Bq_t\Big)^{\alpha}\Big]+C_\alpha\E\Big[\Big(\int_0^T|Z_t|^2d\Bq_t\Big)^{\alpha\over2}\Big].
\end{eqnarray*}
In view of Lemma \ref{energy_ineq}, we have
\begin{eqnarray*}
\E[A_T^\alpha]&\leq& 2C_\alpha\norm{Y}_{S_G^\infty}^\alpha+\tilde C_\alpha\siup^{2\alpha}\Big\{(TM_0)^{\alpha\over 2}
+\Big({1\over 2}L_zT+L_yT\norm{Y}_{S_G^\infty}\Big)^\alpha\Big\}\\
&&+{3\tilde C_\alpha\bar C_{2\alpha} L_z\over2}\norm{Z}_{BMO_G}^{2\alpha}+C_\alpha
\bar C_\alpha\norm{Z}_{BMO_G}^{\alpha}.
\end{eqnarray*}
Substituting the estimate for $Z$, we get the estimate for $A$.
\end{proof}
\begin{Proposition}
\label{Propose_Y_bound}
Let $(\xi,f,S)$ satisfy \textbf{(H1)}, \textbf{(H3)} and \textbf{(H4)}. Assume that the triplet
$(Y,Z,A)\in\mathcal{S}_G^{2p}(0,T)$ with some $p>1$, is a solution to the reflected $G$-BSDE with data $(\xi,f,S)$. Moreover,
 we suppose
$$\norm{L_z(1+|Z|)}_{BMO_G}< \phi(q):=\bigg\{1+{1\over q^2}\log{2q-1\over2(q-1)}\bigg\}^{1\over2}-1,$$
with $q$ satisfying $p>{q\over q-1}$.\\
Then there exists a constant $~C:=C(T,L_z,L_y,\siup,N_0)~$ such that
$$\norm{Y_t}_{L_G^\infty}\leq C\Bigg(1+\norm{\xi}_{L_G^\infty}+\norm{\int_0^T|f(s,0,0)|^2ds}_{L_G^\infty}^{1\over 2}\Bigg),\quad \forall t\in[0,T].$$
\end{Proposition}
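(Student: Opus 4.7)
Since the hypothesis $\|L_z(1+|Z|)\|_{BMO_G}<\phi(q)$ is precisely what is needed to invoke the $G$-Girsanov machinery, the plan is to linearize the quadratic generator and reduce the reflected equation to one whose $Z$-coefficient is absent after the change of measure. By \textbf{(H3)} I write $f(s,Y_s,Z_s)=f(s,0,0)+\alpha_s Y_s+\beta_s Z_s$ with $|\alpha_s|\leq L_y$ and $|\beta_s|\leq L_z(1+|Z_s|)$, so that $\|\beta\|_{BMO_G}<\phi(q)$. By Lemma \ref{reverse holder} and Remark \ref{remark_Ap_reverse_holder}, I may define a new sublinear expectation $\Et[\cdot]:=\E[\Exp(\beta)_T\,\cdot]$, under which $\tilde B_t:=B_t-\int_0^t\beta_s d\Bq_s$ is a $G$-Brownian motion; the reflected equation rewrites as
$$Y_t=\xi+\int_t^T\big(f(s,0,0)+\alpha_s Y_s\big)d\Bq_s-\int_t^T Z_s d\tilde B_s+\int_t^T dA_s.$$
Setting $R_s:=\exp\bigl(\int_0^s\alpha_u d\Bq_u\bigr)\in[e^{-L_y\siup^2 T},e^{L_y\siup^2 T}]$ and applying Itô to $R_t Y_t$ as in the proof of Theorem \ref{Thm_compare} eliminates the $\alpha_s Y_s$ drift and produces the representation
\begin{equation*}
R_t Y_t=R_T\xi+\int_t^T R_s f(s,0,0)d\Bq_s-\int_t^T R_s Z_s d\tilde B_s+\int_t^T R_s dA_s.\tag{$\ast$}
\end{equation*}

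For the lower bound, drop the non-negative term $\int_t^T R_s dA_s$ in $(\ast)$ and apply the lower conditional expectation $-\Et_t[-\cdot]$ to the resulting inequality. Subadditivity, the symmetric $\Et$-martingale property of the $d\tilde B$-integral, Cauchy--Schwarz applied to $\int_0^T|f(s,0,0)|ds$, and the reverse Hölder bound on $\Exp(\beta)_T/\Exp(\beta)_t$ combine to give
$$Y_t\geq -C\Big(\|\xi\|_{L_G^\infty}+\Big\|\int_0^T|f(s,0,0)|^2ds\Big\|_{L_G^\infty}^{1/2}\Big),$$
with $C$ depending only on $T,L_y,L_z$ and $\siup$.

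For the upper bound the essential obstacle is the non-dropable term $\int_t^T R_s dA_s\geq 0$, which must be controlled through the Skorokhod-type condition on $A$. By Lemma \ref{lemma_Gisr_K}, $\int_0^\cdot(S_s-Y_s)dA_s$ remains a non-increasing $G$-martingale under $\Et$. I would approximate $y\mapsto (y-N_0)^+$ by a sequence of smooth non-decreasing convex functions $\Psi_n$ vanishing on $(-\infty,N_0]$, apply Itô to $\Psi_n(Y_t)$ in the Girsanov-transformed BSDE, and exploit that $\Psi_n'(Y_s)>0$ forces $Y_s>N_0\geq S_s$. The integral $\int\Psi_n'(Y_s)dA_s$ is therefore supported on $\{Y>S\}$ where, modulo a $G$-local-martingale error, it coincides with a non-positive increment of the non-increasing $G$-martingale $\int(S-Y)dA$. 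Passing to the limit $n\to\infty$ and taking $\Et_t$ yields an upper bound of the form $(Y_t-N_0)^+\leq C(1+\|\xi\|_{L_G^\infty}+\|\int_0^T|f(s,0,0)|^2ds\|_{L_G^\infty}^{1/2})$ with the constant depending additionally on $N_0$, and combined with the lower bound this delivers the stated estimate.

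The principal technical difficulty lies in this last step: the Skorokhod condition in the $G$-framework is only a $G$-martingale property, not the pathwise identity $\mathbf 1_{\{Y>S\}}dA\equiv 0$ available in the classical theory. Making the regularization and the subsequent $G$-martingale manipulation rigorous will require careful use of the continuity of $Y$ and $S$ supplied by \textbf{(H4)}--\textbf{(H5)}, together with the $G$-Girsanov invariance of non-increasing $G$-martingales from Lemma \ref{lemma_Gisr_K}.
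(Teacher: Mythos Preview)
Your lower-bound argument is fine and your general plan (linearize, Girsanov, remove the $Z$-drift) is exactly the paper's. The gap is in the upper bound. In the $G$-framework the Skorokhod condition does \emph{not} say $\mathbf{1}_{\{Y>S\}}dA\equiv 0$; it only says that $\int_0^\cdot (S_s-Y_s)\,dA_s$ is a non-increasing $G$-martingale. Your regularization $\Psi_n(Y)\approx (Y-N_0)^+$ produces the term $\int \Psi_n'(Y_s)\,dA_s$, but there is no bounded function $g$ with $\Psi_n'(Y_s)\le g(s)(Y_s-S_s)$ quasi-surely, so you cannot dominate this integral by the $G$-martingale $\int(Y_s-S_s)\,dA_s$. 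The argument you sketch (``supported on $\{Y>S\}$, hence coincides with a non-positive increment of the $G$-martingale'') is precisely the classical pathwise reasoning that fails here, and I do not see how continuity of $Y,S$ rescues it.

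The paper avoids this difficulty by a single clean trick: instead of splitting into upper and lower bounds, apply It\^o to $e^{rt}|Y_t-N_0|^2$. The reflection term then becomes $\int_t^T 2e^{rs}(Y_s-N_0)\,dA_s$, and since $S_s\le N_0$ by \textbf{(H4)} one has the pointwise inequality $(Y_s-N_0)\le (Y_s-S_s)$, so the whole reflection term is dominated by $\int_t^T 2e^{rs}(Y_s-S_s)\,dA_s$. By Lemma~\ref{lemma_Gisr_K} this is (minus) a non-increasing $G$-martingale under $\Et$, hence disappears upon taking $\Et_t$. Choosing $r>\siup^2(1+2L_y)$ absorbs the $|Y_s-N_0|^2$ drift and yields the two-sided bound in one stroke. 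A secondary remark: your direct linearization $f(s,Y_s,Z_s)=f(s,0,0)+\alpha_s Y_s+\beta_s Z_s$ is formally correct but the coefficients defined by difference quotients with indicators $\mathbf 1_{\{Y\ne0\}},\mathbf 1_{\{Z\ne0\}}$ need not lie in the required $G$-spaces; the paper uses the $\varepsilon$-smoothing (functions $l$ with $\mathbf 1_{[-\varepsilon,\varepsilon]}\le l\le \mathbf 1_{[-2\varepsilon,2\varepsilon]}$) to obtain $a_s^\varepsilon,b_s^\varepsilon,m_s^\varepsilon$ with $b^\varepsilon\in BMO_G$, and lets $\varepsilon\to0$ at the end.
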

\begin{proof}
For some $r>0~$, applying It\^o's formula to $e^{rt}|Y_t-N_0|^2$,~ we have for each $t\in[0,T]$,
\begin{eqnarray*}
&&e^{rt}|Y_t-N_0|^2+r\int_t^T e^{rs}|Y_s-N_0|^2ds+\int_t^T e^{rs}|Z_s|^2d\Bq_s\\
&=&e^{rT}|\xi-N_0|^2+\int_t^T 2e^{rs}(Y_s-N_0)f(s,Y_s,Z_s)d\Bq_s\\
&&-\int_t^T 2e^{rs}(Y_s-N_0)Z_sdB_s+\int_t^T 2e^{rs}(Y_s-N_0)dA_s,~~\text{q.s.}
\end{eqnarray*}
We have
$$f(s,Y_s,Z_s)=f(s,0,0)+m_s^\varepsilon+a_s^\varepsilon Y_s+b_s^\varepsilon Z_s,$$
 where
\begin{eqnarray*}
a_s^\varepsilon&:=&[1-l(Y_s)]{f(s,Y_s,Z_s)-f(s,0,Z_s)\over Y_s}\textbf{1}_{\{|Y_s|>0\}},\\
b_s^\varepsilon&:=&[1-l(Z_s)]{f(s,0,Z_s)-f(s,0,0)\over |Z_s|^2}Z_s\textbf{1}_{\{|Z_s|>0\}},\\
m_s^\varepsilon&:=& l(Y_s)[f(s,Y_s,Z_s)-f(s,0,Z_s)]+l(Z_s)[f(s,0,Z_s)-f(s,0,0)]
\end{eqnarray*}
with $s\in [0, T]$ and the function $l$ being Lipschitz continuous such that $\textbf{1}_{[-\varepsilon,\varepsilon]}(x)\leq l(x)\leq \textbf{1}_{[-2\varepsilon,2\varepsilon]}(x)$ for $x\in (-\infty, +\infty)$. Moreover,
\begin{eqnarray*}
|a_s^\varepsilon|&\leq& L_y,\quad |b_s^\varepsilon|\leq L_z(1+|Z_s|),\quad
|m_s^\varepsilon|\leq2\varepsilon(L_y+L_z(1+ 2\varepsilon)).
\end{eqnarray*}
In view of \cite[Lemma 3.6]{Hu2018}, we know that $b^\varepsilon\in BMO_G$. Set $\tilde{B}_t:=B_t-\int_0^tb_s^\varepsilon d\Bq_s$ for $t\in [0,T]$. Thus we can define a new $G$-expectation $\Et[\cdot]$ by $\Exp(b_s^\varepsilon)$, such that $\tilde{B}$ is a $G$-Brownian motion under $\Et[\cdot]$.
Then we have for each $t\in[0,T]$,
\begin{eqnarray*}
&&e^{rt}|Y_t-N_0|^2+r\int_t^T e^{rs}|Y_s-N_0|^2ds+\int_t^T e^{rs}|Z_s|^2d\Bq_s\\
&\leq&e^{rT}|\xi-N_0|^2+\int_t^T 2e^{rs}(Y_s-N_0)(f(s,0,0)+m_s^\varepsilon+a_s^\varepsilon Y_s+b_s^\varepsilon Z_s)d\Bq_s\\
&&-\int_t^T 2e^{rs}(Y_s-N_0)Z_sdB_s+\int_t^T 2e^{rs}(Y_s-N_0)dA_s\\
&\leq&e^{rT}|\xi-N_0|^2+(1+2L_y)\int_t^T e^{rs}|Y_s-N_0|^2d\Bq_s-\int_t^T 2e^{rs}(Y_s-N_0)Z_sd\tilde{B}_s\\
&&+\int_t^T e^{rs}\big(f(s,0,0)+|m_s^\varepsilon|+N_0L_y\big)^2d\Bq_s+\int_t^T 2e^{rs}(Y_s-S_s)dA_s,~~\text{q.s.}
\end{eqnarray*}

Setting $r>\siup^2(1+2L_y)$ and taking conditional expectations on both sides, we have
\begin{eqnarray*}
&&e^{rt}|Y_t-N_0|^2+\Et_t\Big[-\int_t^T 2e^{rs}(Y_s-S_s)dA_s\Big]\\
&\leq& \Et_t[e^{rT}|\xi-N_0|^2]+\Et_t\Big[\int_t^T e^{rs}\big(f(s,0,0)+|m_s^\varepsilon|+N_0L_y\big)^2d\Bq_s\Big],~~\text{q.s.}
\end{eqnarray*}
From (\ref{RGBSDE2}), we know that $\{-\int_0^t(Y_s-S_s)dA_s\}_{t\in[0,T]}$ is a non-increasing G-martingale under $\E[\cdot]$.
Moreover,
$$\E\Big[\Big(\int_0^t(Y_s-S_s)dA_s\Big)^p\Big]\leq\E\Big[\sup_{s\in[0,T]}|Y_s-S_s|^p\Big(\int_0^TdA_s\Big)^p\Big]
\leq\E\big[\sup_{s\in[0,T]}|Y_s-S_s|^{2p}\big]^{1\over 2}\E[|A_T|^{2p}]^{1\over 2}.$$
Note  $p>{q\over q-1}$ and
$$\norm{b_s^\varepsilon}_{BMO_G}\leq\norm{L_z(1+|Z|)}_{BMO_G}< \phi(q).$$
In view of Lemma \ref{lemma_Gisr_K}, we know that $\{-\int_0^t(Y_s-S_s)dA_s\}_{t\in[0,T]}$ is a non-increasing G-martingale under $\Et[\cdot]$.
Then for each $t\in[0,T]$,
\begin{eqnarray*}
e^{rt}|Y_t-N_0|^2&\leq& \Et_t[e^{rT}|\xi-N_0|^2]+\Et_t\Big[\int_t^T e^{rs}\big(f(s,0,0)+|m_s^\varepsilon|+N_0L_y\big)^2d\Bq_s\Big]\\
&\leq& 2e^{rT}(\norm{\xi}_{L_G^\infty}^2+N_0^2)+2e^{rT}\siup^2\Bigg\{\big(2\varepsilon(L_y+L_z(1+ 2\varepsilon))+N_0L_y\big)^2T\\
&&+\norm{\int_0^T|f(t,0,0)|^2dt}_{L_G^\infty}\Bigg\},~~\text{q.s.}
\end{eqnarray*}
Let $\varepsilon\to 0$, we have
$$e^{rt}|Y_t-N_0|^2\leq 2e^{rT}(\norm{\xi}_{L_G^\infty}^2+N_0^2)+2e^{rT}\siup^2\Bigg(N_0^2L_y^2T+\norm{\int_0^T|f(t,0,0)|^2dt}_{L_G^\infty}\Bigg),~~\text{q.s.}$$
So we get the estimate for $Y$.
\end{proof}
\begin{Proposition}
\label{Propose_stable}
Let $(\xi^1,f^1,S^1)$ and $(\xi^2,f^2,S^2)$ be two sets of data, each one satisfying \textbf{(H1)}, \textbf{(H3)} and \textbf{(H4)}. Assume that the triplet $(Y^i,Z^i,A^i)\in\mathcal{S}_G^{2p}(0,T)$ with some $p>1$, is a solution of the reflected $G$-BSDE with data $(\xi^i,f^i,S^i),~i=1,2$. Moreover, we suppose
 $$\norm{L_z(1+|Z^1|+|Z^2|)}_{BMO_G}< \phi(q):=\left\{1+{1\over q^2}\log{2q-1\over2(q-1)}\right\}^{1\over2}-1.$$
with $q$ satisfying $p>{q\over q-1}$.
Then there exists a constant $~C_1:=C_1(q,T,L_z,L_y,N_0)~$ such that for each $t \in [0,T]$,
\begin{eqnarray*}
|Y_t^1-Y_t^2|^2&\leq& C_1\norm{\xi^1-\xi^2}_{L_G^\infty}^2
+C_1`\E_t\left[\left(\int_t^T |\hat\lambda_s|^2d\Bq_s\right)^p\right]^{1\over p}\\
&&+C_1\E_t\left[\sup_{s\in [t,T]}|S_t^1-S_t^2|^{2p}\right]^{1\over 2p}
\E_t\left[|A_T^1-A_t^1|^{2p}+|A_T^2-A_t^2|^{2p}\right]^{1\over 2p},~~\text{q.s.},
\end{eqnarray*}
where $$\hat\lambda_s:=f^1(s,Y_s^2,Z_s^2)-f^2(s,Y_s^2,Z_s^2).$$
Moreover, there exists a constant $~C_2:=C_2(T,L_z,L_y,\siup,N_0,M_0)~$ such that
$$\E_t\left[\int_t^T|Z_s^1-Z_s^2|^2d\Bq_s\right]\leq C_2
\norm{\hat Y}_{S_G^\infty}
\left(1+\E_t[|A_T^1-A_t^1|+|A_T^2-A_t^2|]\right),~~\text{q.s.},\quad\forall t \in [0,T].$$
\end{Proposition}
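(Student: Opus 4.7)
My plan is to derive both estimates from It\^o's formula applied to $|\hat Y_t|^2$ (with an exponential weight for the first estimate), using the linearization from the proof of Theorem \ref{Thm_compare} and a Girsanov change of measure to tame the quadratic-in-$z$ growth. Setting $\hat Y:=Y^1-Y^2$, $\hat Z:=Z^1-Z^2$, $\hat A:=A^1-A^2$, $\hat\xi:=\xi^1-\xi^2$, I write
$$f^1(s,Y^1_s,Z^1_s)-f^2(s,Y^2_s,Z^2_s) = \hat\lambda_s + \hat m^\varepsilon_s + \hat a^\varepsilon_s \hat Y_s + \hat b^\varepsilon_s \hat Z_s,$$
with $|\hat a^\varepsilon_s|\le L_y$, $|\hat b^\varepsilon_s|\le L_z(1+|Z^1_s|+|Z^2_s|)$, and $|\hat m^\varepsilon_s|=O(\varepsilon)$. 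By hypothesis $\norm{\hat b^\varepsilon}_{BMO_G}\le\norm{L_z(1+|Z^1|+|Z^2|)}_{BMO_G}<\phi(q)$, so $\Exp(\hat b^\varepsilon)$ defines a new $G$-expectation $\Et$ under which $\tilde B_t:=B_t-\int_0^t\hat b^\varepsilon_s\,d\Bq_s$ is a $G$-Brownian motion and, by Lemma \ref{lemma_Gisr_K}, each process $-\int_0^\cdot(Y^i_s-S^i_s)\,dA^i_s$ remains a non-increasing $G$-martingale.

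For the first estimate I apply It\^o's formula to $e^{rt}|\hat Y_t|^2$ with $r>\siup^2(1+2L_y)$; the choice of $r$ combined with Young's inequality $2\hat Y_s(\hat\lambda_s+\hat m^\varepsilon_s)\le|\hat Y_s|^2+|\hat\lambda_s+\hat m^\varepsilon_s|^2$ cancels all drifts linear in $\hat Y$. Taking $\Et_t$ kills the $d\tilde B$-integral and yields
$$e^{rt}|\hat Y_t|^2 \le \Et_t[e^{rT}|\hat\xi|^2] + \Et_t\!\left[\int_t^T e^{rs}|\hat\lambda_s+\hat m^\varepsilon_s|^2\,d\Bq_s\right] + \Et_t\!\left[\int_t^T 2e^{rs}\hat Y_s\,d\hat A_s\right].$$
The last summand is handled by inserting $\pm S^i_s$: $\hat Y_s\,dA^1_s = (Y^1_s-S^1_s)dA^1_s + (S^1_s-S^2_s)dA^1_s - (Y^2_s-S^2_s)dA^1_s$, and symmetrically for $dA^2_s$. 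The $(Y^i-S^i)dA^i$ pieces vanish under $\Et_t$ by Lemma \ref{lemma_Gisr_K} (their $e^{rs}$-weighted versions remain non-increasing $G$-martingales after Girsanov), the cross pieces $-(Y^2-S^2)dA^1$ and $-(Y^1-S^1)dA^2$ are non-positive, and only the obstacle-difference pieces $(S^1_s-S^2_s)d(A^1-A^2)_s$ remain, bounded by $\sup_{[t,T]}|S^1-S^2|\cdot(A^1_T-A^1_t+A^2_T-A^2_t)$. Sending $\varepsilon\to 0$, transferring from $\Et_t$ back to $\E_t$ via the reverse H\"older inequality of Remark \ref{remark_Ap_reverse_holder} (with conjugate exponent $q^*=q/(q-1)<p$, followed by Jensen to raise the exponent to $p$), and splitting the obstacle and increasing-process factors by Cauchy--Schwarz, produces exactly the announced bound on $|\hat Y_t|^2$.

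For the second estimate I work directly under $\E$ without Girsanov. It\^o's formula for $|\hat Y_t|^2$ reads
$$|\hat Y_t|^2 + \int_t^T |\hat Z_s|^2\,d\Bq_s = |\hat\xi|^2 + \int_t^T 2\hat Y_s[f^1(s,Y^1_s,Z^1_s)-f^2(s,Y^2_s,Z^2_s)]\,d\Bq_s - \int_t^T 2\hat Y_s \hat Z_s\,dB_s + \int_t^T 2\hat Y_s\,d\hat A_s.$$
Using the Lipschitz-quadratic decomposition $|f^1(Y^1,Z^1)-f^2(Y^2,Z^2)|\le L_y|\hat Y|+L_z(1+|Z^1|+|Z^2|)|\hat Z|+|\hat\lambda|$ together with Young's inequality $2L_z|\hat Y|(1+|Z^1|+|Z^2|)|\hat Z|\le\tfrac12|\hat Z|^2+2L_z^2|\hat Y|^2(1+|Z^1|+|Z^2|)^2$, I absorb half of $\int|\hat Z|^2\,d\Bq$ into the left-hand side. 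After taking $\E_t$ (which kills the symmetric $dB$-martingale), I bound $|\hat Y_s|\le\norm{\hat Y}_{S_G^\infty}$ everywhere and use $|\hat\xi|^2\le\norm{\hat Y}_{S_G^\infty}(\norm{Y^1}_{S_G^\infty}+\norm{Y^2}_{S_G^\infty})$ to factor $\norm{\hat Y}_{S_G^\infty}$ out of the terminal term as well. The remaining integrals $\E_t[\int(1+|Z^1|+|Z^2|)^2\,d\Bq_s]$ and $\E_t[\int|\hat\lambda_s|\,d\Bq_s]$ are absolute constants, thanks to Lemma \ref{energy_ineq}, Remark \ref{remark_f_bound}, and the uniform bounds on $\norm{Y^i}_{S_G^\infty}$, $\norm{Z^i}_{BMO_G}$ furnished by Propositions \ref{Propose_Z_bound} and \ref{Propose_Y_bound}. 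Finally $\int_t^T 2\hat Y_s\,d\hat A_s\le 2\norm{\hat Y}_{S_G^\infty}(A^1_T-A^1_t+A^2_T-A^2_t)$, producing the announced linear dependence on the increasing processes.

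The main obstacle is the treatment of $\int d\hat A$ in the first estimate: $\hat Y\,d\hat A$ has no a priori sign, because the two increasing processes reflect against different obstacles. The decomposition above works only because the $G$-martingale property of $-\int(Y^i-S^i)\,dA^i$ survives the Girsanov transform (Lemma \ref{lemma_Gisr_K}), so the main pieces are killed under $\Et_t$, while the two residual cross-sign pieces happen to be non-positive; what remains is exactly the obstacle-difference contribution that the theorem records. The subsequent exponent book-keeping in passing from $\Et_t$ back to $\E_t$ is delicate but mechanical once the right conjugate pair $(q,q^*)$ has been identified.
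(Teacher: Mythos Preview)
Your proof is correct and follows essentially the same route as the paper: linearization of the generator difference, Girsanov to the expectation $\Et$ generated by $\Exp(\hat b^\varepsilon)$, It\^o's formula for $e^{rt}|\hat Y_t|^2$, and the decomposition of $\hat Y_s\,d\hat A_s$ into the martingale pieces $(Y^i-S^i)\,dA^i$, the non-positive cross pieces, and the residual obstacle term $\hat S_s\,d\hat A_s$, followed by reverse H\"older to return to $\E_t$.

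The only noteworthy difference is in the second estimate: you decompose $f^1(Y^1,Z^1)-f^2(Y^2,Z^2)$ via the Lipschitz structure and use Young's inequality to absorb half of $\int|\hat Z|^2\,d\Bq$, then recover linear dependence on $\norm{\hat Y}_{S_G^\infty}$ by writing $\norm{\hat Y}_{S_G^\infty}^2\le\norm{\hat Y}_{S_G^\infty}(\norm{Y^1}_{S_G^\infty}+\norm{Y^2}_{S_G^\infty})$. The paper takes a shorter path: it simply bounds $2\hat Y_s[f^1-f^2]\le 2\norm{\hat Y}_{S_G^\infty}(|f^1|+|f^2|)$ and controls $\E_t[\int|f^i(s,Y^i_s,Z^i_s)|\,d\Bq_s]$ directly via Remark~\ref{remark_f_bound} and the BMO bound on $Z^i$. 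Both arguments are valid; the paper's avoids the absorption step entirely.
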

\begin{proof}
First, with Proposition \ref{Propose_Z_bound} and \ref{Propose_Y_bound}, we know that there exists a constant $C:=C(T,L_z,L_y,\siup,N_0,M_0)$ such that
\begin{equation}
\label{eq_Y_Z_bound}
\sum_{i=1}^2(\norm{Y^i}_{S_G^\infty}+\norm{Z^i}_{BMO_G})\leq C.
\end{equation}
Define
$$\hat{Y_t}:=Y_t^1-Y_t^2,~\hat{Z_t}:=Z_t^1-Z_t^2,~\hat{S_t}:=S_t^1-S_t^2,~\hat{\xi}:=\xi^1-\xi^2.$$
With the condition of $f^1$ and $f^2$, we see that
$\hat\lambda\in H_G^{2p}(0,T).$
As in the proof of Proposition \ref{Propose_Y_bound}, define  for $0\leq s\leq T,$
\begin{eqnarray*}
\hat a_s^\varepsilon&:=&[1-l(\hat Y_s)]
{f^1(s,Y_s^1,Z_s^1)-f^1(s,Y_s^2,Z_s^1)\over
\hat Y_s}\textbf{1}_{\{|\hat Y_s|>0\}},\\
\hat b_s^\varepsilon&:=&[1-l(\hat Z_s)]
{f^1(s,Y_s^2,Z_s^1)-f^1(s,Y_s^2,Z_s^2)\over
|\hat Z_s|^2}\hat Z_s\textbf{1}_{\{|\hat Z_s|>0\}}, \\
\hat m_s^\varepsilon&:=& l(\hat Y_s)
[f^1(s,Y_s^1,Z_s^1)-f^1(s,Y_s^2,Z_s^1)]+l(\hat Z_s)[f^1(s,Y_s^2,Z_s^1)-f^1(s,Y_s^2,Z_s^2)],
\end{eqnarray*}
where $l$ is a Lipschitz continuous function such that $\textbf{1}_{[-\varepsilon,\varepsilon]}(x)\leq l(x)\leq \textbf{1}_{[-2\varepsilon,2\varepsilon]}(x)$.   Also define $\hat A: =A^1-A^2$.  We have
$$\hat Y_t=\hat\xi+\int_t^T(\hat\lambda_s+\hat m_s^\varepsilon+\hat a_s^\varepsilon\hat Y_s+\hat b_s^\varepsilon\hat Z_s)d\Bq_s-\int_t^T \hat Z_s dB_s+\int_t^Td\hat A_s,~~\text{q.s.},~t\in[0,T],$$
and for each $s\in[0,T]$,
\begin{eqnarray*}
|\hat a_s^\varepsilon|&\leq& L_y,\quad |\hat b_s^\varepsilon|\leq L_z(1+|Z_s^1|+|Z_s^2|),\\
|\hat m_s^\varepsilon|&\leq& 2\varepsilon(L_y+L_z(1+ 2\varepsilon+2|Z_s^1|)).
\end{eqnarray*}
Then we have
$$\hat Y_t=\hat\xi+\int_t^T(\hat\lambda_s+\hat m_s^\varepsilon+\hat a_s^\varepsilon\hat Y_s)d\Bq_s-\int_t^T \hat Z_s d\tilde{B}_s+\int_t^Td\hat A_s,~~\text{q.s.},~t\in[0,T],$$
where $d\tilde{B}_s=dB_s-\hat b_s^\varepsilon d\Bq_s.$
Similarly to the proof of Proposition \ref{Propose_Y_bound}, we can define a new $G$-expectation $\Et[\cdot]$ by $\Exp(\hat b^\varepsilon)$, such that $\tilde{B}$ is a $G$-Brownian Motion under $\Et[\cdot]$.

For some $r>0~$, applying It\^o's formula to $e^{rt}|\hat Y_t|^2$, we have for each $t\in[0,T]$,
\begin{eqnarray*}
&&e^{rt}|\hat Y_t|^2+r\int_t^T e^{rs}|\hat Y_s|^2ds+\int_t^Te^{rs}|\hat Z_s|^2d\Bq_s\\
&=&e^{rT}|\hat\xi|^2+\int_t^T2e^{rs}\hat Y_s(\hat\lambda_s+\hat m_s^\varepsilon+\hat a_s^\varepsilon\hat Y_s)d\Bq_s-\int_t^T2e^{rs}\hat Y_sd\tilde B_s+\int_t^T2e^{rs}\hat Y_sd\hat A_s \\
&\leq& e^{rT}|\hat\xi|^2+\int_t^T e^{rs}(|\hat\lambda_s|^2+|\hat m_s^\varepsilon|^2)d\Bq_s+(2+2L_y)\int_t^T e^{rs}|\hat Y_s|^2d\Bq_s-\int_t^T2e^{rs}\hat Y_sd\tilde B_s \\
&&+\int_t^T2e^{rs}\hat S_sd\hat A_s+\int_t^T2e^{rs}(\hat Y_s-\hat S_s)d\hat A_s\\
&\leq& e^{rT}|\hat\xi|^2+\int_t^T e^{rs}(|\hat\lambda_s|^2+|\hat m_s^\varepsilon|^2)d\Bq_s+(2+2L_y)\int_t^T e^{rs}|\hat Y_s|^2d\Bq_s-\int_t^T2e^{rs}\hat Y_sd\tilde B_s \\
&&+\int_t^T2e^{rs}\hat S_sd\hat A_s+\int_t^T2e^{rs}(Y_s^1-S_s^1)dA_s^1
+\int_t^T2e^{rs}(Y_s^2-S_s^2)dA_s^2,~~\text{q.s.}
\end{eqnarray*}
In view of a similar argument as in the proof of Proposition \ref{Propose_Y_bound}, we see that $\int_0^\cdot (S_s^i-Y_s^i)\, dA_s^i$ is a non-increasing $G$-martingale on $[0,T]$ under $\Et[\cdot]$ for $i=1,2$.

Setting $r>\siup^2(2+2L_y)$ and taking conditional expectations on both sides, we have
\begin{eqnarray*}
e^{rt}|\hat Y_t|^2&\leq& \Et_t[e^{rT}|\hat\xi|^2]+\Et_t\Big[\int_t^T e^{rs}(|\hat\lambda_s|^2+|\hat m_s^\varepsilon|^2)d\Bq_s+\int_t^T2e^{rs}\hat S_sd\hat A_s\Big]\\
&\leq& e^{rT}\bigg\{\norm{\hat\xi}_{L_G^\infty}^2+\Et_t\Big[\int_t^T (|\hat\lambda_s|^2+|\hat m_s^\varepsilon|^2)d\Bq_s\Big]
+2\Et_t\Big[\sup_{s\in[t,T]}|\hat S_s||\hat A_T-\hat A_t|\Big]\bigg\},~~\text{q.s.}
\end{eqnarray*}
Note that $\norm{\hat b_s^\varepsilon}_{BMO_G}\leq\norm{L_z(1+|Z_1|+|Z_2|)}_{BMO_G}< \phi(q)<\phi(p^\prime)$ where $p^\prime={p\over p-1}$. Then according to Lemma \ref{reverse holder}, $\forall X\in L_G^p(\Omega_T)$, we have
for each $t\in[0,T]$,
$$\Et_t[X]=\E_t\Big[{\Exp(\hat b^\varepsilon)_T\over \Exp(\hat b^\varepsilon)_t}X\Big]\leq \E_t\Big[\Big({\Exp(\hat b^\varepsilon)_T\over \Exp(\hat b^\varepsilon)_t}\Big)^{p^\prime}\Big]^{1\over p^\prime}\E_t[X^p]^{1\over p}\leq C_p\E_t[X^p]^{1\over p},~~\text{q.s.}$$
So by the H\"{o}lder inequality, we have for each $t\in[0,T]$,
\begin{eqnarray*}
\Et_t\Big[\sup_{s\in[t,T]}|\hat S_s||\hat A_T-\hat A_t|\Big]&\leq&
C_p\E_t\Big[\sup_{s\in[t,T]}|\hat S_s|^p|\hat A_T-\hat A_t|^p\Big]^{1\over p}\\
&\leq& C_pC_p^\prime \E_t\Big[\sup_{s\in[t,T]}|\hat S_s|^p\Big]^{1\over 2p}\E_t\Big[|A_T^1-A_t^1|^{2p}+|A_T^2-A_t^2|^{2p}\Big]^{1\over 2p},~~\text{q.s.}
\end{eqnarray*}
 Then there exists a constant $~C_1:=C_1(q,T,L_z,L_y,N_0)~$ such that for each $t\in[0,T]$,
 \begin{eqnarray*}
|\hat{Y_t}^2|&\leq& C_1\bigg\{\norm{\hat\xi}_{L_G^\infty}^2
+\E_t\Big[\Big(\int_t^T |\hat\lambda_s|^2d\Bq_s\Big)^p\Big]^{1\over p}
+\Et_t\Big[\int_t^T |\hat m_s^\varepsilon|^2d\Bq_s\Big]\\
&&+\E_t\Big[\sup_{s\in [t,T]}|\hat{S_t}|^{2p}\Big]^{1\over 2p}
\E_t\Big[|A_T^1-A_t^1|^{2p}+|A_T^2-A_t^2|^{2p}\Big]^{1\over 2p}\bigg\},~~\text{q.s.}
\end{eqnarray*}
Finally, we just need to prove
$$\lim_{\varepsilon\to0}\Et_t\Big[\int_t^T |\hat m_s^\varepsilon|^2d\Bq_s\Big]=0,~~\text{q.s.}$$
In view of Lemma \ref{energy_ineq}, we have
 \begin{eqnarray*}
 \Et_t\Big[\int_t^T |\hat m_s^\varepsilon|^2d\Bq_s\Big]&\leq&
 8\varepsilon^2\siup^2T(L_y+L_z+2L_z\varepsilon)^2
 +32\varepsilon^2\Et_t\Big[\int_t^T|Z_s^1|^2d\Bq_s\Big]\\
 &\leq&8\varepsilon^2\siup^2T(L_y+L_z+2L_z\varepsilon)^2
 +32\varepsilon^2C_p\E_t\Big[\Big(\int_t^T|Z_s^1|^2d\Bq_s\Big)^p\Big]^{1\over p}\\
  &\leq&8\varepsilon^2\siup^2T(L_y+L_z+2L_z\varepsilon)^2
 +32\varepsilon^2C_pC_p^{\prime\prime}\norm{Z^1}_{BMO_G}^2,~~\text{q.s.}\\
 \end{eqnarray*}
 So we get $\lim_{\varepsilon\to0}\Et_t\Big[\int_t^T |\hat m_s^\varepsilon|^2d\Bq_s\Big]=0.$ And we get the estimate for $\hat Y.$

 Then we consider the estimate for $\hat Z.$ Applying It\^o's formula to $e^{rt}|\hat Y_t|^2$,~ we have for each $t \in [0,T]$,
 \begin{eqnarray*}
&&|\hat Y_t|^2+\int_t^T|\hat Z_s|^2d\Bq_s\\
&=&|\hat\xi|^2+\int_t^T2\hat Y_s(f^1(s,Y_s^1,Z_s^1)-f^2(s,Y_s^2,Z_s^2))d\Bq_s-\int_t^T2\hat Y_sdB_s+\int_t^T2\hat Y_sd\hat A_s ,~~\text{q.s.}
\end{eqnarray*}
Taking conditional expectations on both sides, we have
  \begin{eqnarray*}
&&\E_t\Big[\int_t^T|\hat Z_s|^2d\Bq_s\Big]\\
&\leq& \norm{\hat Y}_{S_G^\infty}\Big(M_0+2\sum_{i=1}^2\E_t\Big[\int_t^T|f^i(s,Y_s^i,Z_s^i)|d\Bq_s\Big]+
\E_t[|\hat A_T-\hat A_t|]\Big),~~\text{q.s.}
\end{eqnarray*}
Note that $\forall i=1,2,$
\begin{eqnarray*}
&&\E_t\Big[\int_t^T|f^i(s,Y_s^i,Z_s^i)|d\Bq_s\Big]\\
&\leq& \E_t\Big[\int_t^T\Big(|f^i(s,0,0)|+{L_z\over2}+L_y|Y_s^i|+{3L_z\over2}|Z_s^i|^2\Big)d\Bq_s\Big]\\
&\leq& \Big(\sqrt{M_0T}+{L_zT\over2}\Big)\siup^2+L_y\siup^2T\norm{Y^i}_{S_G^\infty}+{3L_z\over2}\norm{Z^i}_{BMO_G}^2,~~\text{q.s.}
\end{eqnarray*}
With (\ref{eq_Y_Z_bound}), we get the estimate for $\hat Z.$
 \end{proof}
 \begin{Remark}
 \label{Remark_uniqueness}
 The uniqueness for solutions to the reflected quadratic $G$-BSDE is an immediate consequence of  Proposition \ref{Propose_stable}.
 \end{Remark}

\section{Penalized $G$-BSDEs and their limit}
Similar to \cite{Li2017_1} and \cite{Li2017_2}, we use a penalized method. In this section, we first prove some convergence properties of solutions to the penalized $G$-BSDEs.
For $(f,\xi,S)$ satisfying (\textbf{H1})-(\textbf{H5}) and $n\in\bb N$, we  consider the following penalized $G$-BSDE:
\begin{equation}
\label{Penal_GBSDE1}
Y_t^n=\xi+\int_t^Tf(s,Y_s^n,Z_s^n)d\Bq_s+n\int_t^T(Y_s^n-S_s)^-ds-\int_t^TZ_s^ndB_s-\int_t^TdK_s^n,~~\text{q.s.},~t\in[0,T].
\end{equation}
Define  $L_t^n:=n\int_0^t(Y_s^n-S_s)^-ds$ for $t\in [0,T]$.  The  penalized $G$-BSDE reads:
\begin{equation}
\label{Penal_GBSDE2}
Y_t^n=\xi+\int_t^Tf(s,Y_s^n,Z_s^n)d\Bq_s-\int_t^TZ_s^ndB_s-\int_t^TdK_s^n+\int_t^TdL_s^n,~~\text{q.s.},~t\in[0,T].
\end{equation}
From Theorem \ref{Thm_QBSDE}, the penalized BSDE~(\ref{Penal_GBSDE1}) or (\ref{Penal_GBSDE2}) has a unique solution $(Y^n,Z^n,K^n)\in \mathfrak{G}_G^2(0,T)$ such that
$$\norm{Y^n}_{S_G^\infty}+\norm{Z^n}_{BMO_G}\leq C(M_0,L_y,L_z,n),$$ and
$$\E[|K_T^n|^p]\leq  C(p,M_0,L_y,L_z,n),\quad \forall p\geq 1.$$
Both estimates  depend on $n$. In fact,  $(Y^n,Z^n,K^n,L^n)$ is uniformly bounded in $n$.

\begin{Lemma}
\label{lemma_Y^n_bound}
There exists two positive constants $C$ and $C_p$ which are independent of $n$, such that
$$\norm{Y^n}_{S_G^\infty}+\norm{Z^n}_{BMO_G}\leq C,$$and
$$\E[|K_T^n|^p]+\E[|L_T^n|^p]\leq  C_p,\quad \forall p\geq 1.$$
\end{Lemma}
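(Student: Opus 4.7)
The strategy is first to establish the uniform $S_G^\infty$-bound on $Y^n$, and then to derive the $Z^n$ and $A^n$ bounds by reducing to Proposition \ref{Propose_Z_bound}. The lower bound on $Y^n$ is immediate: letting $(\bar Y, \bar Z, \bar K)\in \mathfrak{G}_G^2(0,T)$ denote the unpenalized quadratic $G$-BSDE solution with data $(\xi,f)$ provided by Theorem \ref{Thm_QBSDE}, the non-negativity of the penalty $n(y-S)^-$ together with the comparison theorem \ref{Thm_compare} gives $Y^n\ge \bar Y$, so $\inf_{t,n}Y^n_t \ge -\norm{\bar Y}_{S_G^\infty}$ q.s.

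The main difficulty is the matching upper bound. The key structural observation is that the support of $\phi'(Y^n)=2(Y^n-N_0)^+$ lies inside $\{Y^n>N_0\}\subseteq\{Y^n>S\}$, on which the penalty $n(Y^n-S)^-$ (and hence $dL^n$) vanishes. I would apply It\^o's formula to $e^{rt}\phi(Y^n_t)$ with $\phi(y):=((y-N_0)^+)^2$, so that the $dL^n$-contribution is identically zero. Following the proof of Proposition \ref{Propose_Y_bound}, I then linearize $f$ as $f(\cdot,0,0)+m^\varepsilon+a^\varepsilon Y^n+b^\varepsilon Z^n$ with $|a^\varepsilon|\le L_y$, $|b^\varepsilon|\le L_z(1+|Z^n|)$, $|m^\varepsilon|\le 2\varepsilon(L_y+L_z(1+2\varepsilon))$; perform the $G$-Girsanov transform $d\tilde B=dB-b^\varepsilon\,d\Bq$ with associated $G$-expectation $\Et$ (well-defined for each fixed $n$ since $Z^n\in BMO_G$ by Theorem \ref{Thm_QBSDE}); and choose $r>\siup^2(1+2L_y)$ to absorb the linear-in-$\phi(Y^n)$ drift arising from $\phi'[|f(s,0,0)|+|m^\varepsilon|+L_y|Y^n|]\le (1+2L_y)\phi(Y^n)+(|f(s,0,0)|+|m^\varepsilon|+L_yN_0)^2$. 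After the two $dB$-martingales (one from the It\^o expansion, one from the Girsanov substitution) cancel and the nonpositive $-\tfrac12\int e^{rs}\phi''|Z^n|^2\,d\Bq$ is dropped, moving the $dK^n$-term to the left gives
\begin{equation*}
e^{rt}\phi(Y^n_t)+\int_t^T e^{rs}\phi'(Y^n_s)\,dK^n_s\le e^{rT}\phi(\xi)+\int_t^T e^{rs}\bigl(|f(s,0,0)|+|m^\varepsilon_s|+L_yN_0\bigr)^2 d\Bq_s-\int_t^T e^{rs}\phi'(Y^n_s)Z^n_s\,d\tilde B_s.
\end{equation*}
Since $\phi'(Y^n)\ge 0$ is bounded for each fixed $n$, Hu et al.~\cite[Lemma 3.4]{HuM2014_1} combined with Lemma \ref{lemma_Gisr_K} shows that $\int_0^{\cdot}e^{rs}\phi'(Y^n_s)\,dK^n_s$ is a non-increasing $G$-martingale under $\Et$. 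Taking conditional $\Et_t$-expectations, pulling out the $\mathcal F_t$-measurable $e^{rt}\phi(Y^n_t)$, and noting that the $dK^n$ and $d\tilde B$ integrals both contribute zero then yields
\begin{equation*}
e^{rt}\phi(Y^n_t)\le e^{rT}\Et_t[\phi(\xi)]+\siup^2\,\Et_t\!\left[\int_t^T e^{rs}\bigl(|f(s,0,0)|+|m^\varepsilon_s|+L_yN_0\bigr)^2 ds\right].
\end{equation*}
Letting $\varepsilon\to 0$ and invoking $|\xi|\le M_0$ and $\int_0^T|f(s,0,0)|^2 ds\le M_0$, one concludes $e^{rt}\phi(Y^n_t)\le C$ q.s.\ with $C$ independent of $n$, and hence $\norm{Y^n}_{S_G^\infty}\le C'$ uniformly in $n$.

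With the $Y^n$-bound in hand, set $A^n:=L^n-K^n$, which is continuous, non-decreasing (as a sum of the two increasing processes $L^n$ and $-K^n$), and satisfies $A^n_0=0$; the penalized equation \eqref{Penal_GBSDE1} rewrites as
\begin{equation*}
Y^n_t=\xi+\int_t^T f(s,Y^n_s,Z^n_s)\,d\Bq_s-\int_t^T Z^n_s\,dB_s+\int_t^T dA^n_s,\qquad \text{q.s.}
\end{equation*}
Proposition \ref{Propose_Z_bound} now applies with $n$-independent constants, giving $\norm{Z^n}_{BMO_G}\le C$ and $\E[|A^n_T|^\alpha]\le C_\alpha$ for every $\alpha\ge 1$. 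Finally, since $L^n_T$ and $-K^n_T$ are two non-negative random variables whose sum equals $A^n_T$, we have $L^n_T\le A^n_T$ and $|K^n_T|\le A^n_T$; hence $\E[|L^n_T|^p]+\E[|K^n_T|^p]\le 2\E[|A^n_T|^p]\le C_p$ uniformly in $n$, as required.
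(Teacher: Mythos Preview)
Your proof is correct and follows essentially the same route as the paper. The only noteworthy difference is in the test function used for the $Y^n$-bound: the paper applies It\^o's formula to $e^{rt}|Y^n_t-N_0|^2$ (a smooth function that yields the two-sided bound in one shot, using $(Y^n_s-N_0)(Y^n_s-S_s)^-\le (Y^n_s-S_s)(Y^n_s-S_s)^-\le 0$ to drop the $dL^n$-term and $(Y^n_s-N_0)^+dK^n_s\le (Y^n_s-N_0)dK^n_s$ to extract the decreasing $G$-martingale against $dK^n$), whereas you apply it to $e^{rt}\bigl((Y^n_t-N_0)^+\bigr)^2$ and invoke Theorem~\ref{Thm_compare} with $V^1=L^n$, $V^2=0$ for the lower bound separately. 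Your variant makes the vanishing of the $dL^n$-term exact rather than an inequality, at the cost of a $C^{1,1}$ (not $C^2$) test function---a minor technicality that is handled by smooth approximation---and the extra comparison step. The linearization/Girsanov argument, the use of \cite[Lemma 3.4]{HuM2014_1} and Lemma~\ref{lemma_Gisr_K} for the $dK^n$-martingale, the reduction to Proposition~\ref{Propose_Z_bound} via $A^n:=L^n-K^n$, and the final estimate $|L^n_T|+|K^n_T|\le A^n_T$ are all identical to the paper.
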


\begin{proof}
First we consider the estimate for $Y^n$.
The proof is very similar to that of Proposition \ref{Propose_Y_bound}.

For some $r>0~$, applying It\^o's formula to $e^{rt}|Y_t-N_0|^2$,~ we have for each $t\in[0,T]$,
\begin{eqnarray*}
&&e^{rt}|Y_t^n-N_0|^2+r\int_t^T e^{rs}|Y_s^n-N_0|^2ds+\int_t^T e^{rs}|Z_s^n|^2d\Bq_s\\
&=&e^{rT}|\xi-N_0|^2+\int_t^T 2e^{rs}(Y_s^n-N_0)f(s,Y_s^n,Z_s^n)d\Bq_s-\int_t^T 2e^{rs}(Y_s^n-N_0)Z_s^ndB_s\\
&&+\int_t^T 2e^{rs}(Y_s^n-N_0)d(L_s^n-K_s^n),~~\text{q.s.}
\end{eqnarray*}
Noting that
\begin{eqnarray*}
&&\int_t^T e^{rs}(Y_s^n-N_0)dL_s^n\\
&=&n\int_t^T e^{rs}(Y_s^n-N_0)(Y_s^n-S_s)^-ds
\leq n\int_t^T e^{rs}(Y_s^n-S_s)(Y_s^n-S_s)^-ds\leq 0,~~\text{q.s.},
\end{eqnarray*}
we have
\begin{eqnarray*}
&&e^{rt}|Y_t^n-N_0|^2+r\int_t^T e^{rs}|Y_s^n-N_0|^2ds\\
&\leq&e^{rT}|\xi-N_0|^2+\int_t^T 2e^{rs}(Y_s^n-N_0)f(s,Y_s^n,Z_s^n)d\Bq_s-\int_t^T 2e^{rs}(Y_s^n-N_0)Z_s^ndB_s\\
&&-\int_t^T 2e^{rs}(Y_s^n-N_0)dK_s^n,~~\text{q.s.}
\end{eqnarray*}
Similar to the proof of Proposition \ref{Propose_Y_bound}, we have for each $s\in[0,T]$,
$$f(s,Y_s^n,Z_s^n)=f(s,0,0)+m_s^{n,\varepsilon}+a_s^{n,\varepsilon} Y_s^n+b_s^{n,\varepsilon} Z_s^n,$$
where
\begin{eqnarray*}
|a_s^{n,\varepsilon}|&\leq& L_y,~~|b_s^{n,\varepsilon}|\leq L_z(1+|Z_s^n|),\\
|m_s^{n,\varepsilon}|&\leq& 2\varepsilon(L_y+L_z(1+ 2\varepsilon)).
\end{eqnarray*}
So we have
\begin{eqnarray*}
&&e^{rt}|Y_t^n-N_0|^2+r\int_t^T e^{rs}|Y_s^n-N_0|^2ds+\int_t^T 2e^{rs}(Y_s^n-N_0)^+dK_s^n\\
&\leq&e^{rt}|Y_t^n-N_0|^2+r\int_t^T e^{rs}|Y_s^n-N_0|^2ds+\int_t^T 2e^{rs}(Y_s^n-N_0)dK_s^n\\
&\leq&e^{rT}|\xi-N_0|^2+\int_t^T 2e^{rs}(Y_s^n-N_0)(f(s,0,0)+m_s^{n,\varepsilon}+a_s^{n,\varepsilon} Y_s^n+b_s^{n,\varepsilon} Z_s^n)d\Bq_s\\
&&-\int_t^T 2e^{rs}(Y_s^n-N_0)Z_s^ndB_s\\
&\leq&e^{rT}|\xi-N_0|^2+(1+2L_y)\int_t^T e^{rs}|Y_s^n-N_0|^2d\Bq_s\\
&&+\int_t^T e^{rs}\big(f(s,0,0)+|m_s^{n,\varepsilon}|+N_0L_y\big)^2d\Bq_s-\int_t^T 2e^{rs}(Y_s^n-N_0)Z_s^nd\tilde{B}_s,~~\text{q.s.},
\end{eqnarray*}
where $d\tilde{B}_s^{n,\varepsilon}=dB_s-b_s^{n,\varepsilon} d\Bq_s$.
In view of \cite[Lemma 3.6]{Hu2018}, we know that $b^{n,\varepsilon}\in BMO_G$. Thus we can define a new $G$-expectation $\Et^{n,\varepsilon}[\cdot]$ by $\Exp(b_s^{n,\varepsilon})$, such that $\tilde{B}^{n,\varepsilon}$ is a $G$-Brownian motion under $\Et^{n,\varepsilon}[\cdot]$.

In view of  Hu et al. \cite[Lemma 3.4]{HuM2014_1} and  Lemma \ref{lemma_Gisr_K}, we know that the process
 $$\int_0^\cdot 2e^{rs}(Y_s^n-N_0)^+dK_s^n$$
 is a decreasing $G$-martingale under both $\E[\cdot]$ and $\Et^{n,\varepsilon}[\cdot]$.
Setting $r>\siup^2(1+2L_y)$ and taking conditional expectations in the last inequality, we have for each $t\in[0,T]$,
$$e^{rt}|Y_t^n-N_0|^2\leq \Et_t^{n,\varepsilon}[e^{rT}|\xi-N_0|^2]+\Et^{n,\varepsilon}_t\Big[\int_t^T e^{rs}\big(f(s,0,0)+|m_s^{n,\varepsilon}|+N_0L_y\big)^2d\Bq_s\Big],~~\text{q.s.}$$
Then
\begin{eqnarray*}
e^{rt}|Y_t^n-N_0|^2&\leq& \Et^{n,\varepsilon}_t[e^{rT}|\xi-N_0|^2]+\Et^{n,\varepsilon}_t\Big[\int_t^T e^{rs}\big(f(s,0,0)+|m_s^{n,\varepsilon}+N_0L_y\big)^2d\Bq_s\Big]\\
&\leq& 2e^{rT}(\norm{\xi}_{L_G^\infty}^2+N_0^2)+2e^{rT}\siup^2\Bigg\{\norm{\int_0^T |f(s,0,0)|^2ds }_{L_G^\infty}\\
&&+\big(2\varepsilon(L_y+L_z+ 2L_z\varepsilon)+N_0L_y\big)^2T\Bigg\},~~\text{q.s.}
\end{eqnarray*}
Setting $\varepsilon\to 0$, we have
$$e^{rt}|Y_t^n-N_0|^2\leq 2e^{rT}(\norm{\xi}_{L_G^\infty}^2+N_0^2)+2e^{rT}\siup^2\Bigg\{\norm{\int_0^T |f(s,0,0)|^2ds }_{L_G^\infty}
+N_0^2L_y^2T\Bigg\},~~\text{q.s.}$$
So we know there exists a constant $C^\prime$ independent of $n$ such that $\norm{Y^n}_{S_G^\infty}\leq C^\prime$.

Then by Proposition \ref{Propose_Z_bound}, we know that there exist two constants $C^{\prime\prime}$ and $C_p^\prime$ which are independent of $n$, such that
$$\norm{Z^n}_{BMO_G}\leq C^{\prime\prime},$$and
$$\E[|L_T^n-K_T^n|^p]\leq  C_p^\prime,\quad \forall p\geq 1.$$
We have $$\norm{Y^n}_{S_G^\infty}+\norm{Z^n}_{BMO_G}\leq C,$$
with $C=2(C^\prime+C^{\prime\prime})$, and
$$\E[|L_T^n|^p]+\E[|K_T^n|^p]\leq2\E[|L_T^n-K_T^n|^p]\leq  C_p,$$
with $C_p=2C_p^\prime$.
\end{proof}

The following lemma plays a key role in the proof of the convergence of $\{Y^n\}$.  It gives the convergence of $(Y^n-S)^-$ in $S_G^\alpha(0,T)$.

\begin{Lemma}
\label{lemma_Y^n-S convergence}
For each $\alpha>1$, we have
$$\lim_{n\to\infty}\E\big[\sup_{t\in[0,T]}|(Y^n_t-S_t)^-|^\alpha\big]=0.$$
\end{Lemma}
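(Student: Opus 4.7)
The starting point is the bound $\E[|L_T^n|^p]\le C_p$ supplied by Lemma \ref{lemma_Y^n_bound}. Since $L_T^n=n\int_0^T(Y_s^n-S_s)^-\,ds$, this immediately gives
$$\E\Big[\Big(\int_0^T(Y_s^n-S_s)^-ds\Big)^\alpha\Big]\le \frac{C_\alpha}{n^\alpha}\to 0,$$
so the time integral of $\phi_n(t):=(Y_t^n-S_t)^-$ already converges to zero in $L_G^\alpha$ at rate $n^{-\alpha}$. The entire task is then to upgrade this integral convergence into convergence of the supremum in $t$.

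The plan is to invoke the elementary pathwise inequality, valid for any continuous non-negative $\phi$ on $[0,T]$ and $\delta\in(0,T]$,
$$\sup_{t\in[0,T]}\phi(t)\le \frac{1}{\delta}\int_0^T\phi(s)\,ds+\sup_{|t-s|\le\delta}|\phi(t)-\phi(s)|,$$
obtained by averaging $\phi$ over a $\delta$-window around a maximizer. Applied to $\phi_n$ with the contraction $|a^--b^-|\le|a-b|$ and Assumption \textbf{(H5)},
$$\sup_t\phi_n(t)\le \frac{1}{\delta}\int_0^T\phi_n(s)\,ds+\sup_{|t-s|\le\delta}|Y_t^n-Y_s^n|+w(\delta).$$
Raising to the $\alpha$-th power and taking $\E[\cdot]$, the first term tends to zero as $n\to\infty$ for each fixed $\delta$, while $w(\delta)\to 0$ deterministically as $\delta\to 0$.

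The principal obstacle is controlling the modulus of continuity $\omega_{Y^n}(\delta):=\sup_{|t-s|\le\delta}|Y_t^n-Y_s^n|$ uniformly in $n$. Writing
$$Y_t^n-Y_s^n=-\int_s^t f(u,Y^n,Z^n)\,d\Bq_u+(L_t^n-L_s^n)+\int_s^t Z_u^n\,dB_u-(K_t^n-K_s^n),$$
the drift and $G$-It\^o integral parts can be tamed by the uniform bounds $\norm{Y^n}_{S_G^\infty}+\norm{Z^n}_{BMO_G}\le C$ of Lemma \ref{lemma_Y^n_bound}, Remark \ref{remark_f_bound} on the quadratic growth of $f$, the energy inequality (Lemma \ref{energy_ineq}), and the $G$-BDG inequality, via a dyadic partition argument. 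The genuinely delicate terms are the increments of the non-decreasing penalty $L^n$ and the non-increasing $G$-martingale $K^n$, whose total variations are uniformly bounded in every $L_G^p$ but whose small-interval increments are a priori not uniformly small.

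To bypass this modulus issue I would pair comparison-monotonicity with Dini's theorem. Comparison (Theorem \ref{Thm_compare}) applied to $(Y^n,Y^{n+1})$ shows $Y^n$ is non-decreasing in $n$, since the penalty $n(y-S_s)^-$ is non-decreasing in $n$; consequently $\phi_n$ is non-increasing q.s.\ and decreases pointwise to some $\phi_\infty\ge 0$ with $\int_0^T\phi_\infty\,ds=0$ q.s. Continuity of $S$ together with continuity of the pointwise limit $Y^\infty=\lim_n Y^n$ (inherent to the candidate RBSDE solution constructed in Section 4) forces $\phi_\infty\equiv 0$ q.s. Dini's theorem then yields $\sup_t\phi_n(t)\downarrow 0$ q.s.\ pathwise; combined with the uniform bound $\sup_t\phi_n\le C+N_0$ and a downward $G$-monotone convergence argument (justified via the quasi-continuity of $\sup_t\phi_n$), this delivers $\E[(\sup_t\phi_n)^\alpha]\to 0$ and completes the proof.
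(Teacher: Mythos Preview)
Your proposal contains a genuine gap at the Dini step, and a second one at the final passage to $G$-expectation.

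\textbf{Circularity.} You invoke ``continuity of the pointwise limit $Y^\infty=\lim_n Y^n$ (inherent to the candidate RBSDE solution constructed in Section 4)'' to force $\phi_\infty\equiv 0$. But the continuity of the limit $Y$ is established in Lemma~\ref{lemma_Y^n convergence} (Cauchy property in $S_G^\alpha$), whose proof \emph{uses} the present Lemma~\ref{lemma_Y^n-S convergence}. Without that input, all you know is that $Y^\infty=\sup_n Y^n$ is lower semicontinuous in $t$, hence $\phi_\infty=(Y^\infty-S)^-$ is merely upper semicontinuous. An upper semicontinuous non-negative function that vanishes Lebesgue-a.e.\ can still be strictly positive on a nonempty null set (e.g.\ $\mathbf 1_{\{t_0\}}$), so $\int_0^T\phi_\infty\,ds=0$ does \emph{not} give $\phi_\infty\equiv 0$, and Dini's theorem does not apply. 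You have not produced any independent argument for the continuity of $Y^\infty$.

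\textbf{Monotone convergence under $\E$.} Even granting $\sup_t\phi_n\downarrow 0$ q.s.\ and $\sup_t\phi_n$ bounded, the passage to $\E[(\sup_t\phi_n)^\alpha]\to 0$ is not free in the $G$-framework: as the paper stresses in the introduction, the dominated convergence theorem fails because $\mathcal P$ is non-dominated. The downward monotone result you allude to is available for sequences in $C_b(\Omega)$ via weak compactness of $\mathcal P$, but $\sup_t\phi_n$ is only known to lie in $L_G^\alpha$ (quasi-continuous), and you would need to justify why that suffices here.

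\textbf{What the paper does instead.} The paper avoids both obstacles by producing a \emph{quantitative} upper bound on $(Y_t^n-S_t)^-$ that is uniform in $\mathbb P$. It linearizes the quadratic generator, passes via $G$-Girsanov to a new expectation $\Et^{n,\varepsilon}$, and dominates $(Y^n-S)^-$ by the solution $y^n$ of an explicit linear penalized $G$-BSDE. The explicit formula for $y^n$ reduces the estimate to the modulus of continuity of $S$ alone (handled by Lemma~\ref{lemma_Y_continue}), plus terms of order $n^{-1/2}$. Finally the reverse H\"older and $A_p$ bounds of Lemmas~\ref{reverse holder}--\ref{A_p} transfer the estimate from $\Et^{n,\varepsilon}$ back to $\E$. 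At no point does the argument appeal to properties of the limit $Y^\infty$ or to a convergence theorem under $\E$.
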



\begin{proof} The lemma has been proved by
Li, Peng and Soumana Hima \cite[Lemma 4.3]{Li2017_1}  when the generator $(f,g)$ is uniformly Lipschitz continuous.  Their arguments can be adapted to our general case.

First, we sketch the main ideas.
Under our $z$-quadratic generator, we will still use the method of linearization. By the $G$-Girsanov theorem, we can we rewrite the $G$-BSDE (\ref{Penal_GBSDE1}) so that the generator is independent of $z$ under a new $G$-expectation $\Et[\cdot]$. Similarly as in  \cite[Lemma 4.3]{Li2017_1},  the following holds true:
$$\lim_{n\to\infty}\Et\big[\sup_{t\in[0,T]}|(Y^n_t-S_t)^-|^\alpha\big]=0,\quad \forall \alpha>1.$$
Then from Lemmas~\ref{A_p} and \ref{reverse holder}, we see that $\Et[\cdot]$ can be replaced with $\E[\cdot]$ in the last limit, which completes the proof.

Now we begin our proof. Similar to the proof of Proposition \ref{Propose_Y_bound} and Lemma \ref{lemma_Y^n_bound}, we first rewrite the $G$-BSDE (\ref{Penal_GBSDE1}) by linearization into the form:
$$f(s,Y_s^n,Z_s^n)=f(s,0,0)+m_s^{n,\varepsilon}+a_s^{n,\varepsilon} Y_s^n+b_s^{n,\varepsilon} Z_s^n,~~s\in[0,T],$$
with
\begin{eqnarray*}
|a_s^{n,\varepsilon}|\leq L_y,\quad |b_s^{n,\varepsilon}|\leq L_z(1+|Z_s^n|),\quad \hbox{ \rm and } \quad
|m_s^{n,\varepsilon}|\leq 2\varepsilon(L_y+L_z(1+ 2\varepsilon)),~~s\in[0,T].
\end{eqnarray*}
So the $G$-BSDE (\ref{Penal_GBSDE1}) reads
$$Y_t^n=\xi+\int_t^Tf(s,0,0)+m_s^{n,\varepsilon}+a_s^{n,\varepsilon} Y_s^nd\Bq_s+n\int_t^T(Y_s^n-S_s)^-ds-\int_t^TZ_s^nd\tilde B^{n,\varepsilon}_s-\int_t^TdK_s^n,~~\text{q.s.},$$
where $d\tilde{B}^{n,\varepsilon}_s=dB_s-b_s^{n,\varepsilon} d\Bq_s$.
In view of \cite[Lemma 3.6]{Hu2018}, we know that $b^{n,\varepsilon}\in BMO_G$.
Thus we can define a new $G$-expectation $\Et^{n,\varepsilon}[\cdot]$ by $\Exp(b^{n,\varepsilon})$, such that $\tilde{B}^{n,\varepsilon}$ is a $G$-Brownian motion under $\Et^{n,\varepsilon}[\cdot]$.

We now prove
\begin{equation}\label{penalty lim}
\lim_{n\to\infty}\Et^{n,\varepsilon}\left[\sup_{t\in[0,T]}|(Y^n_t-S_t)^-|^\alpha\right]=0.
\end{equation}
Set
\begin{eqnarray*}
y_t^n&=&\xi+\int_t^Tf(s,0,0)+m_s^{n,\varepsilon}+a_s^{n,\varepsilon} Y_s^nd\Bq_s+n\int_t^T(S_s-y_s^n)ds\\
&&-\int_t^Tz_s^nd\tilde B^{n,\varepsilon}_s-\int_t^Tdk_s^n,~~\text{q.s.},\quad t\in[0,T].
\end{eqnarray*}
Then we have for each $t\in[0,T]$,
$$y_t^n=e^{nt}\Et_t^{n,\varepsilon}\left[e^{-nT}\xi+\int_t^Tne^{-ns}ds+\int_t^Te^{-ns}(f(s,0,0)+m_s^{n,\varepsilon}
+a_s^{n,\varepsilon} Y_s^n)d\Bq_s\right],~~\text{q.s.}$$
In view of \cite[Theorem 3.6]{HuM2014_2}, we have for each $t\in[0,T]$,
$$Y_t^n-S_t\geq y_t^n-S_t=\Et_t^{n,\varepsilon}\left[\tilde S_t^n +\int_t^Te^{n(t-s)}(f(s,0,0)+m_s^{n,\varepsilon}
+a_s^{n,\varepsilon} Y_s^n)d\Bq_s\right],~~\text{q.s.},$$
where $$\tilde S_t^n:=e^{n(t-T)}(\xi-S_t)+\int_t^Tne^{n(t-s)}(S_s-S_t)ds, \quad t\in [0,T].$$
It follows that
$$(Y_t^n-S_t)^-\leq (y_t^n-S_t)^-\leq\Et_t^{n,\varepsilon}\left[|\tilde S_t^n| +\left|\int_t^Te^{n(t-s)}(f(s,0,0)+m_s^{n,\varepsilon}
+a_s^{n,\varepsilon} Y_s^n)d\Bq_s\right|\right],~~\text{q.s.}$$
We have for any $\alpha>1$,
\begin{eqnarray*}
&&\Et^{n,\varepsilon}\left[\sup_{t\in[0,T]}\left|\int_t^Te^{{n(t-s)}}(f(s,0,0)+m_s^{n,\varepsilon}
+a_s^{n,\varepsilon} Y_s^n)d\Bq_s\right|^\alpha\right]\\
&\leq&\siup^{2\alpha}\Et^{n,\varepsilon}\bigg[\left(\int_0^T(f(s,0,0)+m_s^{n,\varepsilon}
+a_s^{n,\varepsilon} Y_s^n)^2ds\right)^{\alpha\over 2}\sup_{t\in[0,T]}\left(\int_t^Te^{2n(t-s)}ds\right)^{\alpha\over 2}\bigg]\\
&\leq&\left({1-e^{-2nT}\over n}\right)^{\alpha\over 2}\siup^{2\alpha}
\Bigg\{\norm{\int_0^T|f(s,0,0)|^2ds}_{L_G^\infty}\\
&&\quad\quad\quad+T\big[L_y\norm{Y^n}_{S_G^\infty}+2\varepsilon (L_y+L_z+ 2L_z\varepsilon)\big]^2\Bigg\}^{\alpha\over 2}.
\end{eqnarray*}
In view of Lemma \ref{lemma_Y^n_bound}, we have
\begin{equation}
\label{eq_lim_f}
\lim_{n\to\infty}\Et^{n,\varepsilon}\left[\sup_{t\in[0,T]}\left|\int_t^Te^{n(t-s)}(f(s,0,0)+m_s^{n,\varepsilon}
+a_s^{n,\varepsilon} Y_s^n)d\Bq_s\right|^\alpha\right]=0.
\end{equation}
For $\epsilon>0$, it is straightforward to show for each $t\in[0,T]$,
\begin{eqnarray*}
|\tilde S_t^n|&=&\left|e^{n(t-T)}(\xi-S_t)+\int_{t+\epsilon}^Tne^{n(t-s)}(S_s-S_t)\, ds
+\int_t^{t+\epsilon}ne^{n(t-s)}(S_s-S_t)ds\right|\\
&\leq& e^{n(t-T)}|\xi-S_t|+e^{-n\epsilon}\sup_{s\in[t+\epsilon,T]}|S_t-S_s|
+\sup_{s\in[t,t+\epsilon]}|S_t-S_s|,~~\text{q.s.}
\end{eqnarray*}
For $\delta\in (0,T)$, we have
\begin{eqnarray}
&&\sup_{t\in[0,T-\delta]}|\tilde S_t^n |\\ &\leq&e^{-n\delta}\sup_{t\in[0,T-\delta]}|\xi-S_t|+e^{-n\epsilon}\sup_{t\in[0,T-\delta]}
\sup_{s\in[t+\epsilon,T]}|S_t-S_s|\nonumber\\[3mm]
&&\quad\quad+\sup_{t\in[0,T-\delta]}\sup_{s\in[t,t+\epsilon]}|S_t-S_s|\\[3mm]
&\leq&e^{-n\delta}(\sup_{t\in[0,T]}|S_t|+|\xi|)+2e^{-n\epsilon}\sup_{t\in[0,T]}|S_t|
+\sup_{t\in[0,T]}\sup_{s\in[t,t+\epsilon]}|S_t-S_s|,~~\text{q.s.}
\label{ineq_St_n}
\end{eqnarray}
Define the function
$$\phi(x):=\left(1+{1\over x^2}\log{2x-1\over2(x-1)}\right)^{1\over2}-1, \quad x\in (1,\infty).$$
 In view of Lemma \ref{lemma_Y^n_bound}, we can choose $p>1$ independent of $n$ and $\varepsilon$, such that
$$\norm{b^{n,\varepsilon}}_{BMO_G}\leq L_z(1+\norm{Z^n}_{BMO_G})<\phi(p).$$
Set $q={p\over p-1}$.
Then in view of
Lemma \ref{reverse holder}, we have for each $\alpha>1$ and $X\in L_G^q(\Omega_T)$,
\begin{equation}
\label{eq_change_measure1}
\Et^{n,\varepsilon}[X]
=\E\big[\Exp(b^{n,\varepsilon})_TX\big]\leq
\E\big[\Exp(b^{n,\varepsilon})_T^p\big]^{1\over p}\E[|X|^q]^{1\over q}
\leq C_p\E[|X|^q]^{1\over q},
\end{equation}
where $C_p$ depends only on $p$.

In view of Assumption {\bf (H4)} on $S$, we know
$$\E[\sup_{t\in[0,T]}|S_t|^\alpha]<+\infty,\quad \forall \alpha>1.$$
So we have for all $\alpha>1$,
$$\Et^{n,\varepsilon}\Big[\sup_{t\in[0,T]}|S_t|^\alpha\Big]\leq
C_p\E\Big[\sup_{t\in[0,T]}|S_t|^{\alpha q}\Big]^{1\over q},$$
and
$$\Et^{n,\varepsilon}\Big[\sup_{t\in[0,T]}\sup_{s\in[t,t+\epsilon]}|S_t-S_s|^\alpha\Big]\leq
C_p\E\Big[\sup_{t\in[0,T]}\sup_{s\in[t,t+\epsilon]}|S_t-S_s|^{\alpha q}\Big]^{1\over q}.$$
From (\ref{ineq_St_n}), we know
\begin{equation}
\label{eq_lim_St_n}
\limsup_{n\to\infty}\Et^{n,\varepsilon}\Big[\sup_{t\in[0,T-\delta]}|\tilde S_t^n|^\alpha\Big]\leq
C_\alpha C_p\E\Big[\sup_{t\in[0,T]}\sup_{s\in[t,t+\epsilon]}|S_t-S_s|^{\alpha q}\Big]^{1\over q}.
\end{equation}
Then,  in view of (\ref{eq_lim_f}), (\ref{eq_lim_St_n}),  and Remark \ref{remark_doob_type_ineq}, we have
\begin{eqnarray*}
&&\limsup_{n\to\infty}\Et^{n,\varepsilon}\Big[\sup_{t\in[0,T-\delta]}|(Y_t^n-S_t)^-|^\alpha\Big]\\
&\leq & \limsup_{n\to\infty}\Et^{n,\varepsilon}\Big[\sup_{t\in[0,T-\delta]}
\Et^{n,\varepsilon}_t\Big[|\tilde S_t^n| +\Big|\int_t^Te^{n(t-s)}(f(s,0,0)+m_s^{n,\varepsilon}
+a_s^{n,\varepsilon} Y_s^n)d\Bq_s\Big|\Big]^\alpha\Big]\\
&\leq & C\limsup_{n\to\infty}\Et^{n,\varepsilon}\Big[\sup_{t\in[0,T-\delta]}
\Et^{n,\varepsilon}_t\Big[\sup_{u\in[0,T-\delta]}|\tilde S_u^n|^\alpha \Big]\Big]\\
&&+C\limsup_{n\to\infty}\Et^{n,\varepsilon}\Big[\sup_{t\in[0,T-\delta]}
\Et^{n,\varepsilon}_t\Big[\sup_{u\in[0,T-\delta]}\Big|\int_u^Te^{n(t-s)}(f(s,0,0)+m_s^{n,\varepsilon}
+a_s^{n,\varepsilon} Y_s^n)d\Bq_s\Big|^\alpha \Big]\Big]\\
&=& C\limsup_{n\to\infty}\Et^{n,\varepsilon}\Big[\sup_{t\in[0,T-\delta]}
\Et^{n,\varepsilon}_t\Big[\sup_{u\in[0,T-\delta]}|\tilde S_u^n|^\alpha \Big]\Big]\\
&\leq& C^\prime\limsup_{n\to\infty}\bigg\{\Et^{n,\varepsilon}\Big[\sup_{u\in[0,T-\delta]}|\tilde S_u^n|^{2\alpha} \Big]+\Et^{n,\varepsilon}\Big[\sup_{u\in[0,T-\delta]}|\tilde S_u^n|^{2\alpha} \Big]^{1\over 2}\bigg\}\\
&\leq& C^{\prime\prime}\bigg\{\E\Big[\sup_{t\in[0,T]}\sup_{s\in[t,t+\epsilon]}|S_t-S_s|^{2\alpha q}\Big]^{1\over q}+\E\Big[\sup_{t\in[0,T]}\sup_{s\in[t,t+\epsilon]}|S_t-S_s|^{2\alpha q}\Big]^{1\over 2q} \bigg\},
\end{eqnarray*}
where $C^{\prime\prime}$ is independent of $n$, $\delta$ and $\epsilon$.
Therefore,  in view of Lemma \ref{lemma_Y_continue}, setting $\epsilon\to 0$,  we have
$$\limsup_{n\to\infty}\Et^{n,\varepsilon}\Big[\sup_{t\in[0,T-\delta]}|(Y_t^n-S_t)^-|^\alpha\Big]=0.$$
In view of Theorem \ref{Thm_compare}, we get $Y_t^n\geq Y_t^1$ and then obtain
\begin{eqnarray*}
&&\limsup_{n\to\infty}\Et^{n,\varepsilon}\Big[\sup_{t\in[0,T]}|(Y_t^n-S_t)^-|^\alpha\Big]\\
&\leq& \limsup_{n\to\infty}\Et^{n,\varepsilon}\Big[\sup_{t\in[0,T-\delta]}|(Y_t^n-S_t)^-|^\alpha\Big]+
\limsup_{n\to\infty}\Et^{n,\varepsilon}\Big[\sup_{t\in[T-\delta,T]}|(Y_t^n-S_t)^-|^\alpha\Big]\\
&\leq&\limsup_{n\to\infty}\Et^{n,\varepsilon}\Big[\sup_{t\in[T-\delta,T]}|(Y_t^1-S_t)^-|^\alpha\Big].
\end{eqnarray*}
By Lemma \ref{lemma_Y_continue} again and noting that $(Y_T^1-S_T)^-=0$, we obtain
$$\lim_{\delta\to 0}\E\Big[\sup_{t\in[T-\delta,T]}|(Y_t^1-S_t)^-|^\alpha\Big]=0,~~\forall \alpha>1.$$
Finally, with (\ref{eq_change_measure1}), we derive that
$$\limsup_{n\to\infty}\Et^{n,\varepsilon}\Big[\sup_{t\in[T-\delta,T]}|(Y_t^1-S_t)^-|^\alpha\Big]\leq C_p\E\Big[\sup_{t\in[T-\delta,T]}|(Y_t^1-S_t)^-|^{q\alpha}\Big]^{1\over q}.$$
Let $\delta\to 0$ and we know
$$\limsup_{n\to\infty}\Et^{n,\varepsilon}\Big[\sup_{t\in[T-\delta,T]}|(Y_t^1-S_t)^-|^\alpha\Big]=0.$$
Therefore,  we have~\eqref{penalty lim}.

Next we want to change the $G$-expectation in the last equality. Actually, in view of Lemma \ref{A_p} and Remark \ref{remark_Ap_Rp}, there exists $r>1$ which is independent of $n$ and $\varepsilon$, such that
$$\E\left[\left\{\Exp(b^{n,\varepsilon})_T\right\}^{1\over 1-r}\right]\leq C_r,$$
for some positive constant $C_r$ which depends only on $r$. Thus, for each $\alpha>1$, we have
\begin{eqnarray*}
\E\Big[\sup_{t\in[0,T]}|(Y_t^n-S_t)^-|^\alpha\Big]&=&
\E\Big[\Exp(b^{n,\varepsilon})_T^{1\over r}\Exp(b^{n,\varepsilon})_T^{-{1\over r}}\sup_{t\in[0,T]}|(Y_t^n-S_t)^-|^\alpha\Big]\\
&\leq& \E\Big[\Exp(b^{n,\varepsilon})_T\sup_{t\in[0,T]}|(Y_t^n-S_t)^-|^{\alpha r}\Big]^{1\over r}
\E\Big[\Big\{\Exp(b^{n,\varepsilon})_T\Big\}^{1\over 1-r}\Big]^{r-1\over r}\\
&\leq& C_r^{r-1\over r}\Et^{n,\varepsilon}\Big[\sup_{t\in[0,T]}|(Y_t^n-S_t)^-|^{\alpha r}\Big]^{1\over r}.
\end{eqnarray*}
So $$\limsup_{n\to\infty}\E\Big[\sup_{t\in[0,T]}|(Y_t^n-S_t)^-|^\alpha\Big]=0.$$
\end{proof}

Now we show the convergence of the sequence $\{Y^n\}_{n=1}^\infty$.
\begin{Lemma}
\label{lemma_Y^n convergence}
The sequence $\{Y^n\}_{n=1}^\infty$ is a Cauchy sequence in $S_G^\alpha(0,T)$ for any $\alpha\geq 2$.
\end{Lemma}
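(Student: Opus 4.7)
The plan is to mimic the stability argument of Proposition~\ref{Propose_stable} but applied to the pair of penalized solutions $(Y^n,Z^n,K^n,L^n)$ and $(Y^m,Z^m,K^m,L^m)$, exploiting the uniform bounds of Lemma~\ref{lemma_Y^n_bound} and the convergence of the negative parts from Lemma~\ref{lemma_Y^n-S convergence}. Set $\hat Y:=Y^n-Y^m$, $\hat Z:=Z^n-Z^m$. Writing the BSDE satisfied by $\hat Y$ and linearizing the difference of generators exactly as in Proposition~\ref{Propose_stable}, we obtain coefficients $\hat a_s^{\varepsilon},\hat b_s^{\varepsilon},\hat m_s^{\varepsilon}$ with
$$|\hat a^\varepsilon|\le L_y,\quad |\hat b^\varepsilon|\le L_z(1+|Z^n|+|Z^m|),\quad |\hat m^\varepsilon|\le 2\varepsilon(L_y+L_z(1+2\varepsilon+2|Z^n|)).$$
Because $\|Z^n\|_{BMO_G}+\|Z^m\|_{BMO_G}$ is bounded uniformly in $n,m$ by Lemma~\ref{lemma_Y^n_bound}, $\|\hat b^\varepsilon\|_{BMO_G}$ is controlled by a constant independent of $n,m,\varepsilon$. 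We may therefore perform the $G$-Girsanov transformation $d\tilde B_s=dB_s-\hat b_s^\varepsilon d\Bq_s$ and work under a new $G$-expectation $\Et[\cdot]$ for which the reverse H\"older constants in Lemmas~\ref{reverse holder}--\ref{A_p} are uniform in $n,m,\varepsilon$.

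Next I would apply It\^o's formula to $e^{rt}|\hat Y_t|^2$ for $r$ large enough to absorb the linear term $\hat a^\varepsilon\hat Y$. The essential new ingredient, replacing the obstacle-cross-term in Proposition~\ref{Propose_stable}, is the elementary inequality
$$(Y_s^n-Y_s^m)\,d(L_s^n-L_s^m)\ \le\ (Y_s^m-S_s)^-\,dL_s^n+(Y_s^n-S_s)^-\,dL_s^m,$$
obtained from the definition $dL_s^n=n(Y_s^n-S_s)^-\,ds$ and the sign relation $x\,x^-\le 0$. Under $\Et$, the integrals against $dK^n$ and $dK^m$ produce decreasing $G$-martingales by \cite[Lemma 3.4]{HuM2014_1} together with Lemma~\ref{lemma_Gisr_K}, so that after taking $\Et_t$ and sending $\varepsilon\to0$ (as in the proof of Proposition~\ref{Propose_stable}, using Lemma~\ref{energy_ineq} to kill the $\hat m^\varepsilon$ contribution), one arrives at a pointwise, quasi-sure bound
$$e^{rt}|\hat Y_t|^2\le C\,\Et_t\!\left[\int_t^T\!\!(Y_s^m-S_s)^-\,dL_s^n+\int_t^T\!\!(Y_s^n-S_s)^-\,dL_s^m\right].$$

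To pass from this pointwise estimate to an $S_G^\alpha$ bound, I would first dominate $\Et_t$ by $C_p\,\E_t[\,\cdot\,^p]^{1/p}$ via the reverse H\"older inequality (Remark~\ref{remark_Ap_reverse_holder}), then apply Cauchy--Schwarz in the form
$$\int_0^T(Y_s^m-S_s)^-\,dL_s^n\le \sup_{s\in[0,T]}(Y_s^m-S_s)^-\cdot L_T^n,$$
and use the uniform $L_G^p$ bound on $L_T^n+L_T^m$ from Lemma~\ref{lemma_Y^n_bound} together with the convergence in every $S_G^\beta$ of $(Y^\cdot-S)^-$ to $0$ from Lemma~\ref{lemma_Y^n-S convergence}. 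Finally, to upgrade the resulting pointwise bound on $|\hat Y_t|^2$ into a bound on $\E[\sup_{t\in[0,T]}|\hat Y_t|^\alpha]$, I would apply the Doob-type maximal inequality of Theorem~\ref{Thm_doob_type_ineq} and Remark~\ref{remark_doob_type_ineq} to the conditional-$G$-expectation dominating $|\hat Y_t|^2$. The main technical obstacle is the careful bookkeeping of the cross terms involving $dL^n-dL^m$ and $dK^n-dK^m$ under the change of $G$-expectation and upgrading the pointwise estimate to the supremum norm; once the key inequality above is in place, combining Lemmas~\ref{lemma_Y^n_bound} and \ref{lemma_Y^n-S convergence} with H\"older's inequality yields the Cauchy property in $S_G^\alpha(0,T)$ for every $\alpha\ge2$.
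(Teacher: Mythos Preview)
Your outline follows essentially the same strategy as the paper: linearize the difference of generators, perform the $G$-Girsanov change $d\tilde B=dB-\hat b^\varepsilon d\Bq$ with constants uniform in $n,m,\varepsilon$ thanks to Lemma~\ref{lemma_Y^n_bound}, apply It\^o's formula to a power of $\hat Y$, control the $L$-cross-terms by $(Y^\cdot-S)^-$, and upgrade the pointwise conditional bound to an $S_G^\alpha$ bound via the Doob-type inequality of Remark~\ref{remark_doob_type_ineq}.

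Two points deserve tightening. First, your claim that ``the integrals against $dK^n$ and $dK^m$ produce decreasing $G$-martingales'' is not correct as written: the integrand $2e^{rs}\hat Y_s$ has arbitrary sign, so \cite[Lemma 3.4]{HuM2014_1} does not apply directly. You must split $\hat Y=\hat Y^+-\hat Y^-$ and observe that
\[
-\hat Y_s\,d\hat K_s\ \le\ -\hat Y_s^+\,dK_s^n-\hat Y_s^-\,dK_s^m,
\]
and it is the right-hand side (with nonnegative integrands against decreasing $G$-martingales) that, combined with the symmetric martingale term, forms a $G$-martingale under $\Et$. This is exactly how the paper handles it; once you insert this splitting your argument goes through.

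Second, the paper applies It\^o to $e^{rt}|\hat Y_t|^\alpha$ directly, takes the Doob-type inequality \emph{under} $\Et^{n,m,\varepsilon}$, and only at the end converts $\Et^{n,m,\varepsilon}[\sup_t|\hat Y_t|^\alpha]$ back to $\E[\sup_t|\hat Y_t|^{\alpha'}]$ via Lemma~\ref{A_p}. Your route---convert $\Et_t$ to $C_p\E_t[\cdot^p]^{1/p}$ first via reverse H\"older, then apply Doob under $\E$---also works, but you need one extra Jensen-type step to pass $\E[\,\cdot^{1/p}]\le (\E[\,\cdot\,])^{1/p}$ through the supremum. Working with $|\hat Y|^2$ instead of $|\hat Y|^\alpha$ is harmless here because the uniform $S_G^\infty$ bound on $Y^n$ from Lemma~\ref{lemma_Y^n_bound} immediately upgrades Cauchy in $S_G^2$ to Cauchy in $S_G^\alpha$.
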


\begin{proof}
For $m,n\in\bb N$ and each $t\in[0,T]$, set
$$\hat Y^{n,m}_t=Y^n_t-Y^m_t,\quad\hat Z^{n,m}_t=Z^n_t-Z^m_t,\quad \hat K^{n,m}_t=K^n_t-K^m_t,\quad\hat L^{n,m}_t=L^n_t-L^m_t.$$
We  use the method of linearization. Similar to the proof of Proposition \ref{Propose_Y_bound} and Lemma \ref{lemma_Y^n_bound}, $\forall \varepsilon>0$, we write for each $s\in[0,T]$,
$$f(s,Y_s^n,Z_s^n)-f(s,Y_s^m,Z_s^m)=m_s^{n,m,\varepsilon}+a_s^{n,m,\varepsilon}
\hat Y_s^{n,m}+b_s^{n,m,\varepsilon}\hat Z_s^{n,m}$$
with
\begin{eqnarray*}
| a_s^{n,m,\varepsilon}|&\leq& L_y,~~| b_s^{n,m,\varepsilon}|\leq L_z(1+|Z_s^n|+|Z_s^m|),\\
| m_s^{n,m,\varepsilon}|&\leq& 2\varepsilon(L_y+L_z(1+ 2\varepsilon+2|Z_s^n|)).
\end{eqnarray*}
So we have for each $t\in[0,T]$,
\begin{eqnarray*}
\hat Y_t^{n,m}&=&\int_t^T m_s^{n,m,\varepsilon}+a_s^{n,m,\varepsilon}
\hat Y_s^{n,m}+b_s^{n,m,\varepsilon}\hat Z_s^{n,m} d\Bq_s\\
&&-\int_t^T \hat Z_s^{n,m}dB_s
-\int_t^Td\hat K^{n,m}_s+\int_t^Td\hat L^{n,m}_s\\[3mm]
&=&\int_t^T m_s^{n,m,\varepsilon}+a_s^{n,m,\varepsilon}
\hat Y_s^{n,m} d\Bq_s-\int_t^T \hat Z_s^{n,m}d\tilde B_s^{n,m,\varepsilon}
-\int_t^Td\hat K^{n,m}_s+\int_t^Td\hat L^{n,m}_s,~~\text{q.s.},
\end{eqnarray*}
where $d\tilde B_s^{n,m,\varepsilon}=dB_s-b_s^{n,m,\varepsilon}\hat Z_s^{n,m} d\Bq_s.$
In view of \cite[Lemma 3.6]{Hu2018}, we know that $b^{n,m,\varepsilon}\in BMO_G$
and we  define a new $G$-expectation $\Et^{n,m,\varepsilon}[\cdot]$ by $\Exp(b^{n,m,\varepsilon})$, such that $\tilde{B}^{n,m,\varepsilon}$ is a $G$-Brownian motion under $\Et^{n,m,\varepsilon}[\cdot]$.

For all $\alpha\geq 2$, by applying It\^o's formula to $|\hat Y_t^{n,m}|^\alpha e^{rt}$, we get for each $t\in[0,T]$,
\begin{eqnarray*}
&&|\hat Y_t^{n,m}|^\alpha e^{rt}+\int_t^T re^{rs}|\hat Y_s^{n,m}|^\alpha ds+{1\over2} \alpha(\alpha-1)\int_t^T e^{rs}|\hat Y_s^{n,m}|^{\alpha-2}|\hat Z_s^{n,m}|^2d\Bq_s\\
&=&\int_t^T \alpha e^{rs}|\hat Y_s^{n,m}|^{\alpha-2}\hat Y_s^{n,m}m_s^{n,m,\varepsilon}d\Bq_s+\int_t^T \alpha e^{rs}|\hat Y_s^{n,m}|^{\alpha}a_s^{n,m,\varepsilon} d\Bq_s\\
&&- \int_t^T \alpha e^{rs}|\hat Y_s^{n,m}|^{\alpha-2}\hat Y_s^{n,m}\hat Z_s^{n,m}d\tilde B_s^{n,m,\varepsilon}-\int_t^T \alpha e^{rs}|\hat Y_s^{n,m}|^{\alpha-2}\hat Y_s^{n,m}d\hat K^{n,m}_s\\
&&+\int_t^T \alpha e^{rs}|\hat Y_s^{n,m}|^{\alpha-2}\hat Y_s^{n,m}d\hat L^{n,m}_s,~~\text{q.s.}
\end{eqnarray*}
Let $r>L_y\alpha\siup^2$. Noting that $| a_s^{n,m,\varepsilon}|\leq L_y$, we get
\begin{eqnarray*}
|\hat Y_t^{n,m}|^\alpha e^{rt}
&\leq&\int_t^T \alpha e^{rs}|\hat Y_s^{n,m}|^{\alpha-2}\hat Y_s^{n,m}m_s^{n,m,\varepsilon}d\Bq_s\\
&&- \int_t^T \alpha e^{rs}|\hat Y_s^{n,m}|^{\alpha-2}\hat Y_s^{n,m}\hat Z_s^{n,m}
d\tilde B_s^{n,m,\varepsilon}\\
&&-\int_t^T \alpha e^{rs}|\hat Y_s^{n,m}|^{\alpha-2}\hat Y_s^{n,m}d\hat K^{n,m}_s
+\int_t^T \alpha e^{rs}|\hat Y_s^{n,m}|^{\alpha-2}\hat Y_s^{n,m}d\hat L^{n,m}_s,~~\text{q.s.}
\end{eqnarray*}
It is easy to check that
\begin{eqnarray*}
&&\int_t^T \alpha e^{rs}|\hat Y_s^{n,m}|^{\alpha-2}\hat Y_s^{n,m}d\hat L^{n,m}_s\\
&=&-\int_t^T \alpha e^{rs}|\hat Y_s^{n,m}|^{\alpha-2} (Y_s^{n}-S_s)d L^{m}_s
-\int_t^T \alpha e^{rs}|\hat Y_s^{n,m}|^{\alpha-2} (Y_s^{m}-S_s)dL^{n}_s\\
&&+\int_t^T \alpha e^{rs}|\hat Y_s^{n,m}|^{\alpha-2} (Y_s^{n}-S_s)d L^{n}_s
+\int_t^T \alpha e^{rs}|\hat Y_s^{n,m}|^{\alpha-2} (Y_s^{m}-S_s)dL^{m}_s\\
&\leq&-\int_t^T \alpha e^{rs}|\hat Y_s^{n,m}|^{\alpha-2} (Y_s^{n}-S_s)d L^{m}_s
-\int_t^T \alpha e^{rs}|\hat Y_s^{n,m}|^{\alpha-2} (Y_s^{m}-S_s)dL^{n}_s,~~\text{q.s.}\\
\end{eqnarray*}
Noting that
\begin{eqnarray*}
\int_t^T \alpha e^{rs}|\hat Y_s^{n,m}|^{\alpha-2}\hat Y_s^{n,m}d\hat K^{n,m}_s&\geq&
\int_t^T \alpha e^{rs}|\hat Y_s^{n,m}|^{\alpha-2}(\hat Y_s^{n,m})^+d K^m_s\\
&& \quad\quad
+\int_t^T \alpha e^{rs}|\hat Y_s^{n,m}|^{\alpha-2}(\hat Y_s^{n,m})^-d K^n_s,~~\text{q.s.},
\end{eqnarray*}
we have
\begin{eqnarray*}
|\hat Y_t^{n,m}|^\alpha e^{rt}+M_T-M_t
&\leq&\int_t^T \alpha e^{rs}|\hat Y_s^{n,m}|^{\alpha-2}\hat Y_s^{n,m}m_s^{n,m,\varepsilon}d\Bq_s\\
&&-\int_t^T \alpha e^{rs}|\hat Y_s^{n,m}|^{\alpha-2} (Y_s^{n}-S_s)d L^{m}_s\\
&&-\int_t^T \alpha e^{rs}|\hat Y_s^{n,m}|^{\alpha-2} (Y_s^{m}-S_s)dL^{n}_s,~~\text{q.s.},
\end{eqnarray*}
where
$$M_t:=\int_0^t \alpha e^{rs}|\hat Y_s^{n,m}|^{\alpha-2}[(\hat Y_s^{n,m})^+d K^m_s
+(\hat Y_s^{n,m})^-d K^n_s+\hat Y_s^{n,m}\hat Z_s^{n,m}d\tilde B_s^{n,m,\varepsilon}].
$$

In view of \cite[Lemma 3.3]{HuM2014_1} and \cite[Lemma 3.4]{Hu2018}, we conclude that $M$ is a $G$-martingale under $\Et^{n,m,\varepsilon}[\cdot]$.
Thus we obtain
\begin{eqnarray}
\nonumber&&|\hat Y_t^{n,m}|^\alpha e^{rt}
-\Et_t^{n,m,\varepsilon}\left[\int_t^T \alpha e^{rs}|\hat Y_s^{n,m}|^{\alpha-2}\hat Y_s^{n,m}m_s^{n,m,\varepsilon}d\Bq_s\right]\\
&\leq&\Et_t^{n,m,\varepsilon}\biggl[-\int_t^T \alpha e^{rs}|\hat Y_s^{n,m}|^{\alpha-2} (Y_s^{n}-S_s)\, d L^{m}_s\nonumber\\
&&\quad\quad\quad-\int_t^T \alpha e^{rs}|\hat Y_s^{n,m}|^{\alpha-2} (Y_s^{m}-S_s)dL^{n}_s\biggr],~~\text{q.s.}
\label{ineq_Y_hat_m_n1}
\end{eqnarray}
Noting the following estimate
\begin{eqnarray*}
&&\Et_t^{n,m,\varepsilon}\left[-\int_t^T \alpha e^{rs}|\hat Y_s^{n,m}|^{\alpha-2} (Y_s^{m}-S_s)dL^{n}_s\right]\\
&=& \Et_t^{n,m,\varepsilon}\left[-\int_t^T n\alpha e^{rs}|\hat Y_s^{n,m}|^{\alpha-2} (Y_s^{m}-S_s)(Y_s^{n}-S_s)^-ds\right]\\
&\leq& \alpha e^{rT}\Et_t^{n,m,\varepsilon}\left[\int_t^T n| (Y_s^{n}-S_s)-(Y_s^m-S_s)|^{\alpha-2} (Y_s^{m}-S_s)^-(Y_s^{n}-S_s)^-ds\right]\\
&\leq& C\Et_t^{n,m,\varepsilon}\left[\int_t^T n| (Y_s^{n}-S_s)^-|^{\alpha-1} (Y_s^{m}-S_s)^-ds\right]\\
&&+C\Et_t^{n,m,\varepsilon}\left[\int_t^T n| (Y_s^{m}-S_s)^-|^{\alpha-1} (Y_s^{n}-S_s)^-ds\right],~~\text{q.s.},
\end{eqnarray*}
where $C$ is independent of $n$, $m$ and $\varepsilon$, we deduce from (\ref{ineq_Y_hat_m_n1}) that
\begin{eqnarray}
\nonumber&&\Et^{n,m,\varepsilon}\left[\sup_{t\in[0,T]}|\hat Y_t^{n,m}|^\alpha\right]
-\Et^{n,m,\varepsilon}\left[\sup_{t\in[0,T]}\Et_t^{n,m,\varepsilon}\left[\int_0^T \alpha e^{rs}|\hat Y_s^{n,m}|^{\alpha-1}|m_s^{n,m,\varepsilon}|d\Bq_s\right]\right]\\\nonumber
&\leq&C\Et^{n,m,\varepsilon}\left[\sup_{t\in[0,T]}\Et_t^{n,m,\varepsilon}\left[\int_0^T (m+n)| (Y_s^{n}-S_s)^-|^{\alpha-1} (Y_s^{m}-S_s)^-ds\right]\right]\\
&&+C\Et^{n,m,\varepsilon}\left[\sup_{t\in[0,T]}\Et_t^{n,m,\varepsilon}\left[\int_0^T (m+n)| (Y_s^{m}-S_s)^-|^{\alpha-1} (Y_s^{n}-S_s)^-ds\right]\right].
\label{ineq_Y_hat_m_n2}
\end{eqnarray}
Recall that
$$\phi(x)=\left(1+{1\over x^2}\log{2x-1\over2(x-1)}\right)^{1\over2}-1, \quad x>1.$$
 In view of Lemma \ref{lemma_Y^n_bound}, we can choose $p>1$ independent of $n$, $m$ and $\varepsilon$, such that
$$\norm{b^{n,m,\varepsilon}}_{BMO_G}\leq L_z(1+\norm{Z^n}_{BMO_G}+\norm{Z^m}_{BMO_G})<\phi(p).$$
Set $q={p\over p-1}$.
Then in view of
Lemma \ref{reverse holder}, we have for each $\alpha>1$ and $X\in L_G^q(\Omega_T)$,
\begin{equation}
\label{eq_change_measure2}
\Et_t^{n,m,\varepsilon}[X]
=\E_t\Big[{\Exp(b^{n,m,\varepsilon})_T\over \Exp(b^{n,m,\varepsilon})_t}X\Big]\leq
\E_t\Big[\Big({\Exp(b^{n,m,\varepsilon})_T\over \Exp(b^{n,m,\varepsilon})_t}\Big)^p\Big]^{1\over p}\E_t[|X|^q]^{1\over q}
\leq C_p\E_t[|X|^q]^{1\over q},~~\text{q.s.},
\end{equation}
where $C_p$ depends only on $p$.

Then we have for some $\beta>1$,
\begin{eqnarray}
\nonumber&&\Et^{n,m,\varepsilon}\left[\left(\int_t^T n|(Y_s^{n}-S_s)^-|^{\alpha-1} (Y_s^{m}-S_s)^-ds\right)^\beta\right]\\
\nonumber&\leq&\Et^{n,m,\varepsilon}\left[\sup_{s\in[0,T]}
\left\{|(Y_s^{n}-S_s)^-|^{(\alpha-2)\beta}|(Y_s^{m}-S_s)^-|^\beta\right\}
\left(\int_t^T n(Y_s^{n}-S_s)^- ds\right)^\beta\right]\\
\nonumber&\leq&\Et^{n,m,\varepsilon}\left[\sup_{s\in[0,T]}|(Y_s^{n}-S_s)^-|^{4(\alpha-2)\beta}\right]^{1\over 4}
\Et^{n,m,\varepsilon}\left[\sup_{s\in[0,T]}|(Y_s^{m}-S_s)^-|^{4\beta}\right]^{1\over 4}\\
\nonumber&&\times\Et^{n,m,\varepsilon}\left[\left(\int_t^T n(Y_s^{n}-S_s)^- ds\right)^{2\beta}\right]^{1\over 2}\\
\nonumber&\leq&C_p^3\E\left[\sup_{s\in[0,T]}(Y_s^{n}-S_s)^-|^{4(\alpha-2)\beta q}\right]^{1\over 4q}
\E\left[\sup_{s\in[0,T]}(Y_s^{m}-S_s)^-|^{4\beta q}\right]^{1\over 4q}\\
&&\times \E\left[\left(\int_t^T n(Y_s^{n}-S_s)^- ds\right)^{2\beta q}\right]^{1\over 2q}\nonumber\\
&\leq& C_1\E\left[\sup_{s\in[0,T]}(Y_s^{m}-S_s)^-|^{4\beta q}\right]^{1\over 4q},
\label{ineq_Y_hat_m_n3}
\end{eqnarray}
and
\begin{eqnarray}
\nonumber&&\Et^{n,m,\varepsilon}\left[\left(\int_t^T m| (Y_s^{n}-S_s)^-|^{\alpha-1} (Y_s^{m}-S_s)^-ds\right)^\beta\right]\\
\nonumber&\leq& \Et^{n,m,\varepsilon}\left[\sup_{s\in[0,T]}|(Y_s^{n}-S_s)^-|^{(\alpha-1)\beta}
\left(\int_t^T m(Y_s^{m}-S_s)^- ds\right)^\beta\right]\\
\nonumber&\leq& \Et^{n,m,\varepsilon}\left[\sup_{s\in[0,T]}|(Y_s^{n}-S_s)^-|^{2(\alpha-1)\beta}\right]^{1\over 2}
\Et^{n,m,\varepsilon}\left[\left(\int_t^T m(Y_s^{m}-S_s)^- ds\right)^{2\beta}\right]^{1\over 2}\\
\nonumber&\leq& C_p^2 \E\left[\sup_{s\in[0,T]}|(Y_s^{n}-S_s)^-|^{2(\alpha-1)\beta q}\right]^{1\over 2q}
\E\left[\left(\int_t^T m(Y_s^{m}-S_s)^- ds\right)^{2\beta q}\right]^{1\over 2q}\\
&\leq& C_2\E\left[\sup_{s\in[0,T]}|(Y_s^{n}-S_s)^-|^{2(\alpha-1)\beta q}\right]^{1\over 2q}.
\label{ineq_Y_hat_m_n4}
\end{eqnarray}
Moreover, in view of the assumption on $S$ and Lemma \ref{lemma_Y^n_bound}, we know $C_1$ and $C_2$ are independent of $n$, $m$ and $\varepsilon$.
From Remark \ref{remark_doob_type_ineq}, there exists a constant $C^{\prime}$ independent of $n$, $m$ and $\varepsilon$, such that
\begin{eqnarray*}
\Et^{n,m,\varepsilon}\Big[\sup_{t\in[0,T]}\Et_t^{n,m,\varepsilon}[|X|]\Big]
\leq C^\prime ( \Et^{n,m,\varepsilon}[|X|^\beta]^{1\over \beta}+ \Et^{n,m,\varepsilon}[|X|^\beta]) .
\end{eqnarray*}
Then with (\ref{ineq_Y_hat_m_n2}), (\ref{ineq_Y_hat_m_n3}) and (\ref{ineq_Y_hat_m_n4}), we have
\begin{eqnarray}
\nonumber&&\Et^{n,m,\varepsilon}\Big[\sup_{t\in[0,T]}|\hat Y_t^{n,m}|^\alpha\Big]
-C^\prime\Et^{n,m,\varepsilon}\Big[\Big(\int_0^T \alpha e^{rs}|\hat Y_s^{n,m}|^{\alpha-1}|m_s^{n,m,\varepsilon}|d\Bq_s\Big)^\beta\Big]^{1\over \beta}\\
\nonumber&&-C^\prime\Et^{n,m,\varepsilon}\Big[\Big(\int_0^T \alpha e^{rs}|\hat Y_s^{n,m}|^{\alpha-1}|m_s^{n,m,\varepsilon}|d\Bq_s\Big)^\beta\Big]\\
\nonumber&\leq&CC^\prime\bigg\{\Et^{n,m,\varepsilon}\Big[\Big(\int_0^T (m+n)| (Y_s^{n}-S_s)^-|^{\alpha-1} (Y_s^{m}-S_s)^-ds\Big)^\beta\Big]^{1\over \beta}\\
\nonumber&&+\Et^{n,m,\varepsilon}\Big[\Big(\int_0^T (m+n)| (Y_s^{n}-S_s)^-|^{\alpha-1} (Y_s^{m}-S_s)^-ds\Big)^\beta\Big]\\
\nonumber&&+\Et^{n,m,\varepsilon}\Big[\Big(\int_0^T (m+n)| (Y_s^{m}-S_s)^-|^{\alpha-1} (Y_s^{n}-S_s)^-ds\Big)^\beta\Big]^{1\over \beta}\\
\nonumber&&+\Et^{n,m,\varepsilon}\Big[\Big(\int_0^T (m+n)| (Y_s^{m}-S_s)^-|^{\alpha-1} (Y_s^{n}-S_s)^-ds\Big)^\beta\Big]\bigg\}\\
\nonumber&\leq& \bar C\sum_{j=m,n}\bigg(\E\Big[\sup_{s\in[0,T]}(Y_s^{j}-S_s)^-|^{4\beta q}\Big]^{1\over 4q}+
\E\Big[\sup_{s\in[0,T]}|(Y_s^{j}-S_s)^-|^{2(\alpha-1)\beta q}\Big]^{1\over 2q}\\
&&+\E\Big[\sup_{s\in[0,T]}(Y_s^{j}-S_s)^-|^{4\beta q}\Big]^{1\over 4q\beta}+
\E\Big[\sup_{s\in[0,T]}|(Y_s^{j}-S_s)^-|^{2(\alpha-1)\beta q}\Big]^{1\over 2q\beta} \ \bigg),
\label{ineq_Y_hat_m_n5}
\end{eqnarray}
where $\bar C$ is independent of $n$, $m$ and $\varepsilon$.

In view of Lemma \ref{A_p} and Remark \ref{remark_Ap_Rp}, there is $r>1$ which is independent of $n$, $m$ and $\varepsilon$, such that
$$\E\Big[\Big\{\Exp(b^{n,m,\varepsilon})_T\Big\}^{1\over 1-r}\Big]\leq C_r,$$
where $C_r$ depends only on $r$.
Thus, for each $\alpha^\prime\geq 2$, we have
\begin{eqnarray*}
\E\Big[\sup_{t\in[0,T]}|\hat Y_t^{n,m}|^{\alpha^\prime}\Big]&=&
\E\Big[\Exp(b^{n,m,\varepsilon})_T^{1\over r}\Exp(b^{n,m,\varepsilon})_T^{-{1\over r}}\sup_{t\in[0,T]}|\hat Y_t^{n,m}|^{\alpha^\prime}\Big]\\
&\leq& \E\Big[\Exp(b^{n,m,\varepsilon})_T\sup_{t\in[0,T]}|\hat Y_t^{n,m}|^{\alpha^\prime r}\Big]^{1\over r}
\E\Big[\Big\{\Exp(b^{n,m,\varepsilon})_T\Big\}^{1\over 1-r}\Big]^{r-1\over r}\\
&\leq& C_r^{r-1\over r}\Et^{n,m,\varepsilon}\Big[\sup_{t\in[0,T]}|\hat Y_t^{n,m}|^{\alpha^\prime r}\Big]^{1\over r}.
\end{eqnarray*}
Setting $\alpha=\alpha^\prime r>2$ in (\ref{ineq_Y_hat_m_n5}), we have
\begin{eqnarray}
\nonumber&&\E\Big[\sup_{t\in[0,T]}|\hat Y_t^{n,m}|^{\alpha^\prime}\Big]^r
-C_r^{r-1}C^\prime\Et^{n,m,\varepsilon}\Big[\Big(\int_0^T \alpha e^{rs}|\hat Y_s^{n,m}|^{\alpha-1}|m_s^{n,m,\varepsilon}|d\Bq_s\Big)^\beta\Big]^{1\over \beta}\\
\nonumber&&-C_r^{r-1}C^\prime\Et^{n,m,\varepsilon}\Big[\Big(\int_0^T \alpha e^{rs}|\hat Y_s^{n,m}|^{\alpha-1}|m_s^{n,m,\varepsilon}|d\Bq_s\Big)^\beta\Big]\\
\nonumber&\leq& C_r^{r-1}\bar C\sum_{j=m,n}\bigg(\E\Big[\sup_{s\in[0,T]}(Y_s^{j}-S_s)^-|^{4\beta q}\Big]^{1\over 4q}+
\E\Big[\sup_{s\in[0,T]}|(Y_s^{j}-S_s)^-|^{2(\alpha-1)\beta q}\Big]^{1\over 2q}\\
&&+\E\Big[\sup_{s\in[0,T]}(Y_s^{j}-S_s)^-|^{4\beta q}\Big]^{1\over 4q\beta}+
\E\Big[\sup_{s\in[0,T]}|(Y_s^{j}-S_s)^-|^{2(\alpha-1)\beta q}\Big]^{1\over 2q\beta} \ \bigg).
\label{ineq_Y_hat_m_n6}
\end{eqnarray}
On the other hand,
\begin{eqnarray*}
&&\Et^{n,m,\varepsilon}\Big[\Big(\int_0^T \alpha e^{rs}|\hat Y_s^{n,m}|^{\alpha-1}|m_s^{n,m,\varepsilon}|d\Bq_s\Big)^\beta\Big]\\
&\leq& C_p \Et\Big[\Big(\int_0^T \alpha e^{rs}|\hat Y_s^{n,m}|^{\alpha-1}|m_s^{n,m,\varepsilon}|d\Bq_s\Big)^{\beta q}\Big]^{1\over q}\\
&\leq& 2\varepsilon C_p \alpha e^{rT}\norm{\hat Y^{n,m}}_{S_G^\infty}^{\beta(\alpha-1)}\E\Big[\Big(\int_0^T (L_y+L_z(1+ 2\varepsilon+2|Z_s^n|))d\Bq_s\Big)^{\beta q}\Big]^{1\over q}\\
&&\longrightarrow 0,~~as~~\varepsilon\to 0.
\end{eqnarray*}
Let $\varepsilon\to 0$ in (\ref{ineq_Y_hat_m_n6}). Then in view of Lemma \ref{lemma_Y^n-S convergence}, we conclude that $(Y^n)_{n=1}^\infty$ is a Cauchy sequence in $S_G^{\alpha\prime}(0,T).$
\end{proof}

\section{Existence and uniqueness result on reflected quadratic $G$-BSDEs}

Our main result in the paper is stated as follows.

\begin{Theorem}
\label{Thm_existence_uniqueness}
Let the triple $(\xi, f, S)$ satisfy \textbf{(H1)}-\textbf{(H5)}. Then,  the reflected $G$-BSDE (\ref{RGBSDE2}) has a unique solution (Y,Z,A) such that $(Y, Z)\in S_G^{\infty}(0,T)\times BMO_G$ and
$A\in \bigcap_{\alpha\geq 2} S_G^\alpha(0,T)$.
\end{Theorem}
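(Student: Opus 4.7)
The uniqueness follows at once from Proposition~\ref{Propose_stable} (see Remark~\ref{Remark_uniqueness}), so the task is existence, which I would establish by passing to the limit in the penalization scheme of Section~4. Writing $L^n_t:=n\int_0^t(Y^n_s-S_s)^-\,ds$ and $A^n_t:=L^n_t-K^n_t$, the penalized equation \eqref{Penal_GBSDE2} takes the form
\[
Y^n_t=\xi+\int_t^Tf(s,Y^n_s,Z^n_s)\,d\Bq_s-\int_t^TZ^n_s\,dB_s+\int_t^TdA^n_s,\quad\text{q.s.},
\]
where $A^n$ is continuous nondecreasing with $A^n_0=0$ (since $L^n$ and $-K^n$ both are). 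By Lemma~\ref{lemma_Y^n_bound} the family $(Y^n,Z^n,L^n_T,K^n_T)$ is uniformly bounded in $S_G^\infty(0,T)\times BMO_G\times\bigcap_{p\ge 1}L_G^p(\Omega_T)\times\bigcap_{p\ge 1}L_G^p(\Omega_T)$.

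The limits of $Y^n$ and $Z^n$ are obtained as follows. Lemma~\ref{lemma_Y^n convergence} gives $Y^n\to Y$ in $S_G^\alpha(0,T)$ for every $\alpha\ge 2$, so $Y\in S_G^\infty(0,T)$ by the uniform bound, and Lemma~\ref{lemma_Y^n-S convergence} yields $Y\ge S$, q.s. For $Z^n$ I would reproduce the stability argument of Proposition~\ref{Propose_stable}: linearize the $z$-quadratic difference $f(\cdot,Y^n,Z^n)-f(\cdot,Y^m,Z^m)$, apply It\^o's formula to $|Y^n_t-Y^m_t|^2$ under the resulting $G$-Girsanov change of measure, and exploit the uniform BMO bound of Lemma~\ref{lemma_Y^n_bound} together with the reverse H\"older inequality (Remark~\ref{remark_Ap_reverse_holder}) to estimate $\E[\int_0^T|Z^n_s-Z^m_s|^2\,d\Bq_s]$ by $\norm{Y^n-Y^m}_{S_G^{\alpha_0}}$ for some $\alpha_0\ge 2$. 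Hence $Z^n\to Z$ in $H_G^2(0,T)$, with $Z\in BMO_G$ by the uniform BMO bound. I would then define
\[
A_t:=Y_0-Y_t-\int_0^tf(s,Y_s,Z_s)\,d\Bq_s+\int_0^tZ_s\,dB_s,\quad t\in[0,T],
\]
and combine the above convergences with Remark~\ref{remark_f_bound} and Lemma~\ref{energy_ineq} to obtain $A^n_t\to A_t$ in $L_G^\alpha(\Omega_t)$, uniformly in $t$, for every $\alpha\ge 1$. Since each $A^n$ is continuous and nondecreasing with $A^n_0=0$ and the convergence is uniform in time, $A$ inherits continuity, monotonicity, and $A_0=0$; the uniform $L_G^p$ bound on $A^n_T$ places $A$ in $\bigcap_{\alpha\ge 2}S_G^\alpha(0,T)$.

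The main obstacle is the verification of the $G$-martingale property of $-\int_0^\cdot(Y_s-S_s)\,dA_s$. Given $Y\ge S$ and $A$ nondecreasing, this process is non-increasing with value $0$ at $t=0$, so by the characterization $\E_t[M_T]=M_t$ of $G$-martingales it suffices to show $\E_t\bigl[-\int_t^T(Y_s-S_s)\,dA_s\bigr]=0$ for every $t\in[0,T]$. At the penalized level,
\[
-\int_t^T(Y^n_s-S_s)\,dA^n_s=n\int_t^T\bigl((Y^n_s-S_s)^-\bigr)^2\,ds+\int_t^T(Y^n_s-S_s)\,dK^n_s.
\]
The first summand is controlled by $\norm{(Y^n-S)^-}_{S_G^\infty}(L^n_T-L^n_t)$, which tends to $0$ in $L_G^\alpha$ for every $\alpha\ge 1$ by Lemma~\ref{lemma_Y^n-S convergence} and the uniform bound on $L^n_T$. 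For the second summand I would adapt the strategy of Li, Peng and Soumana Hima~\cite{Li2017_1} to the quadratic setting: using that the integral of a nonnegative bounded adapted process against the decreasing $G$-martingale $K^n$ is itself a decreasing $G$-martingale (so that its $\E_t$-conditional increment vanishes), one decomposes $(Y^n-S)=(Y^n-S)^+-(Y^n-S)^-$. The negative-part contribution vanishes in the limit by Lemma~\ref{lemma_Y^n-S convergence} combined with the $L_G^p$ bound on $K^n_T$, while the positive-part contribution can be passed to the limit by extracting an $L_G^p$-limit $K$ of $K^n$ from the identity $K^n=L^n-A^n$ together with the $S_G^\alpha$-convergence of $Y^n$. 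The resulting identity $\E_t\bigl[-\int_t^T(Y_s-S_s)\,dA_s\bigr]=0$ closes the argument. I expect this last step, which requires stable extraction of the decreasing $G$-martingale component of $A$ and careful use of the $G$-Girsanov and BMO machinery in the presence of quadratic growth, to be the crux of the proof.
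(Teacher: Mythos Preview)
Your outline is broadly aligned with the paper through the construction of $Y$, $Z$, and $A$, but the final step---verifying that $-\int_0^\cdot(Y_s-S_s)\,dA_s$ is a non-increasing $G$-martingale---contains a real gap. You propose to ``extract an $L_G^p$-limit $K$ of $K^n$ from the identity $K^n=L^n-A^n$''. This is not available: only the difference $A^n=L^n-K^n$ is shown to converge, and there is no argument that $L^n$ or $K^n$ converge separately. Boundedness of $K^n_T$ in $L_G^p$ does not help either, since (as the paper stresses in its introduction) bounded sequences in $M_G^p$ need not be weakly compact in the $G$-framework. Without a limit $K$, your plan to pass the positive-part term $\int(Y^n-S)^+\,dK^n$ to the limit breaks down.

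The paper avoids this entirely. It sets $\tilde K^n_t:=\int_0^t(Y_s-S_s)\,dK_s^n$ using the \emph{already-constructed limit} $Y$ (which satisfies $Y\ge S$), so that each $\tilde K^n$ is a decreasing $G$-martingale by \cite[Lemma~3.4]{HuM2014_1}, with no need for $K^n$ itself to converge. It then proves
\[
\lim_{n\to\infty}\E\Big[\sup_{t\in[0,T]}\Big|{-}\int_0^t(Y_s-S_s)\,dA_s-\tilde K^n_t\Big|\Big]=0,
\]
the delicate term being $\int_0^t(Y_s-S_s)\,d(A^n_s-A_s)$, which is handled by approximating $Y-S$ by step processes $\tilde Y^m-\tilde S^m$ on a uniform partition and using the $S_G^\alpha$-convergence $A^n\to A$. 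This step-function device is precisely what replaces the missing convergence of $K^n$.

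A minor remark on the $Z^n$ step: the paper does not linearize and change measure here. It applies It\^o to $|\hat Y^{n,m}|^2$ directly, bounds $|\hat f^{n,m}|\le L(1+|\hat Y^{n,m}|+|Z^n|^2+|Z^m|^2)$, and uses BDG plus H\"older with the uniform $BMO_G$ bound (Lemma~\ref{energy_ineq}) to get $Z^n$ Cauchy in $H_G^\alpha$ for every $\alpha\ge 2$; the higher integrability is then used in the estimate for $A^n-A^m$. Your Girsanov route can be made to work, but it is heavier than needed and, as written, only yields $H_G^2$.
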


\begin{proof} The uniqueness of the solution is referred to Remark~\ref{Remark_uniqueness}. We now prove the existence.

Recalling the penalized $G$-BSDE (\ref{Penal_GBSDE2}), for $m,n\in\bb N$ and each $t\in[0,T]$, define
$$\hat Y^{n,m}_t:=Y^n_t-Y^m_t,\quad \hat Z^{n,m}_t:=Z^n_t-Z^m_t,\quad \hat K^{n,m}_t:=K^n_t-K^m_t,\quad\hat L^{n,m}_t:=L^n_t-L^m_t,$$
and
$$\hat f^{n,m}_t:=f(t,Y^n_t,Z^n_t)-f(t,Y^m_t,Z^m_t).$$
In view of Lemma \ref{lemma_Y^n convergence}, there exists $Y\in S_G^\alpha(0,T)$ satisfying $$\lim_{n\to\infty}\E[\sup_{t\in[0,T]}|Y_t-Y_t^n|^\alpha]=0, \quad\forall \alpha\geq2.$$
Note that there is $L:=L(L_y,L_z)$ such that for each $t\in[0,T]$,
$$|\hat f^{n,m}_t|\leq L_y|\hat Y^{n,m}_t|+L_z(1+|Z_t^m|+|Z_t^n|)|\hat Z^{n,m}_t|
\leq L(1+|\hat Y^{n,m}_t|+|Z_t^m|^2+|Z_t^n|^2).$$
Applying It\^o's formula to $|\hat Y^{n,m}_t|^2$, we get for each $t\in[0,T]$,
\begin{eqnarray*}
&&|\hat Y^{n,m}_t|^2+\int_t^T |\hat Z^{n,m}_s|^2 d\Bq_s\\
&=&2\int_t^T\hat Y^{n,m}_s\hat f^{n,m}_s d\Bq_s-2\int_t^T\hat Y^{n,m}_sd\hat K^{n,m}_s
+2\int_t^T\hat Y^{n,m}_sd\hat L^{n,m}_s-2\int_t^T\hat Y^{n,m}_s\hat Z^{n,m}_sdB_s\\
&\leq& 2L\int_t^T\hat Y^{n,m}_s(1+|\hat Y^{n,m}_s|+|Z_s^m|^2+|Z_s^n|^2) d\Bq_s-2\int_t^T\hat Y^{n,m}_sd\hat K^{n,m}_s\\
&&+2\int_t^T\hat Y^{n,m}_sd\hat L^{n,m}_s-2\int_t^T\hat Y^{n,m}_s\hat Z^{n,m}_sdB_s,~~\text{q.s.}
\end{eqnarray*}
Setting $t=0$, we have
\begin{eqnarray*}
\int_0^T |\hat Z^{n,m}_s|^2 d\Bq_s
&\leq& 2L\siup^2T\sup_{s\in[0,T]}|\hat Y^{n,m}_s|^2+2L\sup_{s\in[0,T]}|\hat Y^{n,m}_s|\int_0^T(1+|Z_s^m|^2+|Z_s^n|^2) d\Bq_s\\
&&+2\sup_{s\in[0,T]}|\hat Y^{n,m}_s|\sum_{j=m,n}(|K_T^j|+|L_T^j|)
-2\int_0^T\hat Y^{n,m}_s\hat Z^{n,m}_sdB_s,~~\text{q.s.}
\end{eqnarray*}
With the B-D-G inequality and H\"older's inequality, we have
\begin{eqnarray*}
&&\E\Big[\Big(\int_0^T |\hat Z^{n,m}_s|^2 d\Bq_s\Big)^{\alpha\over 2}\Big]\\
&\leq& C_\alpha\bigg\{2L\siup^2T\E\Big[\sup_{s\in[0,T]}|\hat Y^{n,m}_s|^\alpha\Big]+2L\E\Big[\sup_{s\in[0,T]}|\hat Y^{n,m}_s|^{\alpha\over 2}\Big(\int_0^T(1+|Z_s^m|^2+|Z_s^n|^2) d\Bq_s\Big)^{\alpha\over 2}\Big]\\
&&+2\E\Big[\sup_{s\in[0,T]}|\hat Y^{n,m}_s|^{\alpha\over 2}
\Big(\sum_{j=m,n}(|K_T^j|+|L_T^j|)\Big)^{\alpha\over 2}\Big]
+2\E\Big[\Big(\int_0^T|\hat Y^{n,m}_s\hat Z^{n,m}_s|^2ds\Big)^{\alpha\over4}\Big]\bigg\}\\
&\leq& C_\alpha^\prime \E\Big[\sup_{s\in[0,T]}|\hat Y^{n,m}_s|^\alpha\Big]
+C_\alpha^\prime\E\Big[\sup_{s\in[0,T]}|\hat Y^{n,m}_s|^{\alpha}\Big]^{1\over 2}
\bigg\{\E\Big[\Big(\int_0^T(1+|Z_s^m|^2+|Z_s^n|^2) d\Bq_s\Big)^\alpha\Big]^{1\over 2}\\
&&+\E\Big[\sum_{j=m,n}(|K_T^j|^\alpha+|L_T^j|^\alpha)\Big]^{1\over 2}
+\E\Big[\Big(\int_0^T| Z^{n}_s|^2+|Z^m_s|^2ds\Big)^{\alpha\over2}\Big]^{1\over 2}\bigg\}.
\end{eqnarray*}
In view of Lemmas~\ref{lemma_Y^n_bound} and \ref{energy_ineq}, there exists a constant $C_1$ independent of $m$ and $n$, such that
$$
\E\Big[\Big(\int_0^T |\hat Z^{n,m}_s|^2 d\Bq_s\Big)^{\alpha\over 2}\Big]
\leq C_1 \E\Big[\sup_{s\in[0,T]}|\hat Y^{n,m}_s|^\alpha\Big]
+C_1\E\Big[\sup_{s\in[0,T]}|\hat Y^{n,m}_s|^{\alpha}\Big]^{1\over 2}.
$$
In view of Lemma \ref{lemma_Y^n convergence}, we know that $\{Z^n\}_{n=1}^\infty$ is a Cauchy sequence in $H_G^{\alpha}(0,T)$ for $\alpha\geq2$. Thus there exists $Z\in H_G^\alpha(0,T)$ satisfying
$$\lim_{n\to\infty}\E\Big[\Big(\int_0^T|Z_s-Z_s^n|ds\Big)^{\alpha\over 2}\Big]=0, \quad\forall \alpha\geq2.$$
Now set $A^n:=L^n-K^n$. It is easy to check that $(A_t^n)_{t\in[0,T]}$ is a nondecreasing process and
$$A_t^n-A_t^m=\hat Y^{n,m}_0-\hat Y^{n,m}_t-\int_0^t\hat f^{n,m}_s d\Bq_s+\int_0^t \hat Z^{n,m}_s dB_s,~~\text{q.s.}$$
So we get
\begin{eqnarray}
\nonumber&&\E\Big[\sup_{t\in[0,T]}|A_t^n-A_t^m|^\alpha\Big]\\
&\leq& C_2\bigg\{\E\Big[\sup_{t\in[0,T]}|\hat Y^{n,m}_t|^\alpha\Big]
+\E\Big[\Big(\int_0^T|\hat f^{n,m}_s| d\Bq_s\Big)^\alpha\Big]
+\E\Big[\Big(\int_0^T |\hat Z^{n,m}_s|^2 ds\Big)^{\alpha\over 2}\Big]\bigg\}.
\label{ineq_A_m_n}
\end{eqnarray}
From the assumption on $f$, we have
\begin{eqnarray*}
&&\E\Big[\Big(\int_0^T\hat f^{n,m}_s d\Bq_s\Big)^\alpha\Big]\\
&\leq & \E\Big[\Big(\int_0^T\ L_y|\hat Y^{n,m}_s|+L_z(1+|Z_s^m|+|Z_s^n|)|\hat Z^{n,m}_t| d\Bq_s\Big)^\alpha\Big]\\
&\leq& C_3\bigg\{ \E\Big[\sup_{t\in[0,T]}|\hat Y^{n,m}_t|^\alpha\Big]
+\E\Big[\Big(\int_0^T (1+|Z_s^m|+|Z_s^n|)^2d\Bq_s\Big)^{\alpha\over 2}
\Big(\int_0^T |\hat Z^{n,m}_t|^2 d\Bq_s\Big)^{\alpha\over 2}\Big]\bigg\}\\
&\leq& C_3\bigg\{ \E\Big[\sup_{t\in[0,T]}|\hat Y^{n,m}_t|^\alpha\Big]
+\E\Big[\Big(\int_0^T (1+|Z_s^m|+|Z_s^n|)^2d\Bq_s\Big)^{\alpha}\Big]^{1\over 2}\\
&&\times\E\Big[\Big(\int_0^T |\hat Z^{n,m}_t|^2 d\Bq_s\Big)^{\alpha}\Big]^{1\over 2}\bigg\}.
\end{eqnarray*}
Then in view of Lemmas~\ref{lemma_Y^n_bound} and~\ref{energy_ineq} and inequality~(\ref{ineq_A_m_n}), we know that $\{A^n\}_{n=1}^\infty$ is a Cauchy sequence in $S_G^{\alpha}(0,T)$ for each $\alpha\geq2$. There exists a nondecreasing process $(A_t)_{t\in[0,T]}$ such that
$$\lim_{n\to\infty}\E\left[\sup_{t\in [0,T]}|A_t-A_t^n|^\alpha\right]=0.$$

Now, we prove $Y\in S_G^\infty(0,T)$. In view of Lemma \ref{lemma_Y^n_bound}, we know there exist a constant $C>0$ such that $\norm{Y^n}_{S_G^\infty}\leq C$. Recall that
$$\E[X]=\sup_{\bb P\in\mathcal P}E^{\bb P}[X], \quad\forall X\in L_G^1(\Omega_T).$$
From
$$\lim_{n\to\infty}\E\left[\sup_{t\in[0,T]}|Y_t-Y_t^n|^2\right]=0,$$
we see that for each $\bb P\in\mathcal P$,  $\{\sup_{t\in[0,T]}|Y_t^n|, n=1,2,\ldots\}$ converges in probability $\bb P$ to
$\sup_{t\in[0,T]}|Y_t|$. Then, there exits a sub-sequence of $(\sup_{t\in[0,T]}|Y_t^n|)$ such that  ${\bb P}$-a.s.,
$$\lim_{k\to \infty}\sup_{t\in[0,T]}|Y_t^{n_k}|=  \sup_{t\in[0,T]}|Y_t|.$$
Since  $\sup_{t\in[0,T]}|Y_t^{n_k}|\leq C$ for a positive constant $C$  independent of $\bb P$, we have $\sup_{t\in[0,T]}|Y_t|\leq C$ $\bb P\text{\,-a.s.}$ for each $\bb P\in \mathcal P$, and then $\sup_{t\in[0,T]}|Y_t|\leq C$, q.s., which yields the inequality $\norm{Y}_{S_G^\infty}\leq C$. In view of Proposition \ref{Propose_Z_bound}, we have $Z\in BMO_G$.

From Lemma \ref{lemma_Y^n-S convergence}, we have $Y_t\geq S_t$ for  $t\in[0,T]$.
We claim that $\int_0^\cdot (S_s-Y_s)\, dA_s$  is a non-increasing $G$-martingale on $[0,T]$. Set
$\tilde K_t^n=\int_0^t (Y_s-S_s)dK_s^n$. Since $Y_t\geq S_t$  for $t\in[0,T]$ and $K^n$ is a decreasing $G$-martingale, then $\tilde K^n$ is a decreasing $G$-martingale.

We have
\begin{eqnarray*}
&&\sup_{t\in[0,T]}\Big|-\int_0^t(Y_s-S_s)dA_s-\tilde K_t^n\Big|\\
&\leq& \sup_{t\in[0,T]}\biggl\{ \Big|-\int_0^t(Y_s-S_s)dA_s+\int_0^t(Y_s-S_s)dA_s^n\Big|
+\Big|\int_0^t(Y_s^n-Y_s)dA_s^n\Big|\\
&&+\Big|\int_0^t(Y_s^n-Y_s)dK_s^n\Big|+\Big|\int_0^t-(Y_s^n-S_s)dL_s^n\Big|\biggr\}\\
&\leq& \sup_{t\in[0,T]}\Big\{ \Big|\int_0^t(\tilde Y_s^m-\tilde S_s^m)d(A_s^n-A_s)\Big|
+\Big|\int_0^t\{Y_s-S_s-(\tilde Y_s^m-\tilde S_s^m)\}d(A_s^n-A_s)\Big|\Big\}\\
&&+\sup_{t\in[0,T]}|Y_t-Y_t^n|[|A_T^n|+|K_T^n|]+\sup_{t\in[0,T]}(Y_s^n-S_s)^-|L_T^n|,~~\text{q.s.},
\end{eqnarray*}
with
$$\tilde Y_t^m:=\sum_{i=0}^{m-1}Y_{t_i^m}\textbf{1}_{[t_i^m,t_{i+1}^m)}(t), \quad \tilde S_t^m:=\sum_{i=0}^{m-1}S_{t_i^m}\textbf{1}_{[t_i^m,t_{i+1}^m)}(t)$$
and
$$t_i^m:={iT\over m}, \quad i=0,1,\cdots,m.$$

In view of Lemma \ref{lemma_Y^n_bound} and identically as in the proof of \cite[Theorem 5.1]{Li2017_1}, we have
$$\lim_{n\to\infty} \E\Big[\sup_{t\in[0,T]}\Big|-\int_0^t(Y_s-S_s)dA_s-\tilde K_t^n\Big|\Big]=0,$$
which implies that $\int_0^\cdot (S_s-Y_s)\, dA_s$ is a non-increasing $G$-martingale on $[0,T]$.
\end{proof}

In an identical way, we have the following theorem.

\begin{Theorem}
\label{Thm_existence_uniqueness_1}
Suppose that $\xi$, $f$, $g$, and $S$ satisfy \textbf{(H1)}-\textbf{(H5)}. Then the reflected $G$-BSDE (\ref{RGBSDE1}) has a unique solution (Y,Z,A) such that $(Y, Z)\in L_G^{\infty}[0,T]\times BMO_G$ and
$A\in \bigcap_{\alpha\geq 2} S_G^\alpha[0,T]$.
\end{Theorem}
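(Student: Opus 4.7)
The plan is to retrace the pipeline of Sections 3--5 verbatim, treating the drift $\int_t^T g(s,Y_s,Z_s)\,ds$ in parallel with $\int_t^T f(s,Y_s,Z_s)\,d\Bq_s$. The single new structural ingredient is the non-degeneracy inequality $ds \leq \sidown^{-2}\,d\Bq_s$, which holds quasi-surely and lets every $ds$-integral of a non-negative integrand be dominated by a constant times its $d\Bq_s$-analogue. Consequently, each energy/BMO estimate used in the $g\equiv 0$ case carries over at the price of multiplying the constants by factors involving $\sidown^{-2}$.

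For uniqueness, I would extend Propositions \ref{Propose_Z_bound}--\ref{Propose_stable} by linearising $g$ exactly as $f$ was linearised: $g(s,Y,Z) = g(s,0,0) + \tilde m_s^\varepsilon + \tilde a_s^\varepsilon Y + \tilde b_s^\varepsilon Z$ with $|\tilde a_s^\varepsilon|\leq L_y$, $|\tilde b_s^\varepsilon|\leq L_z(1+|Z|)$, and $|\tilde m_s^\varepsilon|\to 0$ as $\varepsilon\to 0$. In the It\^o expansions of $e^{-aY_t}$, $e^{rt}|Y_t-N_0|^2$ and $e^{rt}|\hat Y_t|^2$, the $g$-linearisation produces $ds$-integrals mirroring the corresponding $d\Bq_s$-integrals from $f$, and these are absorbed by enlarging $r$ and the prefactors. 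The $G$-Girsanov step still uses the density $\Exp(\hat b_s^{f,\varepsilon})$ coming from $f$; the linear-in-$Z$ remainder $\int_t^T \tilde b_s^\varepsilon \hat Z_s\,ds$ is then handled as a source term via $\int |\tilde b_s^\varepsilon \hat Z_s|\,ds \leq \sidown^{-2}\int |\tilde b_s^\varepsilon \hat Z_s|\,d\Bq_s$ and Lemma \ref{energy_ineq}. Uniqueness then follows from Remark \ref{Remark_uniqueness}.

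For existence, the penalised equation reads
\begin{equation*}
Y_t^n = \xi + \int_t^T g(s,Y_s^n,Z_s^n)\,ds + \int_t^T f(s,Y_s^n,Z_s^n)\,d\Bq_s + n\int_t^T(Y_s^n-S_s)^-\,ds - \int_t^T Z_s^n\,dB_s - \int_t^T dK_s^n,
\end{equation*}
whose well-posedness in $\mathfrak{G}_G^2(0,T)$ is already supplied by Theorem \ref{Thm_QBSDE}. The uniform bounds of Lemma \ref{lemma_Y^n_bound}, the convergence $(Y^n-S)^- \to 0$ of Lemma \ref{lemma_Y^n-S convergence}, and the Cauchy property of Lemma \ref{lemma_Y^n convergence} all extend by simultaneous linearisation of $g$ and $f$ and by the domination $ds \leq \sidown^{-2}\,d\Bq_s$; the representation of the auxiliary process $y^n$ now involves both $g(s,0,0)$ and $f(s,0,0)$ as inhomogeneous terms, but no new measure change is required since only the $f$-density $\Exp(\hat b^{f,\varepsilon})$ is used.

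The main obstacle I foresee is the uniform validity of the reverse H\"older inequality (Lemma \ref{reverse holder}) and the $A_p$-bound (Lemma \ref{A_p}) after accounting for the $g$-perturbation. This reduces to checking that $\norm{\hat b_s^{f,\varepsilon}}_{BMO_G}$ stays below the thresholds $\phi(q)$ and $\tfrac{\sqrt{2}}{2}(\sqrt r - 1)$ for some $q,r > 1$ independent of $n$ and $\varepsilon$, a property inherited from the uniform $BMO_G$-bound on $Z^n$ in Lemma \ref{lemma_Y^n_bound}. Once this is in place, the entire argument of Theorem \ref{Thm_existence_uniqueness} transfers verbatim to yield existence and uniqueness for the reflected $G$-BSDE \eqref{RGBSDE1}.
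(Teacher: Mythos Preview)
Your overall strategy---retracing Sections 3--5 with $g$ handled in parallel to $f$---is exactly what the paper means by ``in an identical way,'' and several pieces extend just as you say: the $e^{-aY_t}$ argument of Proposition~\ref{Propose_Z_bound} works once $a$ is enlarged to $4L_z(1+\sidown^{-2})$ so that the extra $\frac{3aL_z}{2}e^{-aY_s}|Z_s|^2\,ds\le\frac{3aL_z}{2\sidown^2}e^{-aY_s}|Z_s|^2\,d\Bq_s$ is absorbed, and the penalised equation is indeed covered by Theorem~\ref{Thm_QBSDE}.

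The gap is in your decision to keep the Girsanov density equal to $\Exp(\hat b^{f,\varepsilon})$ and treat $\int_t^T\tilde b_s^{\varepsilon}\hat Z_s\,ds$ as a source term. In the It\^o expansions of $e^{rt}|Y_t-N_0|^2$ and $e^{rt}|\hat Y_t|^2$ (Propositions~\ref{Propose_Y_bound}--\ref{Propose_stable}, Lemmas~\ref{lemma_Y^n_bound} and~\ref{lemma_Y^n convergence}), Young's inequality on $2\hat Y_s\tilde b_s^{\varepsilon}\hat Z_s$ followed by $ds\le\sidown^{-2}d\Bq_s$ leaves a term of the form $C\int_t^Te^{rs}(|Z_s^1|^2+|Z_s^2|^2)|\hat Y_s|^2\,ds$; the random coefficient $|Z_s^i|^2$ is unbounded, so this cannot be absorbed by enlarging $r$. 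After taking $\Et_t$ and invoking Lemma~\ref{energy_ineq} you obtain only $|\hat Y_t|^2\le C\|\hat Y\|_{S_G^\infty}^2$ with $C$ depending on $\|Z^i\|_{BMO_G}$ but not small, so the estimate does not close and uniqueness does not follow. The correct move is to fold $g$'s linearisation into the Girsanov density as well: writing $ds=\gamma_s^{-1}\,d\Bq_s$ with $\gamma_s=d\Bq_s/ds\in[\sidown^2,\siup^2]$, one has $\hat b_s^{f,\varepsilon}\hat Z_s\,d\Bq_s+\tilde b_s^{\varepsilon}\hat Z_s\,ds=\beta_s^\varepsilon\hat Z_s\,d\Bq_s$ with $\beta_s^\varepsilon:=\hat b_s^{f,\varepsilon}+\gamma_s^{-1}\tilde b_s^{\varepsilon}$, and $|\beta_s^\varepsilon|\le(1+\sidown^{-2})L_z(1+|Z_s^1|+|Z_s^2|)$. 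With $\Exp(\beta^\varepsilon)$ in place of $\Exp(\hat b^{f,\varepsilon})$, the $\hat Z$-remainder disappears and every argument of Sections~3--5 transfers verbatim; the uniform $BMO_G$ bound of Lemma~\ref{lemma_Y^n_bound} still puts $\|\beta^\varepsilon\|_{BMO_G}$ below the thresholds of Lemmas~\ref{reverse holder}--\ref{A_p} for some $q,r>1$ independent of $n,\varepsilon$. This is the route taken in \cite{Hu2018} for the unreflected case and is what the paper has in mind; if one prefers to avoid the density $\gamma_s^{-1}$, the same effect is obtained by carrying out the estimates $\bb P$-by-$\bb P$ with classical Girsanov and then taking the supremum over $\mathcal P$, exactly as in the proofs of Lemmas~\ref{energy_ineq}--\ref{A_p}.
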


We have the following comparison theorem for reflected quadratic $G$-BSDEs.

\begin{Theorem}
\label{Thm_compare_RBSDE}
Let the set $(\xi^i, f^i, g^i, S^i)$ satisfy \textbf{(H1)}-\textbf{(H5)},  and $(Y^i,Z^i,A^i)\in \mathcal{S}_G^2(0,T)$ be the solution to the following reflected $G$-BSDE:
\begin{equation*}
\begin{cases}
\displaystyle
 Y_t^i=\xi^i+\int_t^Tg^i(s,\omega_{\cdot \land s},Y_s^i,Z_s^i)ds+\int_t^Tf^i(s,\omega_{\cdot\land s},Y_s^i,Z_s^i)d\Bq_s-\int_t^TZ_s^idB_s+\int_t^TdA_s^i,~~\text{q.s.};\\
 \displaystyle
Y_t^i\geq S_t^i,~~\text{q.s.}, \quad 0\leq t\leq T; \quad \int_0^\cdot (S_s^i-Y_s^i)\, dA_s^i \text{ is a non-increasing } G\text{-martingale},
 \end{cases}
 \end{equation*}
with $i=1,2$. Assume that $(Y^i, Z^i)\in S_G^\infty(0,T)\times BMO_G$ and $A^i_T\in \bigcap_{p\geq 1} L_G^p(\Omega_T)$ for  $i=1,2$.
If $\xi^1\geq \xi^2$, $g^1\geq g^2$, $f^1\geq f^2$, and $S^1\geq S^2$,~~\text{q.s.}, then $Y_t^1\geq Y_t^2,~~\text{q.s.}$~ for any $t\in [0,T]$.
\end{Theorem}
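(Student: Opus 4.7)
The plan is to prove the comparison theorem by approximation via penalization, reducing it to Theorem \ref{Thm_compare} (comparison for quadratic $G$-BSDEs without reflection). Since the arguments of Section 4, suitably extended to include the $g$-driver as in Theorem \ref{Thm_existence_uniqueness_1}, show that each $Y^i$ arises as the $S_G^\alpha$-limit of penalized $G$-BSDE solutions, it suffices to establish the comparison at the penalized level and then pass to the limit.

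First, for each $i = 1, 2$ and $n \in \mathbb{N}$, I would consider the penalized $G$-BSDE
\begin{equation*}
Y_t^{i,n} = \xi^i + \int_t^T \tilde{g}^{i,n}(s, Y_s^{i,n}, Z_s^{i,n})\, ds + \int_t^T f^i(s, Y_s^{i,n}, Z_s^{i,n})\, d\Bq_s - \int_t^T Z_s^{i,n}\, dB_s - \int_t^T dK_s^{i,n},
\end{equation*}
where the penalty is absorbed into the $ds$-generator via
\begin{equation*}
\tilde{g}^{i,n}(s, \omega, y, z) := g^i(s, \omega, y, z) + n\bigl(y - S_s^i(\omega)\bigr)^-.
\end{equation*}
I then verify that $\tilde{g}^{i,n}$ satisfies \textbf{(H1)}--\textbf{(H3)}. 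For \textbf{(H1)}, one uses $\tilde{g}^{i,n}(s, \omega, 0, 0) = g^i(s, \omega, 0, 0) + n (S_s^i)^+$ together with $(S_s^i)^+ \le N_0^+$ from \textbf{(H4)}. For \textbf{(H3)}, the penalty is $n$-Lipschitz in $y$, while the quadratic-in-$z$ structure comes entirely from $g^i$; thus $\tilde{g}^{i,n}$ is Lipschitz in $y$ and quadratic in $z$. \textbf{(H2)} follows from \textbf{(H2)} for $g^i$ and \textbf{(H5)} for $S^i$. By Theorem \ref{Thm_QBSDE}, the penalized equation has a unique solution $(Y^{i,n}, Z^{i,n}, K^{i,n}) \in \mathfrak{G}_G^2(0,T)$, with $(Y^{i,n}, Z^{i,n}, K_T^{i,n}) \in S_G^\infty(0,T) \times BMO_G \times \bigcap_{p\ge 1} L_G^p(\Omega_T)$ (Lemma \ref{lemma_Y^n_bound}).

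Second, I would apply Theorem \ref{Thm_compare} to the two penalized equations, taking $V^i \equiv 0$. The hypotheses are verified thus: $\xi^1 \ge \xi^2$ and $f^1 \ge f^2$ hold by assumption; for the drifts,
\begin{equation*}
\tilde{g}^{1,n}(s, y, z) - \tilde{g}^{2,n}(s, y, z) = \bigl[g^1(s, y, z) - g^2(s, y, z)\bigr] + n\bigl[(y - S_s^1)^- - (y - S_s^2)^-\bigr] \ge 0, \quad\text{q.s.},
\end{equation*}
since $g^1 \ge g^2$ and, because $S^1 \ge S^2$ q.s., the map $x \mapsto (y - x)^-$ being nondecreasing gives $(y - S_s^1)^- \ge (y - S_s^2)^-$. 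Theorem \ref{Thm_compare} then yields $Y_t^{1,n} \ge Y_t^{2,n}$, q.s., for every $t \in [0,T]$ and every $n$.

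Finally, I would pass to the limit. The penalization convergence (Theorem \ref{Thm_existence_uniqueness_1}) gives $\E\bigl[\sup_{t \in [0,T]} |Y_t^{i,n} - Y_t^i|^2\bigr] \to 0$ for $i = 1, 2$. Applying the Chebyshev-type inequality for the capacity $c$ together with a Borel--Cantelli argument, I can extract a common subsequence $(n_k)$ along which $\sup_{t \in [0,T]} |Y_t^{i,n_k} - Y_t^i| \to 0$ q.s.\ for $i = 1, 2$. Passing to the limit in $Y_t^{1,n_k} \ge Y_t^{2,n_k}$ delivers $Y_t^1 \ge Y_t^2$, q.s., for every $t \in [0,T]$. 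The main (modest) obstacle is the bookkeeping check that $\tilde{g}^{i,n}$ falls within the scope of Theorem \ref{Thm_compare} (chiefly \textbf{(H1)}, which uses the upper bound on $S^i$), and the extraction of a quasi-sure convergent subsequence in the $G$-framework, a step that relies on the sub-additivity and monotone continuity of the capacity rather than on classical dominated convergence.
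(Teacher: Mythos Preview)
Your proof is correct and follows the same overall strategy as the paper: reduce to the unreflected comparison Theorem \ref{Thm_compare} via penalization and pass to the limit. The only difference is a matter of taste. The paper penalizes \emph{only} the smaller solution, writing
\[
y_t^n=\xi^2+\int_t^Tg^2(s,y_s^n,z_s^n)\,ds+\int_t^Tf^2(s,y_s^n,z_s^n)\,d\Bq_s+n\int_t^T(y_s^n-S_s^2)^-\,ds-\int_t^Tz_s^n\,dB_s-\int_t^TdK_s^n,
\]
and then observes that, since $Y^1\ge S^1$, the term $n(Y_s^1-S_s^1)^-$ is identically zero, so $Y^1$ already solves the corresponding penalized equation with the extra increasing process $V^1=A^1$. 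Theorem \ref{Thm_compare} (with $V^1=A^1$, $V^2=0$, $K^1=0$, $K^2=K^n$) gives $Y_t^1\ge y_t^n$ for each $n$, and a single limit yields the conclusion. Your version penalizes both equations and compares at the penalized level through the monotonicity of $x\mapsto (y-x)^-$, then takes two limits. The paper's trick saves one limit and the subsequence extraction, while your route avoids invoking Theorem \ref{Thm_compare} with a nontrivial $V^1$; either way the argument goes through without difficulty.
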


\begin{proof}
The proof is identical to that of \cite[Theorem 5.3]{Li2017_1}.

We first consider the following $G$-BSDE
\begin{eqnarray*}
 y_t^n&=&\xi^2+\int_t^Tg^2(s,y_s^n,z_s^n)ds+\int_t^Tf^2(s,y_s^n,z_s^n)d\Bq_s+\int_t^T n(y_s^n-S_s^2)^-ds\\
&& -\int_t^Tz_s^ndB_s-\int_t^TdK_s^n,\quad \forall t\in[0,T],
\end{eqnarray*}
for $n=1,2,\cdots$.
As before, we have
$$\lim_{n\to\infty}\E\Big[\sup_{t\in[0,T]}|Y_t^2-y_t^n|^\alpha\Big]=0,\quad\forall \alpha\geq 2.$$
Noting that $Y_t^1\geq S_t^1$, we can rewrite the equation for $(Y^1,Z^1,A^1)$ as
\begin{eqnarray*}
Y_t^1&=&\xi^1+\int_t^Tg^1(s,Y_s^1,Z_s^1)ds+\int_t^Tf^1(s,Y_s^1,Z_s^1)d\Bq_s\\
&&+\int_t^T n(Y_s^1-S_s^1)^-ds-\int_t^TZ_s^1dB_s+\int_t^TdA_s^1,~~\text{q.s.},~t\in[0,T].
\end{eqnarray*}
Using Theorem \ref{Thm_compare},  we have  $Y_t^1\geq y_t^n,~~\text{q.s.}$~ for all $n\in\bb N$. Letting $n\to \infty$, we conclude that
$Y_t^1\geq Y_t^2,~~\text{q.s.}$
\end{proof}

\section{Relation between quadratic $G$-BSDEs and nonlinear parabolic PDEs }

Consider the following PDE:
\begin{equation}
\begin{cases}
\label{PDE_thm}
\partial_t u+F(D_x^2u,D_xu,u,x,t)=0,~(t,x)\in[0,T)\times \bb R^n;\\
u(T,x)=\phi(x),~x\in\bb R^n,
\end{cases}
\end{equation}
where
\begin{eqnarray*}
F(A,p,y,x,t)
&:=&G\left(\sigma^\mathrm{T}(t,x) A \sigma(t,x)+2f(t,x,y,\sigma^\mathrm{T}(t,x)p)+2h^\mathrm{T}(t,x)p\right)\\
&&+b^\mathrm{T}(t,x)p+g(t,x,y,\sigma^\mathrm{T}(t,x)p),
\end{eqnarray*}
for each $(A,p,y,x,t)\in \bb S_n\times \bb R^n\times \bb R\times\bb R^n\times[0,T] $.

We shall give a nonlinear Feynman-Kac formula for the fully nonlinear PDE (\ref{PDE_thm})  when the functions $f$ and $g$ are quadratic in the last argument. Similar to Li, Peng and Soumana Hima \cite[Section 6]{Li2017_1}, we give the relationship between solutions of the obstacle problem for nonlinear parabolic PDEs and the related reflected quadratic $G$-BSDEs.

In what follows, we consider the $G$-expectation space $(\Omega,L_G^1(\Omega_T),\E)$ for the case of $d=1$ and $\siup^2=\E[B_1^2]\geq-\E[-B_1^2]=\sidown^2>0$.

\subsection{Nonlinear Feynman-Kac formula}

Our main assumptions of this section are formulated as follows.

For deterministic functions $b,h,\sigma: [0,T]\times\bb R^n\to\bb R^n$, $\phi:\bb R^n\to\bb R$, and $f,g:[0,T]\times\bb R^n\times \bb R\times\bb R\to\bb R$, we make the following assumptions.

 \begin{itemize}
\item[\textbf{(A1)}]The functions $b,h,\sigma,f,g$ are uniformly continuous in $t$, i.e. there is a non-decreasing continuous function $w: [0, +\infty)\to [0, +\infty)$ such that $w(0)=0$ and
    $$\sup_{x,y,z\in \bb{R}}|l_1(t,x,y,z)-l_1(t^\prime,x,y,z)|\leq w(|t-t^\prime|),\quad l_1=f,g,$$
        $$\sup_{x\in\bb{R}}|l_2(t,x)-l_2(t^\prime,x)|\leq w(|t-t^\prime|),\quad l_2=b,h,\sigma;$$
\item[\textbf{(A2)}] There exist a positive integer $m$ and a constant $L>0$ such that for each
$(t,x,x^\prime,y,y^\prime,z,z^\prime)\in[0,T]\times\bb R^n\times\bb R^n\times \bb R\times\bb R\times\bb R\times\bb R$,
\begin{eqnarray*}
 &&|b(t,x)-b(t,x^\prime)|+|h(t,x)-h(t,x^\prime)|+|\sigma(t,x)-\sigma(t,x^\prime)|\leq L|x-x^\prime|,\\
 &&|\phi(x)-\phi(x^\prime)|\leq L(1+|x|^m+|x^\prime|^m)|x-x^\prime|,\\
 &&|f(t,x,y,z)-f(t,x^\prime,y^\prime,z^\prime)|+|g(t,x,y,z)-g(t,x^\prime,y^\prime,z^\prime)|\\
 &&\qquad\leq L[(1+|x|^m+|x^\prime|^m)|x-x^\prime|+|y-y^\prime|+(1+|z|+|z^\prime|)|z-z^\prime|];
\end{eqnarray*}
\item[\textbf{(A3)}] There is a positive constant $M_0$ such that
$$\int_0^T\sup_{x\in \bb{R}^n}\left[|f(t,x,0,0)|^2+|g(t,x,0,0)|^2\right]\, dt+\sup_{x\in \bb{R}^n}|\phi(x)|\leq M_0;$$
\item[\textbf{(A4)}]
There are two constants  $\varepsilon>0$ and $K>0$ such that for each $(t,x)\in[0,T]\times \bb R^n$,
 $$\varepsilon I\leq \sigma\sigma^\mathrm{T}(t,x)\leq KI.$$
\end{itemize}

\begin{Remark}
\label{remark_sigma}
Assumption \textbf{(A4)} implies  that $\sigma$ is bounded on $[0,T]\times \bb R^n$.
\end{Remark}

For each $(t,\xi)\in[0,T]\times \bigcap\limits_{p\geq2}L_G^p(\Omega_t;\bb R^n)$, we consider the following of $G$-SDE:
\begin{equation}
X_s=\xi+\int_t^sb(u, X_u)du+\int_t^sh(u, X_u)d\Bq_u+\int_t^s\sigma(u, X_u)dB_u,~~\text{q.s.},~s\in[t,T].
\label{SDE0}
\end{equation}
Denote by  $X^{t,\xi}$ the solution to $G$-SDE (\ref{SDE0}).  Then, we have
\begin{Proposition}
\label{propose_estimate_sde}
(See \cite[Chapter \uppercase\expandafter{\romannumeral5}]{Peng2010}) Let $\xi,\xi^\prime\in L_G^p(\Omega_t;\bb R^n)$ with $p\geq 2$.
Then we have, for each $\delta\in[0,T-t]$,
\begin{eqnarray*}
&&\E_t \left[\sup_{s\in[t,t+\delta]}\left|X_{s}^{t,\xi}-X_{s}^{t,\xi^\prime}\right|^p\right]\leq C|\xi-\xi^\prime|^p,\\
&&\E_t \left[\sup_{s\in[t,t+\delta]}\left|X_{s}^{t,\xi}\right|^p\right]\leq C(1+|\xi|^p),\\
&&\E_t \left[\sup_{s\in[t,t+\delta]}\left|X_{s}^{t,\xi}-\xi\right|^p\right]\leq C(1+|\xi|^p)\delta^{p\over2},
\end{eqnarray*}
where the constant $C$ depends on $L,G,p,n$ and $T$.
\end{Proposition}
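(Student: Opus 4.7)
The plan is to derive all three estimates by the classical SDE scheme adapted to the $G$-framework: combine the $G$-BDG inequality (the Proposition after Definition of $H_G^p, M_G^p$ in Section~2) with the Lipschitz assumption \textbf{(A2)} and close the loop via a $G$-Gronwall argument. Throughout, I absorb constants depending only on $L, G, p, n, T$ into a generic $C$, and I exploit $\Bq_s - \Bq_t \le \siup^2(s-t)$ q.s.\ to convert $d\Bq$ integrals into $du$ integrals when convenient.

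First I would prove the difference estimate. Writing
\[ \Delta_s := X_s^{t,\xi} - X_s^{t,\xi^\prime} = (\xi-\xi^\prime) + \int_t^s \hat b_u\,du + \int_t^s \hat h_u\,d\Bq_u + \int_t^s \hat\sigma_u\,dB_u, \]
where $\hat b_u, \hat h_u, \hat\sigma_u$ denote the corresponding coefficient differences, I would raise $\sup_{s\in[t,r]}|\Delta_s|$ to the $p$-th power, apply $|a_1+\cdots+a_4|^p \le 4^{p-1}\sum|a_i|^p$, use H\"older in time on the drift and quadratic-variation integrals, and apply the $G$-BDG inequality to the $dB$ integral. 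The Lipschitz bound in \textbf{(A2)} converts each integrand into $L|\Delta_u|$, so taking conditional $G$-expectation gives
\[ \E_t\Bigl[\sup_{s\in[t,r]}|\Delta_s|^p\Bigr] \le C|\xi-\xi^\prime|^p + C\int_t^r \E_t\Bigl[\sup_{u\in[t,v]}|\Delta_u|^p\Bigr]\,dv, \quad r\in[t,t+\delta]. \]
A $G$-Gronwall step then yields the first bound on $[t,t+\delta]$.

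For the moment bound, I would apply the same scheme to the raw integral equation, using $|b(u,x)|\le |b(u,0)|+L|x|$ and the analogues for $h,\sigma$. By \textbf{(A1)}, $b(\cdot,0), h(\cdot,0), \sigma(\cdot,0)$ are uniformly continuous on $[0,T]$, hence bounded (and $\sigma$ is globally bounded by Remark~\ref{remark_sigma}); the same Gronwall step yields the second estimate. For the short-time estimate, I would directly inspect $X_s^{t,\xi} - \xi$: the $du$ and $d\Bq$ pieces contribute $O(\delta^p(1+\E_t[\sup_u |X_u^{t,\xi}|^p]))$ via H\"older, while the $G$-BDG inequality bounds the stochastic piece by $C\delta^{p/2}(1+\E_t[\sup_u |X_u^{t,\xi}|^p])$; substituting the second estimate to control $\sup_u|X_u^{t,\xi}|^p$ and noting $\delta^p \le T^{p/2}\delta^{p/2}$ gives the $\delta^{p/2}$ rate of the third estimate.

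The main technical point is that the classical Gronwall lemma must be replaced by a version compatible with the sublinear conditional $G$-expectation $\E_t$; the kernel in the integral inequality above lives inside $\E_t[\,\cdot\,]$ rather than a classical expectation. This is handled by iterating the inequality using sub-additivity and monotonicity of $\E_t$, exactly as in the proof of the wellposedness of $G$-SDEs in Peng~\cite{Peng2010}: the kernel is a constant, so the Picard-type iteration converges and produces the usual exponential bound. All constants generated this way depend only on $L, G, p, n, T$ as claimed.
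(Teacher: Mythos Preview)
The paper does not actually give a proof of this proposition: it is stated with the parenthetical reference ``(See \cite[Chapter~V]{Peng2010})'' and no argument is supplied. Your outline is the standard $G$-BDG $+$ Lipschitz $+$ Gronwall scheme one finds there, and it is correct; indeed the paper uses exactly the same mechanism (BDG inequality followed by Gronwall under $\E_t$) in its proof of the very next result, Proposition~\ref{Propose_stable_sde}.
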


\begin{Proposition}
\label{Propose_stable_sde}
Let the triplet $(b^i,h^i,\sigma^i)$ satisfy \textbf{(A1)}-\textbf{(A2)} for $i=1,2$. For each $(t,\xi)\in[0,T]\times L_G^p(\Omega_t;\bb R^n)$, $p\geq2$, let $X^{t,\xi,i}$  be the solution to the following $G$-SDE:
	$$ X_s^{t,\xi,i}=\xi+\int_t^sb^i(s, X_u^{t,\xi,i})du+\int_t^sh^i(u, X_u^{t,\xi,i})d\Bq_u+\int_t^s\sigma^i(u, X_u^{t,\xi,i})dB_u,
~~\text{q.s.},~ s\in[t,T].$$
Then for each $\delta\in[0,T-t]$, there exist a constant $C$ depends only on $L,G,p$ and $T$, such that
\begin{eqnarray*}
&&\E_t \left[\left|X_{t+\delta}^{t,\xi,1}-X_{t+\delta}^{t,\xi,2}\right|^p\right]\\
&\leq& C\left(\E_t\left[\left(\int_t^{t+\delta}|\hat b_u|du\right)^{p} \right]+\E_t\left[\left(\int_t^{t+\delta}|\hat h_u|du\right)^{p} \right]
+\E_t\left[\left(\int_t^{t+\delta}|\hat\sigma_u|^2du\right)^{p\over2} \right] \right),~~\text{q.s.},
\end{eqnarray*}
where for each $u\in[t,T]$,
$$\hat l_u:=l^1(u,X_u^{t,\xi,2})-l^2(u,X_u^{t,\xi,2}), \quad l=b,h,\sigma.$$
\end{Proposition}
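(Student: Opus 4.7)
The plan is to subtract the two defining $G$-SDEs and then apply the standard Lipschitz-BDG-Gronwall machinery in the $G$-framework. Setting $Y_s := X_s^{t,\xi,1} - X_s^{t,\xi,2}$ for $s \in [t, t+\delta]$, subtraction yields
\begin{align*}
Y_s &= \int_t^s \bigl[b^1(u,X_u^{t,\xi,1}) - b^2(u,X_u^{t,\xi,2})\bigr]\, du + \int_t^s \bigl[h^1(u,X_u^{t,\xi,1}) - h^2(u,X_u^{t,\xi,2})\bigr]\, d\Bq_u \\
&\quad + \int_t^s \bigl[\sigma^1(u,X_u^{t,\xi,1}) - \sigma^2(u,X_u^{t,\xi,2})\bigr]\, dB_u.
\end{align*}
For each $l \in \{b,h,\sigma\}$, I would insert and subtract $l^1(u, X_u^{t,\xi,2})$ to decompose the integrand as $[l^1(u,X_u^{t,\xi,1}) - l^1(u,X_u^{t,\xi,2})] + \hat l_u$, where the bracket is dominated by $L|Y_u|$ by assumption \textbf{(A2)} while the remainder is exactly the data appearing in the right-hand side of the claim.

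Next, I would raise the identity to the $p$-th power, take $\sup_{s\in[t,t+\delta]}$, and bound term by term: the $du$ integral by H\"older's inequality, the $d\Bq_u$ integral by H\"older after using $d\Bq_u \leq \siup^2\, du$ q.s., and the stochastic integral by the $G$-BDG inequality stated in Section 2. The latter contributes a constant times $\E_t\bigl[\bigl(\int_t^{t+\delta}|\sigma^1(u,X_u^{t,\xi,1}) - \sigma^2(u,X_u^{t,\xi,2})|^2\, du\bigr)^{p/2}\bigr]$, which after $|\sigma^1(u,X_u^{t,\xi,1}) - \sigma^2(u,X_u^{t,\xi,2})|^2 \leq 2L^2|Y_u|^2 + 2|\hat\sigma_u|^2$ and H\"older's $\bigl(\int_t^{t+\delta}|Y_u|^2\,du\bigr)^{p/2} \leq \delta^{p/2-1}\int_t^{t+\delta}|Y_u|^p\,du$ (valid for $p\geq 2$) produces, after taking $\E_t$,
\[
\E_t\Bigl[\sup_{s\in[t,t+\delta]}|Y_s|^p\Bigr] \leq C\,\mathcal{R} + C\int_t^{t+\delta}\E_t[|Y_u|^p]\, du,
\]
where $\mathcal R$ denotes the three-term right-hand side of the target inequality and $C = C(L,G,p,T)$.

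Setting $\phi(s) := \E_t\bigl[\sup_{u\in[t,s]}|Y_u|^p\bigr]$ reduces this to the linear Gronwall inequality $\phi(t+\delta) \leq C\mathcal R + C\int_t^{t+\delta}\phi(u)\, du$, which yields $\phi(t+\delta) \leq C\mathcal R\, e^{CT}$ and hence the proposition, since $\E_t[|Y_{t+\delta}|^p] \leq \phi(t+\delta)$. The main subtlety will be the Fubini-type interchange $\E_t\bigl[\int_t^{s}|Y_u|^p\, du\bigr] \leq \int_t^{s}\E_t[|Y_u|^p]\, du$, which is not automatic under the sub-linear $\E_t$; it is justified by sub-additivity on simple processes (which approximate $|Y|^p$ since $Y \in \bigcap_{q\geq 2}S_G^q(0,T)$ by Proposition 6.1) together with a monotone passage to the limit. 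The remaining steps are a routine reassembly of the Lipschitz bound \textbf{(A2)} and the $G$-BDG estimate already stated in the paper.
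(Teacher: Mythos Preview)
Your proposal is correct and follows essentially the same route as the paper: subtract the two $G$-SDEs, split each integrand via the Lipschitz condition on $l^1$ into $L|Y_u|$ plus $\hat l_u$, apply the $G$-BDG inequality and H\"older, and close with Gronwall after the sub-additive Fubini interchange $\E_t\bigl[\int|Y_u|^p\,du\bigr]\le\int\E_t[|Y_u|^p]\,du$. The only cosmetic difference is that the paper works directly with the endpoint quantity $\E_t[|X_{t+\delta}^{t,\xi,1}-X_{t+\delta}^{t,\xi,2}|^p]$ as a function of $\delta$ and applies Gronwall to that, whereas you pass through the running supremum $\phi(s)=\E_t[\sup_{u\in[t,s]}|Y_u|^p]$; both lead to the same bound.
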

\begin{proof}
For simplicity, we assume $h^i=0$. Then we have
\begin{eqnarray*}
X_{t+\delta}^{t,\xi,1}-X_{t+\delta}^{t,\xi,2}
&=&\int_t^{t+\delta}\hat b_u+b^1\left(t,X_{t+\delta}^{t,\xi,1}\right)-b^1\left(t,X_{t+\delta}^{t,\xi,2}\right)du\\
&&+\int_t^{t+\delta}\hat \sigma_u+\sigma^1\left(t,X_{t+\delta}^{t,\xi,1}\right)-\sigma^1\left(t,X_{t+\delta}^{t,\xi,2}\right)dB_u.
\end{eqnarray*}
In view of BDG inequality, we have
\begin{eqnarray*}
&&\E_t \left[\left|X_{t+\delta}^{t,\xi,1}-X_{t+\delta}^{t,\xi,2}\right|^p\right]\\
&\leq & C_1\left( \E_t\left[\int_t^{t+\delta}\left|X_{u}^{t,\xi,1}-X_{u}^{t,\xi,2}\right|^p du \right]+\E_t\left[\left(\int_t^{t+\delta}|\hat b_u|du\right)^{p} \right]
+\E_t\left[\left(\int_t^{t+\delta}|\hat\sigma_u|^2du\right)^{p\over2} \right] \right)\\
&\leq & C_1\left( \int_t^{t+\delta}\E_t\left[\left|X_{u}^{t,\xi,1}-X_{u}^{t,\xi,2}\right|^p du \right]+\E_t\left[\left(\int_t^{t+\delta}|\hat b_u|du\right)^{p} \right]
+\E_t\left[\left(\int_t^{t+\delta}|\hat\sigma_u|^2du\right)^{p\over2} \right] \right).\\
\end{eqnarray*}
By the Gronwall's inequality, we obtain
$$
\E_t \left[\left|X_{t+\delta}^{t,\xi,1}-X_{t+\delta}^{t,\xi,2}\right|^p\right]
\leq C_1e^{C_1T}\left(\E_t\left[\left(\int_t^{t+\delta}|\hat b_u|du\right)^{p} \right]
+\E_t\left[\left(\int_t^{t+\delta}|\hat\sigma_u|^2du\right)^{p\over2} \right] \right).
$$
\end{proof}
We now consider the following $G$-BSDE.
\begin{eqnarray}
\nonumber Y_s&=&\phi(X_T^{t,\xi})+\int_s^Tg(u, X_u^{t,\xi},Y_u,Z_u)du+\int_s^Tf(u, X_u^{t,\xi},Y_u,Z_u)d\Bq_u\\
 &&-\int_s^TZ_udB_u-\int_s^TdK_u,\quad \text{q.s.},~s\in[t,T].
 \label{BSDE0}
\end{eqnarray}
We should point out that Theorem 5.3 in Hu et al. \cite{Hu2018} can not be used to our case directly. Because it is hard to check the assumption \textbf{(H2)} directly in our Markovian case. We now give the the existence of the solution to $G$-BSDE (\ref{BSDE0}) in the spirit of the method in  Hu et al. \cite{HuM2014_1} and Hu et al. \cite{Hu2018}.

Without loss of generality, we assume $h=0$, $g=0$ and $t=0$. For each $x_0\in\bb R^n$, we consider the following  forward and backward differential equations in the $G$-framework ($G$-FBSDE )
\begin{flalign}
&X_t=x_0+\int_0^tb(u, X_u)du+\int_0^t\sigma(u, X_u)dB_u,~~\text{q.s.},~t\in[0,T],
\label{SDE1}\\
 &Y_t=\phi(X_T)+\int_t^Tf(u, X_u,Y_u,Z_u)d\Bq_u-\int_t^TZ_udB_u-\int_t^TdK_u,~~\text{q.s.},~ t\in[0,T],
 \label{BSDE1}
\end{flalign}
where $b,\sigma,f$ and $\phi$ satisfy \textbf{(A1)}-\textbf{(A4)}.

First, we introduce the following fully nonlinear PDE on $[0,T]$:
\begin{equation}
\begin{cases}\displaystyle
\label{PDE1}
\partial_t u+G\left(\sigma^\mathrm{T}(t,x) D_x^2u \sigma(t,x)+2f(t,x,u,\sigma^\mathrm{T}(t,x)D_xu)\right)+b^\mathrm{T}(t,x)D_xu=0,\\
\qquad\qquad\qquad\qquad\qquad\qquad\qquad \qquad\qquad\qquad\qquad\qquad
(t,x)\in[0,T)\times\bb R^n;\\
u(T,x)=\phi(x),~~x\in\bb R^n.
\end{cases}
\end{equation}
We make the following assumptions on  the coefficients of the PDE (\ref{PDE1}).
 \begin{itemize}
\item[\textbf{(A5)}]The function $f(t,x,y,z)$ is continuously differentiable in $(x,y,z)$, differentiable in $t$, and twice differentiable in $(x,y,z)$, where  the first-order time derivative of $f$  and the second-order derivatives of $f$ in $(x,y,z)$ are bounded on the set $[0,T]\times\bb R^n\times
    [-M_y,M_y]\times[-M_z,M_z]$, for any $M_y, M_z>0$.

\item[\textbf{(A6)}]Both functions $b$ and $\sigma$ are differentiable in $t$ and twice differentiable in $x$, where the first-order time  derivative of $(b, \sigma)$  and the second-order spatial derivatives of $(b, \sigma)$  are bounded on the set $[0,T]\times\bb R^n$.

\item[\textbf{(A7)}]The functions $b$ is bounded on the set $[0,T]\times\bb R^n$. The function $f$ is bounded on the set $[0,T]\times\bb R^n\times\bb R\times\bb R$.

\item[\textbf{(A8)}]There exists a constant $L>0$ such that for each
$(t,y,z)\in[0,T]\times\bb R\times\bb R$,
$$|\phi(x)-\phi(x^\prime)|+|f(t,x,y,z)-f(t,x^\prime,y,z)|\leq L|x-x^\prime|,\quad \forall x,x^\prime\in\bb R^n.$$
\end{itemize}

Note that Peng \cite[Appendix C]{Peng2010} used Krylov \cite[Theorem 6.4.3]{Krylov1987} to prove that  there is a classical solution to PDE (\ref{PDE1}) when $b=0$, $f=0$, and $\sigma=1$. In a similar way,
we prove that there is  a classical solution to PDE (\ref{PDE1}) and further that  $u(t,\cdot)$ is uniformly Lipschitz continuous.

\begin{Proposition}
\label{Propose_lip_pde}
Assume $b,\sigma,f$ and $\phi$ satisfy \textbf{(A1)}-\textbf{(A8)}. Then the PDE (\ref{PDE1}) admits a classical solution $u\in C([0,T]\times\bb R^n)$ bounded by $M:=M(M_0,L)$, and there exists a constant $\alpha\in(0,1)$ such that for each $k\in(0,T)$,
$$\norm{u}_{C^{1+\alpha /2,2+\alpha}([0,T-k]\times\bb R^n)}<\infty.$$
Moreover, there exists a constant $C>0$ such that for all $t\in[0,T]$,
$$|u(t,x)-u(t,x^\prime)|\leq C|x-x^\prime|,\quad \forall x,x^\prime\in\bb R^n. $$
\end{Proposition}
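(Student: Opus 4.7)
The plan is to recast the fully nonlinear PDE~\eqref{PDE1} as a Hamilton--Jacobi--Bellman equation and then invoke Krylov's classical regularity theory, exactly in the spirit of Peng~\cite[Appendix C]{Peng2010}. Using the Bellman representation $G(A)=\frac{1}{2}\sup_{\gamma\in\Gamma}\operatorname{tr}[\gamma A]$, the PDE reads
\begin{equation*}
\partial_t u+\sup_{\gamma\in\Gamma}\Bigl\{\tfrac{\gamma}{2}\,\sigma^{\mathrm T}(t,x)D_x^2u\,\sigma(t,x)+\gamma f\bigl(t,x,u,\sigma^{\mathrm T}(t,x)D_xu\bigr)\Bigr\}+b^{\mathrm T}(t,x)D_xu=0.
\end{equation*}
Under \textbf{(A4)} each Bellman summand is uniformly parabolic, under \textbf{(A6)} the second-order coefficients are $C^{1,2}$ with bounded derivatives, and under \textbf{(A5)} and \textbf{(A7)} the Hamiltonian is smooth and bounded in $(u,D_xu)$ on every bounded set, with first derivatives of $f$ in $(x,u,z)$ bounded under \textbf{(A2)}. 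Krylov~\cite[Theorem 6.4.3]{Krylov1987} therefore applies and furnishes a classical solution $u\in C([0,T]\times\bb R^n)$ with $\|u\|_{C^{1+\alpha/2,\,2+\alpha}([0,T-k]\times\bb R^n)}<\infty$ for every $k\in(0,T)$ and some $\alpha\in(0,1)$.

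Next I would establish the $L^\infty$ bound. Since $\phi$ is bounded by $M_0$ under \textbf{(A3)} and $f$ is bounded under \textbf{(A7)}, for a sufficiently large constant $C$ depending only on those bounds, the affine functions $v_{\pm}(t,x):=\pm\bigl(M_0+C(T-t)\bigr)$ are classical super/subsolutions of \eqref{PDE1}. A standard comparison principle for the HJB equation (applicable thanks to the uniform parabolicity and the Lipschitz dependence of $F$ on $(u,D_xu,D_x^2u)$) then yields $|u(t,x)|\le M:=M_0+CT$ on $[0,T]\times\bb R^n$.

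For the global Lipschitz estimate in $x$, the plan is a gradient estimate via linearization. Fix $i\in\{1,\dots,n\}$ and, for $h\neq 0$, set $v^h(t,x):=h^{-1}\bigl(u(t,x+he_i)-u(t,x)\bigr)$. Subtracting \eqref{PDE1} at $x+he_i$ from \eqref{PDE1} at $x$ and using the convexity of $G$ to linearize (choosing a measurable selector $\gamma^*(t,x)\in\Gamma$ attaining the supremum), $v^h$ satisfies a linear parabolic PDE of the form
\begin{equation*}
\partial_t v^h+\tfrac{\gamma^{*}}{2}\operatorname{tr}\!\bigl[a(t,x)D_x^2v^h\bigr]+B(t,x)\cdot D_xv^h+C(t,x)\,v^h+R^h(t,x)=0,
\end{equation*}
where $a=\sigma\sigma^{\mathrm T}$ is uniformly elliptic by \textbf{(A4)}, the coefficients $B$ and $C$ are bounded uniformly in $h$ by the Lipschitz bounds in \textbf{(A2)} and \textbf{(A5)}--\textbf{(A6)} combined with the previous $L^\infty$ bound on $u$, and the inhomogeneity $R^h$ is controlled by the uniform Lipschitz assumption~\textbf{(A8)} on $f$ in $x$. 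The terminal datum $v^h(T,\cdot)$ is the difference quotient of $\phi$, uniformly bounded by $L$ via \textbf{(A8)}. A standard maximum principle for this linear parabolic equation then gives $\|v^h\|_\infty\le C$ independently of $h$; sending $h\to 0$ (legitimate on $[0,T-k]\times\bb R^n$ by the interior $C^{2+\alpha}$ regularity from Step~1, and extending to $t=T$ by continuity) delivers the desired Lipschitz constant.

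The main obstacle is to ensure that the linearized equation in the third step really has coefficients bounded uniformly in $x\in\bb R^n$ and uniformly parabolic on the whole space, so that the maximum principle yields a global (not merely local) gradient estimate. This is where hypotheses~\textbf{(A4)}, \textbf{(A7)}, and especially the uniform (rather than polynomially growing) Lipschitz condition~\textbf{(A8)} on $\phi$ and on $x\mapsto f(t,x,y,z)$ are crucial: they replace the locally Lipschitz assumption \textbf{(A2)} and remove the polynomial growth factor $1+|x|^m+|x'|^m$, which would otherwise make the source term $R^h$ grow with $x$ and prevent a global gradient bound.
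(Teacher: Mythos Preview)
Your outline has the right architecture (HJB form, Krylov regularity, then a gradient estimate), but there is a genuine gap at each of the two nontrivial steps, and it is precisely these gaps that force the paper to take a different route.

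\textbf{Step 1 (Krylov).} The version of Krylov~\cite[Theorem 6.4.3]{Krylov1987} invoked here (and in Peng~\cite[Appendix C]{Peng2010}) requires the Bellman Hamiltonian to be globally Lipschitz in the gradient variable. Under \textbf{(A2)}--\textbf{(A3)} the map $p\mapsto f(t,x,y,\sigma^{\mathrm T}p)$ is quadratic, so $\partial_p H$ has linear growth and the hypotheses fail. Saying the Hamiltonian is ``bounded on every bounded set'' is not enough; the theorem needs global control. This is exactly why the paper first replaces $f$ by the truncated generator $f^N(t,x,y,z)=f(t,x,y,\rho_N(z))$, which \emph{is} globally Lipschitz in $z$, applies Krylov to obtain a classical solution $u^N$ of the truncated PDE, and only afterwards removes the cutoff.

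\textbf{Step 3 (gradient bound).} Your linearized equation for the difference quotient $v^h$ carries a drift $B$ of size $|\gamma^{*}\sigma\,\partial_z f|\lesssim 1+|\sigma^{\mathrm T}D_x u|$. Bounding $B$ therefore requires a bound on $D_x u$, which is what you are trying to prove. The $L^\infty$ bound on $u$ from Step~2 does not control $D_x u$, and the interior $C^{2+\alpha}$ estimate from Step~1 (even if it were available) gives a constant that may blow up as $t\to T$, so the maximum-principle argument is circular as a route to a \emph{uniform} Lipschitz constant on $[0,T]$.

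The paper avoids this circularity by a probabilistic detour: it writes the truncated HJB equation as the value function $u^N(t,x)=\sup_{v\in\mathcal V}\mathcal Y_t^{t,x,v,N}$ of a controlled FBSDE under a reference probability, and then uses the BMO/energy estimates for quadratic BSDEs (Briand--Hu~\cite{Briand2006}, Briand--Elie~\cite{Briand2013}) together with the stability result of Ankirchner--Imkeller--Dos Reis~\cite{Ankirchner2007} to obtain $|\mathcal Y_t^{t,x,v,N}-\mathcal Y_t^{t,x',v,N}|\le C|x-x'|$ with $C$ independent of both the control $v$ and the truncation level $N$. This yields $|D_x u^N|\le C$ uniformly in $N$, and choosing $N>1+C\|\sigma\|_\infty$ makes the truncation inactive, so that $u:=u^N$ solves the original quadratic PDE. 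In short, the Lipschitz estimate is obtained \emph{before} one knows the quadratic PDE has a classical solution, and it is what makes the existence proof close; your Step~3 presupposes existence and then runs into the quadratic drift.
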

\begin{proof}
First, we introduce the truncation function. For each integer $N$, let $\rho_N:\bb R\to\bb R$ be a a smooth modification of the projection on
$[-N,N]$ such that $|\rho_N|\leq N$, $|\rho_N^\prime|\leq 1$ and $\rho_N(z)=z$ when $|z|\leq N-1$.
We consider the following PDE.
\begin{equation}
\begin{cases}
\label{PDE1_truncation}
\partial_t u+G\left(\sigma^\mathrm{T}(t,x) D_x^2u \sigma(t,x)+2f^N(t,x,u,\sigma^\mathrm{T}(t,x)D_xu)\right)+b^\mathrm{T}(t,x)D_xu=0,\\
\qquad\qquad\qquad\qquad\qquad\qquad\qquad \qquad\qquad\qquad\qquad\qquad
(t,x)\in[0,T)\times\bb R^n;\\
u(T,x)=\phi(x),~~x\in \bb R^n,
\end{cases}
\end{equation}
where $f^N$ is defined as
$$f^N(t,x,y,z):=f(t,x,y,\rho_N(z)),~~\forall (t,x,y,z)\in[0,T]\times\bb R^n\times\bb R\times\bb R.$$
 It is easy to check that $f^N$ is uniformly Lipschitz in $z$.

Considering  the PDE for the quantity $e^{(L\siup^2+1)(t-T)}u(t,x)$ as in Peng \cite[Appendix C]{Peng2010}, in view of  Krylov~\cite[Theorem 6.4.3]{Krylov1987}, we can prove that the PDE (\ref{PDE1_truncation}) admits a classical solution
$u^N\in C([0,T]\times\bb R^n)$ dominated by a constant $M:=M(M_0,L)$, such that   for some constant $\alpha\in(0,1)$,  the related restriction of  $u^N$ belong to $C^{1+\alpha /2,2+\alpha}([0, k]\times\bb R^n)$ with any $k\in(0,T)$.

We now rewrite  PDE (\ref{PDE1_truncation}) into a HJB equation, and then estimate the gradient $D_xu^N$.
Since
$$G(a)={1\over2}(\siup^2a^+-\sidown^2a^-)=\sup_{v\in[\sidown,\siup]}{1\over2}v^2a,$$
 the PDE (\ref{PDE1_truncation}) is the following  HJB equation:
\begin{equation}
\begin{cases}
\label{PDE_HJB} \displaystyle
\partial_t u+\sup_{v\in[\sidown,\siup]}H^N(t,x,u(t,x),D_xu(t,x),D^2_xu(t,x),v)=0,\\[3mm]
u(T,x)=\phi(x).
\end{cases}
\end{equation}
where  the Hamiltonian $H^N$ is defined as follows: for $(t,x,y,p,A,v)\in [0,T]\times\bb R^n\times\bb R\times\bb R^n\times\bb S_n\times[\sidown,\siup]$,
$$H^N(t,x,y,p,A,v):={1\over2}\hat\sigma^\mathrm{T}(t,x,v) A \hat\sigma(t,x,v)+F^N(t,x,y,\hat\sigma^\mathrm{T}(t,x)p,v)+b^\mathrm{T}(t,x)p$$
with
$$\hat\sigma(t,x,v):=v\sigma(t,x) \quad \hbox{\rm and } \quad F^N(t,x,y,z,v):=v^2f^N(t,x,y,{z/v}) \hbox{ \rm for }  z\in\bb R.$$
This shows that  $u^N$ is in fact a value function of a control problem.

Let $(\Omega,\mathcal{F},\bb P)$ be the classical Wiener space. Let $W$ be a one-dimension standard Brownian motion under Probability $\bb P$.
For each $(t,x)\in[0,T]\times \bb R^n$, we consider the FBSDE:
 \begin{eqnarray*}
\mathcal{X}_s^{t,x,v}&=&x+\int_t^sb(r, \mathcal X_r^{t,x,v})\, dr+\int_t^s\hat\sigma(r,  \mathcal X_r^{t,x,v},v_r)\, dW_r,~~\bb P\text{\,-a.s.},~s\in[t,T],\\
 \mathcal  Y_s^{t,x,v,N}&=&\phi( \mathcal X_T^{t,x,v})+\int_s^TF^N(r,  \mathcal X_r^{t,x,v}, \mathcal Y_r^{t,x,v,N}, \mathcal Z_r^{t,x,v,N},v_r)\, dr\\
&& -\int_s^T \mathcal Z_r^{t,x,v,N}dW_r,~~\bb P\text{\,-a.s.},~s\in[t,T],
\end{eqnarray*}
Let $\mathcal F^W$ be the filtration generated by $W$ and augmented by all $\bb P$-null sets. Let $\mathcal V$ be the set of
all $\mathcal F^W_t$-progressively measurable processes valued in $[\sidown,\siup]$.
In view of \cite[Theorem 4.2]{Peng1992} or \cite[Theorem 4.2, Theorem 5.3]{Buckdahn2008} and noting that $u^N$ is a viscosity solution of the PDE (\ref{PDE_HJB}), we have
$$u^N(t,x)=\sup_{v\in\mathcal V} \mathcal Y_t^{t,x,v,N}.$$
Note that for each $z\in \bb R$,
$$|\rho_N(z)|\leq z,\quad \text{and} \quad |\rho_N^\prime(z)|\leq 1.$$
We have for each $(t,x,y,v)\in[0,T]\times \bb R^n\times\bb R\times [\sidown,\siup]$,
\begin{eqnarray*}
&&|F^N(t,x,y,z,v)-F^N(t,x,y^\prime,z^\prime,v)|\\
&=&\left|v^2f^N\left(t,x,y,{z\over v}\right)-v^2f^N\left(t,x,y^\prime,{z^\prime\over v}\right)\right|\\
&\leq& Lv^2\left(1+\left|\rho_N\left(z\over v\right)\right|+\left|\rho_N\left(z^\prime\over v\right)\right|\right)
\left|\rho_N\left(z\over v\right)-\rho_N\left(z^\prime\over v\right)\right|\\
&\leq& L(\siup+|z|+|z^\prime|)|z-z^\prime|.
\end{eqnarray*}
In view of \cite[Lemma 1]{Briand2006}, there exists a constant $C_1$ independent of $v$ and $N$ such that for each $(t,x)\in[0,T]\times \bb R^n$, $$\sup_{s\in[t,T]}| \mathcal Y_s^{t,x,v,N}|\leq C_1.$$
In view of \cite[Proposition 2.1]{Briand2013}, there exists a constant $C_2$ independent of $v$ and $N$ such that for each $(t,x)\in[0,T]\times \bb R^n$,
$$\norm{\int_t^\cdot  \mathcal Z_s^{t,x,v,N}dW_s}_{BMO(\bb{P})}\leq C_2.$$
By a similar stability result as in \cite[Theorem 5.1]{Ankirchner2007}, there exist a constant $C_3$ and some $p>2$ which are independent of $v$ and $N$ such that for each $(t,x,x^\prime)\in[0,T]\times \bb R^n\times \bb R^n$,
\begin{eqnarray*}
|\mathcal Y_t^{t,x,v,N}-\mathcal Y_t^{t,x^\prime,v,N}|\leq C_3\left\{E^{\bb P}\left[|\phi(\mathcal X_T^{t,x,v})-\phi(\mathcal X_T^{t,x^\prime,v})|^p\right]^{1\over p}
+E^{\bb P}\left[\left(\int_t^T |\delta F^N_s|ds\right)^p\right]^{1\over p}\right\},
\end{eqnarray*}
 where
 $$\delta F^N_s:=F^N(s, \mathcal X_s^{t,x,v},\mathcal Y_s^{t,x,v,N},\mathcal Z_s^{t,x,v,N},v_s)
 -F^N(s, \mathcal X_s^{t,x^\prime,v},\mathcal Y_s^{t,x,v,N},\mathcal Z_s^{t,x,v,N},v_s).$$
Thus we get,
\begin{eqnarray*}
|\mathcal Y_t^{t,x,v,N}-\mathcal Y_t^{t,x^\prime,v,N}|\leq LC_3\left\{E^{\bb P}\left[|\mathcal X_T^{t,x,v}-\mathcal X_T^{t,x^\prime,v}|^p\right]^{1\over p}
+E^{\bb P}\left[\left(\int_t^T |\mathcal X_s^{t,x,v}-\mathcal X_s^{t,x^\prime,v}|ds\right)^p\right]^{1\over p}\right\}.
\end{eqnarray*}
By inequality (3.3) in \cite{Buckdahn2008}, there exists a constant $C_4$ independent of $v$ and $N$ such that for each $(t,x,x^\prime)\in[0,T]\times \bb R^n\times \bb R^n$,
$$E^{\bb P}\left[\sup_{s\in[t,T]}|\mathcal X_s^{t,x,v}-\mathcal X_s^{t,x^\prime,v}|^p\right]\leq C_4|x-x^\prime|^p.$$
Thus there exists a constant $C$ independent of $v$ and $N$ such that for each $(t,x,x^\prime)\in[0,T]\times \bb R^n\times \bb R^n$,
$$|\mathcal Y_t^{t,x,v,N}-\mathcal Y_t^{t,x^\prime,v,N}|\leq C|x-x^\prime|,$$
which means $|u^N(t,x)-u^N(t,x^\prime)|\leq C|x-x^\prime|$.
So we get
$$\sup_{(t,x)\in[0,T]\times \bb R^n}|D_xu^N(t,x)|\leq C.$$
In view of Remark \ref{remark_sigma}, we know $\sigma$ is bounded.
Let $$N>1+C\cdot\max_{(t,x)\in[0,T]\times \bb R^n}|\sigma(t,x)|.$$ It is easy to check
that for each $(t,x)\in[0,T]\times\bb R^n$,
$$f^N(t,x,u^N,\sigma^\mathrm{T}(t,x)D_xu^N)=f(t,x,u^N,\sigma^\mathrm{T}(t,x)D_xu^N).$$
Set $u:=u^N$ and we know $u$ is the solution of PDE \eqref{PDE1}.
\end{proof}

For each $t\in[0,T]$, we set $Y_t:=u(t,X_t)$, $Z_t:=\sigma^\mathrm{T}(t,X_t)D_xu(t,X_t)$ and
\begin{eqnarray*}
K_t&=&\int_0^t{1\over2}D_x^2u(s,X_s)+f\left(s,X_s,u(s,X_s),\sigma^\mathrm{T}(s,X_s)D_xu(s,X_s)\right)d\Bq_s\\
&&-\int_0^tG\left(D_x^2u(s,X_s)+f\left(s,X_s,u(s,X_s),\sigma^\mathrm{T}(s,X_s)D_xu(s,X_s)\right)\right)ds.
\end{eqnarray*}
For any $0<k<T$, applying It\^o's formula to $u(t,X_t)$ for $t\in[0,k]$, we get
\begin{equation}
\label{BSDE_k}
Y_t=u(k,X_{k})+\int_t^{k} f(s,X_s,Y_s,Z_s)d\Bq_s-\int_t^{k}Z_sdB_s-\int_t^{k}dK_s,~~\text{q.s.}
\end{equation}

Similarly to \cite[inequality (4.3)]{HuM2014_1}, in view of Proposition \ref{Propose_lip_pde} and (\ref{BSDE_k}), we could obtain that there exist a constant $C>0$, for $t_1,t_2\in[0,T]$ and $x_1,x_2\in\bb R^n$,
$$|u(t_1,x_1)-u(t_2,x_2)|\leq C\left(\left(1+|x_1|\right) \sqrt{|t_1-t_2|} +|x_1-x_2|\right).$$
Then we can deduce that $(Y,Z,K)\in \bigcap\limits_{p\geq2}\mathfrak{G}_G^p(0,T)$ and $(Y,Z,K)$ is a solution to (\ref{BSDE1}). So we have the following lemma.
\begin{Lemma}
Assume $b,\sigma,f$, and $\phi$ satisfy assumptions \textbf{(A1)}-\textbf{(A8)}. Then $G$-BSDE (\ref{BSDE1}) has a solution $(Y,Z,K)\in\bigcap\limits_{p\geq2}\mathfrak{G}_G^p(0,T)$.
\end{Lemma}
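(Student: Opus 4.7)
The plan is to exhibit the solution explicitly through the classical solution $u$ of the fully nonlinear PDE~(\ref{PDE1}), since almost all of the construction is already in place in the paragraph preceding the lemma; what remains is to justify passing to the terminal time and to check the integrability and $G$-martingale properties.

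First, I invoke Proposition~\ref{Propose_lip_pde} to obtain a bounded classical solution $u\in C([0,T]\times\bb R^n)$ of~(\ref{PDE1}) whose restriction to $[0,T-k]\times\bb R^n$ lies in $C^{1+\alpha/2,2+\alpha}$ for every $k\in(0,T)$, together with a uniform Lipschitz constant in $x$. I then set $Y_t:=u(t,X_t)$, $Z_t:=\sigma^{\mathrm T}(t,X_t)D_xu(t,X_t)$ and define $K$ as in the paragraph preceding the lemma. Applying the $G$-It\^o formula to $u(s,X_s)$ on $s\in[t,T-k]$ and substituting $\partial_tu=-b^{\mathrm T}D_xu-G(\sigma^{\mathrm T}D_x^2u\,\sigma+2f)$ from~(\ref{PDE1}) produces~(\ref{BSDE_k}) on $[t,T-k]$.

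Second, I pass to the limit $k\downarrow 0$. The continuity of $u$ at $t=T$ with $u(T,\cdot)=\phi$, combined with the H\"older-type estimate $|u(t_1,x_1)-u(t_2,x_2)|\leq C((1+|x_1|)\sqrt{|t_1-t_2|}+|x_1-x_2|)$ (which follows from~(\ref{BSDE_k}) by the argument behind \cite[(4.3)]{HuM2014_1}) and the $L_G^p$ estimates of Proposition~\ref{propose_estimate_sde}, yields $u(T-k,X_{T-k})\to\phi(X_T)$ in $L_G^p$ for every $p\geq 2$; the uniform bound on $|D_xu|$ and boundedness of $\sigma$ (Remark~\ref{remark_sigma}) let the remaining integrals in~(\ref{BSDE_k}) pass to their counterparts in~(\ref{BSDE1}). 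The integrability $(Y,Z,K)\in\bigcap_{p\geq 2}\mathfrak{G}_G^p(0,T)$ is then straightforward: $|Y|\leq M$ gives $Y\in S_G^\infty(0,T)$; the Lipschitz bound on $u$ together with boundedness of $\sigma$ gives $Z$ essentially bounded, hence in every $H_G^p$; finally $K$ can be read off the BSDE identity $K_t=Y_0-Y_t-\int_0^tf(s,X_s,Y_s,Z_s)d\Bq_s+\int_0^tZ_sdB_s$, and the $G$-BDG inequality together with the quadratic growth of $f$ in $z$ provides $K_T\in L_G^p$ for every $p\geq 2$. That $K$ is a non-increasing $G$-martingale is immediate from its explicit representation as the difference between an integral against $d\Bq_s$ and $G$ of the same integrand against $ds$, which is exactly the class of processes shown to be non-increasing $G$-martingales in~\cite{HuM2014_1,HuM2014_2}.

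The main obstacle is the lack of classical regularity at $t=T$: the Krylov-type interior estimates produce $u\in C^{1,2}$ only on $[0,T-k]\times\bb R^n$, so~(\ref{BSDE_k}) is genuinely needed as a stepping stone and the passage $k\downarrow 0$ must rest on the a priori H\"older-type continuity of $u$ up to the terminal time rather than on a direct application of the $G$-It\^o formula on $[0,T]$. Once this limit is executed, every other step reduces to routine estimates combining the bounds from Proposition~\ref{Propose_lip_pde} with the $G$-stochastic calculus already developed in the preceding sections.
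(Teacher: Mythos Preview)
Your proposal is correct and follows essentially the same approach as the paper: construct the solution via the classical solution $u$ from Proposition~\ref{Propose_lip_pde}, apply the $G$-It\^o formula on $[0,T-k]$ (the paper writes this as $[0,k]$ with $k\uparrow T$) to obtain~(\ref{BSDE_k}), derive the H\"older-type estimate $|u(t_1,x_1)-u(t_2,x_2)|\leq C((1+|x_1|)\sqrt{|t_1-t_2|}+|x_1-x_2|)$ to pass to the terminal time, and then read off the required integrability for $(Y,Z,K)$.
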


As an immediate consequence of both proofs of \cite[Proposition 3.5]{Hu2018} and  Proposition \ref{Propose_stable}, we have the following  stability property for quadratic $G$-BSDEs.

\begin{Proposition}
\label{Propose_stable_pde}
Let the triplet $(\xi^i,f^i,g^i)$ satisfy assumption \textbf{(H1)} and \textbf{(H3)} for $i=1,2$. Let $(Y^i,Z^i,K^i)\in \mathfrak{G}_G^2(0,T)$  be the solution to the following $G$-BSDE:
	$$ Y_t^i=\xi^i+\int_t^Tf^i(s,Y_s^i,Z_s^i)\, d\Bq_s
	-\int_t^TZ_s^i\, dB_s-\int_t^T dK_s^i,~~\text{q.s.},~t\in[0,T].$$
 Moreover, we suppose
 $$\norm{L_z(1+|Z_1|+|Z_2|)}_{BMO_G}< \phi(q):=\left\{1+{1\over q^2}\log{2q-1\over2(q-1)}\right\}^{1\over2}-1.$$
Then for each $p>{q\over q-1}$, there exists a constant $~C:=C(p,T,L,M_0)~$ such that for any $t \in [0,T]$,
\begin{equation*}
|Y_t^1-Y_t^2|\leq C\left\{\E_t\left[|\xi^1-\xi^2|^p\right]^{1\over p}
+\E_t\left[\left(\int_t^T |(f^1-f^2)(s,Y_s^2,Z_s^2)|d\Bq_s\right)^p\right]^{1\over p}\right\},
~~\text{q.s.}
\end{equation*}
\end{Proposition}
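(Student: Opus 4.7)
I will adapt the linearization-plus-Girsanov argument of Proposition~\ref{Propose_stable}, stripping off the obstacle terms and tracking the $p$-th moment of the data on the right-hand side. Introduce the differences $\hat Y:=Y^1-Y^2$, $\hat Z:=Z^1-Z^2$, $\hat\xi:=\xi^1-\xi^2$, and $\hat\lambda_s:=(f^1-f^2)(s,Y_s^2,Z_s^2)$. With a smooth cut-off $l$ satisfying $\textbf{1}_{[-\varepsilon,\varepsilon]}\le l\le \textbf{1}_{[-2\varepsilon,2\varepsilon]}$, I decompose
\begin{equation*}
f^1(s,Y_s^1,Z_s^1)-f^2(s,Y_s^2,Z_s^2)=\hat\lambda_s+\hat m_s^\varepsilon+\hat a_s^\varepsilon\hat Y_s+\hat b_s^\varepsilon\hat Z_s,
\end{equation*}
with $|\hat a_s^\varepsilon|\le L_y$, $|\hat b_s^\varepsilon|\le L_z(1+|Z_s^1|+|Z_s^2|)$, and $|\hat m_s^\varepsilon|\le 2\varepsilon(L_y+L_z(1+2\varepsilon+2|Z_s^1|))$, exactly as in the cited proposition. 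Since $\norm{\hat b^\varepsilon}_{BMO_G}<\phi(q)$, the Girsanov lemma furnishes a new $G$-expectation $\Et[\cdot]$ defined by $\Exp(\hat b^\varepsilon)$, under which $\tilde B_t:=B_t-\int_0^t\hat b_s^\varepsilon\,d\Bq_s$ is a $G$-Brownian motion, absorbing the $\hat b^\varepsilon\hat Z$ term. Applying It\^o's formula to the discounted process $\Phi_t\hat Y_t$ with $\Phi_t:=\exp(\int_0^t\hat a_s^\varepsilon\,d\Bq_s)$ eliminates the linear-in-$\hat Y$ term and produces
\begin{equation*}
\Phi_t\hat Y_t=\Phi_T\hat\xi+\int_t^T\Phi_s(\hat\lambda_s+\hat m_s^\varepsilon)\,d\Bq_s-\int_t^T\Phi_s\hat Z_s\,d\tilde B_s-\int_t^T\Phi_s\,dK_s^1+\int_t^T\Phi_s\,dK_s^2.
\end{equation*}

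Next I extract a pointwise bound for $|\hat Y_t|$. By Hu et al.~\cite[Lemma 3.4]{HuM2014_1} and Lemma~\ref{lemma_Gisr_K}---applicable since Theorem~\ref{Thm_QBSDE} gives $K_T^i\in L_G^p$ for every $p\ge 1$---each process $N_t^i:=\int_0^t\Phi_s\,dK_s^i$ remains a decreasing $G$-martingale under $\Et[\cdot]$. Dropping the nonpositive increment of $N^2$ from the right and rearranging yields
\begin{equation*}
\Phi_t\hat Y_t+(N_T^1-N_t^1)\le\Phi_T\hat\xi+\int_t^T\Phi_s(\hat\lambda_s+\hat m_s^\varepsilon)\,d\Bq_s-\int_t^T\Phi_s\hat Z_s\,d\tilde B_s.
\end{equation*}
Taking $\Et_t$ and invoking (i) the identity $\Et_t[\eta_t+M_T-M_t]=\eta_t$ (the proposition following Theorem~\ref{Thm_conditional_E}, transposed to $\Et$) with $\eta_t=\Phi_t\hat Y_t$ and $M=N^1$ to collapse the LHS, together with (ii) the symmetric-$G$-martingale property of $\int_0^\cdot\Phi_s\hat Z_s\,d\tilde B_s$ under $\Et$ to erase the stochastic integral on the RHS, I obtain the one-sided bound $\Phi_t\hat Y_t\le\Et_t\big[\Phi_T|\hat\xi|+\int_t^T\Phi_s|\hat\lambda_s+\hat m_s^\varepsilon|\,d\Bq_s\big]$. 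A symmetric derivation, dropping $N^1$ instead, gives the same bound for $-\Phi_t\hat Y_t$. Since $|\hat a^\varepsilon|\le L_y$ implies $\Phi_s\in[e^{-L_yT\siup^2},e^{L_yT\siup^2}]$, and since $\Et_t[\int_t^T|\hat m_s^\varepsilon|\,d\Bq_s]\to 0$ q.s.\ as $\varepsilon\to 0$ (exactly as at the end of the proof of Theorem~\ref{Thm_compare}, using $Z^1\in BMO_G$ and Lemma~\ref{energy_ineq}), passage to the limit leaves
\begin{equation*}
|\hat Y_t|\le C\,\Et_t\left[|\hat\xi|+\int_t^T|\hat\lambda_s|\,d\Bq_s\right],\quad\text{q.s.}
\end{equation*}

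Finally I convert $\Et_t$ back to $\E_t$ via the reverse H\"older estimate of Remark~\ref{remark_Ap_reverse_holder}. The hypothesis $p>q/(q-1)$ is equivalent to $p/(p-1)<q$; since a direct computation shows $\phi$ is strictly decreasing on $(1,\infty)$, this yields $\phi(p/(p-1))>\phi(q)>\norm{\hat b^\varepsilon}_{BMO_G}$, and Remark~\ref{remark_Ap_reverse_holder} then supplies $\Et_t[|X|]\le C_p\,\E_t[|X|^p]^{1/p}$ for any $X\in L_G^p(\Omega_T)$. Applied separately to $|\hat\xi|$ and to $\int_t^T|\hat\lambda_s|\,d\Bq_s$, this delivers the claimed inequality with $C=C(p,T,L,M_0)$. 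The main technical hurdle will be the middle step: reconciling the sublinearity of $\Et_t$ with three cancellations at once---the dropped nonpositive $G$-martingale increment, the pairing identity for $\Phi_t\hat Y_t$ with the $G$-martingale $N^1$ under $\Et$, and the vanishing of the symmetric $\tilde B$-stochastic integral---while making sure the integrability conditions of Lemma~\ref{lemma_Gisr_K} remain intact so that $K^1$ and $K^2$ (and hence $N^1,N^2$) stay $G$-martingales after the Girsanov change of measure.
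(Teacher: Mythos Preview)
Your proposal is correct and follows essentially the same route the paper points to: the paper does not spell out a proof but refers to \cite[Proposition 3.5]{Hu2018} together with Proposition~\ref{Propose_stable}, and your linearization, Girsanov change to $\Et$, It\^o's formula applied to $\Phi_t\hat Y_t$, handling of the decreasing $G$-martingales $N^i$, and final reverse H\"older conversion are exactly the ingredients of that argument (and mirror closely the proof of Theorem~\ref{Thm_compare} in the present paper). The care you take with the three simultaneous cancellations under the sublinear $\Et_t$ is appropriate and correctly executed.
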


\begin{Remark}
\label{remark_stable_pde} We still have
 \cite[inequality (3.2)]{Hu2018}, which means that  the constant
$C_1$ and $p$ in Proposition \ref{Propose_stable_pde} depend only on $T,L$ and $M_0$.
\end{Remark}

The main result of  this section is stated as follows.

\begin{Theorem}
Assume that $b,\sigma,f$, and $\phi$ satisfy assumptions \textbf{(A1)}-\textbf{(A4)}. Then $G$-BSDE (\ref{BSDE1}) has a unique solution $(Y,Z,K)\in\bigcap\limits_{p\geq2}\mathfrak{G}_G^p(0,T)$.
\end{Theorem}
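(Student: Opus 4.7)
The plan is to approximate the coefficients $b,\sigma,f,\phi$ by smoothed versions $(b^n,\sigma^n,f^n,\phi^n)$ satisfying the stronger assumptions \textbf{(A5)}--\textbf{(A8)}, to apply the preceding lemma to obtain solutions $(X^n,Y^n,Z^n,K^n)\in\bigcap_{p\geq 2}\mathfrak{G}_G^p(0,T)$ of the corresponding $G$-FBSDEs, and then to pass to the limit by means of the stability estimates in Propositions \ref{Propose_stable_sde} and \ref{Propose_stable_pde}.

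The approximation I would use is mollification in $(t,x,y,z)$ combined with a smooth truncation at level $n$ in the unbounded variables. The crucial requirement is that the Lipschitz constants in \textbf{(A2)}, the uniform $L^\infty$ bound $M_0$ on $\phi$, the integrated bound in \textbf{(A3)} on $f^n(\cdot,\cdot,0,0)$, and the ellipticity constants in \textbf{(A4)} are preserved uniformly in $n$, while $(b^n,\sigma^n,f^n,\phi^n)$ additionally meet \textbf{(A5)}--\textbf{(A8)}. For each $n$ the previous lemma then produces a solution $(X^n,Y^n,Z^n,K^n)$ of the regularized $G$-FBSDE in $\bigcap_{p\geq 2}\mathfrak{G}_G^p(0,T)$, and Proposition \ref{Propose_stable_sde} together with the uniform convergence of $b^n,\sigma^n$ on compact sets delivers $X^n\to X$ in $S_G^p$ for every $p\geq 2$, where $X$ solves the limiting $G$-SDE (\ref{SDE1}).

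Next I would establish $n$-uniform a priori bounds. The terminal $\phi^n(X^n_T)$ is uniformly bounded in $L_G^\infty$ by $M_0$, and $f^n$ retains the $z$-quadratic structure with uniform constants. Theorem \ref{Thm_QBSDE} combined with Proposition \ref{Propose_Z_bound} then yields $\sup_n\|Y^n\|_{S_G^\infty}<\infty$ and $\sup_n\|Z^n\|_{BMO_G}<\infty$. Consequently one can select $q>1$ independent of $n,m$ with $\|L(1+|Z^n|+|Z^m|)\|_{BMO_G}<\phi(q)$. Invoking Proposition \ref{Propose_stable_pde} (and Remark \ref{remark_stable_pde}) with this uniform $q$ gives, for some $p>q/(q-1)$ and a constant $C$ independent of $n,m$,
\begin{eqnarray*}
|Y^n_t-Y^m_t|&\leq& C\,\E_t\big[|\phi^n(X^n_T)-\phi^m(X^m_T)|^p\big]^{1/p}\\
&&+C\,\E_t\Big[\Big(\int_t^T|(f^n-f^m)(s,X^m_s,Y^m_s,Z^m_s)|\,d\Bq_s\Big)^p\Big]^{1/p},
\end{eqnarray*}
and the right-hand side tends to zero as $n,m\to\infty$ by the construction of $\phi^n,f^n$ and the $S_G^p$-convergence of $X^n$. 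Thus $\{Y^n\}$ is Cauchy in $S_G^\alpha(0,T)$ for every $\alpha\geq 2$. Writing the equation satisfied by $Y^n-Y^m$ and applying It\^o's formula to $|Y^n-Y^m|^2$, in the spirit of the proof of Theorem \ref{Thm_existence_uniqueness}, yields Cauchy property of $\{Z^n\}$ in $H_G^\alpha(0,T)$ and of $\{K^n\}$ in $S_G^\alpha(0,T)$; the limit $(Y,Z,K)$ lies in $\bigcap_{p\geq 2}\mathfrak{G}_G^p(0,T)$, $K$ is a decreasing $G$-martingale, and $(Y,Z,K)$ satisfies (\ref{BSDE1}).

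The principal obstacle is securing uniformity of all estimates in $n$, in particular the uniform $BMO_G$-bound on $Z^n$ that underpins the choice of a single exponent $q$ in Proposition \ref{Propose_stable_pde}. This ultimately reduces to preserving, during the mollification-and-truncation procedure, the $L^\infty$ bound on $\phi^n$ and the integrated bound on $f^n(\cdot,\cdot,0,0)$; the remaining estimates then pass through the earlier a priori bounds with constants depending only on $T,L,M_0$. Uniqueness is an immediate consequence of Proposition \ref{Propose_stable_pde}, since for any two solutions the right-hand side of the stability estimate vanishes identically.
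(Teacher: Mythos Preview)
Your proposal is correct in spirit and relies on the same ingredients as the paper: mollification/truncation of the data, the stability estimates of Propositions \ref{Propose_stable_sde} and \ref{Propose_stable_pde} (with Remark \ref{remark_stable_pde}), and the $n$-uniform a priori bounds $\|Y^n\|_{S_G^\infty}+\|Z^n\|_{BMO_G}\leq C$ that permit a single choice of exponent $q$ in the reverse H\"older inequality. The uniqueness argument via Proposition \ref{Propose_stable_pde} is exactly the paper's.

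The organizational difference is that the paper does not regularize everything at once but proceeds in five nested steps, removing the extra assumptions \textbf{(A5)}--\textbf{(A8)} one at a time: first \textbf{(A5)} is relaxed to \textbf{(A5')} (deferring to \cite{Hu2018}), then one mollifies $f$ in $(t,x)$ to drop \textbf{(A5')}, then $b,\sigma$ in $(t,x)$ to drop \textbf{(A6)}, then $\phi$ and $f(\cdot,\cdot,y,z)$ in $x$ to drop \textbf{(A8)}, and finally truncates $b,f$ to drop \textbf{(A7)}. This staged approach makes the preservation of the constants in \textbf{(A1)}--\textbf{(A4)} (and hence the uniformity of the $BMO_G$ bound) transparent at each stage, and it lets one estimate the residual term in Proposition \ref{Propose_stable_pde} cleanly: mollifying $f$ only in $(t,x)$ yields $|(f^{k_1}-f^{k_2})(\cdot,\cdot,y,z)|\leq w(1/k)+L/k$ independently of $(y,z)$, and truncation in Step~5 is handled via the Chebyshev-type bound $|\hat b|\leq k^{-\delta}|b|^{1+\delta}$ together with moment estimates on $X$. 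Your one-shot regularization mixes these effects; in particular, mollifying $f$ also in $z$ and truncating produce residuals depending on $|Z^m_s|$ and $|Z^m_s|^2$, which are still controllable through Lemma \ref{energy_ineq} but require more care than ``uniform convergence on compacts'' suggests. None of this invalidates your plan, but the paper's layering is what makes each uniformity check routine.
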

\begin{proof}
The uniqueness result directly comes from Proposition \ref{Propose_stable_pde}. Now we focus on the existence result. We borrow the idea of  Hu et al. \cite{Hu2018} to mollify  the coefficients of the $G$-FBSDE.

\textbf{Step 1.} We assume $f$ satisfy the following condition.
 \begin{itemize}
\item[\textbf{(A5')}]The first-order time  derivative of $f$ in $t$, the spatial derivatives of $f$ up to the second-order are bounded on the set $[0,T]\times\bb R^n\times
    [-M_y,M_y]\times[-M_z,M_z]$, for any $M_y, M_z>0$.
\end{itemize}
We replace \textbf{(A5)} with \textbf{(A5')}. Assume $b,\sigma,f$, and $\phi$ satisfy assumptions \textbf{(A1)}-\textbf{(A4)}, assumption \textbf{(A5')} and assumptions \textbf{(A6)}-\textbf{(A8)}. Then we can obtain that $G$-BSDE (\ref{BSDE1}) has one a solution $(Y,Z,K)\in\bigcap_{p\geq2}\mathfrak{G}_G^p(0,T)$ with exactly the same method of step 1 in Hu et al. \cite[Section 5]{Hu2018}.

\textbf{Step 2.} We assume $b,\sigma,f$, and $\phi$ satisfy assumptions \textbf{(A1)}-\textbf{(A4)} and assumptions \textbf{(A6)}-\textbf{(A8)}. For each $(t,x,y,z)\in[0,T]\times \bb R^n\times\bb R\times\bb R$, we define
$$f^k(t,x,y,z):=\int_{\bb R^{n+1}}f(t-\tilde t,x-\tilde x,y,z)\rho_k(\tilde t,\tilde x)d\tilde t d\tilde x,$$
where $\rho_k$ is a positive smooth function such that its support is contained in a ${1\over k}$-ball in $\bb R^{n+1} $ and $\int_{\bb R^{n+1}}\rho_k=1$. In addition, we define the extension of $f$ on $\bb R$, i.e., $f(t,\cdot,\cdot,\cdot)=f(t^+\land T,\cdot,\cdot,\cdot)$. We can check that $f^k$ satisfies \textbf{(A5')}. Therefore, in view of the result in step 1, we obtain that the $G$-BSDE (\ref{BSDE1}) with the coefficients $(\phi,f^k)$ admits a solution $(Y^k,Z^k,K^k)\in\bigcap_{p\geq2}\mathfrak{G}_G^p(0,T)$. Noting that for each $k_1\geq k_2$ and $t\in[0,T]$,
\begin{eqnarray*}
&&|(f^{k_1}-f^{k_2})(t,X_t,Y_t^{k_2},Z_t^{k_2})|\\
&\leq& |(f^{k_1}-f)(t,X_t,Y_t^{k_2},Z_t^{k_2})|+|(f^{k_2}-f)(t,X_t,Y_t^{k_2},Z_t^{k_2})|\\
&\leq& \int_{\bb R^{n+1}}|f(t-\tilde t,X_t-\tilde x,Y_t^{k_2},Z_t^{k_2})-f(t,X_t,Y_t^{k_2},Z_t^{k_2})|\rho_{k_1}(\tilde t,\tilde x)d\tilde t d\tilde x\\
&&+ \int_{\bb R^{n+1}}|f(t-\tilde t,X_t-\tilde x ,Y_t^{k_2},Z_t^{k_2})-f(t,X_t,Y_t^{k_2},Z_t^{k_2})|\rho_{k_2}(\tilde t,\tilde x)d\tilde t d\tilde x\\
&\leq& w\left({1\over k_1}\right)+w\left({1\over k_2}\right)+L\left({1\over k_1}+{1\over k_2}\right),
\end{eqnarray*}
we could deduce that the sequence $\{Y^k\}_{k=1}^\infty$ is a Cauchy sequence in $S_G^p(0,T)$ for any $p\geq 2$ by Proposition \ref{Propose_stable_pde} and Remark \ref{remark_stable_pde}.
Thus we could conclude that $G$-BSDE (\ref{BSDE1}) has a solution $(Y,Z,K)\in\bigcap_{p\geq2}\mathfrak{G}_G^p(0,T)$  in a similar way as in step 1 in Hu et al. \cite[Section 5]{Hu2018}.

\textbf{Step 3.} We assume $b,\sigma,f$, and $\phi$ satisfy assumptions \textbf{(A1)}-\textbf{(A4)} and assumptions \textbf{(A7)}-\textbf{(A8)}. For each $(t,x)\in[0,T]\times \bb R^n$, we define
$$l^k(t,x):=\int_{\bb R^{n+1}}l(t-\tilde t,x-\tilde x)\rho_k(\tilde t,\tilde x)d\tilde t d\tilde x,\quad l=b,\sigma,$$
where $\rho_k$ is a positive smooth function such that its support is contained in a ${1\over k}$-ball in $\bb R^{n+1} $ and $\int_{\bb R^{n+1}}\rho_k=1$.
We can check that $b^k$ and $\sigma^k$ satisfies \textbf{(A6)}.
Moreover, \textbf{(A4)} still hold here for $\sigma^k$ when $k$ is large enough. Actually, if we assume $\sigma$ satisfies \textbf{(A4)} with constants $\varepsilon$ and $K$, we can check that for $k$ large enough and for each $(t,x)\in[0,T]\times \bb R^n$,
 $${\varepsilon\over 2} I\leq \sigma^k(t,x)({\sigma^k})^\mathrm{T}(t,x)\leq 2KI.$$
Let $X^k$ be the solution of the following $G$-SDE:
$$X_t^k=x_0+\int_0^tb^k(u, X_u^k)du+\int_0^t\sigma^k(u, X_u^k)dB_u,~~\text{q.s.},~t\in[0,T].$$
From step 2, we can let $(Y^k,Z^k,K^k)$ be the solution to the following $G$-BSDE:
	$$ Y_t^k=\phi(X_T^k)+\int_t^Tf(s,X_s^k,Y_s^k,Z_s^k)d\Bq_s-\int_t^TZ_s^kdB_s-\int_t^T dK_s^k,~~\text{q.s.},~t\in[0,T].$$
For each $k_1,k_2 \in \bb N$ and $t\in[0,T]$, set
$$\hat l_t:=l^{k_1}(t,X_t^{k_2})-l^{k_2}(t,X_t^{k_2}),\quad l=b,\sigma.$$
It is easy to check that for $l=b,\sigma$ and each $t\in[0,T]$,
\begin{eqnarray*}
|\hat l_t|&\leq &|l^{k_1}(t,X_t^{k_2})-l(t,X_t^{k_2})|+|l^{k_2}(t,X_t^{k_2})-l(t,X_t^{k_2})|\\
&\leq& \int_{\bb R^{2}}|l(t-\tilde t,X_t^{k_2}-\tilde x)-l(t,X_t^{k_2})|\rho_{k_1}(\tilde t,\tilde x)d\tilde t d\tilde x\\
&&+\int_{\bb R^{2}}|l(t-\tilde t,X_t^{k_2}-\tilde x)-l(t,X_t^{k_2})|\rho_{k_2}(\tilde t,\tilde x)d\tilde t d\tilde x\\
&\leq& w\left({1\over k_1}\right)+w\left({1\over k_2}\right)+L\left({1\over k_1}+{1\over k_2}\right).
\end{eqnarray*}
In view of Proposition \ref{Propose_stable_sde}, we obtain for each $p\geq 2$ and $t\in[0,T]$,
\begin{eqnarray}
\nonumber \E \left[\left|X_{t}^{k_1}-X_t^{k_2}\right|^p\right]
 &\leq& C\left(\E\left[\left(\int_0^{t}|\hat b_s|ds\right)^{p} \right]
 +\E\left[\left(\int_0^{t}|\hat \sigma_s|^2ds\right)^{p\over2} \right] \right)\\
 \label{eq_X_estimate1}
&\leq& C_p \left\{w\left({1\over k_1\land k_2}\right)+\left({1\over k_1\land k_2}\right)\right\}^p.
\end{eqnarray}
On the other hand, in view of Proposition \ref{Propose_stable_pde} and \cite[(3.2)]{Hu2018}, we obtain that for some $p>1$ and
each $t\in[0,T]$,
\begin{eqnarray*}
|Y_t^{k_1}-Y_t^{k_2}|&\leq &C\E_t\left[|\phi(X_{T}^{k_1})-\phi(X_{T}^{k_2})|^p\right]^{1\over p}\\
&&+C\E_t\left[\left(\int_t^T |f(s,X_{s}^{k_1},Y_s^{k_2},Z_s^{k_2})-f(s,X_{s}^{k_2},Y_s^{k_2},Z_s^{k_2})|d\Bq_s\right)^p\right]^{1\over p}\\
&\leq & C_1\E_t\left[|X_{T}^{k_1}-X_{T}^{k_2}|^p\right]^{1\over p}+C_2\E_t\left[\int_t^T |X_{s}^{k_1}-X_{s}^{k_2}|^pds\right]^{1\over p}~~\text{q.s.}
\end{eqnarray*}
Therefore,
\begin{eqnarray*}
&&\E\left[\sup_{t\in[0,T]}|Y_t^{k_1}-Y_t^{k_2}|\right]\\
&\leq & C_3\E\left[\sup_{t\in[0,T]}\E_t\left[|X_{T}^{k_1}-X_{T}^{k_2}|^p\right]\right]^{1\over p}+C_4\E\left[\sup_{t\in[0,T]}\E_t\left[\int_0^T |X_{s}^{k_1}-X_{s}^{k_2}|^pds\right]\right]^{1\over p}.
\end{eqnarray*}
In view of (\ref{eq_X_estimate1}), we obtain that for each $\delta>0$
\begin{eqnarray*}
&&\E\left[\int_0^T |X_{s}^{k_1}-X_{s}^{k_2}|^{p+\delta}ds\right]\\
&\leq& \int_0^T \E\left[|X_{s}^{k_1}-X_{s}^{k_2}|^{p+\delta}\right]ds
\leq C_{p,\delta}T \left\{w\left({1\over k_1\land k_2}\right)+\left({1\over k_1\land k_2}\right)\right\}^{p+\delta}.
\end{eqnarray*}
Then in view of Remark \ref{remark_doob_type_ineq}, we know $\{Y^k\}_{k=1}^\infty$ is a Cauchy sequence in $S_G^p(0,T)$ for any $p\geq 2$.
Thus we could conclude that $G$-BSDE (\ref{BSDE1}) has a solution $(Y,Z,K)\in\bigcap_{p\geq2}\mathfrak{G}_G^p(0,T)$.

\textbf{Step 4.} We now consider the situation that $\phi(\cdot)$  and $f(t,\cdot,y,z)$ can be locally Lipschitz. We assume $b,\sigma,f$, and $\phi$ satisfy assumptions \textbf{(A1)}-\textbf{(A4)} and assumption \textbf{(A7)}. For each
$(t,x,y,z)\in[0,T]\times\bb R^n\times\bb R\times\bb R$, we define
$$\phi^k(x):=\int_{\bb R^n}\phi(x-\tilde x)\rho_k(\tilde x)d\tilde x,$$
and
$$f^k(t,x,y,z):=\int_{\bb R^n}f(t,x-\tilde x,y,z)\rho_k(\tilde x)d\tilde x,$$
where $\rho_k$ is a positive smooth function such that its support is contained in a ${1\over k}$-ball in $\bb R^n $ and $\int_{\bb R^n}\rho_k=1$.
Noting that
$$\left|{\partial\phi^k\over\partial x}\right|\leq\int_{\bb R}\left|\phi(\tilde x){\partial\rho_k\over\partial x}(x-\tilde x)\right|d\tilde x
\leq C(k)M_0\norm{\partial\rho_k\over\partial x}_{\infty},$$
we obtain that $\phi^k$ is  Lipschitz in $x$. Similarly, it is easy to check $f^k(t,\cdot,y,z)$ is uniformly Lipschitz. Therefore, in view of the result in step 3, we obtain that the $G$-BSDE (\ref{BSDE1}) with the coefficients $(\phi^k,f^k)$ admits a solution $(Y^k,Z^k,K^k)\in\bigcap_{p\geq2}\mathfrak{G}_G^p(0,T)$.
It is easy to check that
\begin{eqnarray*}
&&|\phi^{k_1}(X_T)-\phi^{k_2}(X_T)|\\
&\leq &|\phi^{k_1}(X_T)-\phi(X_T)|+|\phi^{k_2}(X_T)-\phi(X_T)|\\
&\leq& \int_{\bb R}|\phi(X_T-\tilde x)-\phi(X_T)|\rho_{k_1}(\tilde x) d\tilde x
+\int_{\bb R}|\phi(X_T-\tilde x)-\phi(X_T)|\rho_{k_2}(\tilde x)d\tilde x\\
&\leq& 2L\left({1\over k_1}+{1\over k_2}\right)\left(1+|X_T|^m\right).
\end{eqnarray*}
Similarly, for each $t\in[0,T]$,
$$|(f^{k_1}-f^{k_2})(t,X_t,Y_t^{k_2},Z_t^{k_2})|\leq 2L\left({1\over k_1}+{1\over k_2}\right)\left(1+|X_t|^m\right).$$
In view of Proposition \ref{propose_estimate_sde}, we obtain for each $p\geq2$,
$$\E\left[|\phi^{k_1}(X_T)-\phi^{k_2}(X_T)|^p\right]\leq C\left({1\over k_1}+{1\over k_2}\right)^p\left(1+\E\left[|X_T|^{mp}\right]\right)
\leq C^\prime\left(1+|x_0|^{mp}\right)\left({1\over k_1}+{1\over k_2}\right)^p,$$
and
\begin{eqnarray*}
&&\E\left[\left(\int_0^T|(f^{k_1}-f^{k_2})(t,X_t,Y_t^{k_2},Z_t^{k_2})|dt\right)^p\right]\\
&\leq& C^{\prime\prime}\left(1+|x_0|^{mp}\right)\left({1\over k_1}+{1\over k_2}\right)^p,
\end{eqnarray*}
Then again in view of Remark \ref{remark_doob_type_ineq}, Proposition \ref{Propose_stable_pde} and Remark \ref{remark_stable_pde}, we know $\{Y^k\}_{k=1}^\infty$ is a Cauchy sequence in $S_G^p(0,T)$ for any $p\geq 2$.
Thus we could conclude that $G$-BSDE (\ref{BSDE1}) has a solution $(Y,Z,K)\in\bigcap_{p\geq2}\mathfrak{G}_G^p(0,T)$.

\textbf{Step 5.} Finally, we remove the boundedness condition on $b$ and $f$. We assume $b,\sigma,f$, and $\phi$ satisfy assumptions \textbf{(A1)}-\textbf{(A4)}. Set $b^k:=[b\lor (-k)]\land k$ and $f^k:=[f\lor (-k)]\land f$. It is easy to check that $b^k,\sigma,f^k$, and $\phi$ satisfy assumptions \textbf{(A1)}-\textbf{(A4)} and assumption \textbf{(A7)}. Let $(X^k,Y^k,Z^k,K^k)$ be the solution of the following $G$-FBSDE:
\begin{equation}
\begin{cases}\displaystyle
X_t^k=x_0+\int_0^tb^k(u, X_u^k)du+\int_0^t\sigma(u, X_u^k)dB_u,~~\text{q.s.},~t\in[0,T],\\
\displaystyle
Y_t^k=\phi(X_T^k)+\int_t^Tf^k(s,X_s^k,Y_s^k,Z_s^k)d\Bq_s-\int_t^TZ_s^kdB_s-\int_t^T dK_s^k,~~\text{q.s.},~t\in[0,T].
\end{cases}
\end{equation}
For each $k_1,k_2 \in \bb N$ and $t\in[0,T]$, set
$$\hat b_t:=b^{k_1}(t,X_t^{k_2})-b^{k_2}(t,X_t^{k_2}),$$
and
$$\hat f_t:=f^{k_1}(t,X_t^{k_1},Y_t^{k_2},Z_t^{k_2})-f^{k_2}(t,X_t^{k_2},Y_t^{k_2},Z_t^{k_2}).$$
Assume $k_1<k_2$, then for each $\delta>0$ and $t\in[0,T]$, we have
$$|\hat b_t|\leq |b(t,X_t^{k_2})|\textbf{1}_{\{|b(t,X_t^{k_2})|>k_1\}}\leq {1\over k_1^\delta}|b(t,X_t^{k_2})|^{1+\delta},$$
and
\begin{eqnarray*}
|\hat f_t|
&\leq& |f^{k_1}(t,X_t^{k_1},Y_t^{k_2},Z_t^{k_2})-f^{k_1}(t,X_t^{k_2},Y_t^{k_2},Z_t^{k_2})|\\
&& + |(f^{k_1}-f^{k_2})(t,X_t^{k_2},Y_t^{k_2},Z_t^{k_2})|\\
&\leq& L(1+|X_t^{k_1}|^m+|X_t^{k_2}|^m)|X_t^{k_1}-X_t^{k_2}|+{1\over k_1^\delta}|f(t,X_t^{k_2},Y_t^{k_2},Z_t^{k_2})|^{1+\delta}.
\end{eqnarray*}
In view of Proposition \ref{Propose_stable_sde}, we obtain for each $p\geq 2$ and $t\in[0,T]$,
\begin{eqnarray}
\nonumber \E \left[\left|X_{t}^{k_1}-X_t^{k_2}\right|^p\right]
 &\leq& C\E\left[\left(\int_0^{t}|\hat b_s|ds\right)^{p} \right]\\
\nonumber &\leq&{C_1\over k_1^{p}}+ {C_1\over k_1^{p\delta}}\E\left[\int_0^{t}|b(s,X_s^{k_2})|^{p(1+\delta)}ds \right]\\
 \nonumber &\leq&{C_1\over k_1^{p}}+ {C_2\over k_1^{p\delta}}\left(\lambda_T+\left[\int_0^{t}\E|X_s^{k_2}|^{p(1+\delta)}ds \right]
 \right)\\
\label{eq_x_hat} &\leq&{C_1\over k_1^{p}}+ {C_3\over k_1^{p\delta}}\left(1+\lambda_T+|x_0|^{p(1+\delta)} \right),
\end{eqnarray}
where $\lambda_T:=\int_0^{T}|b(s,0)|^{p(1+\delta)}ds$.
In view of Proposition \ref{propose_estimate_sde}, we obtain for each $p\geq2$,
\begin{eqnarray}
\nonumber\E\left[|\phi(X_T^{k_1})-\phi(X_T^{k_2})|^p\right]&\leq& C\E\left[\left(1+|X_T^{k_1}|^{mp}+|X_T^{k_2}|^{mp} \right)|X_T^{k_1}-X_T^{k_2}|^p\right]\\
\nonumber&\leq& C_1\E\left[1+|X_T^{k_1}|^{2mp}+|X_T^{k_2}|^{2mp} \right]^{1\over2}\E\left[|X_T^{k_1}-X_T^{k_2}|^{2p}\right]^{1\over2}\\
\label{eq_phi_hat}&\leq& C_2\left(1+|x_0|^{mp}\right)\E\left[|X_T^{k_1}-X_T^{k_2}|^{2p}\right]^{1\over2}.
\end{eqnarray}
On the other hand, for each $p\geq2$ and $0<\delta\leq 1$,
\begin{eqnarray*}
\E\left[\left(\int_0^T|\hat f_t|dt\right)^p\right]
&\leq& C\int_0^T\E\left[(1+|X_t^{k_1}|^{mp}+|X_t^{k_2}|^{mp})|X_t^{k_1}-X_t^{k_2}|^p\right]dt\\
&&+{C\over k_1^\delta}\E\left[\left(\int_0^T|f(t,X_t^{k_2},Y_t^{k_2},Z_t^{k_2})|^{1+\delta}dt\right)^p\right]\\
&\leq& C_1\left(1+|x_0|^{mp}\right)\int_0^T\E\left[|X_t^{k_1}-X_t^{k_2}|^{2p}\right]^{1\over2}dt\\
&&+{C_1\over k_1^\delta}\E\left[\left(\int_0^T|f(t,X_t^{k_2},0,0)|^{2}+|Y_t^{k_2}|^2+|Z_t^{k_2}|^2dt\right)^p\right].\\
\end{eqnarray*}
Note that $\cite[(3.2)]{Hu2018}$ still hold here, which means there exists a constant $C=C(M_0,L,G,T)$,
such that
$$\norm{Y^{k_2}}_{S_G^\infty}+\norm{Z^{k_2}}_{BMO_G}\leq C.$$
In view of Lemma \ref{energy_ineq} and \textbf{(A3)}, we obtain
\begin{equation}
\label{eq_f_hat}
\E\left[\left(\int_0^T|\hat f_t|dt\right)^p\right]
\leq C_1\left(1+|x_0|^{mp}\right)\int_0^T\E\left[|X_t^{k_1}-X_t^{k_2}|^{2p}\right]^{1\over2}dt\\+{C_2\over k_1^\delta}\\
\end{equation}
In view of Remark \ref{remark_doob_type_ineq}, inequalities (\ref{eq_x_hat}), (\ref{eq_phi_hat}) and (\ref{eq_f_hat}), Proposition \ref{Propose_stable_pde} and Remark \ref{remark_stable_pde}, we see that $\{Y^k\}_{k=1}^\infty$ is a Cauchy sequence in $S_G^p(0,T)$ for any $p\geq 2$.
Thus we could conclude that $G$-BSDE (\ref{BSDE1}) has a solution $(Y,Z,K)\in\bigcap_{p\geq2}\mathfrak{G}_G^p(0,T)$.
\end{proof}
Moreover, we have the following result with a similar argument before.
\begin{Theorem}
Assume that $\xi\in\bigcap\limits_{p\geq2}L_G^p(\Omega_t;\bb R^n)$  and $b,h,\sigma,g,f$, and $\phi$ satisfy assumptions \textbf{(A1)}-\textbf{(A4)} . Then $G$-BSDE (\ref{BSDE0}) has a unique solution $(Y,Z,K)\in\bigcap\limits_{p\geq2}\mathfrak{G}_G^p(0,T)$.
\end{Theorem}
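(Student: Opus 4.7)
The plan is to follow the five-step mollification scheme of the preceding theorem, adapted to accommodate the two additional features: the presence of $g$ and $h$, and a random $\mathcal{F}_t$-measurable initial value $\xi$. Uniqueness is essentially immediate from Proposition~\ref{Propose_stable_pde}: the inclusion of $g$ (absolutely continuous, linear-in-$y$ and quadratic-in-$z$) and of $h\,d\Bq_s$ in the forward equation introduces only standard linear perturbations that can be absorbed by the same linearization/$G$-Girsanov argument used in Proposition~\ref{Propose_stable_pde}, so no new ideas are needed for uniqueness.

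For existence, I would first treat the case of a deterministic initial condition $X_t = x\in\bb R^n$. The PDE (\ref{PDE1}) must be replaced with (\ref{PDE_thm}), which after the identity $G(a)=\sup_{v\in[\sidown,\siup]}\tfrac12 v^2 a$ becomes an HJB equation of the form
\begin{equation*}
\partial_t u + \sup_{v\in[\sidown,\siup]}\Bigl\{\tfrac12 \hat\sigma^\mathrm{T} D_x^2 u\,\hat\sigma + v^2 f(t,x,u,\sigma^\mathrm{T} D_x u) + v^2 h^\mathrm{T}(t,x)D_x u\Bigr\} + b^\mathrm{T} D_x u + g(t,x,u,\sigma^\mathrm{T} D_x u) = 0,
\end{equation*}
with $\hat\sigma = v\sigma$. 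Under the smoothness, boundedness, and truncation conditions analogous to \textbf{(A5')}--\textbf{(A8)}, Krylov's interior regularity yields a classical $C^{1+\alpha/2,2+\alpha}$ solution. The Lipschitz-in-$x$ estimate is recovered via the BSDE representation as in Proposition~\ref{Propose_lip_pde}: the stochastic control value function associated with the HJB has its $L^p$-stability controlled by the classical BMO estimates on the control-indexed BSDEs (uniformly in $v$ and the truncation level $N$), using \cite[Proposition 2.1]{Briand2013} and the standard stability under quadratic drivers. Applying It\^o's formula to $u(s,X_s)$ then supplies the candidate $(Y,Z,K)$.

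With the smooth case in hand, the five-round mollification of $(f, g, b, h, \sigma, \phi)$ in the time and space variables (with truncation of the unbounded data in the final round) carries over unchanged. In each round, the stability estimates of Propositions~\ref{Propose_stable_sde} and \ref{Propose_stable_pde}, together with the Doob-type estimate of Remark~\ref{remark_doob_type_ineq} and the uniform BMO bound $\norm{Y^k}_{S_G^\infty}+\norm{Z^k}_{BMO_G}\le C$ from \cite[(3.2)]{Hu2018}, show $\{Y^k\}$ is Cauchy in $S_G^p(0,T)$ for all $p\ge 2$; the limit yields a solution to (\ref{BSDE0}) with deterministic initial condition.

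For the random initial value, I would first extend the deterministic construction to arbitrary starting time $t\in[0,T]$ by time shift, obtaining a solution $(Y^{t,x},Z^{t,x},K^{t,x})$ for each $x\in\bb R^n$ together with a Lipschitz value function $u(t,x) := Y^{t,x}_t$. For $\xi\in\bigcap_p L_G^p(\Omega_t;\bb R^n)$, approximate $\xi$ by simple $\mathcal{F}_t$-measurable random variables $\xi_n = \sum_i x_i^n\mathbf 1_{A_i^n}$, paste together pointwise solutions via $(Y^{t,\xi_n}, Z^{t,\xi_n}, K^{t,\xi_n})_s := \sum_i (Y^{t,x_i^n}, Z^{t,x_i^n}, K^{t,x_i^n})_s \mathbf 1_{A_i^n}$, and pass to the limit using Proposition~\ref{propose_estimate_sde} together with Proposition~\ref{Propose_stable_pde} and the polynomial growth of $\phi$ and $f(\cdot,\cdot,0,0)$. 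The main obstacle will be (a) verifying that the Lipschitz-in-$x$ bound on $u$ in Proposition~\ref{Propose_lip_pde} survives all five mollification rounds with a constant independent of the approximation parameters (this forces the BMO bounds on the auxiliary control BSDEs to be made uniform in the HJB-control $v\in[\sidown,\siup]$ and in the truncation/mollification indices), and (b) ensuring the gluing of pointwise solutions is consistent in the quasi-sure sense, which relies on the Lipschitz dependence of $(Y^{t,x},Z^{t,x},K^{t,x})$ on $x$ in a modification that is jointly measurable in $(s,\omega,x)$.
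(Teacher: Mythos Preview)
Your proposal is largely correct and closely follows the paper's approach for incorporating $g$ and $h$ and for the five-step mollification scheme; the paper itself merely says this theorem follows ``with a similar argument before,'' so your level of detail actually exceeds what the paper provides.

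The one place where you take an unnecessary detour is the treatment of the random initial condition $\xi$. You propose to first construct $(Y^{t,x},Z^{t,x},K^{t,x})$ for deterministic $x$, then approximate $\xi$ by simple functions $\xi_n=\sum_i x_i^n\mathbf 1_{A_i^n}$ and glue the pointwise solutions. As you yourself flag in obstacle~(b), this creates genuine difficulties in the $G$-framework: indicator functions of arbitrary $\mathcal F_t$-sets need not lie in $L_G^p(\Omega_t)$ (the completion of $Lip(\Omega_t)$ consists of quasi-continuous functions), so the pasted process $\sum_i Y^{t,x_i^n}\mathbf 1_{A_i^n}$ may fail to belong to $S_G^p$, and the quasi-sure consistency of the gluing is delicate. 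The paper sidesteps all of this by running the five mollification steps \emph{directly} with the random $\xi$ as initial datum. This works because (i) in the smooth base case the PDE solution $u$ is a deterministic Lipschitz function, so $Y_s=u(s,X_s^{t,\xi})$, $Z_s=\sigma^{\mathrm T}(s,X_s^{t,\xi})D_x u(s,X_s^{t,\xi})$ are already well defined for random $\xi$ without any pasting, and (ii) in each subsequent mollification step the stability estimates (Propositions~\ref{propose_estimate_sde}, \ref{Propose_stable_sde}, \ref{Propose_stable_pde}) are all stated conditionally on $\mathcal F_t$ and hold verbatim for $\xi\in\bigcap_p L_G^p(\Omega_t)$. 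Thus your obstacle~(a) is the real issue to check and your obstacle~(b) simply does not arise in the paper's route.
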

\begin{Remark}
\label{remark_qbsde_bound}
Once we have a solution$(Y,Z,K)\in\bigcap\limits_{p\geq2}\mathfrak{G}_G^p(0,T)$ to $G$-BSDE (\ref{BSDE0}). In view of  $\cite[(3.2),(3.3)]{Hu2018}$,  there are two constants $C1=C1(M_0,L,G,T)$ and $C2=C2(p,M_0,L,G,T)$
such that for all $p\geq 2$,
$$\norm{Y}_{S_G^\infty}+\norm{Z}_{BMO_G}\leq C_1\quad\text{and}\quad\E[|K_T|^p]\leq C_2.$$
\end{Remark}

Now we can give the relationship between quadratic $G$-FBSDEs and parabolic PDEs. For  $(t,\xi)\in[0,T]\times\bigcap\limits_{p\geq2}L_G^p(\Omega_t;\bb R^n)$, denote by $(X^{t,\xi},Y^{t,\xi},Z^{t,\xi},K^{t,\xi})$ the solution to the $G$-FBSDE (\ref{SDE0})-(\ref{BSDE0}).
\begin{Proposition}
\label{Propose_u_estimate}
For each $t\in[0,T]$ and $\xi,\xi^{\prime}\in\bigcap\limits_{p\geq2}L_G^p(\Omega_t;\bb R^n)$, we have
\begin{eqnarray*}
&&|Y_t^{t,\xi}-Y_t^{t,\xi^\prime}|\leq C(1+|\xi|^m+|\xi^\prime|^m)|\xi-\xi^\prime|,~~\text{q.s.},\\
&&|Y_t^{t,\xi}|\leq C,~~\text{q.s.},
\end{eqnarray*}
where the constant $C$ depends on $L,G$ and $T$.
\end{Proposition}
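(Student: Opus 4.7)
The second estimate $|Y_t^{t,\xi}|\le C$ q.s.\ is immediate from Remark \ref{remark_qbsde_bound}: applied to the $G$-BSDE~\eqref{BSDE0} it yields $\norm{Y^{t,\xi}}_{S_G^\infty}\le C_1(M_0,L,G,T)$, a bound independent of $\xi$, which in particular gives $|Y_t^{t,\xi}|\le C_1$ quasi-surely.

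For the Lipschitz estimate, the plan is to combine the stability result for quadratic $G$-BSDEs (Proposition~\ref{Propose_stable_pde} together with Remark~\ref{remark_stable_pde}) with the moment estimates for the forward $G$-SDE (Proposition~\ref{propose_estimate_sde}). Remark~\ref{remark_qbsde_bound} also gives a uniform-in-$\xi$ bound $\norm{Z^{t,\xi}}_{BMO_G}\le C_1$, so $\norm{L_z(1+|Z^{t,\xi}|+|Z^{t,\xi'}|)}_{BMO_G}\le L_z(1+2C_1)$ uniformly in $(\xi,\xi')$. Since $\phi(q)\to+\infty$ as $q\to 1^+$, I would fix $q>1$ with $L_z(1+2C_1)<\phi(q)$ and then pick $p>q/(q-1)$; with these choices the BMO-hypothesis of Proposition~\ref{Propose_stable_pde} holds for the pair of controls $(Z^{t,\xi},Z^{t,\xi'})$ regardless of $\xi,\xi'$.

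Applying the stability inequality (extended to include the $ds$-drift $g$ by the same linearization plus $G$-Girsanov argument used in its proof) gives, at each $t\in[0,T]$,
\begin{equation*}
|Y_t^{t,\xi}-Y_t^{t,\xi'}|\le C\left\{\E_t\left[|\phi(X_T^{t,\xi})-\phi(X_T^{t,\xi'})|^p\right]^{1/p}+\E_t\left[\left(\int_t^T|\delta f_s|\,d\Bq_s+\int_t^T|\delta g_s|\,ds\right)^p\right]^{1/p}\right\},
\end{equation*}
where $\delta f_s:=f(s,X_s^{t,\xi},Y_s^{t,\xi'},Z_s^{t,\xi'})-f(s,X_s^{t,\xi'},Y_s^{t,\xi'},Z_s^{t,\xi'})$ and analogously for $\delta g_s$. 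Assumption \textbf{(A2)} dominates the terminal and integrand differences by $L(1+|X^{t,\xi}|^m+|X^{t,\xi'}|^m)|X^{t,\xi}-X^{t,\xi'}|$. A Cauchy--Schwarz splitting and Proposition~\ref{propose_estimate_sde} then yield, for every $s\in[t,T]$,
\begin{equation*}
\E_t\left[(1+|X_s^{t,\xi}|^m+|X_s^{t,\xi'}|^m)^{2p}\right]^{1/(2p)}\le C(1+|\xi|^m+|\xi'|^m),\qquad \E_t\left[|X_s^{t,\xi}-X_s^{t,\xi'}|^{2p}\right]^{1/(2p)}\le C|\xi-\xi'|,
\end{equation*}
quasi-surely; multiplying and integrating in $s$ produces the claimed Lipschitz bound.

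The only non-routine point is securing the BMO-smallness hypothesis of Proposition~\ref{Propose_stable_pde} \emph{uniformly} in $(\xi,\xi')$, which is why the $\xi$-independent control afforded by Remark~\ref{remark_qbsde_bound} is essential. Once that is in hand, the remainder is a routine combination of forward $G$-SDE moment estimates with the quadratic $G$-BSDE stability inequality.
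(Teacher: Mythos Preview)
Your proposal is correct and follows essentially the same route as the paper: invoke Remark~\ref{remark_qbsde_bound} for the uniform $S_G^\infty$ and $BMO_G$ bounds (the latter to verify the BMO hypothesis of Proposition~\ref{Propose_stable_pde} uniformly in $\xi,\xi'$), apply the stability estimate of Proposition~\ref{Propose_stable_pde}, and then control the terminal and integrand differences via \textbf{(A2)}, Cauchy--Schwarz, and the forward $G$-SDE moment bounds of Proposition~\ref{propose_estimate_sde}. Your write-up is in fact slightly more explicit than the paper's about why the BMO-smallness condition can be secured uniformly in the initial data.
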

\begin{proof}
For simplicity, we assume $g=0$ and $h=0$.
In view of Proposition \ref{Propose_stable_pde}, Remark \ref{remark_qbsde_bound}, and Proposition \ref{propose_estimate_sde}, we obtain
\begin{eqnarray*}
|Y_t^{t,\xi}-Y_t^{t,\xi^\prime}|&\leq &C\E_t\left[|\phi(X_{T}^{t,\xi})-\phi(X_{T}^{t,\xi^\prime})|^p\right]^{1\over p}\\
&&+C\E_t\left[\left(\int_t^T |f(s,X_{s}^{t,\xi},Y_s^{t,\xi},Z_s^{t,\xi})-f(s,X_{s}^{t,\xi^\prime},Y_s^{t,\xi},Z_s^{t,\xi})|d\Bq_s\right)^p\right]^{1\over p}\\
&\leq &C_1\E_t\left[(1+|X_{T}^{t,\xi}|^{mp}+|X_{T}^{t,\xi^\prime}|^{mp})|X_{T}^{t,\xi}-X_{T}^{t,\xi^\prime}|^p\right]^{1\over p}\\
&&+C_1\E_t\left[\int_t^T (1+|X_{s}^{t,\xi}|^{mp}+|X_{s}^{t,\xi^\prime}|^{mp})|X_{s}^{t,\xi}-X_{s}^{t,\xi^\prime}|^pds\right]^{1\over p}\\
&\leq &C_2(1+|\xi|^m+|\xi^\prime|^m)\E_t\left[|X_{T}^{t,\xi}-X_{T}^{t,\xi^\prime}|^{2p}\right]^{1\over 2p}\\
&&+C_2(1+|\xi|^m+|\xi^\prime|^m)\left(\int_t^T \E_t\left[|X_{s}^{t,\xi}-X_{s}^{t,\xi^\prime}|^{2p}ds\right]^{1\over 2}\right)^{1\over p}\\
&\leq &C_2(1+|\xi|^m+|\xi^\prime|^m)|\xi-\xi^\prime|~~\text{q.s.}
\end{eqnarray*}
On the other hand, we get $|Y_t^{t,\xi}|\leq C$, q.s., directly from Remark \ref{remark_qbsde_bound}.
\end{proof}
Now for each $(t,x)\in[0,T]\times\bb R^n$, we define $u(t,x):=Y_t^{t,x}$. Identically as  in \cite[Reamrk 4.3]{HuM2014_2}, we deduce that $u$ is a deterministic function. In view of Proposition \ref{Propose_u_estimate}, we immediately have the following estimates:
\begin{eqnarray*}
&&|u(t,x)-u(t,x^\prime)|\leq C(1+|x|^m+|x^\prime|^m)|x-x^\prime|,\\
&&|u(t,x)|\leq C,
\end{eqnarray*}
where the constant $C$ depends on $L,G$ and $T$. Moreover, with the same proof \cite[Theorem 4.4]{HuM2014_2}, we have the following Proposition.
\begin{Proposition}
\label{Propose_markov_fbsde}
For each $\xi\in\bigcap\limits_{p\geq2}L_G^p(\Omega_t;\bb R^n)$, we have $u(t,\xi)=Y_t^{t,\xi}$.
\end{Proposition}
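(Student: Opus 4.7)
The plan is to follow the classical approximation scheme used in \cite[Theorem 4.4]{HuM2014_2}, adapted to our quadratic setting via the a priori estimates of Proposition~\ref{Propose_u_estimate}. The key idea is to first prove the identity $u(t,\xi)=Y_t^{t,\xi}$ when $\xi$ is a simple random variable, and then pass to the limit.

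\textbf{Step 1 (Simple random variables).} Suppose $\xi=\sum_{i=1}^N x_i\mathbf{1}_{A_i}$ where $\{A_i\}_{i=1}^N$ is a partition of $\Omega_t$ with $A_i\in \mathcal{B}(\Omega_t)$ and $x_i\in\mathbb{R}^n$. For each $i$, consider $X^{t,x_i}$ and the solution $(Y^{t,x_i},Z^{t,x_i},K^{t,x_i})$ of the associated $G$-BSDE on $[t,T]$. Define
$$\tilde X_s:=\sum_{i=1}^N X_s^{t,x_i}\mathbf{1}_{A_i},\ \tilde Y_s:=\sum_{i=1}^N Y_s^{t,x_i}\mathbf{1}_{A_i},\ \tilde Z_s:=\sum_{i=1}^N Z_s^{t,x_i}\mathbf{1}_{A_i},\ \tilde K_s:=\sum_{i=1}^N K_s^{t,x_i}\mathbf{1}_{A_i}.$$
Because the coefficients of $G$-SDE~(\ref{SDE0}) are deterministic and $A_i\in\mathcal{F}_t$, the process $\tilde X$ coincides quasi-surely with $X^{t,\xi}$ by uniqueness of $G$-SDEs (restricted to each $A_i$). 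Similarly, the triple $(\tilde Y,\tilde Z,\tilde K)$ satisfies the $G$-BSDE~(\ref{BSDE0}) with terminal data $\phi(X_T^{t,\xi})$ on $[t,T]$, so by the uniqueness part of Theorem for quadratic $G$-BSDEs we get $(\tilde Y,\tilde Z,\tilde K)=(Y^{t,\xi},Z^{t,\xi},K^{t,\xi})$ quasi-surely. Evaluating at $s=t$ gives
$$Y_t^{t,\xi}=\sum_{i=1}^N Y_t^{t,x_i}\mathbf{1}_{A_i}=\sum_{i=1}^N u(t,x_i)\mathbf{1}_{A_i}=u(t,\xi),\quad \text{q.s.}$$

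\textbf{Step 2 (Approximation).} For general $\xi\in\bigcap_{p\geq 2}L_G^p(\Omega_t;\mathbb{R}^n)$, choose a sequence of simple $\mathcal{F}_t$-measurable random variables $\xi^n$ such that $\E[|\xi^n-\xi|^p]\to 0$ for every $p\geq 2$ and $\sup_n\E[|\xi^n|^p]<\infty$. By Step~1, $Y_t^{t,\xi^n}=u(t,\xi^n)$, q.s. We then pass to the limit in both sides.

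\textbf{Step 3 (Convergence of both sides).} On one hand, from the estimate
$$|u(t,x)-u(t,x')|\leq C(1+|x|^m+|x'|^m)|x-x'|$$
in Proposition~\ref{Propose_u_estimate}, together with H\"older's inequality and the uniform $L_G^p$-bound on $\xi^n$, we obtain $\E[|u(t,\xi^n)-u(t,\xi)|]\to 0$. On the other hand, applying Proposition~\ref{Propose_u_estimate} quasi-surely with $\xi'=\xi^n$ yields
$$|Y_t^{t,\xi}-Y_t^{t,\xi^n}|\leq C(1+|\xi|^m+|\xi^n|^m)|\xi-\xi^n|,\quad\text{q.s.},$$
and another H\"older estimate gives $\E[|Y_t^{t,\xi}-Y_t^{t,\xi^n}|]\to 0$. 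Combining yields $Y_t^{t,\xi}=u(t,\xi)$, q.s.

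\textbf{Main obstacle.} The only nontrivial point is that the Lipschitz-type estimate for $u$ and for the flow $\xi\mapsto Y_t^{t,\xi}$ carries polynomial weights $(1+|\xi|^m+|\xi^n|^m)$ coming from the local Lipschitz growth of $\phi$ and $f$ in $x$. To control these in the limit one needs uniform higher moment bounds on $\xi^n$, which is why the approximation $\xi^n\to\xi$ must be taken in every $L_G^p$ rather than just in $L_G^2$. This is precisely the reason why the hypothesis $\xi\in\bigcap_{p\geq 2}L_G^p(\Omega_t;\mathbb{R}^n)$ appears in the statement, and the argument goes through once the approximating sequence is chosen to respect this regularity.
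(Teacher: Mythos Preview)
Your proposal is correct and follows exactly the scheme the paper itself invokes, namely the proof of \cite[Theorem 4.4]{HuM2014_2}: establish the identity for simple $\xi$ by patching the deterministic-initial-data solutions on an $\mathcal{F}_t$-measurable partition, then pass to the limit using the polynomially weighted Lipschitz estimate of Proposition~\ref{Propose_u_estimate}. The only point worth tightening is that in the $G$-framework you should take the partition so that $\mathbf{1}_{A_i}\in L_G^p(\Omega_t)$ (as in \cite{HuM2014_2}), which guarantees that the glued processes $(\tilde Y,\tilde Z,\tilde K)$ remain in the space $\mathfrak{G}_G^p$ where uniqueness for the quadratic $G$-BSDE applies and that $\tilde K$ is still a decreasing $G$-martingale.
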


Now we give the main result of this section.
\begin{Theorem}
\label{Thm_qbsde_pde}
Let $u(t,x):=Y_t^{t,x}$ for $(t,x)\in [0,T]\times \bb R^n$. Then, the function  $u$ is a viscosity solution to the PDE \eqref{PDE_thm}.
\end{Theorem}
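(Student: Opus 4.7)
The proof proceeds by establishing the viscosity sub- and supersolution properties of $u$ separately; the two arguments are symmetric, so I will sketch the subsolution side. The two preliminary ingredients are already in hand: continuity and polynomial growth of $u$ (Proposition~\ref{Propose_u_estimate}) and the Markov identification $u(t,\xi)=Y_t^{t,\xi}$ (Proposition~\ref{Propose_markov_fbsde}). Combined with uniqueness for quadratic $G$-BSDEs, these yield a dynamic programming principle: for $t_0\le s\le T$, the restriction of $(Y^{t_0,x_0},Z^{t_0,x_0},K^{t_0,x_0})$ to $[t_0,s]$ is the unique solution of the $G$-BSDE on that interval with generators $(f,g)$ and terminal value $u(s,X_s^{t_0,x_0})$.

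\textbf{Subsolution step.} Fix $(t_0,x_0)\in[0,T)\times\bb{R}^n$ and $\varphi\in C^{1,2}$ such that $u-\varphi$ attains a local maximum at $(t_0,x_0)$ with $u(t_0,x_0)=\varphi(t_0,x_0)$. Argue by contradiction: assume
$$\partial_t\varphi(t_0,x_0)+F\big(D_x^2\varphi,D_x\varphi,\varphi,x_0,t_0\big)<-c<0.$$
By continuity the same strict inequality persists on $[t_0,t_0+\delta_0]\times B_r(x_0)$; set $\tau:=\inf\{s>t_0:X_s^{t_0,x_0}\notin B_r(x_0)\}\wedge(t_0+\delta)$ with $\delta\in(0,\delta_0]$ to be chosen. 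Apply the $G$-It\^o formula to $\varphi(s,X_s^{t_0,x_0})$. The crucial algebraic identity is that, for any bounded adapted $\gamma$, the process
$$N_s:=\int_{t_0}^{s}\bigl(\gamma_r\,d\Bq_r-2G(\gamma_r)\,dr\bigr)$$
is a non-increasing $G$-martingale, because $\gamma\,d\Bq\le 2G(\gamma)\,ds$ q.s.\ and $\E[\gamma(\Bq_t-\Bq_s)]=2G(\gamma)(t-s)$. Applying this with
$$\gamma_s:=\sigma^{\mathrm T}D_x^2\varphi\,\sigma+2f(s,X_s,\varphi,\sigma^{\mathrm T}D_x\varphi)+2h^{\mathrm T}D_x\varphi$$
(all evaluated at $(s,X_s^{t_0,x_0})$) and rearranging the It\^o expansion produces, on $[t_0,\tau]$, a $G$-BSDE for $(\varphi(\cdot,X_\cdot),\sigma^{\mathrm T}D_x\varphi(\cdot,X_\cdot))$ with generators
$$\bar g_r:=g(r,X_r,\varphi,\sigma^{\mathrm T}D_x\varphi)-\{\partial_t\varphi+F\}(r,X_r),\qquad \bar f_r:=f(r,X_r,\varphi,\sigma^{\mathrm T}D_x\varphi),$$
terminal data $\varphi(\tau,X_\tau^{t_0,x_0})$, and non-increasing $G$-martingale part $N$.

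\textbf{Contradiction via comparison.} On $[t_0,\tau]$ one has $\bar g_r\ge g(r,X_r,\varphi,\sigma^{\mathrm T}D_x\varphi)+c$ by the standing assumption, while $\bar f_r=f(r,X_r,\varphi,\sigma^{\mathrm T}D_x\varphi)$; moreover $u(\tau,X_\tau)\le\varphi(\tau,X_\tau)$ by the local maximum. Compare this BSDE with the $G$-BSDE solved by $Y^{t_0,x_0}$ restricted to $[t_0,\tau]$ (whose terminal is $u(\tau,X_\tau^{t_0,x_0})$) via the comparison theorem for quadratic $G$-BSDEs (Theorem~\ref{Thm_compare}). A quantitative version of that comparison (linearise, use the $G$-Girsanov change of measure, and exploit the strict generator gap $c>0$) yields
$$Y_{t_0}^{t_0,x_0}\le\varphi(t_0,x_0)-c'\,\delta$$
for some $c'>0$ independent of $\delta$, contradicting $Y_{t_0}^{t_0,x_0}=u(t_0,x_0)=\varphi(t_0,x_0)$. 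The supersolution inequality is obtained by exactly the same scheme at a local minimum, with the roles of the non-increasing $G$-martingale identity reversed (writing $-2G(\gamma)\,ds+\gamma\,d\Bq$ as a non-decreasing process plus a $G$-martingale).

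\textbf{Main obstacle.} The scheme is classical; the real work is checking that the BSDE for $\varphi(\cdot,X_\cdot)$ lies in the admissible class $S_G^\infty\times BMO_G$ on the stochastic interval $[t_0,\tau]$ so that Theorem~\ref{Thm_compare} genuinely applies, and that the strict generator gap $c$ translates quantitatively into a strict drop in the initial value. Because the generator is quadratic in $z$, the BMO control on $Z^{t_0,x_0}$ (Proposition~\ref{Propose_Z_bound} and Remark~\ref{remark_qbsde_bound}) and on $\sigma^{\mathrm T}D_x\varphi$ (bounded on $B_r(x_0)$) must be uniform in the stopping parameter $\delta$. Once these are in place, the passage to the limit $\delta\to 0$ is routine; no new difficulty arises from the $G$-framework beyond what is already handled by the $G$-Girsanov tools of Section~\ref{section_BMO}.
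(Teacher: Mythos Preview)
Your plan is essentially the paper's own argument: argue by contradiction, apply the $G$-It\^o formula to the test function, recognise $\int\gamma\,d\Bq-\int 2G(\gamma)\,ds$ as a non-increasing $G$-martingale, linearise the quadratic generator, pass to the Girsanov-shifted expectation $\Et$, and localise at the exit time from a small ball. The paper carries this out explicitly rather than invoking Theorem~\ref{Thm_compare} as a package, and it extracts the strict contradiction from the boundary gap $\eta:=\min_{\partial O_\delta}(\psi-u)>0$ (using a \emph{strict} local extremum) rather than from your generator gap $c>0$; both mechanisms work.

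Two points deserve care. First, you cannot literally apply Theorem~\ref{Thm_compare} on the random interval $[t_0,\tau]$: in the $G$-framework conditional expectations at stopping times and BSDEs with random terminal time are not available off the shelf. The paper avoids this by first writing the linearised identity on the whole of $[t_0,T]$ (a pathwise, q.s.\ equality), and only then, \emph{for each} $\bb P\in\mathcal P$, stopping at the $\bb P$-exit time $\tau^{\bb P}$ and taking expectation under the Girsanov-shifted measure $d\bb Q=\Exp(b^\varepsilon)_T\,d\bb P$; the supremum over $\mathcal P$ is taken at the very end, using that $\Et[K_T^{t_0,x_0}]=0$. Your write-up should make this $\bb P$-wise localisation explicit. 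Second, Proposition~\ref{Propose_u_estimate} gives continuity of $u$ in $x$ only; continuity in $t$ still has to be proved (the paper does this at the start of the proof via the same linearisation/Girsanov device and Proposition~\ref{propose_estimate_sde}). Finally, your remark on the supersolution side is slightly off: $\int\gamma\,d\Bq-\int 2G(\gamma)\,ds$ is always a non-increasing $G$-martingale and cannot be ``reversed''; the supersolution argument is symmetric because the roles of the two decreasing $G$-martingales $\tilde K$ and $K^{t_0,x_0}$ swap, not because the sign of $N$ flips.
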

\begin{proof}
 Without loss of genearlity, we still assume that $h=0$ and $g=0$. First, we show $u$ is a continuous function. Fix some $(t,x)\in[0,T]\times\bb R^n$.
 In view of $Y_{t+\delta}^{t,x}=Y_{t+\delta}^{t+\delta,X_{t+\delta}^{t,x}}$ and Proposition \ref{Propose_markov_fbsde}, we obtain $Y_{t+\delta}^{t,x}=u(t+\delta,X_{t+\delta}^{t,x})$ for $\delta\in[0,T-t]$.
Thus we obtain
$$
 Y_t^{t,x}=u(t+\delta,X_{t+\delta}^{t,x})+\int_t^{t+\delta}f(s, X_s^{t,x},Y_s^{t,x},Z_s^{t,x})d\Bq_s
 -\int_t^{t+\delta}Z_s^{t,x}dB_s-\int_t^{t+\delta}dK_s^{t,x}~~\text{q.s.}
$$
The generator can be written  as  in Proposition \ref{Propose_Y_bound} the following form:  for each $s\in[0,T]$,
$$f(s,X_s^{t,x},Y_s^{t,x},Z_s^{t,x})=f(s,X_s^{t,x},0,0)+m_s^\varepsilon+a_s^\varepsilon Y_s^{t,x}+b_s^\varepsilon Z_s^{t,x},$$
with
\begin{eqnarray*}
a_s^\varepsilon&:=&(1-l(Y_s^{t,x})){f(s,X_s^{t,x},Y_s^{t,x},Z_s^{t,x})-f(s,X_s^{t,x},0,Z_s^{t,x})\over Y_s^{t,x}}\textbf{1}_{\{|Y_s^{t,x}|>0\}},\\
b_s^\varepsilon&:=&(1-l(Z_s^{t,x})){f(s,X_s^{t,x},0,Z_s^{t,x})-f(s,X_s^{t,x},0,0)\over |Z_s^{t,x}|^2}Z_s^{t,x}\textbf{1}_{\{|Z_s^{t,x}|>0\}},\\
m_s^\varepsilon&:=& l(Y_s^{t,x})(f(s,X_s^{t,x},Y_s^{t,x},Z_s^{t,x})-f(s,X_s^{t,x},0,Z_s^{t,x}))\\
&&+l(Z_s^{t,x})(f(s,X_s^{t,x},0,Z_s^{t,x})-f(s,X_s^{t,x},0,0))
\end{eqnarray*}
for a  Lipschitz continuous function $l$  such that $\textbf{1}_{[-\varepsilon,\varepsilon]}(x)\leq l(x)\leq \textbf{1}_{[-2\varepsilon,2\varepsilon]}(x)$ at each $x\in (-\infty, +\infty)$. Moreover,
\begin{eqnarray*}
|a_s^\varepsilon|&\leq& L,\quad |b_s^\varepsilon|\leq L(1+|Z_s^{t,x}|),\quad
|m_s^\varepsilon|\leq4L\varepsilon(1+ \varepsilon).
\end{eqnarray*}
In view of \cite[Lemma 3.6]{Hu2018} and Remark \ref{remark_qbsde_bound}, we know that $b^\varepsilon\in BMO_G$. Set $\tilde{B}_t:=B_t-\int_0^tb_s^\varepsilon d\Bq_s$ for $t\in [0,T]$. Thus we can define a new $G$-expectation $\Et[\cdot]$ by $\Exp(b^\varepsilon)$, such that $\tilde{B}$ is a $G$-Brownian motion under $\Et[\cdot]$.
Thus we have
$$
 Y_t^{t,x}=u(t+\delta,X_{t+\delta}^{t,x})+\int_t^{t+\delta}\left(f_s+m_s^\varepsilon+a_s^\varepsilon Y_s^{t,x}\right)d\Bq_s
 -\int_t^{t+\delta}Z_s^{t,x}d\tilde{B}_s-\int_t^{t+\delta}dK_s^{t,x},~~\text{q.s.},
$$
where $f_s:=f(s,X_s^{t,x},0,0)$.
Taking $G$-expectation $\Et[\cdot]$, we get
$$u(t,x)=\Et\left[u(t+\delta,X_{t+\delta}^{t,x})+\int_t^{t+\delta}\left(f_s+m_s^\varepsilon+a_s^\varepsilon Y_s^{t,x}\right)d\Bq_s\right].$$
In view of Proposition \ref{Propose_u_estimate}, we obtain
\begin{eqnarray*}
|u(t,x)-u(t+\delta,x)|&\leq& \Et\left[|u(t+\delta,X_{t+\delta}^{t,x})-u(t+\delta,x)|
+\int_t^{t+\delta}\left|f_s+m_s^\varepsilon+a_s^\varepsilon Y_s^{t,x}\right|d\Bq_s\right]\\
&\leq& \Et\left[(1+|x|^m+|X_{t+\delta}^{t,x}|^m)|X_{t+\delta}^{t,x}-x|\right]\\
&&+\sqrt{\delta}\siup^2\Et\left[\int_t^{t+\delta}\left|f_s+m_s^\varepsilon+a_s^\varepsilon Y_s^{t,x}\right|^2ds\right]^{1\over 2}.
\end{eqnarray*}
Let $\varepsilon<1$. In view of Remark \ref{remark_qbsde_bound} and \textbf{(A3)}, there exists a constant $C$ depending on $M_0,L,G$ and $T$, such that
$$\siup^2\Et\left[\int_t^{t+\delta}\left|f_s+m_s^\varepsilon+a_s^\varepsilon Y_s^{t,x}\right|^2ds\right]^{1\over 2}\leq C.$$
Thus we know
\begin{equation}
\label{eq_u_t}
|u(t,x)-u(t+\delta,x)|\leq \Et\left[(1+|x|^m+|X_{t+\delta}^{t,x}|^m)^2\right]^{1\over2}\Et\left[|X_{t+\delta}^{t,x}-x|^2\right]^{1\over2}+C\sqrt{\delta}.
\end{equation}
Note that $\norm{ b_s^\varepsilon}_{BMO_G}\leq\norm{L(1+|Z^{t,x}|)}_{BMO_G}$. Then according to Lemma \ref{reverse holder} and Remark \ref{remark_qbsde_bound}, there exists $p>1$ depending on $M_0,L,G$ and $T$, such that, for each $(s,X)\in[0,T]\times L_G^p(\Omega_T)$
$$\Et_s[|X|]=\E_s\Big[{\Exp(b^\varepsilon)_T\over \Exp( b^\varepsilon)_s}|X|\Big]\leq \E_s\Big[\Big({\Exp(t b^\varepsilon)_T\over \Exp( b^\varepsilon)_s}\Big)^{p^\prime}\Big]^{1\over p^\prime}\E_s[|X|^p]^{1\over p}\leq C_p\E_s[|X|^p]^{1\over p},~~\text{q.s.},$$
where $C_p$ depending only on $p$.
In view of Proposition \ref{propose_estimate_sde}, we get
\begin{eqnarray*}
\Et\left[|X_{t+\delta}^{t,x}|^{2m}\right]\leq  C_p\E\left[|X_{t+\delta}^{t,x}|^{2mp}\right]^{1\over p}\leq C(1+|x|^{2m}),
\end{eqnarray*}
and
$$ \Et\left[|X_{t+\delta}^{t,x}-x|^2\right]\leq C(1+|x|^2)\delta.$$
Then from (\ref{eq_u_t}), we have for each $(t,x)\in[0,T]\times\bb R^n$,
$$|u(t,x)-u(t+\delta,x)|\leq C(1+x^{m+1})\sqrt\delta,$$
where $C$ depends on $M_0,L,G$ and $T$.
On the other hand, we get from Proposition \ref{Propose_u_estimate} that
for each $(t,x,x^\prime)\in[0,T]\times\bb R^n\times \bb R^n$,
$$|u(t,x)-u(t,x^\prime)|\leq C(1+|x|^m+|x^\prime|^m)|x-x^\prime|.$$
It follows that $u$ is continuous.

For any fixed $(t_0,x_0)\in (0,T)\times \bb R^n$, let $\psi\in C^{1,2}([0,T]\times \bb R^n)$ such that for each $(t,x)\in \left([0,T]\times\bb R^n\right)\backslash \{(t_0,x_0)\}$
\begin{equation}
\label{strict_minimum_property}
\psi(t,x)-u(t,x)>\psi(t_0,x_0)-u(t_0,x_0)=0.
\end{equation}
Without loss of generality, we may assume that there exists some $m_1>0$ such that for each $(t,x)\in[0,T]\times\bb R^n$,
\begin{equation}
\label{ineq_psi_bound}
|\psi(t,x)|+|D_x^2\psi(t,x)|\leq C(1+|x|^{m_1})\quad\text{and}\quad|D_x\psi(t,x)|\leq C.
\end{equation}
We want to prove that
$$\partial_t \psi+G\left(\sigma^\mathrm{T}(t_0,x_0) D_x^2\psi \sigma(t_0,x_0)+2f(t_0,x_0,\psi,\sigma^\mathrm{T}(t_0,x_0)D_x\psi)\right)+b^\mathrm{T}(t_0,x_0)D_x\psi\geq0.$$
Let us assume the inequality before does not hold.
Let $O_{\delta}(t_0,x_0)$ be a open ball centered at $(t_0,x_0)$, with radius $\delta$.
By continuity, there exists some $\delta\in(0,T-t_0)$ such that
for each $(t,x)\in O_{\delta}(t_0,x_0)$,
$$\partial_t \psi+G\left(\sigma^\mathrm{T}(t,x) D_x^2\psi \sigma(t,x)+2f(t,x,\psi,\sigma^\mathrm{T}(t,x)D_x\psi)\right)+b^\mathrm{T}(t,x)D_x\psi\leq 0.$$
Setting $\tilde Y_t:=\psi(t,X_t^{t_0,x_0})$ and $\tilde Z_t:=\sigma^\mathrm{T}(t,X_t^{t_0,x_0})D_x\psi(t,X_t^{t_0,x_0})$, it is easy to check that for
each $t\in[t_0,T]$,
\begin{eqnarray*}
 \tilde Y_{t}&=&\psi(T,X_{T}^{t_0,x_0})-\int_{t}^{T}\left\{\partial_t\psi(s,X_s^{t_0,x_0})+b^\mathrm{T}(s,X_s^{t_0,x_0})D_x\psi(s,X_s^{t_0,x_0})\right\}ds\\
 &&-\int_{t}^{T}{1\over 2}\sigma^\mathrm{T}(s,X_s^{t_0,x_0}) D_x^2\psi(s,X_s^{t_0,x_0}) \sigma(s,X_s^{t_0,x_0})d\Bq_s-\int_{t}^{T} \tilde Z_sdB_s,
~~\text{q.s.}
\end{eqnarray*}
For each $t\in[t_0,T]$, set $\tilde K_t:=\int_{t_0}^tF_sd\Bq_s-\int_{t_0}^tG(2F_s)ds $, where
$$F_s:={1\over 2}\sigma^\mathrm{T}(s,X_s^{t_0,x_0}) D_x^2\psi(s,X_s^{t_0,x_0}) \sigma(s,X_s^{t_0,x_0})+f(s,X_s^{t_0,x_0},\tilde Y_s,\tilde Z_s),~~s\in[t_0,T].$$
We can check that $\tilde K$ is a decreasing $G$-martingale.
Noting that $\sigma$ is bounded here, by Proposition \ref{propose_estimate_sde} and inequality \eqref{ineq_psi_bound}, we deduce that
$\tilde K_T\in\bigcap\limits_{p\geq1}L_G^p(\Omega_T)$.
Now we have for each $t\in[t_0,T]$,
\begin{eqnarray*}
 \tilde Y_{t}&=&\psi(T,X_{T}^{t_0,x_0})-\int_{t}^{T}\left\{\partial_t\psi(s,X_s^{t_0,x_0})+b^\mathrm{T}(s,X_s^{t_0,x_0})D_x\psi(s,X_s^{t_0,x_0})+G(2F_s)\right\}ds\\
 &&+\int_{t}^{T}f(s,X_s^{t_0,x_0},\tilde Y_s,\tilde Z_s)d\Bq_s-\int_{t}^{T} \tilde Z_sdB_s-\int_{t}^{T} d\tilde K_s,
~~\text{q.s.}
\end{eqnarray*}
Now we set $\delta Y:=\tilde Y-Y^{t_0,x_0}$, $\delta Z:=\tilde Z-Z^{t_0,x_0}$ and for each $s\in [t_0,T]$,
$$\tilde F_s:=\partial_t\psi(s,X_s^{t_0,x_0})+b^\mathrm{T}(s,X_s^{t_0,x_0})D_x\psi(s,X_s^{t_0,x_0})+G(2F_s).$$
Then for each $t\in[t_0,T]$,
\begin{eqnarray}
 \nonumber \delta Y_{t}&=&(\psi-u)(T,X_{T}^{t_0,x_0})
 +\int_{t}^{T}f(s,X_s^{t_0,x_0},\tilde Y_s,\tilde Z_s)-f(s,X_s^{t_0,x_0},Y_s^{t_0,x_0},\tilde Z_s^{t_0,x_0})d\Bq_s\\
 &&-\int_{t}^{T}\tilde F_sds-\int_{t}^{T} \delta Z_sdB_s-\int_{t}^{T} d\tilde K_s+\int_{t}^{T} dK_s^{t_0,x_0},
~~\text{q.s.}
\label{eq_delta_y1}
\end{eqnarray}
As what we do in Proposition \ref{Propose_stable}, we have for each $s\in[t_0,T]$,
$$f(s,X_s^{t_0,x_0},\tilde Y_s,\tilde Z_s)-f(s,X_s^{t_0,x_0},Y_s^{t_0,x_0},\tilde Z_s^{t_0,x_0})=m_s^\varepsilon+a_s^\varepsilon \delta Y_s+b_s^\varepsilon \delta Z_s,$$
where
\begin{eqnarray*}
|a_s^\varepsilon|\leq L,\quad |b_s^\varepsilon|\leq L(1+|Z_s^{t_0,x_0}|+|\tilde Z_s|),\quad
| m_s^\varepsilon|\leq 4L\varepsilon(1+ \varepsilon+|\tilde Z_s|).
\end{eqnarray*}
In view of \cite[Lemma 3.6]{Hu2018}, Remark \ref{remark_qbsde_bound}, and inequality \eqref{ineq_psi_bound}, we know that $b^\varepsilon\in BMO_G$.
Set $\tilde{B}_t:=B_t-\int_0^tb_s^\varepsilon d\Bq_s$ for $t\in [0,T]$. Thus we can define a new $G$-expectation $\Et[\cdot]$ by $\Exp(b^\varepsilon)$, such that $\tilde{B}$ is a $G$-Brownian motion under $\Et[\cdot]$.
Thus the equality \eqref{eq_delta_y1} can be written as
\begin{eqnarray}
 \nonumber \delta Y_{t}&=&(\psi-u)(T,X_{T}^{t_0,x_0})
 +\int_{t}^{T}m_s^\varepsilon+a_s^\varepsilon \delta Y_sd\Bq_s\\
 \nonumber &&-\int_{t}^{T}\tilde F_sds-\int_{t}^{T} \delta Z_sd\tilde B_s-\int_{t}^{T} d\tilde K_s+\int_{t}^{T} dK_s^{t_0,x_0},
~~\text{q.s.}
\end{eqnarray}
	Applying It\^o's formula to $e^{\int_0^ta_s^\varepsilon d\Bq_s}\delta Y_t$,~ we have
	\begin{eqnarray}
\nonumber		&&e^{\int_0^t a_s^\varepsilon d\Bq_s}\delta Y_t\\
\nonumber		&=& e^{\int_0^T a_s^\varepsilon d\Bq_s}(\psi-u)(T,X_{T}^{t_0,x_0})-\int_t^T e^{\int_0^s a_u^\varepsilon d\Bq_u} \tilde F_s d\Bq_s+\int_t^T e^{\int_0^s a_u^\varepsilon d\Bq_u}  m_s^\varepsilon d\Bq_s\\
		&&-\int_t^T e^{\int_0^s a_u^\varepsilon d\Bq_u} \delta Z_s d\tilde{B}_s- \int_t^T e^{\int_0^sa_u^\varepsilon d\Bq_u} d\tilde  K_s+\int_t^T e^{\int_0^s a_u^\varepsilon d\Bq_u} dK_s^{t_0,x_0},~~\text{q.s.}
\label{eq_delta_y2}
	\end{eqnarray}
Let $\mathcal P$ be the weakly compact set that represents $\E$. For each $\bb P\in \mathcal P$, let $\tau^\bb P$ be the following stopping time under $\bb P$:
$$\tau^\bb P:=\inf\{s\geq t_0:(s,X_s^{t_0,x_0})\notin O_\delta(t_0,x_0)\}.$$
By the strict minimum property \eqref{strict_minimum_property}, we notice that
$$\eta:=\min_{(t,x)\in\partial O_\delta(t_0,x_0)}(\psi-u)(t,x)>0.$$
It is easy to check that $\tau^\bb P<T$, $\bb P\text{\,-a.s.}$ and $(\psi-u)(\tau^\bb P,X_{\tau^\bb P}^{t_0,x_0})\geq \eta$, $\bb P\text{\,-a.s.}$.
From equality \eqref{eq_delta_y2}, we have for each $t\in[t_0,T]$,
	\begin{eqnarray}
\nonumber		&&e^{\int_0^{t\land \tau^\bb P} a_s^\varepsilon d\Bq_s}\delta Y_{t\land \tau^\bb P}\\
\nonumber		&=& e^{\int_0^{\tau^\bb P} a_s^\varepsilon d\Bq_s}(\psi-u)(\tau^\bb P,X_{\tau^\bb P}^{t_0,x_0})-\int_{t\land \tau^\bb P}^{\tau^\bb P} e^{\int_0^s a_u^\varepsilon d\Bq_u} \tilde F_s d\Bq_s+\int_{t\land \tau^\bb P}^{\tau^\bb P}  e^{\int_0^s a_u^\varepsilon d\Bq_u}  m_s^\varepsilon d\Bq_s\\
\nonumber		&&-\int_{t\land \tau^\bb P}^{\tau^\bb P} e^{\int_0^s a_u^\varepsilon d\Bq_u} \delta Z_s d\tilde{B}_s- \int_{t\land \tau^\bb P}^{\tau^\bb P} e^{\int_0^sa_u^\varepsilon d\Bq_u} d\tilde  K_s+\int_{t\land \tau^\bb P}^{\tau^\bb P} e^{\int_0^s a_u^\varepsilon d\Bq_u} dK_s^{t_0,x_0},~\bb P\text{\,-a.s.}.
	\end{eqnarray}
Note that for each $(s,\omega)\in [t_0,T]\times\Omega_T$ satisfying $t_0\leq s\leq \tau^\bb P(\omega)$, $\tilde F_s\leq0$.
Thus we have
	\begin{eqnarray}
\nonumber		e^{\int_0^{t\land \tau^\bb P} a_s^\varepsilon d\Bq_s}\delta Y_{t\land \tau^\bb P}
	           &\geq& e^{-LT\siup^2} \eta+\int_{t\land \tau^\bb P}^{\tau^\bb P}  e^{\int_0^s a_u^\varepsilon d\Bq_u}  m_s^\varepsilon d\Bq_s\\
\nonumber		&&-\int_{t\land \tau^\bb P}^{\tau^\bb P} e^{\int_0^s a_u^\varepsilon d\Bq_u} \delta Z_s d\tilde{B}_s+\int_{t\land \tau^\bb P}^{\tau^\bb P} e^{\int_0^s a_u^\varepsilon d\Bq_u} dK_s^{t_0,x_0},~\bb P\text{\,-a.s.}.
	\end{eqnarray}
Since $\tilde B$ is a martingale under the new probability $\bb Q$ with $d\bb Q:=\Exp(b^\varepsilon)_Td\bb P$,
we have in particular
	\begin{eqnarray}
\nonumber		E^{\bb Q}\left[\delta Y_{t_0}\right]
	           \geq e^{-2LT\siup^2} \eta- e^{2LT\siup^2}E^{\bb Q}\left[\int_{t_0}^{\tau^\bb P} | m_s^\varepsilon| d\Bq_s\right]
+ e^{2LT\siup^2}E^{\bb Q}\left[\int_{t\land \tau^\bb P}^{\tau^\bb P} dK_s^{t_0,x_0}\right].
	\end{eqnarray}
While $\delta Y_{t_0}=(\psi-u)(t_0,x_0)$ and $| m_s^\varepsilon|\leq \rho(\varepsilon)$ for a nonnegative continuous function $\rho$ defined on $\bb R^{+}$ with $\rho(0)=0$, we have
\begin{eqnarray*}
(\psi-u)(t_0,x_0)&\geq &
e^{-2LT\siup^2} \eta-e^{2LT\siup^2}T\siup^2\rho(\varepsilon)+ e^{2LT\siup^2}E^{\bb Q}\left[\int_{t\land \tau^\bb P}^{\tau^\bb P} dK_s^{t_0,x_0}\right]\\
&\geq& e^{-2LT\siup^2} \eta-e^{2LT\siup^2}T\siup^2\rho(\varepsilon)+ e^{2LT\siup^2}E^{\bb Q}\left[K_T^{t_0,x_0}\right] \\
&=& e^{-2LT\siup^2} \eta-e^{2LT\siup^2}T\siup^2\rho(\varepsilon)+ e^{2LT\siup^2}E^{\bb P}\left[\Exp(b^\varepsilon)_TK_T^{t_0,x_0}\right]
\end{eqnarray*}
 for each $\bb P\in\mathcal{P}$. Consequently, we have
\begin{eqnarray}
\nonumber (\psi-u)(t_0,x_0)&\geq &
 e^{-2LT\siup^2} \eta-e^{2LT\siup^2}T\siup^2\rho(\varepsilon)+ e^{2LT\siup^2}\sup_{\bb P\in\mathcal{P}}E^{\bb P}\left[\Exp(b^\varepsilon)_TK_T^{t_0,x_0}\right]\\
 &=&e^{-2LT\siup^2} \eta-e^{2LT\siup^2}T\siup^2\rho(\varepsilon)+ e^{2LT\siup^2}\E\left[\Exp(b^\varepsilon)_TK_T^{t_0,x_0}\right].
 \label{ineq_subsolution}
\end{eqnarray}
In view of Lemma \ref{lemma_Gisr_K} and Remark \ref{remark_qbsde_bound}, the process  $K^{t_0,x_0}$ is a $G$-martingale under $\Et[\cdot]$, and
$$\E\left[\Exp(b^\varepsilon)_TK_T^{t_0,x_0}\right]=\Et\left[K_T^{t_0,x_0}\right]=0.$$
Letting $\varepsilon\to 0$ in the last inequality, we have
$$0=(\psi-u)(t_0,x_0)\geq e^{-2LT\siup^2} \eta>0,$$
which is a contradiction. Hence,  $u$ is a viscosity subsolution.

In a similar way,  $u$ can be shown to be  a viscosity supersolution.
\end{proof}
\begin{Remark}
When the functions $f$ and $g$ do not depend on $y$, one can get the uniqueness of viscosity solution to PDE by the uniqueness
result in Da Lio and Ley \cite{DaLio2006} concerning Bellman-Isaacs equation.
\end{Remark}

\subsection{Relation between reflected quadratic $G$-BSDEs and obstacle problems for nonlinear parabolic PDEs}

With the preceding nonlinear Feynman-Kac formula, we can give the relationship between solutions of the obstacle problem for nonlinear parabolic PDEs and the related reflected quadratic $G$-BSDEs. For each $(t,\xi)\in[0,T]\times \bigcap\limits_{p\geq2}L_G^p(\Omega_t;\bb R^n)$, we consider the following $G$-SDE:
\begin{equation}
\label{SDE2}
X_s^{t,\xi}=\xi+\int_t^sb(u, X_u^{t,\xi})du+\int_t^sh(u, X_u^{t,\xi})d\Bq_u+\int_t^s\sigma(u, X_u^{t,\xi})dB_u,~s\in[t,T],
\end{equation}
and the following type of reflected $G$-BSDE:
\begin{equation}
\begin{cases}
\label{RBSDE2}
\displaystyle
Y_s^{t,\xi}=\phi(X_T^{t,\xi})+\int_s^Tg(u, X_u^{t,\xi},Y_u^{t,\xi},Z_u^{t,\xi})du+\int_s^Tf(u, X_u^{t,\xi},Y_u^{t,\xi},Z_u^{t,\xi})d\Bq_u\\
\displaystyle
\qquad \quad-\int_s^TZ_u^{t,\xi}dB_u+\int_s^TdA_u^{t,\xi},~~\text{q.s.},~s\in[t,T];\\[3mm]
\displaystyle
Y_s^{t,\xi}\geq l(s,X_s^{t,\xi}),~~\text{q.s.}, ~s\in[t,T];\\
\displaystyle
\int_t^\cdot(l(u,X_u^{t,\xi})-Y_u^{t,\xi})dA_u^{t,\xi} \text{ is a non-increasing }G\text{-martingale on $[s,T]$.}
\end{cases}
\end{equation}
where $b,h,\sigma,l: [0,T]\times\bb R^n\to\bb R$, $\phi:\bb R^n\to\bb R$, $f,g:[0,T]\times\bb R^n\times \bb R\times
\bb R\to\bb R$ are deterministic functions and satisfy \textbf{(A1)}-\textbf{(A4)}. Moreover, we have the following assumption on $l$:
\begin{itemize}
\item[\textbf{(A9)}]The function $l(t,\cdot)$ is uniformly Lipschitz and $l(T,x)\leq\phi(x)$ for any $x\in\bb R^n$. Furthermore,  there exists a constant $N_0$ such that $ l(t,x)\leq N_0,$  for any $t\in[0,T]$.
\item[\textbf{(A10)}]The function $l(\cdot,x)$ is uniformly continuous, i.e. there is a non-decreasing continuous function $w: [0, +\infty)\to [0, +\infty)$ such that $w(0)=0$ and
    $$\sup_{x \in\bb{R}^n}|l(t,x)-l(t^\prime,x)|\leq w(|t-t^\prime|).$$
\end{itemize}

\begin{Remark}
In the Markovian case, Assumptions \textbf{(H2)} and \textbf{(H5)} may not hold directly. However, in view of Remark \ref{Remark_tech_condition}, one can still get the results under Assumptions  \textbf{(H1)}, \textbf{(H3)} and \textbf{(H4)} as long as the penalized quadratic $G$-BSDE has a solution. The reflected $G$-BSDE (\ref{RBSDE2}) has one solution in the sense of Definition \ref{def_solution} and all results in Sections 3-5 still hold here under Assumptions \textbf{(A1)}-\textbf{(A4)} and \textbf{(A9)}-\textbf{(A10)}.
\end{Remark}

Consider the following obstacle problem for a parabolic PDE:
\begin{equation}
\begin{cases}
\label{PDE_obstacle}
\min\left\{-\partial_t u-F(D_x^2u,D_xu,u,x,t),u(t,x)-l(t,x)\right\}=0,~~(t,x)\in[0,T)\times\bb R^n\\
u(T,x)=\phi(x),~~x\in\bb R^n,
\end{cases}
\end{equation}
where
\begin{eqnarray*}
F(A,p,y,x,t)
&:=&G\left(\sigma^\mathrm{T}(t,x) A \sigma(t,x)+2f(t,x,y,\sigma^\mathrm{T}(t,x)p)+2h^\mathrm{T}(t,x)p\right)\\
&&+b^\mathrm{T}(t,x)p+g(t,x,y,\sigma^\mathrm{T}(t,x)p),
\end{eqnarray*}
for each $(A,p,y,x,t)\in \bb S_n\times \bb R^n\times \bb R\times\bb R^n\times[0,T] $.

We need to recall the equivalent definition of the viscosity solution of the obstacle problem (\ref{PDE_obstacle}) as in \cite{Li2017_1} or \cite{Peng2010}.
\begin{Definition}
Let $u\in C([0,T]\times \bb R^n)$ and $(t,x)\in[0,T]\times \bb R^n$. We denote by $\mathcal P^{2,+}u(t,x)$ the set of triples $(p,q,A)\in\bb R\times\bb R^n\times \bb S_n$ satisfying
$$u(s,y)\leq u(t,y)+p(s-t)+q^\mathrm{T}(y-x)+{1\over 2}A(y-x)^2+o(|s-t|+|y-x|^2).$$
Similarly, we define $\mathcal P^{2,-}u(t,x):=-\mathcal P^{2,+}(-u)(t,x)$.
\end{Definition}
\begin{Definition}
It can be said that $u\in C([0,T]\times \bb R^n)$ is a viscosity subsolution of (\ref{PDE_obstacle}) if $u(T,x)\leq\phi(x),x\in\bb R^n$, and for each $(t,x)\in[0,T)\times \bb R^n$ and $(p,q,A)\in\mathcal P^{2,+}u(t,x)$,
$$\min\left\{-p-F(A,q,u(t,x),x,t),u(t,x)-l(t,x)\right\}\leq 0.$$
It can be said that $u\in C([0,T]\times \bb R^n)$ is a viscosity supersolution of (\ref{PDE_obstacle}) if $u(T,x)\geq\phi(x),x\in\bb R^n$, and for each $(t,x)\in[0,T]\times \bb R^n$ and $(p,q,X)\in\mathcal P^{2,-}u(t,x)$,
$$\min\left\{-p-F(A,q,u(t,x),x,t),u(t,x)-l(t,x)\right\}\geq 0.$$
$u\in C([0,T]\times \bb R^n)$ is said to be a viscosity solution of (\ref{PDE_obstacle}) if it is both a viscosity subsolution and supersolution.
\end{Definition}
We now define $u(t,x):=Y_t^{t,x}$. Similarly as before, we can note that $u$ is a deterministic function. We now should prove that
$u\in C([0,T]\times \bb R^n)$.
\begin{Lemma}
Let assumptions \textbf{(A1)}-\textbf{(A4)} and \textbf{(A9)}-\textbf{(A10)} hold. For each $t\in[0,T]$, $x_1,x_2\in\bb R^n$, we have
$$|u(t,x_1)-u(t,x_2)|^2\leq C(1+|x_1|^{2m}+|x_2|^{2m})|x_1-x_2|^2+C|x_1-x_2|$$
\end{Lemma}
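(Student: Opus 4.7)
The strategy is to adapt the linearization proof of Proposition \ref{Propose_stable} to the present Markovian setting. Write $X^i := X^{t,x_i}$, $(Y^i, Z^i, A^i) := (Y^{t,x_i}, Z^{t,x_i}, A^{t,x_i})$ and $\hat Y := Y^1 - Y^2$, etc. Because $u(t,x) = Y_t^{t,x}$ is deterministic (as in \cite[Remark 4.3]{HuM2014_2}), a q.s. pointwise bound on $\hat Y_t$ suffices. The obstruction to invoking Proposition \ref{Propose_stable} directly is that the terminal value $\hat\xi := \phi(X_T^1) - \phi(X_T^2)$ and the generator difference $\hat\lambda_s := f(s,X_s^1,Y_s^2,Z_s^2) - f(s,X_s^2,Y_s^2,Z_s^2)$ are no longer bounded in $L_G^\infty$, only locally Lipschitz in $x$ with polynomial growth; so the leading term $C\norm{\hat\xi}_{L_G^\infty}^2$ from that stability estimate must be replaced by an $L_G^p$ analogue via reverse H\"older.

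First I would linearize $f(s,X_s^1,Y_s^1,Z_s^1) - f(s,X_s^2,Y_s^2,Z_s^2) = \hat\lambda_s + \hat m_s^\varepsilon + \hat a_s^\varepsilon \hat Y_s + \hat b_s^\varepsilon \hat Z_s$ with $\hat a^\varepsilon$, $\hat b^\varepsilon$, $\hat m^\varepsilon$ defined as in Proposition \ref{Propose_stable}. Remark \ref{remark_qbsde_bound} furnishes uniform $BMO_G$ bounds on $Z^i$, hence on $\hat b^\varepsilon$; so the Girsanov machinery of Section 2.3 defines a new $G$-expectation $\Et[\cdot]$ via $\Exp(\hat b^\varepsilon)$ under which $\tilde B_s := B_s - \int_0^s \hat b_u^\varepsilon d\Bq_u$ is a $G$-Brownian motion and, by Lemma \ref{lemma_Gisr_K}, each $\int_0^\cdot (l(s,X_s^i) - Y_s^i)\, dA_s^i$ remains a non-increasing $G$-martingale. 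Applying It\^o to $e^{rs}|\hat Y_s|^2$ with $r$ large, splitting $\hat Y_s\, d\hat A_s = \hat S_s\, d\hat A_s + (\hat Y_s - \hat S_s)\, d\hat A_s$ where $\hat S_s := l(s,X_s^1) - l(s,X_s^2)$, and dropping the nonnegative cross terms $(Y_s^1 - l(s,X_s^1))dA_s^2$ and $(Y_s^2 - l(s,X_s^2))dA_s^1$, I would obtain, after taking $\Et_t$ and sending $\varepsilon \to 0$,
\begin{equation*}
|\hat Y_t|^2 \leq C\,\Et_t\Bigl[|\hat\xi|^2 + \int_t^T |\hat\lambda_s|^2 d\Bq_s + \sup_{s\in[t,T]}|\hat S_s|\bigl(|A_T^1 - A_t^1| + |A_T^2 - A_t^2|\bigr)\Bigr].
\end{equation*}

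The main obstacle is converting $\Et_t$ back to $\E_t$ with constants independent of $(x_1,x_2)$. The plan is to pick $q > 1$ (depending only on $T,L,M_0,G$) small enough that $\norm{\hat b^\varepsilon}_{BMO_G} < \phi(q)$; Remark \ref{remark_Ap_reverse_holder} then gives $\Et_t[X] \leq C_p\, \E_t[|X|^p]^{1/p}$ for $p := q/(q-1)$. For the terminal piece, the local Lipschitz property of $\phi$, H\"older's inequality and Proposition \ref{propose_estimate_sde} yield $\E_t[|\hat\xi|^{2p}]^{1/p} \leq C(1 + |x_1|^{2m} + |x_2|^{2m})|x_1 - x_2|^2$; the $\hat\lambda$ term is bounded identically using $|\hat\lambda_s| \leq L(1 + |X_s^1|^m + |X_s^2|^m)|X_s^1 - X_s^2|$ together with Fubini and Proposition \ref{propose_estimate_sde}; the $\hat m^\varepsilon$ contribution tends to zero by the argument in Proposition \ref{Propose_stable}; and for the obstacle term, the uniform Lipschitz bound $|\hat S_s| \leq L|X_s^1 - X_s^2|$, the Cauchy--Schwarz inequality, Proposition \ref{propose_estimate_sde} and the polynomial $L_G^p$ moments of $A^i$ (Proposition \ref{Propose_Z_bound}) together produce a bound of order $|x_1 - x_2|$, yielding the extra additive $C|x_1 - x_2|$ in the final estimate. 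Combining all four contributions gives the claimed inequality.
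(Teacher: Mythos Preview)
Your approach is essentially the same as the paper's: the authors also invoke ``Proposition~\ref{Propose_stable} and its proof'' to obtain an $L_G^p$-version of the stability estimate (with $\E_t[|\hat\xi|^{2p}]^{1/p}$ replacing $\norm{\hat\xi}_{L_G^\infty}^2$), and then bound each of the three contributions---terminal value, generator difference, obstacle difference---using the local Lipschitz hypotheses and Proposition~\ref{propose_estimate_sde}, exactly as you outline. Your explicit walk-through of the linearization/Girsanov/reverse-H\"older mechanism is just an unpacking of what the paper abbreviates.

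One small correction: you cite Remark~\ref{remark_qbsde_bound} for the uniform $BMO_G$ bound on $Z^i$, but that remark concerns the \emph{unreflected} $G$-BSDE~\eqref{BSDE0}. In the reflected setting the relevant a~priori bounds are Propositions~\ref{Propose_Z_bound} and~\ref{Propose_Y_bound} (which you do cite later for the $A^i$ moments); the paper invokes precisely these at the start of its proof. Also, the cross terms you ``drop'' are $-(Y_s^1 - l(s,X_s^1))\,dA_s^2$ and $-(Y_s^2 - l(s,X_s^2))\,dA_s^1$, which are nonpositive (not nonnegative) since $Y^i\ge l(\cdot,X^i)$ and each $A^j$ is increasing---a harmless sign slip in your description.
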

\begin{proof}
Without loss of generality, we assume $h=0$ and $g=0$.
In view of Proposition \ref{Propose_Z_bound} and Proposition \ref{Propose_Y_bound}, we deduce that there exist a constant $C_1:=C_1(T,L,G,M_0,N_0)$ such that
$$\norm{Z^{t,x_1}}_{BMO_G}+\norm{Z^{t,x_2}}_{BMO_G}\leq C_1,$$
and a constant $C_2:=C_2(T,L,G,M_0,N_0,\alpha)$,
for any $\alpha\geq 1$, such that $$\E[|A_T^{t,x_1}|^\alpha+|A_T^{t,x_2}|^\alpha]\leq C_2.$$
In view of Proposition \ref{Propose_stable} and its proof and noting that $u$ is deterministic, we obtain that there exist a constant $~C:=C(T,L,G,M_0,N_0)~$ and $p\geq 2$ such that for each $t\in[0,T]$,
\begin{eqnarray*}
&&|u(t,x_1)-u(t,x_2)|^2\\
&\leq& C\left\{\E\left[|\phi(X_T^{t,x_1})-\phi(X_T^{t,x_2})|^{2p}\right]^{1\over p}
+\E\left[\sup_{s\in [t,T]}|l(s,X_s^{t,x_1})-l(s,X_s^{t,x_2})|^{2p}\right]^{1\over 2p}\right\}\\
&&+C\E\left[\left(\int_t^T |f(s, X_s^{t,x_1},Y_s^{t,x_2},Z_s^{t,x_2})-f(s, X_s^{t,x_2},Y_s^{t,x_2},Z_s^{t,x_2})|^2ds\right)^p\right]^{1\over p}\\
&\leq& C^\prime\E\left[(1+|X_T^{t,x_1}|^{2pm}+|X_T^{t,x_2}|^{2pm})^2\right]^{1\over 2p}\E\left[|X_T^{t,x_1}-X_T^{t,x_2}|^{4p}\right]^{1\over 2p}\\
&&+C^\prime\E\left[\sup_{s\in [t,T]}|X_s^{t,x_1}-X_s^{t,x_2}|^{2p}\right]^{1\over 2p}\\
&&+C^\prime\E\left[\int_t^T (1+|X_s^{t,x_1}|^{2pm}+|X_s^{t,x_2}|^{2pm})^2ds\right]^{1\over 2p}\E\left[\int_t^T|X_s^{t,x_1}-X_s^{t,x_2}|^{4p}ds\right]^{1\over 2p}\\
&\leq& C^{\prime\prime}(1+|x_1|^{2m}+|x_2|^{2m})|x_1-x_2|^2+C^{\prime\prime}|x_1-x_2|.
\end{eqnarray*}
\end{proof}
\begin{Lemma}
Let assumptions \textbf{(A1)}-\textbf{(A4)} and \textbf{(A9)}-\textbf{(A10)} hold. The function $u(t,x):=Y_t^{t,x}$ is continuous in $t$.
\end{Lemma}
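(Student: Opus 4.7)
\textit{Proof sketch.} My plan is to combine the flow property of the reflected $G$-FBSDE with the linearization plus $G$-Girsanov technique used throughout the paper, and to deduce vanishing of the $A^{t_0,x}$-increment on a short interval from the path continuity for $S_G^{\alpha}$-processes provided by Lemma~\ref{lemma_Y_continue}. Fix $(t_0,x)\in[0,T)\times\bb{R}^n$ and $\delta\in(0,T-t_0]$. Exactly as in Proposition~\ref{Propose_markov_fbsde} (the Markov property transfers to the reflected setting via Theorem~\ref{Thm_existence_uniqueness} and Theorem~\ref{Thm_compare_RBSDE}), one obtains $Y_{t_0+\delta}^{t_0,x}=u(t_0+\delta,X_{t_0+\delta}^{t_0,x})$. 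I would then split
\[
u(t_0,x)-u(t_0+\delta,x)=\bigl\{Y_{t_0}^{t_0,x}-u(t_0+\delta,X_{t_0+\delta}^{t_0,x})\bigr\}+\bigl\{u(t_0+\delta,X_{t_0+\delta}^{t_0,x})-u(t_0+\delta,x)\bigr\}.
\]
The second bracket is handled by the polynomial Lipschitz continuity of $u$ in $x$ (the lemma just preceding this one) combined with the SDE estimate $\E[|X_{t_0+\delta}^{t_0,x}-x|^p]\le C(1+|x|^p)\delta^{p/2}$ from Proposition~\ref{propose_estimate_sde}, so its contribution is $O(\sqrt\delta)$.

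For the first bracket I would linearize the generator of the reflected $G$-BSDE on $[t_0,t_0+\delta]$ exactly as in Proposition~\ref{Propose_Y_bound}, writing
\[
f(s,X_s^{t_0,x},Y_s^{t_0,x},Z_s^{t_0,x})=f(s,X_s^{t_0,x},0,0)+m_s^{\varepsilon}+a_s^{\varepsilon}Y_s^{t_0,x}+b_s^{\varepsilon}Z_s^{t_0,x},
\]
with $|a_s^{\varepsilon}|\le L$, $|b_s^{\varepsilon}|\le L(1+|Z_s^{t_0,x}|)$, $|m_s^{\varepsilon}|\le 4L\varepsilon(1+\varepsilon)$. Proposition~\ref{Propose_Z_bound} gives $\norm{Z^{t_0,x}}_{BMO_G}$ bounded by a constant independent of $(t_0,x)$, so $b^{\varepsilon}\in BMO_G$ uniformly in $\varepsilon$, and I can introduce $\Et[\cdot]:=\E[\Exp(b^{\varepsilon})_T\cdot]$ under which $\tilde B:=B-\int b^{\varepsilon}d\Bq$ is a $G$-Brownian motion. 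Applying It\^o's formula to $e^{\int_{t_0}^{\cdot}a^{\varepsilon}d\Bq}Y^{t_0,x}$ on $[t_0,t_0+\delta]$ and taking $\Et$ yields, with the help of Lemma~\ref{lemma_Gisr_K} to kill the $d\tilde B$ integral,
\[
u(t_0,x)=\Et\!\left[e^{\int_{t_0}^{t_0+\delta}a_s^{\varepsilon}d\Bq_s}\,u(t_0+\delta,X_{t_0+\delta}^{t_0,x})+\!\int_{t_0}^{t_0+\delta}\!e^{\int_{t_0}^{s}a^{\varepsilon}d\Bq}\bigl(f(s,X_s^{t_0,x},0,0)+m_s^{\varepsilon}\bigr)d\Bq_s+\!\int_{t_0}^{t_0+\delta}\!e^{\int_{t_0}^{s}a^{\varepsilon}d\Bq}dA_s^{t_0,x}\right].
\]

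The main obstacle is showing that the last piece, $\Et\bigl[\int_{t_0}^{t_0+\delta}e^{\int a^{\varepsilon}d\Bq}dA_s^{t_0,x}\bigr]$, tends to $0$ as $\delta\to 0$, uniformly in $\varepsilon$. My plan is to apply Lemma~\ref{lemma_Y_continue} to the continuous nondecreasing process $A^{t_0,x}\in\bigcap_{\alpha\ge 2}S_G^{\alpha}(0,T)$, which yields $\lim_{\delta\to 0}\E[|A_{t_0+\delta}^{t_0,x}-A_{t_0}^{t_0,x}|^{p}]=0$ for every $p\ge 2$; choosing $p$ so large that $q=p/(p-1)$ satisfies $\norm{b^{\varepsilon}}_{BMO_G}<\phi(q)$ uniformly in $\varepsilon$ (possible since $\phi(q)\to\infty$ as $q\downarrow 1$), the reverse H\"older inequality in Lemma~\ref{reverse holder} and Remark~\ref{remark_Ap_reverse_holder} gives
\[
\Et\!\left[\int_{t_0}^{t_0+\delta}e^{\int_{t_0}^{s}a^{\varepsilon}d\Bq}dA_s^{t_0,x}\right]\le e^{L\siup^2 T}\,C_q\,\E\bigl[|A_{t_0+\delta}^{t_0,x}-A_{t_0}^{t_0,x}|^{p}\bigr]^{1/p}\xrightarrow[\delta\to 0]{}0,
\]
uniformly in $\varepsilon$. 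The generator piece is $O(\sqrt\delta)$ by (A3) and Lemma~\ref{energy_ineq}, while $\Et[u(t_0+\delta,X_{t_0+\delta}^{t_0,x})]\to u(t_0+\delta,x)$ at rate $O(\sqrt\delta)$ by the $x$-continuity of $u$, Proposition~\ref{propose_estimate_sde}, and one more application of the reverse H\"older inequality. Letting $\varepsilon\to 0$ and then $\delta\to 0$ proves right-continuity of $u(\cdot,x)$ at $t_0$; an analogous computation on $[t_0-\delta,t_0]$ (using $Y_{t_0}^{t_0-\delta,x}=u(t_0,X_{t_0}^{t_0-\delta,x})$) delivers left-continuity.
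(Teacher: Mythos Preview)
Your right-continuity argument is essentially sound, but the claim that ``an analogous computation on $[t_0-\delta,t_0]$ delivers left-continuity'' hides a genuine gap. For left-continuity at $t_0$ you would need $\E\bigl[|A_{t_0}^{t_0-\delta,x}|^{p}\bigr]\to 0$ as $\delta\to 0$, where $A^{t_0-\delta,x}$ is the increasing process of the reflected $G$-BSDE started at time $t_0-\delta$. Lemma~\ref{lemma_Y_continue} gives $\E[\sup_t\sup_{s\in[t,t+\varepsilon]}|A_t-A_s|^p]\to 0$ for a \emph{fixed} process $A\in S_G^p$, which is what you use (correctly) for right-continuity with the fixed process $A^{t_0,x}$. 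But for left-continuity the process $A^{t_0-\delta,x}$ changes with $\delta$, and the uniform moment bound $\E[|A_T^{t_0-\delta,x}|^\alpha]\le C$ from Proposition~\ref{Propose_Z_bound} does not by itself imply small increments on shrinking intervals uniformly in $\delta$. Attempting to recover this from the equation for $A$ leads back to $|u(t_0-\delta,x)-u(t_0,x)|$ itself, so the argument becomes circular.

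The paper avoids this difficulty entirely by a different route: it extends both $(Y^{t_1,x},Z^{t_1,x},A^{t_1,x})$ and $(Y^{t_2,x},Z^{t_2,x},A^{t_2,x})$ to all of $[0,T]$ (trivially on $[0,t_i]$) and then applies the stability estimate Proposition~\ref{Propose_stable} at time $0$. In that estimate the $A$-dependence enters only through $\E[|A_T^{t_i,x}|^{2p}]$, which is bounded uniformly, so no control of $A$-increments on short intervals is required; what must be shown to vanish is instead $\E[\sup_s|l(s,X_s^{t_1,x})-l(s,X_s^{t_2,x})|^{2p}]$ and the generator/terminal differences, all of which follow from the SDE estimates in Proposition~\ref{propose_estimate_sde}. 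This approach is symmetric in $t_1,t_2$ and does not need the Markov property for the reflected equation (which you invoke but which is not established in the paper).
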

\begin{proof}
For simplicity, we assume $h=0$ and $g=0$.
We define $X_s^{t,x}:=x$, $Y_s^{t,x}:=Y_t^{t,x}$, $Z_s^{t,x}:=0$ and $A_s^{t,x}:=0$ for $s\in[0,t]$. It is easy to check that $(Y^{t,x},Z^{t,x},A^{t,x})$ is
a solution to the following $G$-BSDE on $[0,T]$:
\begin{equation}
\begin{cases}
\label{RBSDE_extend}
Y_s^{t,x}=\phi(X_T^{t,x})+\int_s^T \textbf{1}_{[t,T]}(s)f(r, X_r^{t,x},Y_r^{t,x},Z_r^{t,x})d\Bq_r
-\int_s^TZ_r^{t,x}dB_r+\int_s^TdA_r^{t,\xi},~s\in[0,T]\\
Y_s^{t,\xi}\geq S_s^{t,x},\quad \text{q.s.}, ~s\in[0,T];\\
\{-\int_0^s(Y_r^{t,x}-S_r^{t,x})dA_r^{t,x},~s\in[0,T]\}\text{ is a non-increasing }G\text{-martingale,}
\end{cases}
\end{equation}
where
\begin{equation*}
S_s^{t,x}=
\begin{cases}
l(s,X_s^{t,\xi}),\qquad s\in[t,T],\\
l(s,x),\quad ~\qquad s\in [0,t].
\end{cases}
\end{equation*}
Fix $x\in\bb R^n$. As before, in view of Propositions \ref{Propose_Z_bound}-\ref{Propose_stable}, we have for $0\leq t_1\leq t_2\leq T$ and some $p\geq 2$,
\begin{eqnarray}
\nonumber&&|u(t_1,x)-u(t_2,x)|^2=|Y_0^{t_1,x}-Y_0^{t_2,x}|^2\\
\nonumber&\leq& C\left\{\E\left[|\phi(X_T^{t_1,x})-\phi(X_T^{t_2,x})|^{2p}\right]^{1\over p}
+\E\left[\sup_{s\in [0,T]}|l(s,X_s^{t_1,x})-l(s,X_s^{t_2,x})|^{2p}\right]^{1\over 2p}\right\}\\
\nonumber&&+C\E\left[\left(\int_0^T |\textbf{1}_{[t_1,T]}(s)f(s, X_s^{t_1,x},Y_s^{t_2,x},Z_s^{t_2,x})-\textbf{1}_{[t_2,T]}(s)f(s, X_s^{t_2,x},Y_s^{t_2,x},Z_s^{t_2,x})|^2ds\right)^p\right]^{1\over p}\\
\nonumber&\leq& C^\prime\E\left[(1+|X_T^{t_1,x}|^{2pm}+|X_T^{t_2,x}|^{2pm})^2\right]^{1\over 2p}\E\left[|X_T^{t_1,x}-X_T^{t_2,x}|^{4p}\right]^{1\over 2p}\\
\nonumber&&+C^\prime\E\left[\sup_{s\in [0,T]}|X_s^{t_1,x}-X_s^{t_2,x}|^{2p}\right]^{1\over 2p}
+C^\prime\E\left[\left(\int_{t_1}^{t_2} |f(s, X_s^{t_1,x},Y_s^{t_2,x},Z_s^{t_2,x})|^2ds\right)^p\right]^{1\over p}\\
\nonumber&&+C^\prime\E\left[\int_{t_2}^T (1+|X_s^{t_1,x}|^{2pm}+|X_s^{t_2,x}|^{2pm})^2ds\right]^{1\over 2p}\E\left[\int_{t_2}^T|X_s^{t_1,x}-X_s^{t_2,x}|^{4p}ds\right]^{1\over 2p}\\
\nonumber&\leq& C^{\prime\prime}(1+|x|^{2m})\E\left[\sup_{s\in[t_2,T]}|X_s^{t_1,x}-X_s^{t_2,x}|^{4p}\right]^{1\over 2p}+C^{\prime\prime}\E\left[\sup_{s\in [0,T]}|X_s^{t_1,x}-X_s^{t_2,x}|^{2p}\right]^{1\over 2p}\\
&&+C^{\prime\prime}\E\left[\left(\int_{t_1}^{t_2} |f(s, X_s^{t_1,x},Y_s^{t_2,x},0)|^2ds\right)^p\right]^{1\over p}.
\label{ineq_u_t}
\end{eqnarray}
For each $\alpha\geq 2$, we have
\begin{eqnarray*}
&&\E\left[\sup_{s\in [0,T]}|X_s^{t_1,x}-X_s^{t_2,x}|^{\alpha}\right]\\
&\leq&\E\left[\sup_{s\in [t_1,t_2]}|X_s^{t_1,x}-x|^{\alpha}\right]+\E\left[\sup_{s\in [t_2,T]}|X_s^{t_2,X_{t_2}^{t_1,x}}-X_s^{t_2,x}|^{\alpha}\right]\\
&\leq& C_1(1+|x|^\alpha)|t_1-t_2|^{\alpha/2}+C_1\E[|X_{t_2}^{t_1,x}-x|^{\alpha}]\\
&\leq& C_2(1+|x|^\alpha)|t_1-t_2|^{\alpha/2}.
\end{eqnarray*}
On the other hand, in view of Proposition \ref{Propose_Y_bound}, for each $\alpha\geq 2$,
\begin{eqnarray*}
&&\E\left[\left(\int_{t_1}^{t_2} |f(s, X_s^{t_1,x},Y_s^{t_2,x},0)|^2ds\right)^\alpha\right]\\
&\leq& C\E\left[\int_{t_1}^{t_2} |f(s, 0,0,0)|^{2\alpha}+|X_s^{t_1,x}|^{2\alpha}+|Y_s^{t_2,x}|^{\alpha}ds\right]\\
&\leq& \tilde C \int_{t_1}^{t_2} \left(|f(s, 0,0,0)|^{2\alpha}+1+\E[|X_s^{t_1,x}|^{2\alpha}]\right)ds\\
&\leq& \tilde C^\prime \int_{t_1}^{t_2} \left(|f(s, 0,0,0)|^{2\alpha}+1+|x|^{2\alpha}\right)ds.
\end{eqnarray*}
Then from (\ref{ineq_u_t}), we know $u$ is continuous in $t$.
\end{proof}
Now we consider the penalized $G$-BSDEs:
\begin{eqnarray*}
Y_s^{t,x,n}&=&\phi(X_T^{t,x})+\int_s^Tg(r, X_r^{t,x},Y_r^{t,x,n},Z_r^{t,x,n})dr+\int_s^Tf(r, X_r^{t,x},Y_r^{t,x,n},Z_r^{t,x,n})d\Bq_r\\
&&+n\int_s^T(Y_r^{t,x,n}-l(t,X_r^{t,x}))^-dr-\int_s^TZ_r^{t,x,n}dB_r-\int_s^TdK_r^{t,x,n},~~\text{q.s.},~s\in[t,T].\\
\end{eqnarray*}
We define $u_n(t,x):=Y_t^{t,x,n}$, $(t,x)\in[0,T]\times \bb R^n$. In view of Theorem \ref{Thm_qbsde_pde},
$u_n$ is the viscosity solution to the following PDE:
\begin{equation}
\begin{cases}
\label{PDE_penal}
\partial_t u_n+F_n(D_x^2u_n,D_xu_n,u_n,x,t)=0,~~(t,x)\in[0,T)\times\bb R^n\\
u_n(T,x)=\phi(x),~~x\in\bb R^n.
\end{cases}
\end{equation}
where
$$F_n(DA,p,y,x,t):=F(A,p,y,x,t)+n(y-l(t,x))^-,$$
for each $(A,p,y,x,t)\in \bb S_n\times \bb R^n\times \bb R\times\bb R^n\times[0,T] $.
\begin{Theorem}
Let assumptions \textbf{(A1)}-\textbf{(A4)} and \textbf{(A9)}-\textbf{(A10)} hold. The function $u(t,x):=Y_t^{t,x}$ is a viscosity solution of the obstacle problem (\ref{PDE_obstacle}).
\end{Theorem}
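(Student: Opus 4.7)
The plan is to realize $u$ as the limit of the viscosity solutions $u_n$ of the penalized PDEs~(\ref{PDE_penal}) and pass to the limit using the stability of viscosity solutions. By Theorem~\ref{Thm_qbsde_pde}, each $u_n(t,x):=Y_t^{t,x,n}$ is a viscosity solution of~(\ref{PDE_penal}). The terminal condition $u(T,x)=\phi(x)$ is automatic from the $G$-BSDE formulation.

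First, I would prove that $u_n\to u$ locally uniformly on $[0,T]\times\bb R^n$. Arguing exactly as in the two lemmas preceding the theorem, one obtains a locally Lipschitz/H\"older estimate
$$|u_n(t,x)-u_n(t',x')|^2\leq C(1+|x|^{2m}+|x'|^{2m})(|x-x'|^2+|x-x'|)+C(1+|x|^{m+1})|t-t'|^{1/2}$$
with the constant $C$ \emph{independent of} $n$. The $n$-independence follows from Lemma~\ref{lemma_Y^n_bound} together with the a priori estimates in Propositions~\ref{Propose_stable_sde} and~\ref{Propose_stable_pde} applied with the uniform bounds on $Y^n$, $Z^n$, $K^n$, $L^n$. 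Combining this equicontinuity with the pointwise convergence $Y^{t,x,n}_t\to Y^{t,x}_t$ (a consequence of Lemma~\ref{lemma_Y^n convergence}) and invoking Arzel\`a--Ascoli yields local uniform convergence of $u_n$ to $u$.

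Second, the obstacle inequality $u(t,x)\geq l(t,x)$ follows directly from Lemma~\ref{lemma_Y^n-S convergence}, which gives $\E[(Y^{t,x,n}_t-l(t,x))^-]\to 0$, i.e.\ $(u_n(t,x)-l(t,x))^-\to 0$. Passing to the limit yields $u\geq l$.

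Third, I would establish the viscosity subsolution and supersolution properties by the classical stability argument. For the subsolution property at $(t_0,x_0)\in[0,T)\times\bb R^n$, let $\psi\in C^{1,2}$ be such that $u-\psi$ has a strict local maximum at $(t_0,x_0)$ with $u(t_0,x_0)=\psi(t_0,x_0)$. If $u(t_0,x_0)\leq l(t_0,x_0)$, the obstacle term in~(\ref{PDE_obstacle}) already gives the required inequality, so we may assume $u(t_0,x_0)>l(t_0,x_0)$. By local uniform convergence, there exist $(t_n,x_n)\to(t_0,x_0)$ at which $u_n-\psi$ attains a local maximum and $u_n(t_n,x_n)\to u(t_0,x_0)>l(t_0,x_0)$, so that $(u_n(t_n,x_n)-l(t_n,x_n))^-=0$ for $n$ large. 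The viscosity subsolution property for~(\ref{PDE_penal}) at $(t_n,x_n)$ reads
$$-\partial_t\psi(t_n,x_n)-F(D_x^2\psi,D_x\psi,u_n,x_n,t_n)-n(u_n(t_n,x_n)-l(t_n,x_n))^-\leq 0,$$
and passing to the limit delivers $-\partial_t\psi(t_0,x_0)-F(D_x^2\psi,D_x\psi,\psi,x_0,t_0)\leq 0$. For the supersolution property, the penalty term is nonnegative, hence dropping it preserves the inequality; the standard limiting argument at a local minimum of $u-\psi$ then completes the proof.

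The main obstacle is the locally uniform convergence $u_n\to u$, since the equicontinuity of the $u_n$ must be quantified uniformly in $n$. If the uniform Lipschitz/H\"older estimates turn out to be delicate, the fallback is the Barles--Perthame half-relaxed-limits method: define $\bar u(t,x):=\limsup^*_{(s,y,n)\to(t,x,\infty)}u_n(s,y)$ and $\underline u$ dually; general stability theory then shows that $\bar u$ is a viscosity subsolution and $\underline u$ a viscosity supersolution of~(\ref{PDE_obstacle}), while the pointwise convergence $u_n(t,x)\to u(t,x)$ forces $\bar u=\underline u=u$, yielding the conclusion.
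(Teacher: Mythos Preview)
Your overall strategy---penalize, invoke Theorem~\ref{Thm_qbsde_pde} for the $u_n$, then pass to the limit via viscosity stability---is exactly the paper's. The handling of the sub- and supersolution properties once local uniform convergence is in hand is also essentially the same.

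The gap is in the argument for local uniform convergence of $u_n$ to $u$. Your primary route claims a Lipschitz/H\"older modulus for $u_n$ with constant independent of~$n$, citing Proposition~\ref{Propose_stable_pde} and Lemma~\ref{lemma_Y^n_bound}. But Proposition~\ref{Propose_stable_pde} applied to the penalized $G$-BSDE produces a constant depending on the Lipschitz constant in~$y$ of the generator, and for $f(s,x,y,z)+n(y-l(s,x))^-$ that constant is of order~$n$; Lemma~\ref{lemma_Y^n_bound} controls $\|Y^n\|_{S_G^\infty}$, $\|Z^n\|_{BMO_G}$, $\E[|K_T^n|^p]$, $\E[|L_T^n|^p]$ uniformly, but does not by itself yield an $n$-free modulus of continuity for $u_n$. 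Your fallback via half-relaxed limits has the same difficulty: pointwise convergence $u_n(t,x)\to u(t,x)$ alone does not force $\underline u=u$ (only $\overline u\le u$ follows easily from $u_n\le u$ and the continuity of $u$), so you would still need either equicontinuity or a comparison principle for~(\ref{PDE_obstacle}) that the paper does not provide.

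The paper sidesteps this entirely: by the comparison Theorem~\ref{Thm_compare} one has $u_n\le u_{n+1}$, so $u_n\uparrow u$ monotonically. Since each $u_n$ is continuous (Theorem~\ref{Thm_qbsde_pde} and its proof) and $u$ is continuous (the two lemmas immediately preceding the theorem), Dini's theorem gives local uniform convergence for free, with no need for an $n$-uniform modulus. That monotonicity observation is the missing ingredient in your argument.
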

\begin{proof} We follow the procedure of \cite[Theorem 6.7]{Li2017_1}, and only sketch the main ideas.

From previous results, we have for each $(t,x)\in[0,T]\times\bb R^n$,
$$\lim_{n\to\infty}u_n(t,x)=u(t,x), \quad \text{and} \quad u_{n+1}(t,x)\geq u_n(t,x),~\forall n\in\bb Z^+.$$
Moreover, functions $u$ and $u_n$ are continuous. Then in view of Dini's Theorem, $u_n$ uniformly converges to $u$ on any compact subset.

We now show $u$ is a viscosity subsolution to (\ref{PDE_obstacle}). For each fixed $(t,x)\in[0,T]\times\bb R^n$, let $(p,q,A)\in\mathcal P^{2,+}u(t,x)$. We may assume $u(t,x)>l(t,x)$.
Similar as in the proof of \cite[Theorem 6.7]{Li2017_1}, we deduce that there exist sequences
$$n_j\to\infty,\qquad (t_j,x_j)\to(t,x),\qquad (p_j,q_j,A_j)\to(p,q,A),$$
where $(p_j,q_j,X_j)\in\mathcal P^{2,+}u_{n_j}(t_j,x_j) $. Since $u_n$ is the viscosity solution to (\ref{PDE_penal}), it follows that for any $j$,
$$\min\left\{-p_j-F_{n_j}(A_j,q_j,u_{n_j}(t_j,x_j),x_j,t_j),u(t_j,x_j)-l(t_j,x_j)\right\}\leq 0.$$
Noting that $u(t,x)>l(t,x)$, by the uniform convergence of $u_n$, we deduce that $u_j(t_j,x_j)>l(t_j,x_j)$ for sufficiently large integer $j$. Thus letting $j\to\infty$, we have
$$-p-F(A,q,u(t,x),x,t)\leq 0,$$ which means $u$ is a viscosity subsolution to (\ref{PDE_obstacle}).

In a similar way,  $u$ is proved to be a viscosity supersolution to (\ref{PDE_obstacle}).
\end{proof}

\end{document}